\documentclass[12pt]{amsart}
\usepackage[utf8]{inputenc}
\usepackage{amsfonts, amsthm, amsmath, amssymb,bm}
\usepackage{amssymb,amscd}
\usepackage{mathtools}
\usepackage[numbers]{natbib}
\usepackage{esint}
\usepackage{bbm}
\usepackage[hidelinks,hyperfootnotes=false]{hyperref}
\usepackage[normalem]{ulem}

\usepackage[dvipsnames]{xcolor}

\usepackage{enumerate}
\usepackage{cases}

\usepackage{cancel}

\newcommand{\vol}{\operatorname{vol}}
\newcommand{\loc}{\operatorname{loc}}
\newcommand{\Eis}{\mathrm{Eis}}
\newcommand{\Res}{\operatorname{Res}}

\newcommand{\gen}{\mathrm{gen}}
\newcommand{\Id}{\operatorname{Id}}

\newcommand{\bs}{\backslash}

\newcommand{\ol}{\overline}

\renewcommand{\hat}{\widehat}
\renewcommand{\tilde}{\widetilde}
\newcommand{\defeq}{\vcentcolon=}
\newcommand{\GL}{\mathrm{GL}}
\newcommand{\PGL}{\mathrm{PGL}}
\newcommand{\G}{\mathrm{G}}

\newcommand{\C}{\mathbb{C}}
\newcommand{\A}{\mathbb{A}}
\newcommand{\R}{\mathbb{R}}
\renewcommand{\H}{\mathcal{H}}
\newcommand{\Z}{\mathbb{Z}}
\newcommand{\Ad}{\mathrm{Ad}}

\newcommand{\vphi}{\varphi}
\newcommand{\M}{\mathbf{M}}

\newcommand{\B}{\mathcal{B}}
\renewcommand{\o}{\mathfrak{o}}
\newcommand{\p}{\mathfrak{p}}
\newcommand{\q}{\mathfrak{q}}

\renewcommand{\a}{\mathfrak{a}}

\newcommand{\1}{\mathbf{1}}

\renewcommand{\P}{\mathcal{P}}
\newcommand{\Q}{\mathcal{Q}}

\newcommand{\W}{\mathcal{W}}
\newcommand{\Ind}{\mathrm{Ind}}
\newcommand{\V}{\mathcal{V}}

\newtheorem{thm}{Theorem}[section]
\newtheorem{thmA}{Theorem}
 
\newtheorem{lem}[thm]{Lemma}
\newtheorem*{lem*}{Lemma}

\newtheorem{prop}[thm]{Proposition}

\newtheorem{rmk}{Remark}[section]

\newcommand{\pmat}[1]{\begin{pmatrix} #1 \end{pmatrix}}
\newcommand\IP[1]{\left\langle #1\right\rangle} 

\newcommand{\rn}[1]{{\color{violet} (RN: #1)}}
\newcommand{\sj}[1]{{\color{blue} (SJ: #1)}}
\newcommand{\jd}[1]{{\color{purple} (JD: #1)}}

\newcommand{\forlater}[1]{}

\newcommand{\optionA}[1]{#1}
\newcommand{\optionB}[1]{}

\title[Regularized Spectral Expansion and Moments]{Regularized spectral expansion of a Rankin--Selberg period and moments}
	
	\author{Jakub Dobrowolski}
	\address{Queen Mary University of London, Mile End Road, London E14 NS, United Kingdom.}
	\email{j.dobrowolski@qmul.ac.uk}
	
	\author{Subhajit Jana}
	\address{Indian Statistical Institute, 8th Mile Mysore Road, RVCE Post, Bangalore 560059, India}
	\email{s.jana@isibang.ac.in}
	
	\author{Ramon Nunes}
	\address{Universidade Federal do Cear\'a, Campus do Pici, Bloco 914, 60440-900 Fortaleza-CE, Brasil}
	\email{ramon@mat.ufc.br}

    \thanks{The first and second authors were in part supported by the EPSRC grant EP/Y016769/1. The third author was supported by Instituto Serrapilheira, grant number 8277 and FUNCAP, grant number AJC-0222-00009.01.00/24.}

\begin{document}
	
	\begin{abstract}
		We establish a regularized spectral decomposition of the squared magnitude of a maximal degenerate Eisenstein series as a Schwartz distribution on $\GL_n$. Although the original problem is on $\mathrm{GL}_n$, its spectral components are described entirely by the automorphic spectrum of $\mathrm{GL}_2$. As an application, we prove a partial reciprocity formula relating the second moment of $\mathrm{GL}_n\times\mathrm{GL}_n$ Rankin--Selberg central $L$-values and a mixed moment of $L$-values of $\mathrm{GL}_2$. We also prove, assuming the generalized Ramanujan conjecture, an asymptotic formula for the second moment of the above Rankin--Selberg $L$-functions over a conductor-aspect family with an error term of square-root-cancellation strength.
	\end{abstract}
	
	\date{\today}
	
	\maketitle
	
	%\tableofcontents
	\forlater{
		\section*{FOR LATERRR}
		
		\subsection{Discussions}
		
		\subsubsection{Notation}${}$\\

        \rn{I don't like the use of $\cdot$ in the displays of the intro. I thing they would look better without them}
        
		\jd{notation for the partial Whittaker vector is not consistent}
		\rn{I don't mind using both. We simply say there are two notations when introducing it.}\\
		
		Here's why I think that we should ditch this $x-$matrix:
		\begin{itemize}
			\item It’s not an actual single-translate since the ‘basic’ functions depends on the ramified primes.
			\item Makes notations heavier
			\item It frees the variable $x$ to unequivocally denote a variable in $\A$ or $F_v$.
			\item Avoids the clumsy $|\det x_v|$ factors which has causes some confusion many times
			\item Also, the way I see things, chosing vectors which are a single-translate of a ``fixed'' one are not considered to be harder. I'd say their interest comes from the fact that sometimes it's the only option if we want to control both sides of our reciprocity formula. I’d only leave a comment right after the choice is made that up to some flexibilization, what we do can be considered as a single-translate of something.
		\end{itemize}
	}

	\section{Introduction}\label{sec:intro}
	
	\subsection{Motivation}

	%\jd{at this moment the introduction doesn't mention translate $x$, let's discuss if we want to mention it here} \sj{I like the current version. $x$ would make a distraction (this is not the novelty of this paper)}
	
	$L$-functions are fundamental objects of modern number theory and the study of their moments over natural families is one of the central problems in the subject. Let $n>2$ be a natural number and $F$ be a number field. In this paper we study the second moment of Rankin--Selberg $L$-functions of the form 
	\(L(1/2,\pi\otimes \pi_0)\) where $\pi_0$ is a fixed cuspidal representation for $\PGL_n(F)$ and $\pi$ varies over generic representations for $\PGL_n(F)$ in a non-archimedean conductor aspect family.
	
	We approach this problem via a period-theoretic viewpoint inspired by \cite{MV2010subconvexity}. We start with the following automorphic  integral which captures the desired second moment. Let $\phi_0\in \pi_0$ be a fixed cusp form and $E$ be a maximal degenerate Eisenstein series on $\PGL_n$. We consider the integral
	\[\mathcal{I}(\phi_0,E)\defeq \int_{[\PGL_n]}|\phi_0|^2\cdot|E|^2\]
	where $[\PGL_n]$ denotes the automorphic quotient $\PGL_n(F)\bs\PGL_n(\A)$.
	As $\phi_0$ is a cusp form we know that $\phi_0\cdot E$ lies in $ L^2([\PGL_n])$ and hence the integral converges absolutely. Moreover, writing the above as an inner product 
	\[\IP{\phi_0\cdot E,\phi_0\cdot E}_{[\PGL_n]},\]
	we use Langlands $L^2$-decomposition and Rankin--Selberg unfolding to express it as a weighted second moment
	\[\mathcal{I}(\phi_0,E)=\intop_{\mathrm{aut}}{\left\lvert L(\tfrac{1}{2},\pi_0\otimes \ol{\pi})\right\rvert^2} \H_{\phi_0,E}(\pi)\, d\pi.\]
	Here $\H_{\phi_0,E}$ is a certain weight function which,
  for suitable choices of $\phi_0$ and $E$, detects the intended conductor-aspect family.
	
	The problem is therefore to understand the period $\mathcal{I}(\phi_0,E)$ with sufficient precision.
    %For this we try to do a similar spectral decomposition of $\mathcal{I}(\phi_0,E)=\IP{|\phi_0|^2,\left\lvert E\right\rvert^2}_{[\PGL_n]}$.
    %For this we follow the approach of \cite{MV2010subconvexity}. %and permute the factors to get $\IP{|\phi_0|^2,\left\lvert E\right\rvert^2}_{[\PGL_n]}$.
	%\[\mathcal{I}(\phi_0,E)=\IP{|\phi_0|^2,\left\lvert E\right\rvert^2}_{[\PGL_n]}.\]
	We already encounter a serious analytic difficulty as $|E|^2$ is not integrable on $[\PGL_n]$. So, we cannot, at least not naively as we did before, spectrally expand this inner product. Hence we face the problem of finding a \emph{regularized} spectral expansion of  $\left\lvert E\right\rvert^2$ as a distribution against $|\phi_0|^2$.

We start with the \emph{deformed} integral
  \[\int_{[\PGL_n]} |\phi_0|^2\cdot \ol{E}\cdot E_s,\]
  %\jd{I don't know if I like the word limit here. Maybe can change to ``(display) starting with $\Re(s)\gg 1$ and with an eventual goal of taking $s=0$'' or something along those lines. Limit suggests $s$ is small to me} \rn{I'm happy with 'value at $s=0$ of ...' as well. Just need to be clear where did s come from.}
	where for $s\in\C$ in general position $E_s$ is a certain deformation of $E$ varying meromorphically in $s$. The goal is to understand the meromorphic behavior of the above deformed integral near $s=0$, as the $s\to 0$ limit of it (if it exists) equals $\mathcal{I}(\phi_0,E)$.
  For sufficiently large $\Re(s)$, we apply the standard unfolding via $E_s$,
  along with the Fourier--Whittaker expansion of $|\phi_0|^2$, which transforms
  the above integral as a sum of a few Whittaker periods.
  A crucial observation at this point is that all but two Whittaker
  periods vanish because $E$ is very far from being \emph{generic}.
  %\redsout{This stunning feature is responsible for forces the remaining period to behave like essentially a Rankin--Selberg period over $\GL_2$!}
  Among the two non-vanishing periods, one of them can be decomposed in
  two explicitly computable terms which contribute to the
  main term. The other one is essentially a $\GL_2\times \GL_2$ Rankin-Selberg
  period.
  This allows us to obtain a (partial)
  \emph{reciprocity} formula relating the above moment of $\GL_n\times \GL_n$
  Rankin--Selberg $L$-values and $L$-values of $\GL_2$;  see \S\ref{sec:sketch-thmA}. Furthermore, carefully choosing test vectors $\phi_0$ and $E$ we obtain an asymptotic expansion
  of the above moment with an error term of (essentially)
  square-root-cancellation strength
	
  The use of the period $\mathcal{I}(\phi_0,E)$ to analyze the second moment above was firstly introduced by Blomer in \cite{Blomer2012RS}, albeit in a different set-up. In his case the representation $\pi_0$ also varies, resulting in a conductor-dropping scenario. In any case, his method in our set-up would allow us to show that the growth of $\ol{E}\cdot E_s$ is bounded by that of an Eisenstein series in the \emph{absolute convergence} region. This only yields a Lindel{\"o}f-consistent upper bound of the above second moment. %It is important to note,  Obtaining a power-saving error term in that case would prove Quantum Unique Ergodicity.
  The second author in \cite{Jana2020RS} later was able to extract a \emph{main term} $\mathbf{E}$ so that
	\[|E|^2-\mathbf{E}\in L^1([\PGL_n]),\]
	leading to an asymptotic expansion of the above moment, in the archimedean aspect, with an error term of size $O(1)$. This gave a motivation to extract further terms from $|E|^2$ so that one can obtain a power saving error term. In the non-archimedean setting for $n=2$, the corresponding problem was resolved by the first author in \cite{dobrowolski2025second}, using a general regularized spectral decomposition developed adelically in \cite{MV2010subconvexity}. For $n>2$ the problem becomes substantially more difficult, as many more different types of Eisenstein contributions may occur, and there is presently no general regularized spectral decomposition.  %who obtained an upper bound, hence giving a Lindel{\"o}f on average type result. This was later developed by the second author in \cite{Jana2020RS}, who was able to extract the first term in the asymptotic expansion with constant error term.
	%It is worth noting that Zhang in \cite{Zhang2019QUE} analyzed the measure coming from the squared magnitude of a maximal degenerate \emph{incomplete} Eisenstein series $|E|^2$. The author also observes a phenomena similar to ours, namely, the spectral components that contribute essentially come from $\GL_2$. However, since in this setup $E$ is an incomplete Eisenstein series there is no need to regularize the periods since they converge absolutely. In our case $|E|^2$ does not have such nice analytic properties and hence we have to find an explicit regularized spectral expansion.
    It is worth noting that Zhang in \cite{Zhang2019QUE} analyzed the measure coming from the squared magnitude of a maximal degenerate Eisenstein series $|E|^2$ in the context of \emph{quantum unique ergodicity}. The author of \cite{Zhang2019QUE} also observes a phenomena similar to ours, namely, the spectral components that contribute essentially come from $\GL_2$. However, for his application it suffices to integrate $|E|^2$ \emph{only} against cusp forms and incomplete Eisenstein series so there is no need for regularization, as the periods already converge absolutely. %In our case we need to test $|E|^2$ against the whole spectrum for $[\PGL_n]$ and hence we have to find an explicit regularized spectral expansion.

	\subsection{Main theorems}
	
	Let $F$ be a number field with discriminant $\Delta$ and $n>2$ be a natural number. Let $\1_{n-1}\boxplus\1_1$ denote the normalized induction of the trivial automorphic representation of the parabolic subgroup of $\GL_n$ attached to the partition $n=(n-1)+1$. Let $\M$ denote the standard intertwiner from $\1_{n-1}\boxplus\1_1$ to $\1_1\boxplus\1_{n-1}$; see \S\ref{sec:intertwiners} for definitions. Given $s\in\C$ we construct certain meromorphic sections $f_s$ in the deformed representation $\1_{n-1}\cdot|\det|^s\boxplus\1_1\cdot|\cdot|^{-(n-1)s}$ as in \eqref{eq:def-f-from-phi}. Finally, given any induced section $\varphi$ and its deformation $\varphi_\lambda$ we denote by $\Eis(\varphi_\lambda)=\Eis(\varphi,\lambda)$ the corresponding Eisenstein series; see \eqref{eq:def-eis-from-f}.

    \forlater{\rn{The introduction is the only place where the varying gl2 cuspidal representation is denoted by $\sigma$. Elsewhere we call it $\pi$. Everyone happy with it? }}

	\begin{thmA}\label{thm:spectral-expansion}
		Fix $S$ to be a sufficiently large finite set of places of $F$ containing all the archimedean and ramified places. For $f=f_0$, an induced 
		%\jd{maybe we should be more specific on $f$? Is it Godement--Jacquet section?} \sj{I think it should not matter}\jd{don't we need to have a $\Phi$ for the proof?} \rn{According to a discussion I had with Subhajit, the space of $f$'s that come from $\Phi$'s is at least as large as the space of $f$'s. Maybe they are equal. Who knows? In any case it's safe to write the thm without ever mentioning $\Phi$. This actually raises a question of whether we ever need $\Phi$ for anything... I think not. However ditching it could lead to some references having to to be adapted, which is obviously a pain at this stage. The most pratical is definetely keeping $\Phi$ but write the results hiding it somehow} 
		vector in $\1_{n-1}\boxplus\1_1$, we have the following spectral decomposition of $|\Eis(f)|^2$ as a Schwartz distribution on $\PGL_n(F)\bs\PGL_n(\A)$:
		\begin{multline*}
			|\Eis(f)|^2 = \mathbf{E} + \mathbf{F}+\sum_{\sigma}\frac{L^S\left(\tfrac{n-1}{2},\sigma\right)L^S\left(\tfrac{1}{2},\sigma\right)}{\sqrt{L^S(1,\sigma,\Ad)}}\sum_{\varphi\in\B(\1_{n-2}\boxplus\sigma)}\Eis(\varphi,0)\P_S(\varphi,f)\\
			+\frac{1}{2}\sum_{\chi}\intop_{i\mathbb{R}}\frac{\prod_{\pm}L^S\left(\tfrac{n-1}{2}\pm z,\chi^\pm\right)L^S\left(\tfrac{1}{2}\pm z,\chi^\pm\right)}{L^S(1+2z,\chi^2)}\\
			\sum_{\varphi\in\B(\1_{n-2}\boxplus\chi\boxplus\chi^{-1})}\Eis\left(\varphi,(0,-z,z)\right)\P_S(z,\varphi,f)\, dz,
		\end{multline*}
		where
		\begin{equation*}
			\mathbf{E}:=\lim_{s\to 0}\,\left(\Eis\left(\bar{f}\cdot f_s\right)+\Eis\left(\M\bar{f}\cdot\M f_s\right)\right)\quad\text{and}\quad\mathbf{F}:=\lim_{s\to 0}\,\left(\Eis\left(\bar{f}\cdot \M f_s\right)+\Eis\left(\M\bar{f}\cdot f_s\right)\right),
		\end{equation*}
		%\rn{I changed all $L$ into $L^S$ and defined $\P$ as something un-normalized. Hope you guys are happy with it}
		and $\P_S(\varphi,f)$ and $\P_S(z,\varphi,f)$ are $S$-adic local periods described in  \eqref{def-main-PSf-disc} and \eqref{def-main-PSf-cont}, respectively. Here the $\sigma$-sum runs over the cuspidal automorphic representations for $\PGL_2(F)$ and the $\chi$-sum runs over Hecke characters for $F^\times$ that are trivial on $\R_+$.
	\end{thmA}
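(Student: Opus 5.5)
We test against an arbitrary Schwartz function $\Psi$ on $[\PGL_n]$, e.g. $\Psi=|\phi_0|^2$ or more generally of rapid decay, and regularize by deformation. We replace $|\Eis(f)|^2$ by $\ol{\Eis(f)}\cdot\Eis(f_s)$, where $f_s$ is the meromorphic section of $\1_{n-1}|\det|^s\boxplus\1_1|\cdot|^{-(n-1)s}$ from \eqref{eq:def-f-from-phi}. The claimed identity is then proved as an identity of meromorphic functions of $s$, evaluated at $s=0$.

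\textbf{Step 1: subtract the constant terms.} The constant term of $\ol{\Eis(f)}\Eis(f_s)$ along the maximal parabolic $P_{n-1,1}$ is governed by the four products $\bar f f_s$, $\M\bar f\,\M f_s$, $\bar f\,\M f_s$ and $\M\bar f\, f_s$. We subtract the Eisenstein series built from these four products. For generic $s$ the remainder
\[
\ol{\Eis(f)}\Eis(f_s)-\Eis(\bar f f_s)-\Eis(\M\bar f\M f_s)-\Eis(\bar f\M f_s)-\Eis(\M\bar f f_s)
\]
is of rapid decay by a standard reduction-theory argument. It therefore lies in $L^2$ and admits a Langlands spectral expansion. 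The subtracted terms tend to $\mathbf E+\mathbf F$ as $s\to0$. The individual terms may have poles at $s=0$, and we must show that the poles cancel in the two combinations, which is why $\mathbf E$ and $\mathbf F$ are defined as limits of sums.

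\textbf{Step 2: compute the spectral coefficients.} For each $\varphi$ in the spectrum of $[\PGL_n]$ we pair $\Eis(\varphi)$ against the remainder. For $\Re s\gg1$ we unfold using $\Eis(f_s)$. This yields a mirabolic-type integral of $\ol{\Eis(f)}$ against the Fourier--Whittaker expansion of $\Eis(\varphi)$ over $N\bs$ (the subtracted pieces cancel the degenerate constant-term contributions).

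Because $\Eis(f)$ is maximally degenerate, its Fourier coefficients are supported on characters of rank at most one. As a result, only two Whittaker-type periods survive. One produces the explicit terms already absorbed into Step 1. The other forces $\varphi$ to be induced from $\1_{n-2}\boxplus\sigma$ with $\sigma$ an automorphic representation of $\GL_2$. All other spectral constituents (cusp forms on $\GL_n$, Eisenstein series from other parabolics) have vanishing pairing. Concretely, the surviving period is a $\GL_2\times\GL_2$ Rankin--Selberg integral of $\sigma$ against an $\GL_2$ Eisenstein series coming from $\Eis(f)$ restricted to an embedded $\GL_2$.

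Unramified computation outside $S$ then gives the factor
\[
L^S\left(\tfrac{n-1}{2},\sigma\right)L^S\left(\tfrac12,\sigma\right)\Big/\sqrt{L^S(1,\sigma,\Ad)}.
\]
The shift $\tfrac{n-1}{2}$ comes from the modulus character of the degenerate induction. The square root of the adjoint $L$-value arises from normalizing the Whittaker function. For $\sigma=\chi\boxplus\chi^{-1}$ the same computation gives the continuous part, whose denominator is $L^S(1+2z,\chi^2)$. The places in $S$ are collected into $\P_S$.

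\textbf{Step 3: continuation to $s=0$.} We continue meromorphically in $s$ from $\Re s\gg1$ down to $s=0$. In the continuous integral this requires shifting contours. Residues appear when $\tfrac12\pm z$ or $\tfrac{n-1}{2}\pm z$, shifted by $s$, cross the line $i\R$ (the poles of $L(\cdot,\chi^{\pm})$ for trivial $\chi$). We must then show that these residues cancel against the singular parts of the degenerate Eisenstein terms in Step 1, leaving exactly $\mathbf E+\mathbf F$ with no extra terms.

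\textbf{Main obstacle.} This bookkeeping of poles and residues at $s=0$ is the main difficulty, together with justifying convergence of the spectral sum as a distribution. For the latter I would use polynomial bounds on $\P_S$ and the convexity bound for the $L$-values, combined with the rapid decay of the pairing against a smooth $\Psi$, via repeated application of the Casimir operator.
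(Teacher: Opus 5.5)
\textbf{The gap is in Step~1; Steps~2--3 depend on it.} For $n>2$, $\ol{\Eis(f)}\,\Eis(f_s)$ minus your four Eisenstein series is not of rapid decay, and not even in $L^2([\PGL_n])$. So there is no Langlands expansion of a ``remainder'', and the pairings $\langle\text{remainder},\Eis(\varphi,\lambda)\rangle$ in Step~2 do not converge.

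Already along $\Q_{n-1}$ your description is too crude. The constant term of $\Eis(f)$ there is $f+\Eis^{\Q_{n-1}}(\M f)$, and the second piece is a $\GL_{n-1}$ Eisenstein series, not a character. So the cusp behaviour is not governed by four products.

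The decisive obstruction lies along $\Q_{n-2}$.
\begin{itemize}
\item The constant term of $\Eis(f)$ along $\Q_{n-2}$ contains $\Eis^{\Q_{n-2}}(f)$. On the lower $\GL_2$ block this is a genuine $\GL_2$ Eisenstein series, attached to $|\cdot|^0\boxplus|\cdot|^{-\frac{n-2}{2}}$ up to twist.
\item Its product with its $s$-deformation has nonzero projection onto $\GL_2$ cusp forms $\sigma$. This is exactly the $\GL_2\times\GL_2$ Rankin--Selberg integral that produces $L^S(\tfrac{n-1}{2},\sigma)L^S(\tfrac12,\sigma)$. At $s=0$ this piece grows like $\delta_{\Q_{n-2}}^{1/2}$.
\item None of your subtracted series can cancel it. They have cuspidal support on the Borel, so their constant terms along $\Q_{n-2}$ have no cuspidal $\GL_2$-component.
\end{itemize}

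The statement you are proving confirms this. It says $|\Eis(f)|^2-\mathbf E-\mathbf F$ equals the $\sigma$- and $\chi$-sums, and their spectral measure is singular. It is a point mass at $\lambda=0$ in the one-dimensional $i\a^\ast_{\Q_{n-2}}/i\a^\ast_G$, plus the line $(0,-z,z)$ in the two-dimensional $i\a^\ast_{\widetilde{\Q_{n-2}}}/i\a^\ast_G$. An $L^2$ function has absolutely continuous spectral measure on each $i\a^\ast_P$. The same holds for $s\neq0$, with the non-unitary point $\lambda_{\mathrm D}(s)$ in place of $0$. This is precisely why the paper notes that no general regularized spectral decomposition is available for $n>2$, unlike the $n=2$ case.

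\textbf{How the paper avoids this.} It never forms a remainder.
\begin{itemize}
\item It keeps the test function $\phi$, unfolds $\Eis(f_s)$, and inserts the Fourier--Whittaker expansion of $\phi$, not of $\Eis(\varphi)$.
\item For the $\Q_{n-2}$-term it spectrally decomposes the Schwartz function $F_s$ on $[\PGL_2]$ built from $\phi_{\Q_{n-2}}$. It then inflates this via Proposition~\ref{prop:inflation-prop}.
\item That inflation step is what produces the point evaluations $\langle\phi,\Eis(\varphi,\ol{\lambda_{\mathrm D}(s)})\rangle$, which no $L^2$ expansion could give.
\end{itemize}

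\textbf{Step~3 also needs correcting.}
\begin{itemize}
\item For $\Re s\gg1$ only $\Eis(\bar f f_s)$ and $\Eis(\M\bar f f_s)$ arise. The terms $\Eis(\M\bar f\cdot\M f_s)$ and $\Eis(\bar f\cdot\M f_s)$ appear only as residues when continuing the $\chi=\1_1$ part of $I_2^{\mathrm C}$.
\item The residue giving $\Eis(\M\bar f\cdot\M f_s)$ comes from the pole of $\Eis(\varphi,\lambda_{\mathrm C}(s,z))$ along $z+\tfrac n2 s=\tfrac{n-1}{2}$ (Lemma~\ref{lem:eis-residue}). It does not come from a pole of $L(\tfrac{n-1}{2}\pm z,\cdot)$.
\item Identifying these residues with the stated Eisenstein series is more than bookkeeping. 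It uses the residual inner-product formula and local uniqueness of Whittaker models (Lemmas~\ref{lem:residue-projection}, \ref{lem:local-period-equality-M11}, \ref{lem:local-period-equality-M01}).
\end{itemize}
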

	
	\begin{rmk}\label{rmk:partial-reciprocity}
		Fix a cuspidal automorphic representation $\pi_0$ for $\PGL_n(F)$ that is unramified at every finite place. Choose $\phi_0\in\pi_0$ a cusp form that is spherical at every finite place. We apply Theorem \ref{thm:spectral-expansion} against the Schwartz function $|\phi_0|^2$. Then the Rankin--Selberg theory produces the following identity:
		\begin{multline*}
			\intop_{\pi\text{ on }\GL_n}\frac{|L(\frac{1}{2},\pi\otimes\pi_0)|^2}{L(1,\pi,\Ad)} \approx  \intop_{\sigma\text{ on }\GL_2}\frac{L\left(\tfrac{n-1}{2},\sigma\right)L\left(\tfrac{1}{2},\sigma\right)}{\sqrt{L(1,\sigma,\Ad)}}\sum_{\varphi\in\B(\1_{n-2}\boxplus\sigma)}\langle|\phi_0|^2,\Eis(\varphi)\rangle.
		\end{multline*}
    Here by the $\approx$ sign we have hidden boundary terms and local periods. A striking phenomenon here is that the left hand side represents an average of $L$-functions of $\GL_n\times\GL_n$, whereas the right hand side only involves $L$-functions of $\GL_2$. Such formul\ae\ are known as reciprocity formul\ae\ and are extremely rare in nature, especially in higher rank; see, \emph{e.g.} \cite{blomer2024local, BLM2019spectral,BlKh2019reciprocity} for smaller-rank examples and \cite{JaNu2021reciprocity} for a higher-rank example.
	\end{rmk}
	
	%\vspace{5mm}
	
	In order to study a second-moment average of Rankin--Selberg $L$-functions, we make suitable choices for the cusp form $\phi_0$ and the induced vector $f$ to detect a non-archimedean, conductor-aspect family. We use the spectral decomposition of $\langle|\phi_0|^2,|\Eis(f)|^2\rangle$ to arrive at the following second moment asymptotics.

    Let $\ell(\pi)$ denote the usual harmonic weight that arises in the Kuznetsov-type relative trace formula; see \cite[Lemma 4.1]{JaNu2021reciprocity}. Also, let $d\mu_{\loc}$ (resp.\ $d\mu_{\mathrm{aut}}$) denote the local (resp.\ automorphic) Plancherel density compatible with the Haar measures on $\PGL_n$ over the underlying local field (resp.\ over adeles) and let $\int_\gen$ denote an integral over the generic representations.
	
	\begin{thmA}\label{thm:second_moment_asymptotic}
		Let $\pi_0$ be a fixed cuspidal representation for $\PGL_n(F)$ that is unramified at every finite place and tempered at all archimedean places.
		%\sj{temperedness required?}\jd{$(n^2+1)^{-1}$-tempered, right? archimedean for property 3 and non-arch for property 4} \sj{Yes, but currently it says it is required at all places}\jd{oh, I looked at Lemma 10.3. We don't need strong temperedness at archimedean places?} \sj{Yes, only at the arch places. But the theorem now states it is required for all place, which we don't need, that is, no need for temperedness at non-arch places}\jd{wait, what about Lemma 10.4? We even have ''as $\pi_0$ and $\sigma$ are tempered it follows that'' in the proof.} \sj{I forgot to edit it. All we need $\pi_0$ and $\sigma$ to be $<1/2$-tempered, which is automatic. In fact, $\sigma$ is tempered in the Lemma.} \jd{I see. But in general you'd need sth like $\theta_{\pi_0}+\theta_\sigma<1/2$ for $L_v(1/2,\sigma\times \pi_0)$ to be of size $1$?} \sj{In general, yes. But in Lemma 10.4 we have $\sigma$ is tempered.}\jd{yeah yeah}. 
		Let $\q=\prod_v \p_v^{r_v}$ be an integral ideal coprime to the different ideal of $F$ such that $N(\q)\to \infty$ and let $(C_v)_{v\mid \infty}$ be a collection of fixed real numbers satisfying $C_v\geq 1$. Then there exists a spectral weight function $\H=\H_\q$ implicitly depending on $\pi_0$ and $(C_v)_v$ defined on the generic automorphic representations for $\PGL_n(F)$, factorizing as
		\begin{equation*}
			\H(\pi)=\prod_{v<\infty}H_v(\pi_v)\prod_{v\mid\infty}h_v(\pi_v),\quad \pi=\otimes'_v\pi_v,
		\end{equation*}
		satisfying the following properties:
		\begin{itemize}
			\item if $v<\infty$ and $c(\pi_v)>r_v$ then $H_v(\pi_v)=0$;
			\item if $v<\infty$ and $c(\pi_v)\le r_v$ then $H_v(\pi_v)\geq 1$ with equality when $r_v=0$; 
			\item if $v\mid\infty$ and $C(\pi_v)\leq C_v$ then $h_v(\pi_v) \gg_{C_v} 1$;
			\item if $v\mid\q$ then $\int_{\gen}H_v(\pi_v)\, d\mu_{\loc}\pi_v\asymp \mathrm{vol}^{-1}\left(K_0\left(\p_v^{r_v}\right)\right)$;
		\end{itemize}
		such that we have
		%       There exists weight functions $H_\q(\pi)$ and $h_\infty(\pi)$ such that for every automorphic representation for $\PGL_n(F)$, $\pi$, we have
		% \begin{itemize}
			% 	\item $H_\q(\pi)=0$ if $\mathfrak{C}(\pi)\nmid \q$,
			% 	\item $H_\q(\pi)\geq 1$, if $\mathfrak{C}(\pi)\mid \q$,
			% 	\item $\prod_{v\mid \q}\int_{\hat{\PGL_n(F_v)}}H_\q(\pi)\, d\mu_{\mathrm{aut}}(\pi_v)\asymp \mathrm{vol}^{-1}(K_0(\q))$,
			% 	\item $h_\infty(\pi) \gg 1$ if $C(\pi_v)\leq C_v$.
			% \end{itemize}
		% Then for $\H(\pi)=h_{\infty}(\pi)H_\q(\pi)$ we have 
		\begin{multline*}
			\int_{\mathrm{gen}} \frac{|L(1/2,\pi_0\otimes \tilde{\pi})|^2}{\ell(\pi)}\H(\pi)\,d\mu_{\mathrm{aut}}(\pi)\\
			=\Delta^{\mu}\vol(K_0(\q))^{-1}\left(L(1,\pi_0,\mathrm{Ad})\frac{n\zeta^\ast(1)\zeta(\frac{n}{2})^2}{\zeta(n)}\log N(\q)+B_\q\right.\\
			\left.+N(\q)^{-\frac{n-2}2}\frac{\zeta_\q(\frac{n}{2})}{\zeta_\q(1)}\left(C\log N(\q)+D_\q\right)+O_{F,\pi_0,\epsilon}\left(N(\q)^{-\frac{n-1}{2}+\vartheta+\epsilon}\right)\right),
		\end{multline*}
		where $0\le\vartheta<\frac{7}{64}$ is a bound towards the generalized Ramanujan conjecture and $\mu$ is a constant depending only on $n$. Here $B_\q$ and $D_\q$ are explicit $\q$-dependent constants, given by \eqref{eq:def-Bq} and \eqref{eq:def-Dq}, respectively, and are $O_{F,\pi_0}(1)$; and $C$ is a $\q$-independent constant.
	\end{thmA}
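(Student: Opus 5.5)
We will deduce Theorem~\ref{thm:second_moment_asymptotic} by evaluating $\mathcal{I}(\phi_0,E)=\langle |\phi_0|^2,|\Eis(f)|^2\rangle$ in two ways, for a carefully chosen $\phi_0$ and $f=\otimes_v f_v$. At places $v\nmid\q\infty$ we take $\phi_0$ and $f_v$ spherical. At $v\mid\q$ we keep $\phi_{0,v}$ spherical and take $f_v$ to be the $K_0(\p_v^{r_v})$-type vector of $\1_{n-1}\boxplus\1_1$. Concretely, $f_v$ is essentially the normalized characteristic function of $P\cdot K_1(\p_v^{r_v})$, or equivalently the section built as in \eqref{eq:def-f-from-phi} from a Schwartz function $\Phi_v$ that is the indicator of $(\o_v^{n-1}\times(1+\p_v^{r_v}))$ suitably scaled. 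At archimedean places we take fixed vectors whose local weight is $\gg 1$ on representations of analytic conductor $\le C_v$.

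\textbf{Spectral side.} We write $|\phi_0|^2\cdot\ol{E}\cdot E=|\phi_0 \ol{E}|^2$ and expand $\phi_0\ol{E}\in L^2$ spectrally. Rankin--Selberg unfolding identifies $\langle\phi_0\ol E,\phi\rangle$ with $L(1/2,\pi_0\otimes\tilde\pi)$ times a local Rankin--Selberg zeta integral; the Eisenstein part of the spectrum is handled the same way. This produces the left-hand side of the theorem with
\[
\H(\pi)=\prod_v|Z_v(W_{\pi_v},W_{0,v},f_v)|^2
\]
up to normalizations. The bulleted properties then reduce to local computations. At $v\mid\q$, the integral $Z_v$ is a projection onto $K_0(\p_v^{r_v})$-fixed (newform-theory) vectors of $\pi_v$, so it vanishes if $c(\pi_v)>r_v$. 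When $c(\pi_v)\le r_v$, summing over an orthonormal basis of $\pi_v^{K_1}$ gives a count that is $\ge1$. The Plancherel identity for the characteristic function of $K_0(\p_v^{r_v})$ gives the stated average, $\asymp\vol(K_0)^{-1}$. This uses the known theory of $\GL_n$ newforms (Jacquet--Piatetski-Shapiro--Shalika) together with the local Rankin--Selberg newform evaluation.

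\textbf{Geometric side.} We apply Theorem~\ref{thm:spectral-expansion} against $|\phi_0|^2$, which gives four terms.
\begin{itemize}
\item[(a)] The term $\langle|\phi_0|^2,\mathbf E\rangle$. Unfolding $\Eis(\bar f f_s)$ against $|\phi_0|^2$ gives the Rankin--Selberg integral $L(1+ns,\pi_0\times\tilde\pi_0)$-type expression times local factors. The two pieces of $\mathbf{E}$ have poles at $s=0$ that cancel, and the constant term yields $L(1,\pi_0,\Ad)\frac{n\zeta^*(1)\zeta(n/2)^2}{\zeta(n)}\log N(\q)+B_\q$. The $\log N(\q)$ comes from differentiating the local factor $N(\q)^{ns}$.
\item[(b)] The term $\mathbf F$. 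The same computation, with the intertwiner $\M$ inserted, introduces the local factor of $\M f_v$ at $v\mid\q$. This factor is smaller by $N(\q)^{-(n-2)/2}\zeta_\q(n/2)/\zeta_\q(1)$, giving the secondary term $C\log N(\q)+D_\q$.
\item[(c)] The cuspidal $\GL_2$ sum and the continuous $\chi$-integral. Here we bound the local periods $\P_S(\varphi,f)$ and $\P_S(z,\varphi,f)$ at $v\mid\q$. We expect $f_v$ to force $\varphi_v$ to have conductor $\le r_v$ and the period to be $\ll N(\q)^{-(n-1)/2+\vartheta+\epsilon}\vol(K_0)^{-1}$. The factor $\vartheta$ enters through $|\alpha_v(\sigma)|^{r_v}$ in unramified-principal-series matrix coefficients, since we sum over $\sigma$ with small conductor. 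The global sum over $\sigma$ then converges by polynomial decay of $\langle|\phi_0|^2,\Eis(\varphi)\rangle$, which follows from smoothness and rapid decay of $|\phi_0|^2$, together with convexity bounds for $L(\tfrac{n-1}{2},\sigma)L(\tfrac12,\sigma)$.
\end{itemize}

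\textbf{Main obstacle.} The hardest step is the local analysis in (c): computing or bounding the $\q$-adic period $\P_v(\varphi,f_v)$ sharply enough to exhibit the full saving $N(\q)^{-(n-1)/2}$, uniformly in $\sigma_v$ and $z$. I would first reduce $\P_v$ to a $\GL_2$ Hecke-type integral of the Whittaker function of $\sigma_v$ against the $K_1(\p^{r})$-projection. I would then use explicit Whittaker formulas (Shintani/Macdonald) for unramified $\sigma_v$, and show that ramified $\sigma_v$ contribute even less.

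A second delicate point is the bookkeeping of the constants $B_\q$ and $D_\q$ in (a) and (b). This requires the Laurent expansions at $s=0$ of the local factors of $f_s$ and $\M f_s$, and checking that the polar parts cancel exactly as in the definition of $\mathbf E$ and $\mathbf F$.
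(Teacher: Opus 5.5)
Your overall strategy is the paper's. You evaluate $\int_{[\bar G_n]}|\phi_0|^2|R(x)\Eis(f)|^2$ spectrally, using the newvector/Shintani evaluation for $H_v\ge 1$ and Whittaker--Plancherel for its average. You then apply Theorem~\ref{thm:spectral-expansion}, obtain the main and secondary terms from the $\q$-adic factors relative to the unramified vector together with the cancelling residues at $s=0$, and bound the $\GL_2$ terms by reducing the $\q$-adic period to a $\GL_2\times\GL_1$ integral and using Shintani's formula.

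The one thing that is actually wrong is your ``equivalent'' Schwartz realization of $f_v$, namely $\mathbbm{1}_{\o_v^{n-1}\times(1+\p_v^{r_v})}$. In \eqref{eq:def-f-from-phi} the $t$-integral absorbs any condition on the unit part of the last coordinate. For $k\in K_{n,v}$ one finds $f_v(k)=c\,\mathbbm{1}_{\o_v^\times}(k_{nn})$, where $c$ is the $d^\times t$-volume of $1+\p_v^{r_v}$. Apart from this scalar the vector does not depend on $r_v$, and it is not right $K_0(\p_v^{r_v})$-invariant, so the resulting weight would not detect $c(\pi_v)\le r_v$. The level has to sit in the first $n-1$ coordinates. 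Taking $\mathbbm{1}_{(\p_v^{r_v})^{n-1}\times\o_v^\times}$ gives $f_v|_{K_{n,v}}=\mathbbm{1}_{K_0(\p_v^{r_v})}$. Up to the scalar $|\det x_v|^{1/2}$, this is the paper's $R(x_v)f_v$ built from $\mathbbm{1}_{\o_v^{n-1}\times\o_v^\times}$; cf.\ \eqref{eq:test_vector_on_kx}. This is not cosmetic, because Theorem~\ref{thm:spectral-expansion} is applied with the deformation $f_s$ built from $\Phi$. With the corrected $\Phi_v$, your primary description (the characteristic function of $PK_1(\p_v^{r_v})$) and the rest of the plan go through.

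There are also three smaller points in (c).
\begin{enumerate}
\item You do not need $f_v$ to restrict the conductor of $\sigma_v$. Since $|\phi_0|^2$ is right $K_{n,v}$-invariant at every finite $v$, $\langle|\phi_0|^2,\Eis(\varphi)\rangle$ vanishes unless $\varphi$ is spherical at all finite places. So only $\sigma$ unramified everywhere contribute, and there is no ramified case to handle.
\item With $\|f_v\|\asymp 1$ at $v\mid\q$, as in the paper, the $\q$-adic period itself is $\ll N(\q)^{-\frac{n-1}{2}+\vartheta+\epsilon}$. The factor $\vol(K_0(\q))^{-1}$ enters only through the normalization of the left-hand side, not as a bound on the local period.
\item For $n=3$ and $\chi=\1_1$, the $L$-quotient in the continuous term has a simple pole at $z=0$. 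You must cancel it against the zero of $\Eis(\varphi,(0,-z,z))$ there, and move the contour off $z=0$, before bounding termwise.
\end{enumerate}
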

	
	\begin{rmk}
		We note that for $F=\mathbb{Q}$ the leading coefficient in the asymptotic formula of Theorem \ref{thm:second_moment_asymptotic} matches (unsurprisingly) exactly that of \cite[Theorem 1]{Jana2020RS}.
	\end{rmk}
	
	\begin{rmk}
		We remark on the properties of $\H(\pi)$ described in Theorem \ref{thm:second_moment_asymptotic}. The first three properties assert that it essentially works as a projector on the space of representations $\pi$ with finite conductor dividing $\q$ and bounded archimedean conductors. The fourth property of $\H(\pi)$ describes that its $L^1$-volume is of size $\vol(K_0(\q))^{-1}$.
	\end{rmk}
	
	\begin{rmk}
		The strength of Theorem \ref{thm:second_moment_asymptotic} is essentially of the order of square-root cancellation assuming the generalized Ramanujan conjecture, which can be perceived as the boundary of the current technology.
	\end{rmk}

    \begin{rmk}
    Finally, since the family is of size $N(\q)^{n-1}$ and the square of a Rankin--Selberg $L$-function has conductor up to $N(\q)^{2n}$, for $n>2$ this is outside the subconvexity range.    
    \end{rmk}

	\subsection{Proof ideas of Theorem \ref{thm:spectral-expansion}}\label{sec:sketch-thmA}
	
	We give a high level sketch of the proof of the regularized spectral decomposition of $|\Eis(f)|^2$. For that we start with a Schwartz function $\phi$ on $[\PGL_n]$ and analyze the period
	\[
	\int_{[\PGL_{n}]}\phi\cdot\ol{\Eis(f)}\cdot\Eis(f_s)
	\]
	with sufficiently large $\Re(s)$. We unfold one Eisenstein series and apply the Fourier--Whittaker expansion of $\phi$ from \cite{IchinoYamana2016}. Because $\Eis(f)$ is highly non-generic, all but two types of partial Whittaker coefficients get annihilated. One of them contributes
	\begin{equation*}
		\int_{[\PGL_{n}]}\phi\cdot\Eis(\bar{f}\cdot f_s)+\int_{[\PGL_{n}]}\phi\cdot\Eis(\M\bar{f}\cdot f_s),
	\end{equation*}
	where $\M$ is the intertwining operator mentioned above.
	The other one, named $I_2(s)$, can be rewritten as a period involving $F_s$ that resembles a $\GL_2\times\GL_2$ global zeta integral, where $F_s$ is an automorphic function on $[\PGL_2]$. We first spectrally decompose $F_s$ over the $[\PGL_2]$ spectrum 
	\[F_s(g)=\intop_{\pi\text{ on }\PGL_2}\sum_{\varphi\in \B(\pi)} \langle F_s, \varphi\rangle \varphi(g)\, d\pi.\]
	Our treatment of the above period is inspired by the recent work of Boisseau \cite{fine-boi}.
	%To move the complex parameter $s$ from $\Re(s)\gg 1$ back to a neighbourhood of $0$ we use theory of Eisenstein series and carefully shift the contour integrals appearing.
	We ``inflate'' the above into a part of $[\PGL_n]$ spectrum and get an expression roughly of the form
	\[F_s(g)=\intop_{\pi\text{ on }\PGL_2}\sum_{\varphi\in\B\left(\1_{n-2}\boxplus\pi\right)}
	\IP{\phi,\Eis(\varphi, \ol{\lambda_\pi(s)})}\varphi_{-\lambda_{\pi}(s)}\left[\begin{pmatrix}\mathrm{I}_{n-2}&\\&g\end{pmatrix}\right]\, d\pi,\]
	where the spectral integral is over automorphic generic representations for $\PGL_2(F)$ and $\lambda_\pi(s)$ is a certain co-weight. Subsequently, we obtain
	\begin{equation*}
		I_2(s) = \intop_{\pi\text{ gen on }\PGL_2}\sum_{\varphi\in\B\left(\1_{n-2}\boxplus\pi\right)}\IP{\phi,\Eis(\varphi, \ol{\lambda_\pi(s)})}\P(\varphi_{-\lambda_{\pi}(s)},\bar{f},f_s)\, d\pi,
	\end{equation*}
	where $\P$ is a certain period which resembles a $\GL_2\times\GL_2$ Rankin--Selberg zeta integral.
	%Therefore we can rewrite
	%\[I_2(s)=I_2^{\mathrm{D}}(s) + I_2^{\mathrm{C}}(s),\]
	%where $I_2^{\mathrm{D}}(s),I_2^{\mathrm{C}}(s)$ correspond to discrete and continuous parts of the generic spectrum of $[\PGL_2]$, respectively. Discrete contribution can be analytically continued to the whole of $\C$, so in particular to the neighbourhood of $s=0$. To find the expression for the continuous term $I_2(s)$ when $s$ is close to $0$ we collect two residual terms which evaluate to
	
	The next task is to meromorphically continue $I_2(s)$ to a neighbourhood of $s=0$, which is the most technical part of the paper. This analysis in this part has certain similarities to that of \cite{fine-boi}. There are two sources of polar terms in the above expression of $I_2(s)$, namely, from the poles of $\Eis(\cdots)$ and poles of $\P(\cdots)$. Fortunately, they occur in disjoint hyperplanes, where the assumption $n>2$ becomes necessary\footnote{In fact, from \cite{dobrowolski2025second} it could be seen these poles do coincide for $n=2$.}. We extensively use the meromorphic properties of $\Eis(\cdots)$ in the ``positive Weyl chamber'' and that of the local zeta integral $\P(\cdots)$ to compute the orders of the respective poles and residues. As a pleasant surprise, the residues are exactly equal to
	\begin{equation*}
		\int_{[\PGL_n]}\phi\cdot\Eis\left(\M\bar{f}\cdot\M f_s\right)\quad\text{and}\quad\int_{[\PGL_n]}\phi\cdot\Eis\left(\bar{f}\cdot \M f_s\right).
	\end{equation*}
	We emphasize that determining the residues, which, \emph{a priori}, appear as spectral sums of periods, equal to the above inner products requires a non-trivial amount of work. We use spectral expansion, the Plancherel formula, and uniqueness of Whittaker functionals on $\GL_2$. We expect that a more direct proof should be possible, using uniqueness of trilinear functionals of (not-necessarily generic) representations of $\GL_n$, somewhat along the lines of \cite{SZ2012multiplicity,sun2012multiplicity}, which treat bilinear functionals. However, we were not able to identify a precise reference in the literature. Combining all the components and taking the limit $s\to 0$ yield Theorem \ref{thm:spectral-expansion}.

	\subsection{Proof ideas of Theorem \ref{thm:second_moment_asymptotic}}

	We prove Theorem \ref{thm:second_moment_asymptotic} after choosing a specific test vector $f$ in Theorem \ref{thm:spectral-expansion} and integrating against the squared magnitude of a chosen cusp form $\phi_0\in\pi_0$. The choices are inspired by those of \cite{Jana2020RS}. The main idea is to choose the vectors so that $W_{\phi_0}\cdot f$ at the $\q$-adic place resembles the characteristic function of the Hecke congruence subgroup $K_0(\q)$, so that the \emph{newvector theory} can be applied to pick up the family of representations that have conductors dividing $\q$.
	%we analyse a period integral
	%\[\int_{[\PGL_n]}|\phi_0|^2|\Eis(f)|^2, \]
	%in two different ways. Here $\phi_0\in \pi_0$ is a cusp form unramified at all finite places and $f\in \1_{n-1}\boxplus\1_1$ is a factorizable section satisfying for all places $v\mid \q$ that
	%\[f_v(k_v)=\mathbbm{1}_{K_0(\p_v^{r_v})}(k_v)\quad \forall k_v\in K_v,\]
	%where $K_0(\p_v^{r_v})$ is a congruence subgroup of $\PGL_n(F_v)$. Moreover when $v\nmid \q$ is finite we take $f_v$ to be the spherical vector. Because of that when we expand the inner product 
	%\[\IP{\phi_0 \Eis(f),\phi_0 \Eis(f)}_{[\PGL_n]}\approx \int_{\mathrm{gen}}\frac{\left\lvert L(\tfrac{1}{2},\pi_0\otimes \ol{\pi})\right\rvert^2}{\ell(\pi)} \mathbbm{1}_{\mathfrak{C}(\pi)\mid \q}\, d\pi,\]
	%the resulting spectrum is only over representations with conductor ideal $\mathfrak{C}(\pi)$ dividing $\q$.
	%On the other side we apply Theorem \ref{thm:spectral-expansion} to $\lvert \Eis(f) \rvert^2$. Explicit calculations for degenerate terms give
	The main inputs are from the local computations, which give the main terms
	\[\int_{[\PGL_n]}|\phi_0|^2\cdot \mathbf{E}=\vol(K_0(\q))^{-1}(A\log N(\q) + B), \]
	and
	\[\int_{[\PGL_n]}|\phi_0|^2\cdot\mathbf{F}=\vol(K_0(\q))^{-1}N(\q)^{-\frac{n-2}{2}}(C\log N(\q) + D), \]
	%where $\mathbf{E}, \mathbf{F}$ are limits of Eisenstein series as defined in Theorem \ref{thm:spectral-expansion}. Finally, for the spectral term
	%\begin{equation*}
	%	\int_{\mathrm{gen}}\sum_{\varphi\in\B\left(\1_{n-2}\boxplus\pi\right)}\left\langle|\phi_0|^2,\Eis(\varphi, \ol{\lambda_{\pi}(0)})\right\rangle_{[\PGL_n]}\P(0,-\lambda_{\pi}(0);\varphi,f)\, d\pi
	%\end{equation*}
	%we apply triangle inequality and, using bounds for $\GL_2$ Whittaker functions, we get
	and the error term
	%\[\P(0,-\lambda_{\pi}(0);\varphi,f)\ll_\epsilon N(\q)^{-\frac{n-1}{2}+\theta_2+\epsilon} C(\pi)^{O(1)},\]
	\begin{equation*}
		\P(\cdots) \ll N(\q)^{-\frac{n-1}{2}+\vartheta}
	\end{equation*}
	where $\vartheta$ is any bound towards the temperedness for $\GL_2$.
	%After showing that the remaining spectral average is absolutely convergent we deduce Theorem \ref{thm:second_moment_asymptotic}.

	\section{Acknowledgments}
	
    We are deeply thankful to Paul Boisseau for explaining several technical parts from his preprint \cite{fine-boi}. We thank Valentin Blomer and Paul Nelson for several helpful feedback on an earlier draft. The second author also thanks Universidade Federal do Cear\'a for wonderful hospitality and working condition where a significant portion of this work has been done.
	
	\part{Preliminaries}
	
	\section{Preliminary Discussion and Notations}
	
	\subsection{General notation}\label{sec:general-notion}
	
	Let $n\ge 3$ be a natural number. Let $F$ denote a number field with adele ring $\A$. We denote the norm on $\A^\times$ by $|\cdot|$ and the norm on the $v$-adic completion of $F$ by $|\cdot|_v$, so that $|\cdot|=\prod_v|\cdot|_v$. We abbreviate $F_\infty:=\prod_{v\mid\infty} F_v$. For any global object $\pi$ (such as an automorphic representation, an $L$-function etc.), we denote the corresponding $v$-adic local factor by $\pi_v$. Also, for a finite set $S$ of places of $F$ we abbreviate $\pi^S:=\otimes_{v\notin S}\pi_v$ and $\pi_S:=\otimes_{v\in S}\pi_v$. Also, for $\mathfrak{A}$ being $\infty$ or an integral ideal $\q$ and $S_{\mathfrak{A}}=\{v\mid\mathfrak{A}\},$ we abbreviate $\pi^{\mathfrak{A}}$ for $\pi^{S_{\mathfrak{A}}}$ and $\pi_{\mathfrak{A}}$ for $\pi_{S_\mathfrak{A}}$.
	
	If $v$ is non-archimedean we denote the ring of integers of $F_v$ by $\o_v$. In this case, we denote the maximal ideal of $\o_v$ as $\p_v$ and its uniformizer by $\varpi_v$. Finally, we denote $N(\p_v)$ to be the cardinality of $\o_v/\p_v$ and define $\zeta_v(s):=(1-N(\p_v)^{-s})^{-1}$. The global zeta function is denoted by $\xi=\prod_v\zeta_v$. Similarly, the global $L$-function is denoted by $\Lambda=\prod_v L_v$. We also use the notation $L=\Lambda^\infty$ and $\zeta=\xi^\infty$.
	
	Let $G$ be a reductive algebraic group defined over $F$. Let $Z$ denote the center of $G$ defined over $F$ and we write $\bar{G}:=Z\backslash G$. For any algebraic subgroup $H$ of $G$ defined over $F$, by $[H]$ we denote the quotient $H(F)\backslash H(\A)$. For each place $v$ of $F$ we fix a hyperspecial maximal compact subgroup $K_{n,v}$ of $G(F_v)$ and $K=\prod_v K_{n,v}$ of $G(\A)$.
	
	Let $P$ be a standard parabolic subgroup of $G$ with Levi subgroup $M_P$ and unipotent radical $U_P$. When the parabolic subgroup is clear from the context we will drop the subscript $P$. Let $X^\ast(M)$ denote the lattice of the $F$-rational characters of $M$. We denote the $\R$-vector space spanned by $X^\ast(M)$ by $\a_M^\ast$, its dual by $\a_M$ and its complexification $\a^\ast_M\otimes_\R\C$ by $\a^\ast_{M,\C}$. We fix a pairing $\langle,\rangle$ between $\a_M$ and $\a^\ast_M$ and define $H_M:M(\A)\to\a_M$ to be the natural homomorphism defined by
	\begin{equation*}
		\exp(\langle\chi, H_M(m)\rangle) =|\chi(m)|,\quad \chi\in X^\ast(M).
	\end{equation*}
	We also denote the kernel of $H_M$ by $M(\A)^1$. We also use the notations $\a_P=\a_M$, $\a^\ast_P=\a^\ast_M$ and $\a^\ast_{P,\C}=\a^\ast_{M,\C}$. We extend $H_M$ to $H_P: G(\A)\to\a_P$ by making the former left-$U(\A)$ and right-$K$ invariant.
	
	The modular character attached to $P$, denoted by $\delta_P$, is given by $m\mapsto \exp(\langle 2\rho_P, H_M(m)\rangle)$, where $\rho_P$ is the half-sum of the positive roots associated to $P$. Finally, we call a $\lambda\in\a^\ast_{P,\C}$ to be sufficiently dominant if $\Re\langle\lambda,\alpha\rangle$ are sufficiently positive for all positive roots $\alpha$.
	
	We fix a maximal $F$-split torus and denote its centralizer in $G$ by $M_0$.
	We fix a minimal parabolic $P_0$ containing $M_0$ and call a parabolic standard if it contains $P_0$. Let $W^P$ denote the Weyl group of $M$. We abbreviate $W^G$ as $W$. 
	%Finally, we denote the long element of $W/W^P$ by $w_\ell^P$. \sj{should we define it using the normalizer, length etc.?} \rn{What's the definition?} \sj{If it is actually needed} \rn{This can be ditched, right?}

	\subsection{Characters and Measures}\label{subsec:chars-and-measures}
	
	We fix an additive character $\psi_0$ of $F\backslash \A$, which, for convenience, we make a specific choice $\psi_0:=\psi_{\mathbb{Q}}\circ \operatorname{tr}_{F/\mathbb{Q}}$ where $\psi_{\mathbb{Q}}:\mathbb{Q}\backslash\A_\mathbb{Q}\to \C$ is an everywhere unramified character. We know that $\psi_0=\otimes \psi_{0,v}$, where $\psi_{0,v}$ is unramified everywhere except for the places dividing the discriminant of $F$. Let $\p_v^{d_v}$ be the conductor of $\psi_{0,v}$. We let $\Delta_v:=N(\p_v)^{d_v}$, so that $\Delta_v=1$ for almost every $v$. If we further extend this definition to archimedean places by letting $\Delta_v=1$ if $v\mid \infty$, then the discriminant of $F$, which we denote by $\Delta$, satisfies $\Delta=\prod_{v<\infty}\Delta_v$.
	
	For each place $v$ of $F$ we fix a Lebesgue measure $dx_v$ on $F_v$ such that if $v$ is non-archimedean it gives volume $\Delta^{-1/2}_{v}$ to $\o_v$ and if $v$ is real (resp. complex) $dx_v$ is given by the usual (resp. twice the usual) Lebesgue measure  of $F_v$. Then we take the Lebesgue measure on $\A$ given by $dx=\prod_v dx_v$, which gives volume one to the quotient $F\bs \A$ when $F$ is equipped with the counting measure. Similarly, we fix Haar measure on $F_v^\times$ given by $d^\times y_v:=\zeta_v(1)\frac{dy_v}{|y_v|}$ at archimedean places and $d^\times y_v:=|\Delta_v|^{1/2}\zeta_v(1)\frac{dy_v}{|y_v|}$ at non-archimedean places. If $v$ is non-archimedean then $d^\times y_v$-volume of $\o_v^\times$ is $1$. Finally we equip $\A^\times$ with the measure $d^\times y:=\prod_v d^\times y_v$ on $\A^\times$.  
	
	Finally, for $G=G_n$ we equip $G(F_v)$ with a measure inherited from the Iwasawa decomposition as follows: We fix probability Haar measure $dk_v$ on $K_{n,v}$, we equip $T_{n,v}$ and $N_{n,v}$ with product measures coming from the obvious isomorphisms $T_n(F_v)\simeq (F_v^\times)^n$ and $N_n(F_v)\simeq F_v^{\frac{n(n-1)}{2}}$. We then choose $dg_v$ to be the unique Haar measure such that
	\[
	\int_{G_n(F_v)}f(g_v)\,dg_v=\int_{N_n(F_v)}\int_{T_n(F_v)}\int_{K_{n,v}}f(n_vt_vk_v)
	\delta_{B_n}^{-1}(t_v)\,dn_v\,dt_v\,dk_v,
	\]
	where $\delta_{B_n}$ is the modular character of $B_n$.
	By the usual process we obtain Haar measures on the groups of $G_n(\A),\ N_n(\A),\ T_n(\A)$ and $K_n$.
	
	Let $P$ be any standard parabolic of $G$. Let $U$ be its unipotent radical and $M$ its Levi subgroup. For each place $v$, we equip $M(F_v)$ with the measure inherited from the natural isomorphism with $\GL_{n_1}(F_v)\times\dots\times\GL_{n_k}(F_v)$ and similarly for $U(F_v)$ and $F_v^{\dim U}$. We claim that this choice entails the compatibility formula
	\begin{equation}\label{dg-for-iwasawa-PK}
		dg_v =\delta_P^{-1}(m_v)\,du_v\,dm_v\,dk_v,
	\end{equation}
	where, now, $\delta_{P}$ is the modular character of $P$.
	This can be easily verified at non-archimedean places by comparing the volume of the maximal compact subgroup with respect to both measures and noticing that, at archimedean places we have that $P(F_v)\cap K_{n,v}$ is the maximal compact subgroup of $M(F_v)$, which has volume one by our choice.
	We then equip $G_n(\A),\ N_n(\A),\ T_n(\A)$ and $K_n$ with product measures.
	
	For any subgroup $H\le G$ we equip $H(F)$ with the counting measure. By construction we have that $\vol([U])=1$ for any unipotent radical of a standard parabolic subgroup. Moreover by comparing to the Tamagawa measure (see \cite[\S X.3]{cassels1967algebraic}) we observe that, with our choice of measure, one has 
	\begin{equation}\label{volume-Gk1}
		\vol([\G_k^1])=\Delta^{\frac{k^2+k}{4}}\xi^*(1)\xi(2)\cdots\xi(k).
	\end{equation}
	We also fix measures on $\a_M$ so that the volume of $\a_M/\mathrm{Hom}(X^\ast(M),\Z)$ is $1$. We denote the pull-back of $dm$ to $M(\A)^1$, compatible with the chosen Lebesgue measure on $\a_M$, by the same notation. It is well-known that the volume of $[M^1]$ according to $dm$ is finite. We denote it by $\mathcal{V}_M$.
	We also equip $U(F_v)$ (resp. $M(F_v)$) with local Haar measures $du_v$ (resp. $dm_v$) satisfying $du=\prod_vdu_v$ (resp. $dm=\prod_v dm_v$).
	
	For a subgroup $H$ of $G$ defined over $F$ we also define quotient measure on $H(F_v)\bs G(F_v)$ according to \cite[end of Section 1.2]{feigon2012representation}. Namely, we realize $dx$ on $H(F_v)\bs G(F_v)$ as a linear functional on $\tfrac{\delta_H}{\delta_G}$-left equivariant continuous functions $f$ so that the following is true:
	\begin{equation*}
		\int_{G(F_v)} f(g)\, dg = \int_{H(F_v)\bs G(F_v)}\int_{H(F_v)}\frac{\delta_G}{\delta_H}(h)f(hx)\, dh\, dx.
	\end{equation*}
	%We will use this formalism later in the paper when integrating over $P\bs\GL_n$ for parabolic subgroups $P$ of $\GL_n$.
	
	\subsection{Automorphic representations}\label{sec:auto-rep}
	
	Let $P$ be a parabolic subgroup with Levi decomposition $P=MU$. Let $\sigma$ be a discrete automorphic representation of $M(\A)$ and $\lambda\in\a_{P,\C}^\ast$. By $\Ind_{P(\A)}^{G(\A)}\,\sigma\otimes e^{\langle\lambda,H_P(\cdot)\rangle}$, abbreviated as $\mathcal{I}(\sigma,\lambda)$, we denote the unitary parabolic induction of $\sigma$ twisted by $\lambda$. A non-trivial unitary inner product on the space $\mathcal{I}(\sigma):=\mathcal{I}(\sigma,0)$ is given by
	\begin{equation*}
		\langle \varphi_1, \varphi_2\rangle_{\Ind}:=\int_{K}\langle \varphi_1(k),\varphi_2(k)\rangle_\sigma\, dk.
	\end{equation*}
	where
	\[\langle \phi_1,\phi_2 \rangle_{\sigma}=\int_{M(F)\backslash M(\A)^1}\phi_1(m)\overline{\phi_2(m)}\,dm.\]
	
	Let $\varphi$ be a vector in $\mathcal{I}(\sigma)$ and $\lambda\in\a^\ast_{P,\C}$. We define the Eisenstein intertwiner
	$$\Eis: \mathcal{I}(\sigma,\lambda)\to C^\infty([G])$$
	as
	\begin{equation}\label{eq:def-eis-from-f}
		\Eis(\varphi,\lambda):=\Eis(\varphi_\lambda):= \sum_{\gamma\in P(F)\backslash G(F)}\varphi_\lambda(\gamma\cdot)(1),
	\end{equation}
	where $\varphi_\lambda:=\varphi\cdot e^{\langle\lambda,H_P(\cdot)\rangle}$.
	The above sum converges absolutely if $\lambda$ is sufficiently dominant and admits a meromorphic continuation elsewhere; see \cite[Chapter IV]{MW} or \cite{bernstein2024meromorphic} for a more recent and simpler proof.
	
	Given any automorphic form $\phi$ on $U(F)\bs G(\A)$ we denote the constant term of $\phi$ along $P$ by
	\begin{equation*}
		\phi_P := \int_{[U]}\phi (u\cdot)\, du.
	\end{equation*}
	Note that $\phi_P$ is naturally an automorphic form on $U(\A)M(F)\backslash G(\A)$. We say $\phi$ is cuspidal if $\phi_P=0$ for any proper parabolic $P$ of $G$ defined over $F$. On the other hand, if $\phi$ is Eisenstein then using a Bruhat decomposition one can compute its constant term. We record the following result from \cite[Lemma 6.10]{bernstein2024meromorphic} (also see, \cite[Proposition II.1.7]{MW} for the cuspidally induced Eisenstein case). Let $Q$ be a parabolic of $G$. Then
	\begin{equation}\label{eq:constant-term-expansion}
		\Eis(\varphi,\lambda)_Q=\sum_{w\in W^Q\bs W/W^P} \Eis^Q\left(\M(w,\lambda)\left(\varphi_{P_w}\right),w\lambda\right),
	\end{equation}
	where $P_w$ is a standard parabolic subgroup with Levi $M_P\cap w^{-1}M_Qw$. See \S\ref{sec:intertwiners} for the definition of $\M(w,\lambda)$.
	
	Here and elsewhere in the paper, for an induced vector $\varphi$ from a parabolic $Q'\subset Q$ and $\lambda\in\a_{Q',\C}$, by $\Eis^Q(\varphi,\lambda)$ we denote the \emph{partial} Eisenstein series where we replace the sum over $P(F)\bs G(F)$ in \eqref{eq:def-eis-from-f} by the sum over $Q'(F)\bs Q(F)$, which converges for sufficiently dominant $\lambda$ and has meromorphic continuation elsewhere.
	
	\subsection{Intertwining operators}\label{sec:intertwiners}
	
	For a Weyl element $w$ such that $wM_Pw^{-1}=M_Q$ we define $\M_\sigma(w,\lambda)$, an intertwining operator, as a map
	\begin{equation*}
		\M_\sigma(w,\lambda):\mathcal{I}(\sigma)\to \mathcal{I}(w\sigma),
	\end{equation*}
	defined by
	\begin{equation*}
		\varphi\mapsto e^{\langle-w\lambda,H_Q(\cdot)\rangle}\int_{(wU_Pw^{-1}\cap U_Q)(\A)\bs U_Q(\A)}\varphi_\lambda(w^{-1}u)\, du.
	\end{equation*}
	The above converges absolutely if $\lambda$ is sufficiently dominant with respect to the positive roots relative to $P$ and can be meromorphically continued to all $\a^\ast_{P,\C}$.
	
	We remark that the expression \eqref{eq:constant-term-expansion} and the definition of the intertwiner seem different than in \cite[\S 6.10]{bernstein2024meromorphic}. However, one can prove their equivalence using \cite[\S 6.7]{bernstein2024meromorphic} and \cite[\S 3.5]{jana2024local-l2}.
	
	We also adopt the following normalizations of the intertwining operators. First, we define \emph{local} intertwining operators $\M_{\sigma_v}(w,\lambda)$ acting on $\mathcal{I}(\sigma_v)$ analogously to the above global definition. Then there exists scalar valued functions $n_{\sigma_v}(w,\lambda)$ which are meromorphic in $\lambda$ such that if %$v<\infty$ and
	$\sigma_v$ is unramified then 
	\begin{equation*}
		\widetilde{\M}_{\sigma_v}(w,\lambda):=n_{\sigma_v}(w,\lambda)^{-1}\M_{\sigma_v}(w,\lambda)
	\end{equation*}
	maps the normalized spherical vector of $\mathcal{I}(\sigma_v)$ to that of $\mathcal{I}(w\sigma_v)$. Here, by normalized spherical vector we mean a vector in the induced model whose restriction to the maximal compact $K_v$ equals $1$. The normalized intertwining operators are unitary if $\sigma_v$ is unitary and $\lambda\in i\a^\ast_{P}$. Consequently, we define \emph{global} normalizing scalar $n_\sigma(w,\lambda)$ by $\prod_v n_{\sigma_v}(w,\lambda)$ which converges absolutely for sufficiently dominant $\lambda$ and has meromorphic continuation elsewhere. It follows then
	\begin{equation*}
		\M_\sigma(w,\lambda) = n_\sigma(w,\lambda)\prod_v\widetilde{\M}_{\sigma_v}(w,\lambda).
	\end{equation*}
	We refer to \cite[\S 2]{muller2002spectral} for more detailed discussion.

	%There will be three intertwiners that we will use in this paper majorly. First, we denote $\M$ to be the intertwiner $\M_{\1_{n-1}\times\1_1}\left(\left(\begin{smallmatrix}&1\\ \mathrm{I}_{n-1}&\end{smallmatrix}\right),0\right)$. We also denote $\M_1:=\M_{\1_1\times\1_{n-2}\times\1_1}\left(\left(\begin{smallmatrix}&\mathrm{I}_{n-2}&\\1&&\\&&1\end{smallmatrix}\right),0\right)$ and $\M_2:=\M_{\1_{n-2}\times\1_1\times\1_1}\left(\left(\begin{smallmatrix}\mathrm{I}_{n-2}&&\\&&1\\&1&\end{smallmatrix}\right),0\right)$, that is
	%\begin{multline}\label{eq:def-two-intertwiners}
	%	\M_1\varphi:=\int_{F^{n-2}}\varphi\left[\begin{pmatrix}&1&\\\mathrm{I}_{n-2}&&\\&&1\end{pmatrix}\begin{pmatrix}\mathrm{I}_{n-2}&x&\\&1&\\&&1\end{pmatrix}\right]\, dx,\\ \M_2\varphi:=\int_F\varphi\left[\begin{pmatrix}\mathrm{I}_{n-2}&&\\&&1\\&1&\end{pmatrix}\begin{pmatrix}\mathrm{I}_{n-2}&&\\&1&x\\&&1\end{pmatrix}\right]\, dx
	%\end{multline}
	%whenever they converge absolutely, otherwise by meromorphic continuation.
	
	We record the functional equations for the intertwiners and Eisenstein series from, \emph{e.g.} \cite[Theorem 2.3]{bernstein2024meromorphic}. In our notation, we have
	\begin{equation}\label{eq:FE-for-intertwiner}
		\M_{w\sigma}(w',w\lambda)\circ\M_\sigma(w,\lambda)=\M_\sigma(w'w,\lambda).
	\end{equation}
	and
	\begin{equation}\label{eq:FE-for-Eis}
		\Eis(\varphi,\lambda) = \Eis\left(\M_\sigma(w,\lambda)\varphi,w\lambda\right),\quad\varphi\in\mathcal{I}(\sigma).
	\end{equation}
	We will often abbreviate the intertwiners as $\M(w)$ (resp.\ $\M$) when $\sigma$ and $\lambda$ (resp.\ $w$) are clear from the context, and also abbreviate \eqref{eq:FE-for-Eis} as $\Eis\left(\varphi_\lambda\right)=\Eis\left(\M\varphi_\lambda\right)$.
	
	Finally, we abbreviate
	\begin{equation}\label{eq:def-main-intertwiner}
		w_\ell:=w_\ell^n:=\begin{pmatrix}&1\\ \mathrm{I}_{n-1}&\end{pmatrix},\quad \M:=\M(w_\ell,0).
	\end{equation}
	Also, by $\M^\vee$ we denote the the intertwiner $\M$ attached to $w_\ell^{-1}$.

	\subsection{Spectral decomposition}
	
	We fix a Siegel set $\mathbb{S}$ of $G$ as in \cite[\S I.2.1, p.20]{MW}. In particular, on our chosen Siegel set we have that
	\begin{equation*}
		\alpha\left(H_{P_0}(g)\right) \gg 1,\quad \alpha\in\Delta_0,\quad g\in\mathbb{S},
	\end{equation*}
	where $\Delta_0$ is the set of positive roots with respect to $P_0$. We say that a function $\phi\in C^\infty([G])$ is \emph{Schwartz} if
	\begin{equation*}
		\phi(g)\ll_{\lambda,\phi} e^{\left\langle\lambda, H_{P_0}(g)\right\rangle}, \quad \lambda\in\a^\ast_{P_0},\quad g\in\mathbb{S},
	\end{equation*}
	and the same holds for all its derivatives; \emph{cf.}, \cite[\S I.2.12]{MW}. 
	We also fix a height $\|\cdot\|$ on $G(\A)$, as in \cite[\S1.2.2]{MW}. We denote the Schwartz class by $\mathcal{S}([G])$. A similar estimate immediately follows from the definition of the constant term $\phi_P$, namely:
	
	\begin{lem}\label{lem:general-bound-constant}
		Let $\phi\in\mathcal{S}([G])$. Then $\phi_P$ is a smooth function on $U(\A)M(F)\bs G(\A)$ with
		\begin{equation*}
			\phi_P(g) \ll_{\lambda,\phi} e^{\left\langle\lambda, H_{P_0}(g)\right\rangle},\quad g\in \mathbb{S},
		\end{equation*}
		for any $\lambda\in\a^\ast_{P_0}/\a^\ast_G$. Moreover, the same bounds hold for $\mathcal{D}\phi$, for any differential operator $\mathcal{D}$ of $G(F_\infty)$ (with implied constant allowed to depend on $\mathcal{D}$).
	\end{lem}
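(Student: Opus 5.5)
The approach is to bound $\phi_P$ directly from its definition as an integral over the compact quotient $[U]$, using the Schwartz bound for $\phi$ and the fact that $H_{P_0}$ is essentially unchanged when $g$ is translated by a suitable element of $U(\A)$. Smoothness and the statement for $\mathcal{D}\phi$ are handled together. Since $[U]$ is compact with $\vol([U])=1$, differentiation under the integral sign is justified. Moreover, $\mathcal{D}$ acts by right translation, so $\mathcal{D}(\phi_P)=(\mathcal{D}\phi)_P$. Because $\mathcal{D}\phi$ satisfies the same Schwartz bounds as $\phi$, it suffices to prove the bound for $\phi_P$ itself, with constants depending on $\phi$ (respectively on $\mathcal{D}\phi$).

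Now fix $g\in\mathbb{S}$ and write $g=n a k$ with $n\in N_0(\A)$ in a fixed compact set $\omega$, $a$ in the torus part with $\alpha(H_{P_0}(a))\gg 1$, and $k\in K$. We need $\phi(ug)$ for $u\in U(\A)$. The set $[U]$ has a compact fundamental domain $\Omega\subset U(\A)$. Hence $\phi_P(g)=\int_{\Omega}\phi(ug)\,du$, and $ug=(un)ak$ with $un$ ranging over the compact set $\Omega\omega\subset N_0(\A)$. The key step is to enlarge the Siegel set: choose a Siegel set $\mathbb{S}'$ with the same torus condition and with a larger compact set $\omega'\supset\Omega\omega$ in the unipotent part. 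By \cite[\S I.2]{MW}, the Schwartz property is independent of the choice of Siegel set (Siegel sets with the same torus constraint differ only by compact unipotent parts), so $\phi$ also satisfies $\phi(h)\ll_{\lambda,\phi}e^{\langle\lambda,H_{P_0}(h)\rangle}$ for $h\in\mathbb{S}'$. Since $H_{P_0}$ is left $N_0(\A)$-invariant and right $K$-invariant, we have $H_{P_0}(ug)=H_{P_0}(a)=H_{P_0}(g)$. Integrating over $\Omega$, which has volume $1$, gives $\phi_P(g)\ll_{\lambda,\phi}e^{\langle\lambda,H_{P_0}(g)\rangle}$.

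Finally, we address the quotient by $\a_G^\ast$. Since we work on $\PGL_n$ (or on $[G]$ modulo the centre), $H_{P_0}(g)$ may be taken in $\a_{P_0}/\a_G$, so pairing with $\lambda\in\a^\ast_{P_0}/\a^\ast_G$ is well defined, and the bound holds for every such $\lambda$ simply because it holds for all $\lambda$ in the Schwartz definition.

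The only point requiring care is the claim that the Schwartz estimate persists on the enlarged Siegel set $\mathbb{S}'$. I would justify it either by citing \cite[\S I.2.12]{MW}, or directly: cover $\mathbb{S}'$ by finitely many translates $\gamma\mathbb{S}$ with $\gamma\in G(F)$ and compare the values of $H_{P_0}$ via the standard estimate $H_{P_0}(\gamma h)\le H_{P_0}(h)+O(1)$ on Siegel sets. For dominant-type $\lambda$ this introduces only bounded constants.
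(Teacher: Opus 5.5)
Your proposal is correct and is the argument the paper intends. The paper gives no separate proof and simply says the bound follows immediately from the definition of $\phi_P$ as an integral over the compact quotient $[U]$ (of volume $1$), using left $U(\A)$-invariance of $H_{P_0}$ and $\mathcal{D}(\phi_P)=(\mathcal{D}\phi)_P$. Your passage to an enlarged Siegel set $\mathbb{S}'$ spells out a detail the paper leaves implicit.

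One small correction to your fallback justification. The one-sided estimate $\varpi(H_{P_0}(\gamma h))\le\varpi(H_{P_0}(h))+O(1)$ for $h\in\mathbb{S}$, set up as you did with $g'=\gamma h\in\mathbb{S}'$, controls the anti-dominant (decaying) $\lambda$, not the dominant ones. Those are the $\lambda$ you need, and on $\mathbb{S}'$ they imply the bound for every $\lambda$.
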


	Given any Hilbert space $\mathcal{H}$, by $\B(\mathcal{H})$ we denote an
	(a priori) arbitrary orthonormal basis of $\mathcal{H}$. Here and elsewhere,
	$\Pi(M)$ (resp.\ $\Pi_{\mathrm{c}}(M)$) denotes the isomorphism class of
	irreducible discrete (resp.\ cuspidal) automorphic representations of
	$M(\A)^1$. Here we record the Langlands spectral decomposition from
	\cite[section 7]{arthur2005introduction} (see also \cite[\S 2.2.1]{MV2010subconvexity}).
	For $\phi\in\mathcal{S}([G])$ we have
	\begin{equation}\label{eq:spectral-decomposition}
		\phi(g)=\sum_Pn_P^{-1}\sum_{\pi\in\Pi(M)}\intop_{i\a_P^\ast}\sum_{\varphi\in\B(\mathcal{I}(\pi))}\IP{\phi,\Eis(\varphi,\lambda)}_{[G^1]}\Eis(\varphi,\lambda)(g)\, d\lambda
	\end{equation}
	where the sum over $P$ runs over the associate class of the standard parabolic subgroups, $n_P$ denotes the number of semi-standard parabolic subgroups of $G$ with Levi subgroup $M_P$, and $d\lambda$ denotes a certain normalized Lebesgue measure on $i\a_P^\ast$. The sum and integral above converge absolutely and the expression is independent of the choice of a basis. Finally, if $\phi\in\mathcal{S}([\bar{G}])$ then by a Mellin inversion we see that in the spectral decomposition the $\lambda$-integral will be over $i\a_P^\ast/i\a_G^\ast$ and $\pi$ will run over irreducible discrete (resp. cuspidal) automorphic representations on which $Z_G$ acts trivially. We denote the last set by $\Pi^G(M)$ (resp.\ $\Pi^G_{\mathrm{c}}(M)$). %If $G$ is clear from the context, we simply write $\Pi(M)$ and $\Pi_c(M)$.

	\section{Auxiliary results for $\GL_n$}
	
	We denote the algebraic groups $\GL_n$ by $G_n$ and $\PGL_n$ by $\bar{G}_n$ over a number field $F$.
	We denote the Borel subgroup of upper triangular matrices in $G_n$ by $B_n$. We know that $B_n=T_nN_n$, where $N_n$ is the unipotent radical of $B_n$ consisting of all upper triangular unipotent matrices and $T_n$ is the maximal torus consisting of the diagonal matrices. Let $\psi_0$ be the character defined in \S \ref{subsec:chars-and-measures}. We define a character of $N_n$ by
	\begin{equation*}
		x\mapsto\psi_0\left(\sum_{i=1}^{n-1}x_{i,i+1}\right),\quad x\in N_n,
	\end{equation*}
	which we denote by $\psi$ as well as its restriction to any subgroup of $N_n$.
	%We fix maximal compact subgroups $K_n$ of $G_n(\A)$ and $K_{n,v}$ of $G_n(F_v)$. We drop the subscript $n$ if it is clear from the context.
	
	\subsection{Schwartz space parametrization}\label{sec:Schwartz-space}
	
	Consider the parabolic subgroup of $G_n$ associated to the partition $n=n_1+\dots+n_k$. Let $\sigma:=\sigma_1\otimes\dots\otimes\sigma_k$ be a discrete automorphic representation of the Levi $G_{n_1}\times\cdots\times G_{n_k}$. We abbreviate the induced representation $\mathcal{I}(\sigma)$ by $\sigma_1\boxplus\dots\boxplus\sigma_k$.
	
	The case $k=2$ and $n=(n-1)+1$ is especially relevant to us. Consider $\sigma=\1_{n-1}\otimes\1_1$, where, here and elsewhere, $\1_r$ denotes the trivial representation of $G_r(\A)$ embedded on $L^2([G^1_r])$. For $s\in\C$ we define
	\begin{equation}\label{eq:def-lambda-E}
		\lambda_{\mathrm{E}}(s):=(s,-(n-1)s)\in\a^\ast_{\Q_{n-1},\C}.
	\end{equation}
	For $\Phi\in\mathcal{S}(\A^n)$ we define a vector $f_{s,\Phi}\in \mathcal{I}(\sigma,\lambda_{\mathrm{E}}(s))=\1_{n-1}\cdot|\det |^s\boxplus\1_1\cdot|\cdot|^{-(n-1)s}$
	by
	\begin{equation}\label{eq:def-f-from-phi}
		f_{s,\Phi}(g):=|\det g|^{\frac{1}{2}+s}\int_{\A^\times}\Phi(te_ng)|t|^{n(\frac{1}{2}+s)}\, d^\times t,
	\end{equation}
	where $e_n:=(0,\dots,0,1)\in\A^n$. The above converges absolutely for $\Re(s)>\frac{1}{n}-\frac{1}{2}$ and has meromorphic continuation to all of $\C$; see \cite[\S 2.3.1]{cogdell2007functions}. 
	
	We also record a corresponding local variant of the above construction. For $\Phi_v\in\mathcal{S}(F_v^n)$ we define $f_{s,\Phi_v}$ where we replace $\Phi$ and $\A^\times$ in \eqref{eq:def-f-from-phi} by $\Phi_v$ and $F_v^\times$, respectively.
	\begin{rmk}
		We make the reader cautious that $f_{s,\Phi}$ and $(f_{0,\Phi})_{\lambda_E(s)}$ belong to the same representation space $\1_{n-1}\cdot|\det |^s\boxplus\1_1\cdot|\cdot|^{-(n-1)s}$ but are, in general, different.
	\end{rmk}

	\subsection{Whittaker functions and Fourier expansion}\label{sec:whittaker-function}
	
	Let $\phi$ be an element in $\mathcal{S}([G_n])$. Then we have a Fourier--Whittaker expansion of $\phi$, namely,
	\begin{equation}\label{Fourier-Whittaker-aut-forms}
		\phi(g)=\sum_{i=0}^{n-1} \sum_{\gamma \in \P_i(F)\backslash \P_{n-1}(F)}W_{\phi_{\Q_i}}^{\Q_i}(\gamma g),
	\end{equation}
	where \(\P_i\) and \(\Q_i\) are subgroups of $G_n$ of the form
	\begin{equation*}
		\Q_i:=\left\{\begin{pmatrix} G_i &\ast\\& G_{n-i} \end{pmatrix} \right\}, \quad \P_i:=\left\{\begin{pmatrix} G_i &\ast\\& N_{n-i} \end{pmatrix} \right\}
	\end{equation*}
	and
	\begin{equation*}
		W_{\phi_{\Q_i}}^{\Q_i}:=\int_{[N_{n-i}]}\int_{[U_{\Q_i}]}\phi\left[u\begin{pmatrix}\mathrm{I}_i&\\&n\end{pmatrix}\cdot\right]\overline{\psi}(n)\, du\, dn.
	\end{equation*}
	
	The sum in \eqref{Fourier-Whittaker-aut-forms} converges absolutely. A proof can be done similarly to \cite[Proposition 4.2]{IchinoYamana2016}. We abbreviate $W_{\phi_{\Q_0}}^{\Q_0}$ as $W_\phi$, the Whittaker function of $\phi$. Also, note that $W_{\phi_{\Q_{n-1}}}^{\Q_{n-1}}$ is nothing but the constant term of $\phi$ along $\Q_{n-1}$.

	Finally, it is easy to see that we have the following alternative formula for $W_{\phi_{\Q_i}}^{\Q_i}$:
	\begin{equation}\label{eq:second-formula-for-partial-whittaker}
		W_{\phi_{\Q_i}}^{\Q_i}=\int_{[\widetilde{U_i}]}\phi\left(u\cdot\right)\overline{\psi^{(i)}}(u)\, du,
	\end{equation}
	where $\widetilde{U_i}:=U_{\widetilde{Q_i}}$ is the unipotent radical of the parabolic $\widetilde{\Q_i}$ associated to the partition $n=i+1+\dots+1$ and
	\begin{equation*}
		\psi^{(i)}(u):=\psi(u_{i+1,i+2}+\dots+u_{n-1,n}).
	\end{equation*}
	Hence, $W_{\phi_{\Q_i}}^{\Q_i}$ also satisfies a certain left $\tilde{U_i}$-equivariance, namely,
	\begin{equation}\label{eq:whittaker-equivariance}
		W_{\phi_{\Q_i}}^{\Q_i}(u) = \psi^{(i)}(u) W_{\phi_{\Q_i}}^{\Q_i}(1),\quad u\in \widetilde{U_i}(\A).
	\end{equation}
	
	Now we record a growth bound of the partial Whittaker functions.
	\begin{lem}\label{lem:whittaker-bound}
		Let $\phi\in\mathcal{S}([G_n])$. Then
		\begin{equation*}
			W_{\phi_{\Q_i}}^{\Q_i}\left[\begin{pmatrix}\bullet&\\&h\end{pmatrix}\right]\in \mathcal{S}([G_i])
		\end{equation*}
		for every $h\in G_{n-i}(\A)$. Moreover,
		\begin{equation*}
			W_{\phi_{\Q_i}}^{\Q_i}\left[\begin{pmatrix}\bullet&\\&h\end{pmatrix}\right]\text{ decays rapidly along every positive root of $h$}
		\end{equation*}
		as a function of $h$ uniformly in $\bullet$.
	\end{lem}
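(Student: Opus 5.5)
We treat the two assertions separately, and both rest on the integral formula \eqref{eq:second-formula-for-partial-whittaker} together with the Schwartz bounds for constant terms in Lemma \ref{lem:general-bound-constant}.

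\emph{First assertion.} Fix $h\in G_{n-i}(\A)$ and consider
\[
g\mapsto W_{\phi_{\Q_i}}^{\Q_i}\left[\begin{pmatrix} g&\\&h\end{pmatrix}\right],\qquad g\in G_i(\A).
\]
We first check left $G_i(F)$-invariance. For $\gamma\in G_i(F)$, the element $\mathrm{diag}(\gamma,\mathrm{I}_{n-i})$ normalizes $\widetilde{U_i}$ and preserves the character $\psi^{(i)}$, since $\psi^{(i)}$ only involves entries in the lower-right $(n-i)\times(n-i)$ block. It also preserves the Haar measure on $[\widetilde{U_i}]$. Hence the change of variables $u\mapsto \gamma u\gamma^{-1}$, combined with the left $G_n(F)$-invariance of $\phi$, gives the invariance.

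For the Schwartz bounds we write $W_{\phi_{\Q_i}}^{\Q_i}$ as an integral of the constant term $\phi_{\widetilde{\Q_i}}$ against a character. Concretely, we integrate first over $[U_{\Q_i}]\subset[\widetilde{U_i}]$, then over $[N_{n-i}]$; the latter is a compact domain of volume one. Take $g$ in a Siegel set $\mathbb{S}_i$ of $G_i$ and $n\in N_{n-i}(\A)$ in a fixed compact fundamental domain. The point $\mathrm{diag}(g,nh)$ then has Iwasawa $H_{P_0}$-coordinates controlled by those of $g$ and $h$. To see this, write $h=n'ak$ with $n'$ and $a$ taken modulo the appropriate lattice; after left-translating by a suitable element of $\widetilde{U_i}(F)$ or $N_{n-i}(F)$, the point lands in a fixed Siegel set of $G_n$ up to a bounded translate depending on $h$. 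Lemma \ref{lem:general-bound-constant} then gives $\ll_{\lambda,h} e^{\langle\lambda,H_{P_0}(g)\rangle}$ for every $\lambda$ restricted to $\a^\ast_{P_0,G_i}$. Derivatives in $g$ come from right-translation by $\mathrm{diag}(X,0)$ and are handled by the "moreover" part of the same lemma.

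\emph{Second assertion (main obstacle).} We need decay in $h$ along positive roots of $G_{n-i}$, uniformly in $g$. Write $h=n'ak$ in Iwasawa form. By \eqref{eq:whittaker-equivariance}, together with the left $N_{n-i}(F)$-invariance that the definition provides, we may reduce to $h=ak$ with $n'$ absorbed. The standard trick is then to differentiate along a simple root. Let $X=E_{j,j+1}$ with $i<j<n$ be the corresponding Lie algebra element at an archimedean place. Equivariance gives
\[
\mathcal{D}_{\mathrm{Ad}(a)^{-1}X}W=c\,\psi'(X)\,W,
\]
where $\mathrm{Ad}(a)^{-1}X=\alpha_j(a)^{-1}X$ and $\psi'(X)\neq 0$. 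Rearranging, $W(ak)$ equals $\alpha_j(a)^{-N}$ times the value at $ak$ of an $N$-fold right derivative $\mathcal{D}^N W$ along $\mathrm{Ad}(k^{-1})X$.

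It remains to bound $\mathcal{D}^N W$ uniformly, and we do this by bounding $\mathcal{D}^N\phi$ on the integration domain. Since $\mathcal{D}^N\phi$ is again Schwartz, the bound of the first part applies to it. For a uniform bound in $g$ it suffices to use that $\mathcal{D}^N\phi$ is bounded on $[G_n]$. When $a$ is itself very non-dominant we instead apply Lemma \ref{lem:general-bound-constant} directly, combined with the rapid decay of $\phi_{\widetilde{\Q_i}}$. Combining the two regimes yields $\ll_N \alpha_j(a)^{-N}$ uniformly in $g$ and $k$, for each simple root $\alpha_j$ of $G_{n-i}$. Decay along every positive root then follows, since positive roots are sums of simple ones and $|\alpha(a)|$ is large only if some simple $\alpha_j(a)$ in the sum is large. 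The delicate point is this uniformity in $\bullet$; the plan is to derive it from the global boundedness of all derivatives of $\phi$.
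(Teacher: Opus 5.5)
Your first assertion is argued exactly as in the paper: you express $W^{\Q_i}_{\phi_{\Q_i}}[\mathrm{diag}(g,h)]$ as an integral of $\phi_{\Q_i}$ over the volume-one set $[N_{n-i}]$ and invoke Lemma~\ref{lem:general-bound-constant}. Your check of left $G_i(F)$-invariance is correct.

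For the second assertion you take a genuinely different route. The paper subtracts $\phi_{\widetilde{\Q_i}}$, which integrates to zero against $\overline{\psi}$ over $[N_{n-i}]$. It identifies this term with the Borel constant term of the $[G_{n-i}]$-function $\phi_{\Q_i}[\mathrm{diag}(g,\bullet)]$, and quotes \cite[Lemma I.2.10]{MW} to get rapid decay of the difference between this slice and its constant term. Uniformity in $g$ is left implicit there; it rests on the moderate-growth constants of the slice being controlled by sup norms of derivatives of $\phi$.

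You instead use the $\psi^{(i)}$-equivariance directly. At an archimedean place $v$, with $X$ the root vector for $\alpha_j$ and $R(\cdot)$ denoting right differentiation, one has $W=(\alpha_j(a_v)\,d\psi_v(X))^{-N}R(\Ad(k^{-1})X)^NW$ at $\mathrm{diag}(g,ak)$. Right derivatives commute with the defining integral over the volume-one set $[\widetilde{U_i}]$, and $\Ad(k^{-1})X$ stays in a compact set. So the right-hand side is bounded by $\sup_{[G_n]}$ of finitely many derivatives of $\phi$.

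This is correct and more elementary; it is essentially the single-character case of the mechanism behind approximation-by-constant-term lemmas. It also buys you two things:
\begin{itemize}
\item it makes the uniformity in $\bullet$ and $k$ explicit;
\item it applies to every $a\in T_{n-i}(\A)$ without first placing $h$ in a Siegel domain of $G_{n-i}$, which is how the bound is used for the $h$-integral over $N_{n-i}(\A)\backslash G_{n-i}(\A)$ in Lemma~\ref{lem:partial-unfolding}.
\end{itemize}
The paper's route is shorter and adelic from the outset.

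There are three things to repair.

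First, differentiation only controls $|\alpha_j(a_v)|_v$ for archimedean $v$. For the adelic statement, add the standard finite-place step. The function $\phi$ is right-invariant under an open compact $K'_f$, which you may take normal in $K_f$. Hence $\psi_v(u)W=W$ at $\mathrm{diag}(g,ak)$ whenever $u\in N_{n-i}(F_v)$ and $a_v^{-1}ua_v\in K'_v$. So $W$ vanishes unless $|\alpha_j(a_v)|_v$ is bounded at every finite $v$, and at most $1$ for almost all $v$. On the support, $|\alpha_j(a)|\ll|\alpha_j(a_\infty)|$.

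Second, your ``second regime'' is superfluous. When $|\alpha_j(a)|\le 1$ nothing is being claimed, and the trivial bound $|W|\le\sup|\phi|$ suffices; the decay of $\phi_{\widetilde{\Q_i}}$ plays no role.

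Third, in the first part, the claim that $\mathrm{diag}(g,uh)$ can be moved into a fixed Siegel set of $G_n$ is false. Translating by unipotent elements does not change the torus part. The Siegel condition on $g$ in $G_i$ imposes nothing on the simple root joining the two blocks, i.e.\ the ratio of the $i$-th to the $(i+1)$-st diagonal entry. Where that root is small, Lemma~\ref{lem:general-bound-constant} says nothing, and you only have $|\phi_{\Q_i}|\le\sup|\phi|$. The paper's one-line appeal to the same lemma does not address this region either, so this is a soft spot you share with the paper rather than a divergence. You should still not assert the containment.
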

	
	\begin{proof}
		Recalling the definition, we write
		\begin{equation*}
			W_{\phi_{\Q_i}}^{\Q_i}\left[\begin{pmatrix}g&\\&h\end{pmatrix}\right]=\int_{[N_{n-i}]}\phi_{\Q_i}\left[\begin{pmatrix}g&\\&uh\end{pmatrix}\right]\overline{\psi}(u)\, du.
		\end{equation*}
		The first assertion thus follows from Lemma \ref{lem:general-bound-constant}.
		
		To see the second assertion we write (\emph{cf}.\ \cite[Proof of Proposition 4.1]{JaNu2021reciprocity})
		\begin{equation*}
			W_{\phi_{\Q_i}}^{\Q_i}\left[\begin{pmatrix}g&\\&h\end{pmatrix}\right]=\int_{[N_{n-i}]}\left(\phi_{\Q_i}\left[\begin{pmatrix}g&\\&uh\end{pmatrix}\right]-\phi_{\widetilde{\Q_i}}\left[\begin{pmatrix}g&\\&uh\end{pmatrix}\right]\right)\overline{\psi}(u)\, du,
		\end{equation*}
		where $\widetilde{\Q_i}$ is the parabolic in $G_n$ associated to the partition $n=i+1+\dots+1$. Note that $\phi_{\widetilde{\Q_i}}\left[\begin{pmatrix}g&\\&\bullet\end{pmatrix}\right]$ is nothing but the constant term of $\phi_{{\Q_i}}\left[\begin{pmatrix}g&\\&\bullet\end{pmatrix}\right]$ along the minimal parabolic of $G_{n-i}$ containing $N_{n-i}$.
		Now we use \cite[Lemma I.2.10]{MW} on $\phi_{{\Q_i}}\left[\begin{pmatrix}g&\\&\bullet\end{pmatrix}\right]$ to conclude that the integrand, hence the integral, above decays rapidly along every positive root of $h$.
	\end{proof}

	\subsection{Newvectors}\label{sec:newvectors}
	
	Let $F$ be a non-archimedean local field and $\psi_0$ an unramified character of $F$. Let $\pi$ be an irreducible generic representation of $G_n(F)$, which we choose to realize in its Whittaker model with respect to the character $\psi_0$. Let
	\begin{equation*}
		L(s,\pi)=\prod_{i=1}^r\left(1-\alpha_\pi(i)N(\p)^{-s}\right)^{-1},\quad 0\le r\le n,
	\end{equation*}
	be the corresponding $L$-factor. Here $\alpha_\pi=\{\alpha_\pi(i)\}_{i=1}^r\in\C^{\times r}$ are denoted as Langlands parameters. For $\vartheta\ge 0$, we call $\pi$ to be $\vartheta$-tempered if $|\alpha_\pi(i)|\le N(\p)^\vartheta$ for $1\le i\le r$. Let $\pi_{\mathrm{ur}}$ denote the unramified principal series of $G_r(F)$ whose Satake parameters are $\{\alpha_\pi(i)\}_{i=1}^r$.
	
	If $\pi$ is unitary then we define a $G_n$-invariant inner product on it by
	\begin{equation}\label{eq:def-inner-prod-whittaker}
		\langle W_1,W_2\rangle:=\frac{\zeta(n)}{L(1,\pi_{\mathrm{ur}}\otimes\tilde{\pi}_{\mathrm{ur}})}\int_{N_{n-1}(F)\bs G_{n-1}(F)}W_1\cdot\overline{W_2}\left[\begin{pmatrix}h&\\&1\end{pmatrix}\right]\, dh.
	\end{equation}
	We remark here that if $F$ is archimedean and $\pi$ is a generic unitary representation of $G_n(F)$ we define the inner product on $\pi$ only by the above integral (that is without the normalization); \emph{cf.} \cite[eq.(6)]{JaNu2021reciprocity}.
	
	Let $K_0(\p^N)$ be the Hecke congruence subgroup of $G_n(\o_v)$, consisting of matrices in $G_n(\o_v)$ whose last rows are congruent to $(0,\dots,0,\ast)\mod \p^N$. Let $c(\pi)$ be the lowest non-negative number $N$ such that $\pi^{K_0(\p^{N})}\neq \{0\}$. Then there is a unique (up to scalars) vector $W$ in $\pi^{K_0(\p^{c(\pi)})}$. We call such a vector a \emph{newvector} of $\pi$. We also denote $c(\pi)$ to be the \emph{conductor exponent} and $C(\pi):=N(\p)^{c(\pi)}$ to be the \emph{(finite) conductor} of $\pi$. We refer to \cite{JPSS1981conducteur} for details.
	
	If the underlying additive character $\psi$ is unramified then we denote the newvector in the $\psi$-Whittaker model of $\pi$, normalized so that $W(1)=1$, by $W_\pi$. Miyauchi \cite{Miyauchi2014Whittaker} produced, following previous work by Shintani \cite{Shintani1976explicit}, an explicit formula for $W_\pi$ given by
	\begin{equation}\label{shintani}
		W_\pi(\operatorname{diag}(\varpi^\nu))=
		\begin{cases}
			\delta_{B_n}^{1/2}(\operatorname{diag}(\varpi^\nu))\chi_\nu(\alpha_\pi),\quad &\text{if }\nu\in\mathbb{Z}^m\text{ is dominant},\\
			0,\quad &\text{otherwise},
		\end{cases}
	\end{equation}
	where $\chi_\nu$ is the Schur polynomial attached to $\nu$; see \cite{Shintani1976explicit} and \cite{Miyauchi2014Whittaker} for more details.
	
	Finally, if $F$ is archimedean and $\pi$ is an irreducible generic representation of $G_n(F)$, we also define the analytic conductor $C(\pi)$ of $\pi$ as in \cite{iwaniec2000perspectives}.

	\part{The Regularized Spectral Expansion}
	
	\section{Spectral Expansion for Large $\Re(s)$}
	
	We fix a Schwartz--Bruhat function $\Phi\in\mathcal{S}(\A^n)$ and for $s\in\C$ we construct an induced vector $f_{s,\Phi}$, as in \eqref{eq:def-f-from-phi}. Correspondingly, 
	we construct Eisenstein series $\Eis(f_s)$ as in \eqref{eq:def-eis-from-f}. For the rest of this paper we will abbreviate $f_{0,\Phi}$ as $f$ and $f_{s,\Phi}$ as $f_s$. We also fix a $\phi\in\mathcal{S}([\bar{G}_n])$.
	
	\subsection{Partial unfolding}
	
	We start with a partial Rankin--Selberg type unfolding of the $\GL_n\times\GL_n$ zeta integral of $\phi, \Eis(f), \Eis(f_s)$.
	
	\begin{lem}\label{lem:partial-unfolding}
		Let $\phi$ and $\Eis(f_s)$ be as above with sufficiently large $\Re(s)$. Then we have
		\begin{equation*}
			\int_{[\bar{G}_n]}\phi\cdot\overline{\Eis(f)}\cdot\Eis(f_s)=\sum_{i=0}^{n-1}\int_{\P_i(F)\bs  G_n(\A)}W^{\Q_i}_{\phi_{\Q_i}}\overline{W^{\Q_i}_{\Eis(f)_{\Q_i}}}\Phi(e_n\cdot)|\det|^{1/2+s},
		\end{equation*}
		where the integrals on both sides converge absolutely.
	\end{lem}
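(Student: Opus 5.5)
This is the standard Godement--Jacquet/Rankin--Selberg unfolding. The Fourier--Whittaker expansion \eqref{Fourier-Whittaker-aut-forms} is applied to the product $\phi\cdot\overline{\Eis(f)}$ and not to $\phi$ alone.

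\textbf{Step 1: unfold $\Eis(f_s)$.} First rewrite $f_s$ through its Schwartz--Bruhat integral. The Eisenstein series on the mirabolic-type parabolic $\Q_{n-1}$ with section \eqref{eq:def-f-from-phi} is the classical Epstein-type series
\[
\Eis(f_s)(g)=|\det g|^{\frac12+s}\int_{F^\times\bs\A^\times}\sum_{\xi\in F^n\smallsetminus\{0\}}\Phi(t\xi g)\,|t|^{n(\frac12+s)}\,d^\times t .
\]
This follows because $\Q_{n-1}(F)\bs G_n(F)\simeq F^\times\bs(F^n\smallsetminus 0)$ via $\gamma\mapsto e_n\gamma$. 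Insert this into the integral over $[\bar G_n]=Z(\A)G_n(F)\bs G_n(\A)$. The $t$-integral combines with the quotient by the centre, and the sum over $\xi$ unfolds. Since $\P_{n-1}$ is the stabilizer of $e_n$, we get
\[
\int_{\P_{n-1}(F)\bs G_n(\A)}\phi(g)\overline{\Eis(f)(g)}\,\Phi(e_ng)\,|\det g|^{\frac12+s}\,dg .
\]
The central characters match: $\phi$ and $\Eis(f)$ are $Z(\A)$-invariant, and $|\det(tg)|^{1/2+s}=|t|^{n(1/2+s)}|\det g|^{1/2+s}$. So the measure bookkeeping is the usual one.

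\textbf{Step 2: expand the product.} The function $\Psi:=\phi\cdot\overline{\Eis(f)}$ is of rapid decay, because $\phi$ is Schwartz and $\Eis(f)$ has moderate growth. Apply \eqref{Fourier-Whittaker-aut-forms} to $\Psi$, which is an expansion over $\gamma\in\P_i(F)\bs\P_{n-1}(F)$, and fold the $\gamma$-sum into the $\P_{n-1}(F)\bs G_n(\A)$ integral. This gives $\sum_i\int_{\P_i(F)\bs G_n(\A)}W^{\Q_i}_{\Psi_{\Q_i}}\,\Phi(e_n\cdot)\,|\det|^{1/2+s}$.

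\textbf{Step 3: split the partial Whittaker coefficient.} Use \eqref{eq:second-formula-for-partial-whittaker}: $W^{\Q_i}_{\Psi_{\Q_i}}$ is the $\overline{\psi^{(i)}}$-coefficient of $\phi\overline{\Eis(f)}$ over $[\widetilde{U_i}]$. Expand the $U_{\Q_i}$-part of this integral first. Write $\phi\overline{\Eis(f)}$ restricted to $[\widetilde{U_i}]$ as a product of Fourier series. Then, further unfolding the $\P_i$-quotient (a Parseval-type computation), pair the $\psi^{(i)}$-coefficient of $\phi$ against the $\psi^{(i)}$-coefficient of $\Eis(f)$, with all the other cross terms reabsorbed into the sum over $i$. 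This is exactly how the classical Rankin--Selberg unfolding for $\GL_n\times\GL_n$ produces $W_\phi\overline{W_{\Eis}}$. Concretely, I would do it in the usual way. Expand only $\phi$ via \eqref{Fourier-Whittaker-aut-forms}, unfold to $\P_i(F)\bs G_n(\A)$, and then integrate $\overline{\Eis(f)}$ over $[\widetilde{U_i}]$ using the equivariance \eqref{eq:whittaker-equivariance} of $W^{\Q_i}_{\phi_{\Q_i}}$ and the left $\widetilde U_i(\A)$-invariance of $\Phi(e_n\cdot)|\det|^{1/2+s}$. This converts $\overline{\Eis(f)}$ into $\overline{W^{\Q_i}_{\Eis(f)_{\Q_i}}}$. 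Step 3 therefore replaces Step 2: the cleaner route is to expand $\phi$ alone.

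\textbf{Main obstacle: convergence.} The main difficulty is justifying absolute convergence, which is needed for the interchanges of sums and integrals. The plan is to bound the integrand by the same computation with $\phi,\Eis(f),\Phi$ replaced by absolute values or majorants. Lemma~\ref{lem:whittaker-bound} gives Schwartz decay of $W^{\Q_i}_{\phi_{\Q_i}}$ in the $G_i$-variable and rapid decay along the positive roots of the $G_{n-i}$-variable. The factor $\overline{W^{\Q_i}_{\Eis(f)_{\Q_i}}}$ has only moderate (polynomial) growth. The factor $\Phi(e_n g)|\det g|^{1/2+s}$ with $\Re(s)$ large controls the remaining torus direction, namely the central and last-row scaling. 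Via Iwasawa decomposition $P_i\bs G_n$ reduces to integrals over $[G_i]\times T_{n-i}$ times compact sets, where these bounds make everything converge. The left-hand side converges by the rapid decay of $\phi$ against moderate growth, using the absolutely convergent Epstein series for $\Re(s)>1/2$.
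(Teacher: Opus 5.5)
Your final route (Step~1, then expanding $\phi$ alone via \eqref{Fourier-Whittaker-aut-forms}, folding to $\P_i(F)\bs G_n(\A)$, and integrating over $[\widetilde{U_i}]$ using \eqref{eq:whittaker-equivariance} and the left $\widetilde{U_i}(\A)$-invariance of $\Phi(e_n\cdot)|\det|^{1/2+s}$, with convergence from Lemma~\ref{lem:whittaker-bound}, moderate growth of $\Eis(f)$ and large $\Re(s)$ in Iwasawa coordinates) is exactly the paper's proof. You should delete the opening sentence and Step~2, since expanding the product $\phi\cdot\overline{\Eis(f)}$ is circular: integrating its $\psi^{(i)}$-coefficient against that left $\widetilde{U_i}(\A)$-invariant weight kills every term with $i<n-1$, and the $i=n-1$ term just returns the integral you started from.
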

	
	%\sj{Should we cite \cite[Proposition 7.8]{fine-boi} or references there to compare with ours? Are these related?} \rn{They are indeed related we could definitely citem but it's also a somewhat direct consequence of The Whittaker-Fourier expansion of $\phi$}
	
	\begin{proof}
		The Schwartzness of $\phi$ immediately implies absolute convergence of the left hand side. The standard unfolding technique (see \emph{e.g.}, \cite[\S 2.3.2]{cogdell2007functions}) then leads to
		\begin{equation*}
			\int_{[\bar{G}_n]}\phi\cdot\overline{\Eis(f)}\cdot\Eis(f_s)=
			\int_{\P_{n-1}(F)\bs  G_n(\A)}\phi\cdot\overline{\Eis(f)}\Phi(e_n\cdot)|\det|^{1/2+s},
		\end{equation*}
		where $\P_i$ is as in \S\ref{sec:whittaker-function}. Now we insert the Fourier--Whittaker expansion of $\phi$, as in \eqref{Fourier-Whittaker-aut-forms}, to the above. That yields
		\begin{equation*}
			\int_{[\bar{G}_n]}\phi\cdot\overline{\Eis(f)}\cdot\Eis(f_s)=\sum_{i=0}^{n-1}
			\int_{\P_i(F)\bs  G_n(\A)}W^{\Q_i}_{\phi_{\Q_i}}\cdot\overline{\Eis(f)}\cdot
			\Phi(e_n\cdot)|\det|^{1/2+s}.
		\end{equation*}
		We first prove that the integrals on the right hand side converge absolutely (\emph{cf}.\ \cite[Proof of Lemma 4.1]{IchinoYamana2016}). To see that we parametrize $\P_i(F)\bs  G_n(\A)$ as
		\[
		u\begin{pmatrix}
			g\\&h
		\end{pmatrix}k,\qquad u\in [\tilde U_i],\,g\in [G_i],\, h\in N_{n-i}(\A)\backslash G_{n-i}(\A),\, k\in K_n.
		\]
		%write the domain of the inner integral as 
		%\begin{equation}\label{eq:decomposing-domain}
		%	Z_i(\A)\P_i(F)\bs \Q_i(\A)\times K_n=[{\widetilde{U_i}}]\times\mathcal{R}_i\times K_n,
		%\end{equation}
		%where $\mathcal{R}_i:=\begin{pmatrix}[G_i]&\\&Z_{n-i}(\A)N_{n-i}(\A)\bs G_{n-i}(\A)\end{pmatrix}$.
		We note that by the first part of Lemma \ref{lem:whittaker-bound} and moderate growth of $\Eis(f)$
		we obtain absolute convergence of the $g$-integral. Similarly, by the second part of Lemma \ref{lem:whittaker-bound}, for $\Re(s)$ sufficiently large we get absolute convergence of the $h$-integral. Therefore the whole integral converges absolutely.
		
		We conclude after combining
		%the left $\widetilde{U_i}$-equivariance property of $W^{\Q_i}_{\phi_{\Q_i}}$
		\eqref{eq:second-formula-for-partial-whittaker}, \eqref{eq:whittaker-equivariance} and the left $\widetilde{U_i}$-invariance of $\Phi$
		%\begin{align}\label{eq:iwaswas-like-decomp}
		%	{\int_{\P_i(F)\bs  G_n(\A)}W^{\Q_i}_{\phi_{\Q_i}}\overline{\Eis(f)}
			%	\Phi(e_n\bullet)|\det|^{1/2+s}}&\redcancel{=\int_{\mathcal{R}_i\times K_n}W^{\Q_i}_{\phi_{\Q_i}}\overline{W^{\Q_i}_{\Eis(f)_{\Q_i}}}\Phi(e_n\bullet)|\det|^{1/2+s}}\\
		%	&\redcancel{=\int_{[\widetilde{U_i}]\times\mathcal{R}_i\times K_n}W^{\Q_i}_{\phi_{\Q_i}}\overline{W^{\Q_i}_{\Eis(f)_{\Q_i}}}\Phi(e_n\bullet)|\det|^{1/2+s}.}\nonumber\\
		%	\int_{Z_n(\A)\P_i(F)\bs  G_n(\A)}W^{\Q_i}_{\phi_{\Q_i}}\overline{\Eis(f)}
		%		f_s&=\int_{Z_n(\A)\tilde U_i(\A)\P_i(F)\bs  G_n(\A)}W^{\Q_i}_{\phi_{\Q_i}}\overline{W^{\Q_i}_{\Eis(f)_{\Q_i}}}f_s\notag\\
		%	&=\int_{Z_n(\A)\tilde \P_i(F)\bs  G_n(\A)}W^{\Q_i}_{\phi_{\Q_i}}\overline{W^{\Q_i}_{\Eis(f)_{\Q_i}}}f_s.
		%\end{align}
		%The second formula holds since the integrand is itself left $\widetilde{U_i}$-invariant.
		%Then \eqref{eq:decomposing-domain} leads us to the equality as claimed.
	\end{proof}

	\begin{lem}\label{lem:vanishing-whittaker-integral}
		The partial Whittaker function $W^{\Q_i}_{\Eis(f)_{\Q_i}}$ vanishes identically for $0\le i\le n-3$.
	\end{lem}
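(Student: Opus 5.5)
We will argue that $W^{\Q_i}_{\Eis(f)_{\Q_i}}$ is obtained by first taking a constant term of $\Eis(f)$ along $\Q_i$ and then a Whittaker-type integral over $N_{n-i}$ in the $G_{n-i}$-block. Since $\Eis(f)$ is induced from the trivial representation on the $(n-1)$-block, this Whittaker integral only sees characters of a group that is essentially at most $\GL_2$-sized. By the definition in \S\ref{sec:whittaker-function},
\[
W^{\Q_i}_{\Eis(f)_{\Q_i}}\left[\begin{pmatrix}g&\\&h\end{pmatrix}\right]=\int_{[N_{n-i}]}\Eis(f)_{\Q_i}\left[\begin{pmatrix}g&\\&uh\end{pmatrix}\right]\ol{\psi}(u)\,du .
\]
It suffices to check vanishing on such block-diagonal elements, since the function is left $\psi^{(i)}$-equivariant under $\widetilde{U_i}(\A)$ by \eqref{eq:whittaker-equivariance} and right translation by $K_n$ commutes with the construction (we may replace $f$ by a right translate). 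We may also work with $\Eis(f_s)$ for $s$ in general position and conclude by meromorphic continuation, or apply the formulas directly at $s=0$ where $\Eis(f)$ is holomorphic in the relevant sense.

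The first step computes the constant term by \eqref{eq:constant-term-expansion} with $P=\Q_{n-1}$ and $Q=\Q_i$. The double coset space $W^{\Q_i}\bs W/W^{\Q_{n-1}}$ has at most two elements, according to whether the isolated coordinate $e_n$ lands in the $G_i$-block or the $G_{n-i}$-block. Hence $\Eis(f)_{\Q_i}$ restricted to $\mathrm{diag}(g,h)$ is a sum of at most two terms, each of the form
\[
\Eis^{G_i}(\cdot)(g)\cdot \Eis^{G_{n-i}}(\cdot)(h),
\]
or a finite sum of such products. The factors are degenerate Eisenstein series, or characters, induced from $\1_{a}\otimes\1_{b}$ data with the Levis $M_{\Q_{n-1}}\cap w^{-1}M_{\Q_i}w$. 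On the $G_{n-i}$ side the relevant function of $h$ is therefore one of two things: either a character $|\det h|^{c}$, coming from the trivial representation of $G_{n-i}$, or a degenerate Eisenstein series on $G_{n-i}$ induced from the parabolic of type $(n-i-1)+1$ with trivial data. The second arises in a form like $\1_{i}\boxplus(\1_{n-i-1}\boxplus\1_1)$ on the Levi.

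The second step shows that the $\psi$-Whittaker coefficient on $G_{n-i}$ of either function vanishes when $n-i\ge 3$. For a character of $\det$ this is immediate: the function is left $N_{n-i}(\A)$-invariant, so integrating against the nontrivial character $\ol\psi$ over $[N_{n-i}]$ gives zero. For the degenerate Eisenstein series attached to $(n-i-1)+1$ with $n-i-1\ge 2$, we will use the standard fact that Eisenstein series induced from a non-generic representation have vanishing Whittaker coefficients. Concretely, we unfold the Whittaker integral via the Bruhat decomposition $P\bs G_{n-i}/N_{n-i}$. Each cell contributes an integral that contains, as an inner integral, a $\psi$-integral over a unipotent subgroup of the $G_{n-i-1}$ block. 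That block carries the trivial representation, which is left invariant under this subgroup. Since $\psi$ is nontrivial on the simple root subgroups inside $G_{n-i-1}$ (and $n-i-1\ge2$ guarantees at least one such root), every cell vanishes. This is the local/global uniqueness statement that $\1_{m}$ for $m\ge 2$ has no Whittaker functional, equivalently the vanishing of $W$ on the induced data. To make the unfolding legitimate we first argue for $\Re(s)$ large, where the Eisenstein series converges absolutely.

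The main obstacle is making the first step precise: identifying exactly which Levi data and which $h$-dependence appear for each Weyl double coset representative, and handling the case where $e_n$ is placed into the $G_i$-block. In that case the $h$-dependence should be purely through $\1_{n-i}$, i.e.\ a character of $\det h$. The hypothesis $i\le n-3$, i.e.\ $n-i\ge 3$, is used precisely to ensure that the trivial block inside $G_{n-i}$ has size at least $2$ in both cases.
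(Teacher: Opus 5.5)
Your proof is correct, but it is organized differently from the paper's.

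\textbf{The paper's route.} It never uses the constant term formula. For large $\Re(s)$ it writes $\Eis(f_s)$ as a sum over $\mathbf{x}\in\mathbb{P}^{n-1}(F)$ of $\int_{\A^\times}\Phi(t\mathbf{x}g)|\det(tg)|^{1/2+s}\,d^\times t$. It then kills each term $\int_{[\widetilde{U_i}]}\Phi(t\mathbf{x}u)\overline{\psi^{(i)}(u)}\,du$ by hand, according to the position $k$ of the first nonzero coordinate of $\mathbf{x}$:
\begin{itemize}
\item for $k<n-1$, a translation in $u_{k,n}$ removes all dependence on $u_{n-1,n}$;
\item for $k=n$, the integrand is constant;
\item for $k=n-1$, the integrand depends only on $u_{n-1,n}$, so the remaining character in $u_{i+1,i+2},\dots,u_{n-2,n-1}$ kills it. This is the only place where $i\le n-3$ enters.
\end{itemize}

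\textbf{Your route.} You first split $\Eis(f_s)_{\Q_i}$ via \eqref{eq:constant-term-expansion} into the two $\Q_i$-double cosets. One gives a character of $\det h$; the other gives a degenerate $(n-i-1)+1$ Eisenstein series on $G_{n-i}$. You then run the cell-by-cell unfolding inside $G_{n-i}$. The decisive cell there is again the one where the isolated coordinate sits in position $n-1$, so the core cancellation is the same as the paper's $k=n-1$ case.

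\textbf{Comparison.} The paper's argument is shorter and fully elementary. It needs only the Godement--Jacquet model of $f_s$: no intertwining operators, and no bookkeeping of $M_{\Q_{n-1}}\cap w^{-1}M_{\Q_i}w$. Yours is more structural and exhibits the general principle behind the lemma: the partial Whittaker coefficient vanishes as soon as the inducing data seen by the $G_{n-i}$-block is non-generic.

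\textbf{A small correction to your second step.} The stabilizer in $N_{n-i}$ of the cell with first nonzero coordinate at $k$ contains $U_{\alpha_j}$ for every simple root $\alpha_j\neq\alpha_k$. That set is nonempty exactly when $n-i\ge 3$. After conjugating into $P$, however, these root subgroups need not land in the $G_{n-i-1}$ block. For example, when $n-i=3$ and $k=2$, the only available simple root subgroup is conjugated into $U_P$. This is harmless: the inducing vector is left-invariant under all unipotent elements of $P(\A)$, not just those of the $G_{n-i-1}$ block, and that is what you should invoke.
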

	
	\begin{proof}
		We prove the lemma first for $\Eis(f_s)$ for sufficiently large $\Re(s)$ which will yield the required statement for $s=0$ from the meromorphic continuation of $\Eis(f_s)$. Recalling the definition of $\Eis(f_s)$ from \eqref{eq:def-eis-from-f} and \eqref{eq:def-f-from-phi} we have the following absolutely convergent expression:
		\begin{equation*}
			\Eis(f_s)(g)=\sum_{\gamma\in\Q_{n-1}(F)\bs G_{n}(F)}\int_{\A^\times}\Phi(te_n\gamma g)|\det(t\gamma g)|^{1/2+s}\, d^\times t.
		\end{equation*}
		Note that $G_{n}(F)$ acts on $\mathbb{P}^{n-1}F :=F^n\setminus \{0\}/F^\times$ transitively with stabilizer of $e_n$ being $\Q_{n-1}(F)$. Thus we can realize the $\gamma$-sum above as a sum over $\mathbb{P}^{n-1}F$. On the other hand, writing $W^{\Q_i}_{\Eis(f_s)_{\Q_i}}$ as in \eqref{eq:second-formula-for-partial-whittaker} we obtain
		\begin{equation*}
			W^{\Q_i}_{\Eis(f_s)_{\Q_i}}=\int_{\A^\times}|t|^{n(1/2+s)}\sum_{\mathbf{x}\in\mathbb{P}^{n-1}(F)}\int_{[\widetilde{U_i}]}\Phi(t\mathbf{x} u)\overline{\psi^{(i)}(u)}\,du\, d^\times t.
		\end{equation*}
		It is enough to show that the inner integral vanishes for $0\le i\le n-3$. Note that for $\mathbb{P}^{n-1}F\ni\mathbf{x}:=(x_1,\dots,x_n)$ we have
		\begin{equation*}
			\mathbf{x}u = \left(x_1,\dots,x_i,\sum_{j=1}^ix_ju_{j,i+1}+x_{i+1},\sum_{j=1}^{i+1}x_ju_{j,i+2}+x_{i+2},\dots,\sum_{j=1}^{n-1}x_ju_{j,n}+x_n\right).
		\end{equation*}
		As $\mathbf{x}\neq0$ we may assume $x_1=\dots=x_{k-1}=0$ and $x_k=1$ for some $1\le k\le n$. Let $k<n-1$ for now. We change variable $u_{k,n}\mapsto u_{k,n}-\sum_{k\neq j=1}^{n-1}x_ju_{j,n}-x_n$ to write
		\begin{multline*}
			\int_{[\widetilde{U_i}]}\Phi(t\mathbf{x} u)\overline{\psi^{(i)}(u)}\,du\\
			=\int_{u_{\cdot,\cdot}\in F\bs\A}\Phi\left(t\left(x_1,\dots,x_i,\sum_{j=1}^ix_ju_{j,i+1}+x_{i+1},\dots,\sum_{j=1}^{n-2}x_ju_{j,n-1}+x_{n-1},u_{k,n}\right)\right)\overline{\psi^{(i)}(u)}\,du\\
			=\int_{u_{n-1,n}\in F\bs\A}\overline{\psi_0(u_{n-1,n})}\int_{u_{n-1,n}\neq u_{\cdot,\cdot}\in F\bs\A}\Phi\left(t\left(x_1,\dots,u_{k,n}\right)\right)\overline{\psi_0(u_{i+1,i+2}+\dots+u_{n-2,n-1})}\,du.
		\end{multline*}
		The last equality above follows as the argument in $\Phi(t\bullet)$ is independent of $u_{n-1,n}$. Thus the above integral vanishes.
		
		Now if $k=n$ then we clearly have
		\begin{equation*}
			\int_{[\widetilde{U_i}]}\Phi(t\mathbf{x} u)\overline{\psi^{(i)}(u)}\,du=\int_{[\widetilde{U_i}]}\Phi(te_n u)\overline{\psi^{(i)}(u)}\,du=0.
		\end{equation*}
		Finally, if $k=n-1$ then
		\begin{multline*}
			\int_{[\widetilde{U_i}]}\Phi(t\mathbf{x} u)\overline{\psi^{(i)}(u)}\,du
			=\int_{u_{n-1,n}\neq u_{\cdot,\cdot}\in F\bs\A}\overline{\psi_0(u_{i+1,i+2}+\dots+u_{n-2,n-1})}\\
			\times\int_{u_{n-1,n}\in F\bs\A}\Phi\left(t(0,\dots,0,1,u_{n-1,n}+x_n\right)\overline{\psi_0(u_{n-1,n})}\,du.
		\end{multline*}
		For $i+1\le n-2$ the outer integral above vanishes identically. Hence we conclude.
	\end{proof}
	
	Now combining Lemma \ref{lem:partial-unfolding} and Lemma \ref{lem:vanishing-whittaker-integral} we arrive at the following proposition.
	
	\begin{prop}\label{prop:whittaker-cleanup}
		Let $\phi$, $\Eis(f)$, $\Eis(f_s)$ be as above with sufficiently large $\Re(s)$. Then we have
		\begin{multline*}
			\int_{[\bar{G}_n]}\phi\cdot\overline{\Eis(f)}\cdot\Eis(f_s)=\int_{\P_{n-1}(F)\bs  G_n(\A)}\phi_{\Q_{n-1}}\overline{\Eis(f)_{\Q_{n-1}}}\Phi(e_n\cdot)|\det|^{1/2+s}\\+\int_{\P_{n-2}(F)\bs  G_n(\A)}W^{\Q_{n-2}}_{\phi_{\Q_{n-2}}}\overline{W^{\Q_{n-2}}_{\Eis(f)_{\Q_{n-2}}}}\Phi(e_n\cdot)|\det|^{1/2+s},
		\end{multline*}
		where the integrals on both sides converge absolutely.
	\end{prop}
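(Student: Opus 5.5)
The proposition follows by combining Lemma \ref{lem:partial-unfolding} with Lemma \ref{lem:vanishing-whittaker-integral}; I would carry this out in three steps.

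First, I will start from Lemma \ref{lem:partial-unfolding}, valid for sufficiently large $\Re(s)$. It expresses $\int_{[\bar G_n]}\phi\,\overline{\Eis(f)}\,\Eis(f_s)$ as the finite sum over $0\le i\le n-1$ of the absolutely convergent integrals
\[
\int_{\P_i(F)\bs G_n(\A)}W^{\Q_i}_{\phi_{\Q_i}}\overline{W^{\Q_i}_{\Eis(f)_{\Q_i}}}\Phi(e_n\cdot)|\det|^{1/2+s}.
\]
Since the sum is finite and each term converges absolutely, the terms may be discarded individually.

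Second, I will invoke Lemma \ref{lem:vanishing-whittaker-integral}, which says that $W^{\Q_i}_{\Eis(f)_{\Q_i}}\equiv0$ for $0\le i\le n-3$. Its proof first establishes the vanishing for $\Eis(f_s)$ with large $\Re(s)$ and then transports it to $s=0$ by meromorphic continuation. This transfer is legitimate because the partial Whittaker integral over the compact quotient $[\widetilde{U_i}]$ commutes with the continuation, as $\Eis(f_s)$ is continuous in $g$ and meromorphic in $s$. Hence every summand with $i\le n-3$ vanishes, leaving only $i=n-2$ and $i=n-1$.

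Third, I will rewrite the $i=n-1$ term. By the remark in \S\ref{sec:whittaker-function}, $W^{\Q_{n-1}}_{\phi_{\Q_{n-1}}}$ is the constant term $\phi_{\Q_{n-1}}$, since $N_1$ is trivial. Likewise $W^{\Q_{n-1}}_{\Eis(f)_{\Q_{n-1}}}=\Eis(f)_{\Q_{n-1}}$. This yields exactly the two displayed terms, and their absolute convergence is inherited from Lemma \ref{lem:partial-unfolding}.

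There is no genuine obstacle here. The only point needing care is that the vanishing in Lemma \ref{lem:vanishing-whittaker-integral} concerns the non-deformed $\Eis(f)$, which is what appears conjugated in the integrand, and that lemma states precisely this.
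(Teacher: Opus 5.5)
Your proposal is correct and matches the paper, which obtains this proposition by combining Lemma~\ref{lem:partial-unfolding} with Lemma~\ref{lem:vanishing-whittaker-integral}. For the $i=n-1$ term, the paper likewise relies on the earlier observation that $W^{\Q_{n-1}}_{\phi_{\Q_{n-1}}}$ is just the constant term along $\Q_{n-1}$.
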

	
	\subsection{The term with $i=n-1$}
	
	We start by looking at the first summand on the right hand side of the equation in Proposition \ref{prop:whittaker-cleanup}, namely,
	\begin{equation}\label{eq:def-I1}
		I_1(s):=\int_{\P_{n-1}(F)\bs  G_n(\A)}\phi_{\Q_{n-1}}\overline{\Eis(f)_{\Q_{n-1}}}\Phi(e_n\cdot)|\det|^{1/2+s}.
	\end{equation}
	We define
	\begin{equation}\label{eq:def-M00-M10}
		M_{00}(s):=\IP{\phi,\Eis(f\cdot \ol{f_s})}_{[\ol{G}_n]}\quad\text{and}\quad M_{10}(s):=\IP{\phi,\Eis(\M f\cdot \ol{f_s})}_{[\bar{G}_n]},
	\end{equation}
	where $\M$ is as in \eqref{eq:def-main-intertwiner}. Here we realize $\bar{f}\cdot f_s$ and $\M \bar{f}\cdot f_s$ as elements of $\1_{n-1}\cdot|\det|^{1/2+s}\boxplus\1_1\cdot|\cdot|^{-(n-1)(1/2+s)}$ and $\1_1\cdot|\cdot|^{\frac12 +s}\boxplus\1_{n-2}\cdot|\det|^s\boxplus\1_1\cdot|\cdot|^{-\frac12-(n-1)s}$, respectively. Now we record the main result of this section.
	
	\begin{prop}\label{prop:n-1-case}
		For $\Re(s)$ sufficiently large we have $I_1(s) = M_{00}(s)+M_{10}(s)$, where $I_1$ and $M_{00},M_{10}$ are as in \eqref{eq:def-I1} and \eqref{eq:def-M00-M10}, respectively.
	\end{prop}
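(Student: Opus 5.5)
We will compute the constant term $\Eis(f)_{\Q_{n-1}}$ explicitly and then refold. By the constant term formula \eqref{eq:constant-term-expansion} with $P=Q=\Q_{n-1}$, the double coset space $W^{\Q_{n-1}}\bs W/W^{\Q_{n-1}}$ has exactly two elements: the identity and the class of $w_\ell$. This gives
\begin{equation*}
\Eis(f)_{\Q_{n-1}} = f + \Eis^{\Q_{n-1}}\left(\M f\right),
\end{equation*}
where $\M f\in \1_1\boxplus\1_{n-1}$. Its $P_{w}$-restriction lives on the parabolic of type $(1,n-2,1)$, so the second term is a partial Eisenstein series on the Levi $G_{n-1}\times G_1$. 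It is formed from the $(1,n-2)$ maximal-parabolic section of $G_{n-1}$ obtained by restricting $\M f$. All of this is valid at $s=0$ by meromorphic continuation, as long as $f$ is regular there, and we take $\phi$ Schwartz so that no issue arises. We insert the two pieces into $I_1(s)$ and obtain $I_1(s)=I_{1,0}(s)+I_{1,1}(s)$.

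For $I_{1,0}$ we refold. The function $\Phi(e_n g)|\det g|^{1/2+s}$ integrated over the center (or, more precisely, $\int_{\A^\times}$ via the $\PGL_n$ quotient) is $f_s$. Since $\bar f$ is left $\Q_{n-1}(F)U(\A)$-invariant, the integral over $\P_{n-1}(F)\bs G_n(\A)=N_1\cdots$ collapses to $\Q_{n-1}(F)\bs \bar G_n(\A)$. Using $\phi_{\Q_{n-1}}$, the integrand $\bar f\cdot f_s$ is left $U_{\Q_{n-1}}(\A)$-invariant, so we may replace $\phi_{\Q_{n-1}}$ by $\phi$ and fold the $[U]$-integral back. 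This yields $\int_{\Q_{n-1}(F)\bs\bar G_n(\A)}\phi\,\overline{f}f_s$, which unfolds to $\IP{\phi,\Eis(f\,\overline{f_s})}$. (Complex conjugation conventions must be matched with \eqref{eq:def-M00-M10}; $f$ is real-valued if $\Phi$ is, and in general one reads $\bar f f_s$.) This gives $M_{00}(s)$. We must check that the product $\bar f f_s$ lies in the stated induced space and that absolute convergence holds for $\Re(s)$ large. Convergence follows from the Schwartz decay of $\phi$ together with the bound for $\phi_{\Q_{n-1}}$ in Lemma \ref{lem:general-bound-constant}.

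For $I_{1,1}$ we do the same reduction to $\int_{\Q_{n-1}(F)U(\A)\bs \bar G_n(\A)}\phi_{\Q_{n-1}}\overline{\Eis^{\Q_{n-1}}(\M f)}f_s$. We then unfold the partial Eisenstein series over $P_{(1,n-2,1)}(F)\bs\Q_{n-1}(F)$. This is legitimate because $f_s$ is left $\Q_{n-1}(F)$-invariant up to the character, which is trivial on rational points. The result is $\int_{P_{(1,n-2,1)}(F)U(\A)\bs\bar G_n(\A)}\phi_{\Q_{n-1}}\,\overline{\M f}\,f_s$. The product $\overline{\M f}\,f_s$ is left-invariant under the full unipotent radical of $P_{(1,n-2,1)}$, so $\phi_{\Q_{n-1}}$ may be replaced by $\phi$ after folding. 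Unfolding the Eisenstein series on $P_{(1,n-2,1)}$ then gives $\IP{\phi,\Eis(\M f\,\overline{f_s})}=M_{10}(s)$, with $\M f\cdot f_s$ in the three-block induced space stated before the proposition.

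The main obstacle is justifying the unfolding in $I_{1,1}$. The partial Eisenstein series $\Eis^{\Q_{n-1}}(\M f)$ at $\lambda=0$ is a degenerate Eisenstein series on $G_{n-1}$ at a point that is not in its range of absolute convergence. To handle this, we will introduce an auxiliary deformation $\M f_\mu$ with $\mu$ sufficiently dominant. At that point everything converges absolutely, using the rapid decay of $\phi$ and $\Re(s)\gg1$ to dominate the moderate growth. We will perform the unfolding there and then continue meromorphically in $\mu$ back to $0$. This requires that both sides are holomorphic in $\mu$ near $0$ for fixed large $\Re(s)$. On the right this follows since $\phi$ is Schwartz, so pairing against an Eisenstein series is meromorphic with poles only where the Eisenstein series has them. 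The remaining point to verify is that $\M(w_\ell,\mu)f$ is regular at $\mu=0$ for the given $f$.
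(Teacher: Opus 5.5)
Your proposal is correct and is essentially the paper's proof. In both, the two double cosets $\{1,w_\ell\}$ give $\Eis(f)_{\Q_{n-1}}=f+\Eis^{\Q_{n-1}}(\M f)$, and the identity term folds to $M_{00}(s)$. The $w_\ell$ term is handled as you propose: twist $\M f$ by a sufficiently dominant $\lambda\in\a^\ast_{(\Q_{n-1})_{w_\ell},\C}$, unfold over $(\Q_{n-1})_{w_\ell}(F)\bs\Q_{n-1}(F)$ to reach $\int_{[\bar G_n]}\phi\,\Eis(\overline{(\M f)_\lambda}f_s)$, and continue meromorphically to $\lambda=0$. Since the deformation is only a character twist of the fixed vector $\M f$, you do not need regularity of a deformed intertwiner. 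You only need the relevant Eisenstein series to be regular at $\lambda=0$, which the paper leaves implicit.
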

	
	\begin{proof}
		We first evaluate $\Eis(f)_{\Q_{n-1}}$. We claim that $\{1,w_\ell\}$ is a complete set of representatives for $W^{\Q_{n-1}}\bs W/W^{\Q_{n-1}}$, where $w_\ell$ is as in \eqref{eq:def-main-intertwiner}. Indeed it follows from \cite[Lemma 6.7]{bernstein2024meromorphic} that $|W^{\Q_{n-1}}\bs W/W^{\Q_{n-1}}|=2$ and it is plain obvious that $1$ and $w_\ell$ are in different double classes. We check that $(\Q_{n-1})_{w_\ell}$ is the parabolic associate to the partition $n=1+(n-2)+1$. By definition $f$ is left $N_n$-invariant, consequently, $f_{(\Q_{n-1})_{w_\ell}}=f$. Thus from \eqref{eq:constant-term-expansion} we obtain that
		\begin{equation*}
			\Eis(f)_{\Q_{n-1}} = f+\Eis^{\Q_{n-1}}(\M f).
		\end{equation*}
		Plugging the above back in the expression of $I_1$  and folding the integral over $\A^\times$ we write
		\begin{equation*}
			I_1 =\int_{\Q_{n-1}(F)\bs \bar{G}_n(\A)}\phi_{\Q_{n-1}}\overline{f}f_s + \int_{\Q_{n-1}(F)\bs \bar{G}_n(\A)}\phi_{\Q_{n-1}}\overline{\Eis^{\Q_{n-1}}(\M f)} f_s.
		\end{equation*}
		The first summand above, for sufficiently large $\Re(s)$, upon folding the integral over a sum over $\Q_{n-1}(F)\bs\bar{G}_n(F)$, yields $M_{00}$. On the other hand, for sufficiently dominant $\lambda\in\a^\ast_{(\Q_{n-1})_{w_\ell},\C}$ writing
		\begin{equation*}
			\Eis^{\Q_{n-1}}\left(\M f,\lambda\right)=\sum_{\gamma \in (\Q_{n-1})_{w_\ell}(F)\bs \Q_{n-1}(F)}\left(\M f\right)_\lambda(\gamma\cdot)
		\end{equation*}
		we similarly obtain
		\begin{align*}
			\int_{\Q_{n-1}(F)\bs \bar{G}_n(\A)}\phi_{\Q_{n-1}}\overline{\Eis^{\Q_{n-1}}(\M f,\lambda)} f_s
			&=\int_{(\Q_{n-1})_{w_\ell}(F)\bs \bar{G}_n(\A)}\phi_{\Q_{n-1}}\overline{\left(\M f\right)_\lambda} f_s\\
			&=\int_{[\bar{G}_n]}\phi\,\Eis\left(\overline{(\M f)_\lambda} f_s\right).
		\end{align*}
		Upon invoking meromorphic continuation of both sides in $\lambda$ we specify the above to $\lambda=0$, which yields $M_{10}$.
	\end{proof}
	
	%Note that by the functional equation of the Eisenstein series, we may also write the above as $f+\Eis^{\Q_{n-1}}(M(w_{n-1,n})f)$.

	\subsection{The term with $i=n-2$: preliminary clean-up}
	In this section we analyze the second term on the right hand side of the equation in Proposition \ref{prop:whittaker-cleanup}, namely,
	\begin{equation}\label{eq:def-I2}
		I_{2}(s):=\int_{\P_{n-2}(F)\bs  G_n(\A)}W^{\Q_{n-2}}_{\phi_{\Q_{n-2}}}\overline{W^{\Q_{n-2}}_{\Eis(f)_{\Q_{n-2}}}}\Phi(e_n\cdot)|\det|^{1/2+s}.
	\end{equation}
	We first look at $W^{\Q_{n-2}}_{\Eis(f)_{\Q_{n-2}}}$. Working as in the proof of Lemma \ref{lem:vanishing-whittaker-integral} for sufficiently large $\Re(s')$ we have
	\begin{align}\label{eq:whittaker-function-maximal-deg-eis}
		W^{\Q_{n-2}}_{\Eis(f_{s'})_{\Q_{n-2}}}&=\int_{\A^\times}|t|^{n(1/2+s')}\sum_{q\in F}\int_{F\bs\A}\Phi(t(0,\dots,0,1,x+q))\overline{\psi_0(x)}\, dx\, d^\times t\nonumber\\
		&=\int_{N_2(\A)}f_{s'}\left[\pmat{\mathrm{I}_{n-2}\\&w_2u}\right]\ol{\psi(u)}\,du,
	\end{align}
	\forlater{It will be useful later to prove or say that $W^{\Q_{n-2}}_{\Eis(f_{s'})_{\Q_{n-2}}}=W^{\Q_{n-2}}(\Eis^{\Q_{n-2}}(f_{s'}))$}
	where $w_2$ denotes the non-trivial Weyl element in $G_2$. The second equality above follows from unfolding the $q$-sum and $u$-integral and using the definition \eqref{eq:def-f-from-phi}. The integral above converges absolutely for sufficiently large $\Re(s')$ and has analytic continuation for all $s'\in\C$; see \cite[Theorem 15.4.1]{wallach1992real2}. In particular, by the left-equivariance of $f_{s'}$ and the afore-mentioned analytic continuation, we have
	\begin{equation*}
		W^{\Q_{n-2}}_{\Eis(f_{s'})_{\Q_{n-2}}}\left[\pmat{g&\\&\mathrm{I}_2}\cdot\right]=|\det g|^{1/2+s'}W^{\Q_{n-2}}_{\Eis(f_{s'})_{\Q_{n-2}}}.
	\end{equation*}
	A similar formula also holds for $\Phi(e_n\cdot)|\det|^{1/2+s}$. 
	We parametrize $\P_{n-2}\backslash G_n(\A)$ in Iwasawa coordinates as in the proof of Lemma \ref{lem:partial-unfolding}. Noticing that, by our conventions $\vol([U])=1$ for any unipotent group $U$, we arrive at the following result.
	\begin{lem}\label{lem:before-spectral-expansion}
		We have the following absolutely convergent expression
		\begin{multline*}
			I_2(s)=\intop_{K_n}\intop_{N_2(\A)\bs G_2(\A)}\left(\ \intop_{[G_{n-2}]}W^{\Q_{n-2}}_{\phi_{\Q_{n-2}}}\left[\begin{pmatrix}g&\\&h\end{pmatrix}k\right]|\det g|^{s-1}\, dg\ \right)\\
			\times\overline{W^{\Q_{n-2}}_{\Eis(f)_{\Q_{n-2}}}\left[\begin{pmatrix}\mathrm{I}_{n-2}&\\&h\end{pmatrix}k\right]}\Phi\left[e_n\begin{pmatrix}\mathrm{I}_{n-2}&\\&h\end{pmatrix}k\right]|\det h|^{n-3/2+s}\, dh\, dk
		\end{multline*}
		for $\Re(s)$ sufficiently large.
	\end{lem}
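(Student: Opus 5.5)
We start from the definition \eqref{eq:def-I2} of $I_2(s)$, whose absolute convergence for $\Re(s)$ large is given by Proposition \ref{prop:whittaker-cleanup}, and parametrize $\P_{n-2}(F)\bs G_n(\A)$ in Iwasawa-type coordinates, as in the proof of Lemma \ref{lem:partial-unfolding}:
\[
u\begin{pmatrix} g&\\&h\end{pmatrix}k,\qquad u\in[\widetilde{U_{n-2}}],\ g\in[G_{n-2}],\ h\in N_2(\A)\bs G_2(\A),\ k\in K_n .
\]
Since $\P_{n-2}=\widetilde{U_{n-2}}\rtimes(G_{n-2}\times N_2)$, the measure decomposes as $dg_n=\delta^{-1}(m)\,du\,dg\,dh\,dk$, with $\delta$ the modular character of the parabolic $\widetilde{\Q_{n-2}}$ of type $(n-2,1,1)$ restricted to $\mathrm{diag}(g,h)$. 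This follows from \eqref{dg-for-iwasawa-PK}, after writing $h$ in Iwasawa form in $G_2$ and absorbing the $G_2$-modular factor into $dh$ on $N_2\bs G_2$. Concretely, $\delta(\mathrm{diag}(g,h))=|\det g|^{2}|\det h|^{-(n-2)}$ (the $N_2$ part is handled by the quotient), so $\delta^{-1}=|\det g|^{-2}|\det h|^{n-2}$.

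Next we remove the $u$-integral. The integrand is left $\widetilde{U_{n-2}}(\A)$-invariant: the factor $W^{\Q_{n-2}}_{\phi_{\Q_{n-2}}}$ transforms by $\psi^{(n-2)}$ by \eqref{eq:whittaker-equivariance}, the conjugate Eisenstein Whittaker function transforms by $\overline{\psi^{(n-2)}}$, and $\Phi(e_n u\cdot)=\Phi(e_n\cdot)$. Using $\vol([\widetilde{U_{n-2}}])=1$, the $u$-integral therefore disappears.

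We then separate the $g$-dependence. By the left equivariance recorded just before the lemma,
\[
W^{\Q_{n-2}}_{\Eis(f)_{\Q_{n-2}}}\left[\begin{pmatrix} g&\\&h\end{pmatrix}k\right]=|\det g|^{1/2}\,W^{\Q_{n-2}}_{\Eis(f)_{\Q_{n-2}}}\left[\begin{pmatrix}\mathrm{I}_{n-2}&\\&h\end{pmatrix}k\right].
\]
This is obtained at $s'$ large and then continued to $s'=0$. Similarly, $\Phi(e_n\,\mathrm{diag}(g,h)k)=\Phi(e_n\,\mathrm{diag}(\mathrm{I},h)k)$ and $|\det|^{1/2+s}$ splits as $|\det g|^{1/2+s}|\det h|^{1/2+s}$. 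Collecting exponents gives
\[
|\det g|^{1/2+1/2+s-2}=|\det g|^{s-1}
\quad\text{and}\quad
|\det h|^{1/2+s+n-2}=|\det h|^{n-3/2+s},
\]
which is exactly the form claimed. The $g$-integral, taken with the other factors fixed, isolates the inner bracket. All rearrangements are justified by Fubini, thanks to the absolute convergence from Lemma \ref{lem:partial-unfolding}, itself based on Lemma \ref{lem:whittaker-bound}.

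The main obstacle is bookkeeping rather than analysis. The modular-character exponent must come out right, including the convention for the quotient measure on $N_2\bs G_2$. The continuation of the equivariance identity to $s'=0$ also needs care, since a priori only the integral representation \eqref{eq:whittaker-function-maximal-deg-eis} with its analytic continuation is available there. I would verify the exponent by checking that the total $\det$-weight is consistent with $\mathrm{diag}(z,\dots,z)$-invariance on $\bar G_n$ at the appropriate $s$.
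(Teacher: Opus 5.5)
Your proposal is correct and follows essentially the same route as the paper: Iwasawa coordinates on $\P_{n-2}(F)\bs G_n(\A)$, removal of the $[\widetilde{U_{n-2}}]$-integral via the opposite $\psi^{(n-2)}$-equivariances and $\vol([\widetilde{U_{n-2}}])=1$, and extraction of $|\det g|^{1/2}$ from the Eisenstein partial Whittaker function using the left equivariance of $f_{s'}$, continued to $s'=0$. Your exponent count is right: the relevant factor is $\delta_{\Q_{n-2}}^{-1}(\mathrm{diag}(g,h))=|\det g|^{-2}|\det h|^{n-2}$, which is $\delta_{\widetilde{\Q_{n-2}}}$ with its $\delta_{B_2}$-part absorbed into the quotient measure on $N_2\bs G_2$, as you indicate.
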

	
	In the next subsection, we do a spectral decomposition of the $[G_{n-2}]$-period above.
	
	\subsection{Inflation and a $\GL_2$ spectral expansion}
	
	First, for $s\in\C$ with $\Re(s)$ sufficiently positive and $h\in G_2(\A)$ we define
	\begin{equation}\label{eq:def-F-s}
		F_s(h):=|\det h|^{-\frac{(n-2)}{2}(s-1)}\int_{[G_{n-2}]}\phi_{\Q_{n-2}}\left[\pmat{g&\\&h}\right]|\det g|^{s-1}\, dg.
	\end{equation}
	Using Lemma \ref{lem:general-bound-constant} we see that the above converges absolutely for sufficiently large $\Re(s)$. Now notice that for all $z\in Z_2(\A)$ and $\gamma\in G_2(F)$ we have
	\begin{equation*}
		\int_{[G_{n-2}]}\phi_{\Q_{n-2}}\left[\pmat{g&\\&z\gamma h}\right]|\det g|^{s-1}\, dg = |z|^{(n-2)(s-1)}\int_{[G_{n-2}]}\phi_{\Q_{n-2}}\left[\pmat{g&\\&h}\right]|\det g|^{s-1}\, dg.
	\end{equation*}
	Thus $F_s\in\mathcal{S}([\bar{G}_2])$. Applying the $\GL_2$ spectral decomposition from \eqref{eq:spectral-decomposition} for $F_s$ we obtain
	
	\begin{multline}\label{eq:GL2-spec-decomp}
    F_s(h)=\sum_{\pi\in\Pi(\bar{G}_2)}\sum_{\varphi\in \B(\pi)} \left\langle F_s,\varphi\right\rangle_{[G_2]^1 } \varphi(h)\\
    +\frac12 \sum_{\chi\in\Pi(G_1)}\sum_{\varphi\in \B(\chi\boxplus\chi^{-1})} \int_{i\mathbb{R}}\left\langle F_s,\Eis(\varphi,z)\right\rangle_{[G_2]^1 } \Eis(\varphi,z)(h)\, dz,
	\end{multline}
	The above sum and integrals converge absolutely and uniformly for $h$ in a compact set.
	As a preparation, we start with the following proposition, which ``inflates'' a spectral expansion in $\sigma$ (an automorphic representation for $M$) to the same in $\mathcal{I}(\sigma)$ (an automorphic representation of $G$). The proposition works for any reductive group, so we write the statement and its proof in this general set-up with the hope that it might be useful somewhere else.
	
	\begin{prop}\label{prop:inflation-prop}
		Let $P$ be a standard parabolic of $G$ with Levi $M$ and let $\sigma\in\Pi(M)$. Let $\phi\in \mathcal{S}([\bar{G}])$ and $\lambda\in\a_{P,\C}^\ast/\a^\ast_{G,\C}$ be sufficiently dominant. Then the following sum
		\begin{equation*}
			F_{\sigma,\lambda}(g):=\sum_{\eta\in \B(\sigma)}
			\left(\int_{A_G M(F)\backslash M(\A)}\phi_P(mg)
			e^{\IP{\lambda-\rho_P,H_P(m)}}\overline{\eta}(m)\,dm\right)\eta,
		\end{equation*}
		converges absolutely and we have
		\begin{equation*}
			F_{\sigma,\lambda}(g)=\sum_{\varphi\in\B\left(\mathcal{I}(\sigma)\right)}
			\IP{\phi,\Eis(\varphi, \overline{\lambda})}_{[G]^1 }\varphi_{-\lambda}(g).
		\end{equation*}
		Moreover, the above sums do not depend on the choice of bases.
	\end{prop}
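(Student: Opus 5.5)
The plan is to compare the two expressions coefficient-wise. Both are vectors of $\mathcal{I}(\sigma,-\lambda)$ evaluated at $g$, and the pairing of $\mathcal{I}(\sigma,-\lambda)$ with $\mathcal{I}(\sigma,\bar\lambda)$ is the $K$-integral of the $\sigma$-inner product. So I will pair both sides against an arbitrary $\varphi\in\mathcal{I}(\sigma)$ and show that the resulting functionals agree.

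Absolute convergence of the first sum comes first. For fixed $g$, the function $m\mapsto \phi_P(mg)e^{\langle\lambda-\rho_P,H_P(m)\rangle}$ on $A_GM(F)\bs M(\A)$ is rapidly decreasing along the roots of $M$ (Lemma \ref{lem:general-bound-constant}). Its behaviour in the $\a_P/\a_G$ direction is controlled by the dominance of $\lambda$ together with the Schwartz decay of $\phi$. Hence its projection to the $\sigma$-isotypic part, after integrating out the $\a_P$-direction, lies in the smooth vectors of $\sigma$. The expansion in $\B(\sigma)$ then converges absolutely, as in the usual argument: apply powers of a Casimir/Laplacian and use Weyl-law bounds. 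Independence of the basis follows because the sum is the orthogonal projection onto $\sigma$ of a fixed smooth vector.

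The main step is an unfolding identity. Fix $\varphi\in\mathcal{I}(\sigma)$. For $\lambda$ sufficiently dominant, $\Eis(\varphi,\bar\lambda)$ is given by its absolutely convergent series, so
\[
\IP{\phi,\Eis(\varphi,\bar\lambda)}_{[G]^1}=\int_{A_GP(F)\bs G(\A)}\phi(g)\,\overline{\varphi_{\bar\lambda}(g)}\,dg .
\]
Next I use the Iwasawa decomposition $g=umk$ with measure $\delta_P^{-1}(m)\,du\,dm\,dk$, as in \eqref{dg-for-iwasawa-PK}, and integrate over $[U]$ to replace $\phi$ by $\phi_P$. This gives
\[
\int_K\int_{A_GM(F)\bs M(\A)}\phi_P(mk)\,\overline{\varphi(mk)}\,e^{\langle\lambda-\rho_P,H_P(m)\rangle}\,dm\,dk .
\]
Here I use $\varphi_{\bar\lambda}(mk)=e^{\langle\bar\lambda+\rho_P,H_P(m)\rangle}\varphi(mk)$, with the $\rho_P$ coming from the normalized induction, combined with $\delta_P^{-1}=e^{-2\rho_P}$. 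The inner $m$-integral is exactly the $\sigma$-inner product of the bracketed quantity defining $F_{\sigma,\lambda}(k)$ with $\varphi(\cdot k)$. So the display equals $\langle F_{\sigma,\lambda}|_K,\varphi|_K\rangle_{\Ind}$, once $F_{\sigma,\lambda}$ is viewed as a function in $\mathcal{I}(\sigma,-\lambda)$. Doing this requires checking the left equivariance $F_{\sigma,\lambda}(ug)=F_{\sigma,\lambda}(g)$ and $F_{\sigma,\lambda}(m'g)=e^{\langle-\lambda+\rho_P,H_P(m')\rangle}(\text{translate})$, which follows from the change of variables $m\mapsto mm'^{-1}$.

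Consequently, $F_{\sigma,\lambda}$ is an element of $\mathcal{I}(\sigma,-\lambda)$ whose Fourier coefficients against $\B(\mathcal{I}(\sigma))$ are $\IP{\phi,\Eis(\varphi,\bar\lambda)}$. Expanding it in the orthonormal basis $\varphi$, transported to parameter $-\lambda$, gives the second formula. That expansion is basis-independent and converges absolutely because $F_{\sigma,\lambda}$ is a smooth vector: it is $K$-finite-smooth, since $\phi$ is smooth.

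I expect the main obstacle to be the analytic justification. One must show that the sum over $\B(\mathcal{I}(\sigma))$ converges absolutely pointwise and not just in $L^2(K)$, and that the interchange of the $\eta$-sum with the $K$-integral is legitimate. I would handle this by first applying the identity to $\Delta^N\phi$, using a Casimir element of $M$ together with one for $K$. Self-adjointness moves these operators onto $\varphi$ and $\eta$, which produces eigenvalue decay and hence Sobolev-type absolute convergence. Meanwhile, the dominance of $\lambda$ keeps all the unfolded integrals absolutely convergent.
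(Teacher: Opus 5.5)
Your proposal is correct and is essentially the paper's argument. The three ingredients match: absolute convergence by integrating by parts against a Laplacian eigenbasis, the check that $e^{\langle\lambda,H_P(\cdot)\rangle}F_{\sigma,\lambda}\in\mathcal{I}(\sigma)$ via the change of variables $m\mapsto mm_0^{-1}$, and identification of the coefficients by unfolding $\langle\phi,\Eis(\varphi,\bar\lambda)\rangle_{[G]^1}$ in Iwasawa coordinates.

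The only difference is cosmetic. You read the inner $m$-integral directly as $\langle F_{\sigma,\lambda}(k),\varphi(k)\rangle_\sigma$, viewing $F_{\sigma,\lambda}(k)$ as an orthogonal projection onto $\sigma$, whereas the paper expands $\bar\varphi(k)$ over $\B(\bar\sigma)$ and interchanges sum and integral. Also, the identity you want is $\varphi_{\bar\lambda}(mk)(1)=e^{\langle\bar\lambda+\rho_P,H_P(m)\rangle}\varphi(k)(m)$, not the version with $\varphi(mk)$ on the right.
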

	
	\begin{proof}
		We begin by proving the absolute convergence of the definition and the proposed expression of $F_{\sigma,\lambda}(g)$ which are realized as vectors in $\sigma$. The proof is essentially similar to \cite[\S 5.1]{finisetal2011spectral}.
		
		Let $K_\infty$ be the chosen maximal compact of $G(F_\infty)$. We fix a Laplacian $\Delta$ on $G(F_\infty)$ by
		\begin{equation*}
			\Delta_G:=\Id-\Omega_G+2\Omega_{K}
		\end{equation*}
		where $\Omega_G$ and $\Omega_K$ are the Casimir operators on $G(F_\infty)$ and $K_\infty$, respectively. $\Delta_G$ is a positive definite self-adjoint operator. It is known that there is a sufficiently large $A>0$ such that $\Delta_G^{-A}$ is of trace class; see \cite[Cor 0.3]{muller1998trace} and \cite[\S 2.3.2]{MV2010subconvexity}. We will often work with a $\Delta_G$-eigenbasis of an irreducible admissible representation of $G$. Note that by construction, a $K$-type basis will automatically be a $\Delta_G$-eigenbasis.

		We start with a $\Delta_M$-eigenbasis $\B(\sigma)$. After integrating by parts the integral
		\begin{equation*}
			\int_{A_G M(F)\backslash M(\A)}\phi_P(mg)
			e^{\IP{\lambda-\rho_P,H_P(m)}}\overline{\eta}(m)\,dm
		\end{equation*}
		with respect to $\Delta_M$ we write the above as
		\begin{equation*}
			\nu_{\eta_\lambda}^{-N}\int_{A_G M(F)\backslash M(\A)}\Delta_M^N\left(\phi_P(\cdot g)
			e^{-\IP{\rho_P,H_P(\cdot)}}\right)(m)\overline{\eta}_\lambda(m)\,dm,
		\end{equation*}
		where $\nu_\eta$ is the $\Delta_M$-eigenvalue of $\eta$. We use that $\eta\in\sigma$ are of uniform moderate growth and use the Sobolev embedding (see \cite[\S 2]{MV2010subconvexity} and \cite[\S I.2.2]{MW}) to write
		\begin{equation*}
			\eta(m) \ll e^{\langle\lambda_0,H_{P_M}(m)\rangle}\nu_\eta^{O(1)},\quad \|\eta\|_\sigma\ll \nu_\eta^{O(1)},
		\end{equation*}
		for some fixed $\lambda_0\in\a_{P_M}$ with $P_M:=P_0\cap M$. 
		On the other hand, as $\phi\in\mathcal{S}([G]^1 )$ applying Lemma \ref{lem:general-bound-constant} we have $\phi_P(\cdot g)\vert_{M}\in \mathcal{S}(A_G M(F)\bs M(\A))$. Consequently, the integral
		\begin{equation*}
			\int_{A_G M(F)\backslash M(\A)}\Delta_M^N\left(\phi_P(\cdot g)
			e^{-\IP{\rho_P,H_P(\cdot)}}\right)(m)\overline{\eta}_\lambda(m)\,dm
		\end{equation*}
		converges absolutely for sufficiently dominant $\lambda$. 
		Now applying the trace-class property of $\Delta_M^{-N}$ for sufficiently large $N>0$ we see that the sum defining $F_{\sigma,\lambda}$ is absolutely convergent (in the $L^2$-norm sense) as a vector sum in $\sigma$.
		
		A similar argument shows the absolute convergence of the other sum. The independence of the choice of basis is standard; see \cite[Appendix 4]{baruch2005bessel}.
		
		In order to establish the identity, we check that
		\begin{equation*}
			\tilde{F}_{\sigma,\lambda}(g):= e^{\IP{\lambda,H_P(g)}}F_{\sigma,\lambda}(g)
		\end{equation*}
		is an element of $\mathcal{I}(\sigma)$. Let $u_0m_0\in U(\A)M(\A) = P(\A)$ and write $mu_0m_0=(mu_0m^{-1})m_0$. Using the left-$U(\A)$ invariance of $\phi_P$ and changing variable $m\mapsto mm_0^{-1}$ we write
		\begin{equation*}
			F_{\sigma,\lambda}(u_0m_0g) = \sum_{\eta\in \B(\sigma)}
			\left(\int_{A_G M(F)\backslash M(\A)}\phi_P(mg)
			e^{\IP{\lambda-\rho_P,H_P(mm_0^{-1})}}\overline{\eta}(mm_0^{-1})\,dm\right)\eta.
		\end{equation*}
		Changing the basis $\B(\sigma)$ we write the above as
		\begin{equation*}
			e^{\IP{\rho_P-\lambda,H_P(m_0)}}\sum_{\eta\in \B(\sigma)}
			\left(\int_{A_G M(F)\backslash M(\A)}\phi_P(mg)
			e^{\IP{\lambda-\rho_P,H_P(m)}}\overline{\eta}(m)\,dm\right)\sigma(m_0)\eta
		\end{equation*}
		which, by linearity, can be rewritten as $e^{\IP{\rho_P-\lambda,H_P(m_0)}}\sigma(m_0)F_{\sigma,\lambda}(g)$. Finally, noting that
		\begin{equation*}
			e^{\IP{\lambda,H_P(u_0m_0g)}}=e^{\IP{\lambda,H_P(m_0)}}\cdot e^{\IP{\lambda,H_P(g)}},
		\end{equation*}
		we verify the claim.
		
		Now expanding over a basis of $\B\left(\mathcal{I}(\sigma)\right)$ we write
		\begin{equation*}
			F_{\sigma,\lambda}(g)={e^{\IP{-\lambda,H_P(g)}}}\sum_{\varphi\in \B\left(\mathcal{I}(\sigma)\right)} \left\langle \tilde{F}_{\sigma,\lambda},\varphi\right\rangle_{\Ind} \varphi(g)=\sum_{\varphi\in \B\left(\mathcal{I}(\sigma)\right)} \left\langle \tilde{F}_{\sigma,\lambda},\varphi\right\rangle_{\Ind} \varphi_{-\lambda}(g),
		\end{equation*}
		Thus it suffices to show that
		\begin{equation}\label{eq:coeff-equality}
			\left\langle \tilde{F}_{\sigma,\lambda},\varphi\right\rangle_{\Ind}=\IP{\phi, \Eis(\varphi,\bar{\lambda})}_{[G]^1 }
		\end{equation}
		for all $\lambda\in\a_{P,\C}^\ast$.
		%As both sides are meromorphic in $\lambda$ it is enough to show \eqref{eq:coeff-equality} for sufficiently dominant $\lambda$.
		
		We start with the right-hand side \eqref{eq:coeff-equality}. First, using the definition of $\Eis(\varphi,\bar{\lambda})$ in the absolutely convergent region and unfolding we write
		\begin{equation*}
			\langle\phi, \Eis(\varphi,\bar\lambda)\rangle_{[G]^1 }=\int_{A_G P(F)\backslash G(\A)}\phi(g) \bar{\varphi}_{\lambda}(g)(1)\, dg.
		\end{equation*}
		We use Iwasawa coordinates $g=umk$ with $u\in[U]$, $m\in[M]$, and $k\in K$. Using left $P(\A)$-equivariance of $\varphi_\lambda$, the integral above then evaluates to 
		\begin{equation*}
			\int_{K}\int_{A_G M(F)\bs M(\A)}\phi_P(mk)e^{\IP{\rho_P+{\lambda},H_P(m)}}\bar{\varphi}(k)(m)\, \frac{dm}{\delta_P(m)}\, dk.
		\end{equation*}
		Now we expand $\bar{\varphi}(h)\in\bar{\sigma}$ over $\B(\bar{\sigma})$, which rewrites the above as
		\begin{equation*}
			\int_{K}\int_{A_G M(F)\bs M(\A)}\phi_P(mk)e^{\IP{{\lambda}-\rho_P,H_P(m)}}\sum_{\eta\in\B(\sigma)}\IP{\bar{\varphi}(k),\bar\eta}_{\bar\sigma}\bar\eta(m)\, dm\, dk
		\end{equation*}
		while
		\begin{equation*}
			\IP{\bar{\varphi}(k),\bar\eta}_{\bar{\sigma}}=\int_{[M^1]}\bar{\varphi}(k)(x){\eta}(x)\, dx.
		\end{equation*}
		On the other hand, using the definition of $F_{\sigma,\lambda}$ we expand the left hand side of \eqref{eq:coeff-equality} as
		\begin{equation*}
			\int_{K}\sum_{\eta\in \B(\sigma)}
			\left(\int_{A_G M(F)\backslash M(\A)}\phi_P(mk)
			e^{\IP{\lambda-\rho_P,H_P(m)}}\overline{\eta}(m)\,dm\right)
			\int_{[M^1]}\eta(x)\bar{\varphi}(k)(x)\, dx\, dk.
		\end{equation*}
		Interchanging $x$-integral and $\eta$-sum (which can be justified as above using the rapid decay of $\IP{\varphi(h),\eta}$ in $\eta$) concludes the proof.
	\end{proof}
	
	\begin{rmk}\label{rmk:extra-decay-rep-lambda}
		We remark that the proof of absolute convergence above also shows that the sum 
		\begin{equation*}
			\sum_{\varphi\in\B(\mathcal{I}(\sigma))}
			\left|\IP{\phi,\Eis(\varphi, \overline{\lambda})}_{[G]^1 }\right| \nu_{\varphi}^{O(1)}
		\end{equation*}
		is supported on $\sigma$ that have bounded (in terms of $\phi$) finite ramification and
		decays rapidly in $\lambda$ and in terms of the infinitesimal character (equivalently the archimedean conductor) of $\sigma$, as long as $\lambda$ is away from a pole of $\Eis(\varphi,\bar{\lambda})$; see \cite[\S 5.1]{finisetal2011spectral} for a detailed proof.
	\end{rmk}

	\subsubsection{Discrete part}
	
	By the definition of $F_s$ as in \eqref{eq:def-F-s}, for any cusp form $\varphi\in\pi$ on $[\bar{G}_2]$ we have
	\begin{equation*}
		\left\langle F_s,\varphi\right\rangle_{[G_2]^1 }=\int_{[G_{n-2}]\times [G_2]^1 }\phi_{\Q_{n-2}}\left[\pmat{g&\\&h}\right]
		\delta_{\Q_{n-2}}^{\frac{s-1}2}\left[\pmat{g&\\&h}\right]\overline{\varphi(h)}\,dg\,dh.
	\end{equation*}
	Realizing $\pi\in\Pi(G_2)$ with trivial central character we understand $\1_{n-2}\otimes\pi\in\Pi\left(M_{\Q_{n-2}}\right)$. We can realize the $[G_{n-2}]\times[G_2]^1 $-integral as an integral on $A_{G_n} M_{\Q_{n-2}}(F)\backslash M_{\Q_{n-2}}(\A)$. Moreover, letting 
	\begin{equation}\label{eq:def-lambda-discrete}
		\lambda_{\mathrm{D}}(s):=\left(s,-\tfrac{n-2}{2}s\right)\in\a^\ast_{\Q_{n-2},\C},
	\end{equation}
	we note that
	\begin{equation*}
		\delta_{\Q_{n-2}}^{\frac{s-1}{2}}(m)=\exp{\left\langle\lambda_{\mathrm{D}}(s)-\rho_{\Q_{n-2}},H_{\Q_{n-2}}(m)
			\right\rangle},\quad m\in M_{\Q_{n-2}}.
	\end{equation*}
	Thus recalling the definition of $F_{\sigma,\lambda}$ from Proposition \ref{prop:inflation-prop} we write
	\begin{equation*}
		\V^{-1}\sum_{\varphi\in \B(\pi)} \left\langle F_s,\varphi\right\rangle_{[G_2]^1 } \varphi(h) =  F_{\1_{n-2}\otimes\pi,\lambda_{\mathrm D}(s)}(1)(h),
	\end{equation*}
	where
	\begin{equation}\label{def-mathcal-V}
		%\V=\vol([\overline{G}_{n-2}])=\vol[\overline{M}_{\tilde{Q}_{n-2}}]
		\V=\vol([G_{n-2}]^1).
	\end{equation}
	Thus invoking Proposition \ref{prop:inflation-prop} for sufficiently large $\Re(s)$ (which, in this case, is equivalent to $\lambda_{\mathrm{D}}(s)$ being sufficiently dominant) we obtain
	\begin{equation}\label{eq:disc-gl2-spectral}
		\V^{-1}\sum_{\varphi\in \B(\pi)} \left\langle F_s,\varphi\right\rangle_{[G_2]^1 } \varphi(h) =\sum_{\varphi\in\B\left(\1_{n-2}\boxplus\pi\right)}
		\IP{\phi,\Eis(\varphi, \ol{\lambda_{\mathrm{D}}(s)})}_{[G_n]^1 }\varphi_{-\lambda_{\mathrm{D}}(s)}(1)(h)
	\end{equation}
	for all $h\in G_2(\A)$.
	
	\subsubsection{Continuous part}
	For $\chi\in \Pi(G_1)$, $\Re(z)=0$, we use Proposition \ref{prop:inflation-prop} in the opposite direction, leading to
	\[
    \sum_{\varphi\in\B(\chi\boxplus\chi^{-1})}\left\langle F_s,\Eis(\varphi,-\overline{z})\right\rangle_{[G_2]^1 } \varphi_z(h)=\sum_{\eta\in\B(\chi\otimes\chi^{-1})}\int_{A_{G_2} T_2(F)\backslash T_2(\A)}(F_s)_{B_2}(th)\delta^{-1/2-z}_B(t)\overline{\eta}(t)dt.
	\]
	Observe that the basis $\B(\chi\otimes\chi^{-1})$ only contains one element which is, up to a volume factor, nothing but $\chi\otimes \chi^{-1}$. Recalling the definition \eqref{eq:def-F-s} of $F_s$, we see that
	\[
	(F_s)_{B_2}=|\det h|^{-\frac{(n-2)}{2}(s-1)}\int_{[G_{n-2}]}\phi_{\tilde{Q_{n-2}}}\left[\pmat{g\\&h}\right]|\det g|^{s-1}dg.
	\]
	Substituting in the above expression and observing that $\B(1_{n-2}\otimes\chi\otimes\chi^{-1})$ is also formed by one single element, we can show that
	\begin{equation*}
		\V^{-1}\sum_{\varphi\in\B(\chi\boxplus\chi^{-1})}\left\langle F_s,\Eis(\varphi,-\overline{z})\right\rangle_{[G_2]^1 } \varphi_z(h)=|\det h|^{-\frac{(n-2)}{2}(s-1)}F_{\1_{n-2}\otimes\chi\otimes\chi^{-1},\lambda_{\mathrm{C}}(s,z)}(h),
	\end{equation*}
	where
	\begin{equation}\label{eq:def-lambda-cont}
		\lambda_{\mathrm{C}}(s,z):=\left(s,-\tfrac{n-2}{2}s-z,-\tfrac{n-2}{2}s+z\right)\in\a^\ast_{\widetilde{Q_{n-2}},\C}.
	\end{equation}
	Applying Proposition \ref{prop:inflation-prop} once again, we arrive at
	\begin{multline*}
		|\det h|^{\frac{n-2}{2}(s-1)}\V^{-1}\sum_{\varphi\in \B(\chi\boxplus\chi^{-1})} \left\langle F_s,\Eis(\varphi,z)\right\rangle_{[G_2]^1 }\varphi_z(h)\\
		=\sum_{\xi\in \B(\1_{n-2}\boxplus\chi\boxplus\chi^{-1})}\left\langle \phi,\Eis(\xi,\overline{\lambda_{\mathrm{C}}(s,z)})\right\rangle_{[G_n]^1 }\xi_{-\lambda_{\mathrm{C}}(s,z)}\left[\pmat{\mathrm{I}_{n-2}&\\&h}\right].
	\end{multline*}
	Assume momentarily that $\Re(z)$ is sufficiently large. Replacing $h$ by $\gamma h$ and summing over $\gamma\in B_2(F)\backslash G_2(F)$, we arrive at
	\begin{multline}\label{eq:cont-gl2-spectral}
		|\det h|^{\frac{n-2}{2}(s-1)}\V^{-1}\sum_{\varphi\in \B(\chi\boxplus\chi^{-1})} \left\langle F_s,\Eis(\varphi,z)\right\rangle_{[G_2]^1 } \Eis(\varphi,z)(h)\\
		=\sum_{\xi\in \B(\1_{n-2}\boxplus\chi\boxplus\chi^{-1})}\left\langle \phi,\Eis(\xi,\overline{\lambda_{\mathrm{C}}(s,z)})\right\rangle_{[G_n]^1 }\Eis^{\Q_{n-2}}(\xi,-\lambda_{\mathrm{C}}(s,z))\left[\pmat{\mathrm{I}_{n-2}&\\&h}\right].
	\end{multline}
	
	Combining \eqref{eq:disc-gl2-spectral}, together with the fact that
	\begin{equation*}
		\varphi(1)(h)=\varphi_{-\lambda_{\mathrm{D}}(s)}(1)(h)= |\det h|^{-\frac{n-2}{2}(s-1)}\varphi_{-\lambda_{\mathrm{D}}(s)}\left[\begin{pmatrix}\mathrm{I}_{n-2}&\\&h\end{pmatrix}\right](1),
	\end{equation*}
	and \eqref{eq:cont-gl2-spectral}, and invoking the meromorphic continuation of the $\Eis^{\Q_{n-2}}$ in $z$ we obtain
	\begin{multline}\label{eq:main-spec-decomp}
		|\det h|^{\frac{n-2}{2}(s-1)}\V^{-1}F_s(h)=\sum_{\pi\in\Pi(\bar{G}_2)}\sum_{\varphi\in\B\left(\1_{n-2}\boxplus\pi\right)}\IP{\phi,\Eis(\varphi, \ol{\lambda_{\mathrm{D}}(s)})}_{[G_n]^1 }\\
		\varphi_{-\lambda_{\mathrm{D}}(s)}\left[\begin{pmatrix}\mathrm{I}_{n-2}&\\&h\end{pmatrix}\right](1)
		+\frac{1}{2}\sum_{\chi\in\Pi(G_1)}\sum_{\varphi\in\B(\1_{n-2}\boxplus\chi\boxplus\chi^{-1})}\intop_{i\mathbb{R}}\left\langle \phi,\Eis(\varphi,\overline{\lambda_{\mathrm{C}}(s,z)})\right\rangle_{[G_n]^1 }\\
		\Eis^{\Q_{n-2}}(\varphi,-\lambda_{\mathrm{C}}(s,z))\left[\begin{pmatrix}\mathrm{I}_{n-2}&\\&h\end{pmatrix}\right]\, dz.
	\end{multline}
	Finally, note that replacing $\phi\in \mathcal{S}([\bar{G}_n])$ by a $K_n\ni k$-translate of $\phi$ in \eqref{eq:def-F-s} and changing bases in \eqref{eq:main-spec-decomp} we arrive at the following lemma.
	%\sj{should we explain more?}  \sj{can we change it to $G_n(\A)$? Think carefully.} \jd{I think it can be any fixed $\GL_n(\A)$ translate, and we can mention that the bases change is to $\varphi(.g'^{-1})$.}\rn{The point is that changing basis with g is requires a little more saliva and we will not need it right here so I vote we only say k-change of basis and I think the phrasing is fine.}
	
	\begin{lem}\label{lem:main-spec-decomp}
		Let $\phi\in \mathcal{S}([G_n]^1 )$ and $k\in K_n$. Recall $\V$ from \eqref{def-mathcal-V}. For $s,z\in\C$ recall $\lambda_{\mathrm{D}}(s)$ and $\lambda_{\mathrm{C}}(s,z)$ from \eqref{eq:def-lambda-discrete} and \eqref{eq:def-lambda-cont}. Then for $h\in G_2(\A)$ we have
		\begin{multline*}
			\V^{-1}\int_{[G_{n-2}]}\phi_{\Q_{n-2}}\left[\pmat{g&\\&h}k\right]|\det g|^{s-1}\, dg=\sum_{\pi\in\Pi(\bar{G}_2)}\sum_{\varphi\in\B\left(\1_{n-2}\boxplus\pi\right)}\IP{\phi,\Eis(\varphi, \ol{\lambda_{\mathrm{D}}(s)})}_{[G_n]^1 }\\
			\varphi_{-\lambda_{\mathrm{D}}(s)}\left[\begin{pmatrix}\mathrm{I}_{n-2}&\\&h\end{pmatrix}k\right](1)+\frac{1}{2}\sum_{\chi\in\Pi(G_1)}\sum_{\varphi\in\B(\1_{n-2}\boxplus\chi\boxplus\chi^{-1})}\intop_{i\mathbb{R}}\left\langle \phi,\Eis(\varphi,\overline{\lambda_{\mathrm{C}}(s,z)})\right\rangle_{[G_n]^1 }\\
			\Eis^{\Q_{n-2}}(\varphi,-\lambda_{\mathrm{C}}(s,z))\left[\begin{pmatrix}\mathrm{I}_{n-2}&\\&h\end{pmatrix}k\right]\, dz
		\end{multline*}
		if $\Re(s)$ is sufficiently large.
	\end{lem}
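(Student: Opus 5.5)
The plan is to obtain the lemma as a direct consequence of \eqref{eq:main-spec-decomp}, applied to the right $k$-translate $\phi^k:=\phi(\cdot\,k)$ of $\phi$, followed by a change of basis in each induced representation.

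First, $\phi^k$ is again in $\mathcal{S}([\bar{G}_n])$, since right translation by the compact group $K_n$ preserves the Schwartz bounds on the Siegel set. The constant term commutes with right translation, so $(\phi^k)_{\Q_{n-2}}=(\phi_{\Q_{n-2}})(\cdot\,k)$. Hence, by \eqref{eq:def-F-s}, the function $F_s$ built from $\phi^k$ equals
\[
|\det h|^{-\frac{n-2}{2}(s-1)}\int_{[G_{n-2}]}\phi_{\Q_{n-2}}\left[\pmat{g&\\&h}k\right]|\det g|^{s-1}\,dg,
\]
which is exactly the left-hand side of the lemma up to the factor $|\det h|^{-\frac{n-2}{2}(s-1)}\V$. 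Applying \eqref{eq:main-spec-decomp} to $\phi^k$, valid for $\Re(s)$ large, gives the identity with $\phi$ replaced by $\phi^k$ and $h$ replacing $hk$ on the right.

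Second, we move $k$ across the spectral coefficients. Write $R(k)$ for right translation. Since $K_n$ preserves the induced inner product $\langle\cdot,\cdot\rangle_{\Ind}$ (it is an integral over $K$), $R(k)$ acts unitarily on $\mathcal{I}(\sigma)$ for $\sigma=\1_{n-2}\otimes\pi$ or $\1_{n-2}\otimes\chi\otimes\chi^{-1}$. It also commutes with the twist $\varphi\mapsto\varphi_\lambda$, because $H_P(gk)=H_P(g)$, and with $\Eis$ and $\Eis^{\Q_{n-2}}$. Invariance of the Haar measure on $[G_n]^1$ gives
\[
\IP{\phi^k,\Eis(\varphi,\bar\lambda)}=\IP{\phi,\Eis(R(k^{-1})\varphi,\bar\lambda)}.
\]
Replacing the basis $\B$ by $R(k^{-1})\B$, which is again orthonormal, turns each term $\IP{\phi^k,\Eis(\varphi,\bar\lambda)}\,\varphi_{-\lambda}[\mathrm{diag}(\mathrm{I}_{n-2},h)]$ into $\IP{\phi,\Eis(\varphi',\bar\lambda)}\,\varphi'_{-\lambda}[\mathrm{diag}(\mathrm{I}_{n-2},h)k]$. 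The continuous term is handled the same way, with $\Eis^{\Q_{n-2}}$ in place of evaluation. Independence of the basis, as in Proposition \ref{prop:inflation-prop}, justifies the substitution.

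The only delicate point is convergence. The change of basis must be performed inside absolutely convergent sums and integrals. I expect this to follow from Remark \ref{rmk:extra-decay-rep-lambda}, because the translated basis has the same $\Delta$-eigenvalues (the Laplacian commutes with $K_\infty$, and $K$-types are permuted), so the same decay estimates apply uniformly in $k$. One must also check that the meromorphic continuation in $z$ used for $\Eis^{\Q_{n-2}}$ is compatible with $R(k)$, which is immediate since translation is a continuous linear operation. Finally, multiplying both sides by $|\det h|^{\frac{n-2}{2}(s-1)}\V^{-1}$ recovers the stated form, as in the passage from \eqref{eq:disc-gl2-spectral} to \eqref{eq:main-spec-decomp}.
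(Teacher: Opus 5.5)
Your proposal is correct and is exactly the paper's argument: apply \eqref{eq:main-spec-decomp} to the right $K_n$-translate $\phi(\cdot\,k)$, then replace each orthonormal basis $\B$ by $R(k^{-1})\B$ and invoke basis independence. Your checks fill in details the paper leaves implicit, namely that $R(k)$ is unitary for $\langle\cdot,\cdot\rangle_{\Ind}$, commutes with $\varphi\mapsto\varphi_\lambda$, and intertwines $\Eis$ and $\Eis^{\Q_{n-2}}$; your final rescaling by $|\det h|^{\frac{n-2}{2}(s-1)}\V^{-1}$ is redundant, since that factor is already built into \eqref{eq:main-spec-decomp}.
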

	
	\subsubsection{Partial Whittaker coefficient}
	
	Now we start with the equation in Lemma \ref{lem:main-spec-decomp}, replace $h$ by $uh$ where $u\in [N_2]$ and integrate both sides against the additive character $\overline{\psi(u)}$. The left hand side, by definition, yields
	\begin{equation*}
		\V^{-1}\int_{[G _{n-2}]}W^{\Q_{n-2}}_{\phi_{\Q_{n-2}}}\left[\pmat{g&\\&h}k\right] |\det g|^{s-1} dg,
	\end{equation*}
	which is the inner-most $[G_{n-2}]$-integral in Lemma \ref{lem:before-spectral-expansion}.
	On the other hand, the right-hand side, after interchanging the $u$-integral with the existing sums and integrals there (which is justified as $u$-integral is compact and these sums and integrals converge absolutely), yields
	\begin{multline*}
		\sum_{\pi\in\Pi(\bar{G}_2)}\sum_{\varphi\in\B\left(\1_{n-2}\boxplus\pi\right)}\IP{\phi,\Eis(\varphi, \ol{\lambda_{\mathrm{D}}(s)})}_{[G_n]^1 }
		W^{\Q_{n-2}}_{\varphi_{-\lambda_{\mathrm{D}}(s)}}\left[\begin{pmatrix}\mathrm{I}_{n-2}&\\&h\end{pmatrix}k\right](1)\\
		+\frac{1}{2}\sum_{\chi\in\Pi(G_1)}\sum_{\varphi\in\B(\1_{n-2}\boxplus\chi\boxplus\chi^{-1})}\intop_{i\mathbb{R}}\left\langle \phi,\Eis(\varphi,\overline{\lambda_{\mathrm{C}}(s,z)})\right\rangle_{[G_n]^1 }
		W^{\Q_{n-2}}_{\Eis^{\Q_{n-2}}(\varphi,-\lambda_{\mathrm{C}}(s,z))}\left[\begin{pmatrix}\mathrm{I}_{n-2}&\\&h\end{pmatrix}k\right]\, dz.
	\end{multline*}
	Moreover, we note that
	\begin{equation*}
		W^{\Q_{n-2}}_{\varphi_{-\lambda_{\mathrm{D}}(s)}}\left[\begin{pmatrix}\mathrm{I}_{n-2}&\\&h\end{pmatrix}k\right](1)=e^{\left\langle\rho_{\Q_{n-2}}-\lambda_{\mathrm D}(s),H_{\Q_{n-2}}(h)\right\rangle}\int_{[N_2]}\varphi(k)(uh)\, du,
	\end{equation*}
	which vanishes identically if $\pi\ni\varphi(k)$ is non-generic. Thus we can restrict the above $\pi$-sum to only cuspidal spectrum $\Pi_{\mathrm{c}}(\bar{G}_2)$.
	
	Before stating the main result of this section, we define some useful expressions.
	
	\begin{rmk}\label{rmk:short-name-for-partial-Whittaker}
		Note that if $\varphi\in\1_{n-2}\boxplus\pi$ (or any vector that is induced from a parabolic associated to $n = (n-2)+2$) then, by the definition of $\Eis^{\Q_{n-2}}(\varphi,\lambda)$ as in \S\ref{sec:auto-rep}, it is the same as $\varphi_\lambda(\cdot)(1)$ for any $\lambda\in\a^\ast_{\Q_{n-2},\C}$. Thus we can realize $W^{\Q_{n-2}}_{\varphi_{-\lambda_{\mathrm{D}}(s)}}$ as $W^{\Q_{n-2}}_{\Eis^{\Q_{n-2}}(\varphi,-\lambda_{\mathrm{D}}(s))}$. Finally, to ease the notation we abbreviate $W^{\Q_{n-2}}_{\Eis^{\Q_{n-2}}(\varphi,\lambda)}$ as $\W(\varphi,\lambda)$.
	\end{rmk}
	
	For a vector $\varphi\in\1_{n-2}\boxplus\pi$ or $\varphi\in\1_{n-2}\boxplus\chi\boxplus\chi^{-1}$, correspondingly, for $\lambda\in\a^\ast_{\Q_{n-2},\C}$ or $\lambda\in\a^\ast_{\widetilde{\Q_{n-2}},\C}$, and $s\in\C$ with sufficiently large $\Re(s)$ we define our main local period integral as
	\optionA{\begin{equation}\label{eq:main-RS-period}
			\P(\lambda;\varphi,f,f_s):=\intop_{K_n}\intop_{Z_2(\A)N_2(\A)\bs G_2(\A)}
			\W(\varphi,\lambda)\cdot\overline{\W(f)}\cdot f_s
			\left[\begin{pmatrix}\mathrm{I}_{n-2}&\\&h\end{pmatrix}k\right]|\det h|^{n-2}\, dh\, dk.
	\end{equation}}
	\optionB{\begin{multline}\label{eq:main-RS-period}
			\P(s,\lambda;\varphi,\Phi):=\intop_{K_n}\intop_{N_2(\A)\bs G_2(\A)}\W(\varphi,\lambda)\cdot
			\overline{\W(f)}\cdot\Phi(e_n\cdot)
			\left[\begin{pmatrix}\mathrm{I}_{n-2}&\\&h\end{pmatrix}k\right]\\
			\times|\det h|^{n-3/2+s}\, dh\, dk.\\
	\end{multline}}
	In the next section, in the proof of Lemma \ref{lem:factorization-period} we will show that the above integral converges absolutely for $\Re(s)$ sufficiently large.
	\begin{rmk}\label{rmk:extension-def-period}
		Although the entry $\varphi$ in $\P$ is originally defined to be on $\1_{n-2}\boxplus\pi$ or $\varphi\in\1_{n-2}\boxplus\chi\boxplus\chi^{-1}$, we can easily extend the domain of $\varphi$ to $\1_{n-1}\boxplus\1_1$. We will do that later in this paper and keep the same notation $\P$ for this extension.
	\end{rmk}
	We also define 
	\begin{equation}\label{eq:def-I2-disc}
		I_2^{\mathrm{D}}(s):=\V\sum_{\pi\in\Pi_{\mathrm{c}}(\bar{G}_2)}
		\sum_{\varphi\in\B\left(\1_{n-2}\boxplus\pi\right)}
		\left\langle\phi,\Eis(\varphi, \ol{\lambda_{\mathrm{D}}(s)})\right\rangle_{[G_n]^1 }
		\optionA{\P(-\lambda_{\mathrm{D}}(s);\varphi,f,f_s)}
		\optionB{\P(s,-\lambda_{\mathrm{D}}(s);\varphi,\Phi)},
	\end{equation}
	for $\lambda_{\mathrm{D}}$ as in \eqref{eq:def-lambda-discrete} and
	\begin{equation}\label{eq:def-I2-cont}
		I_2^{\mathrm{C}}(s):=\frac{\V}{2}\sum_{\chi\in\Pi(G_1)}\sum_{\varphi\in\B(\1_{n-2}\boxplus\chi\boxplus\chi^{-1})}
		\intop_{i\mathbb{R}}\left\langle \phi,\Eis(\varphi,\overline{\lambda_{\mathrm{C}}(s,z)})\right\rangle_{[G_n]^1 }
		\optionA{\P(-\lambda_{\mathrm{C}}(s,z);\varphi,f,f_s)}
		\optionB{\P(s,-\lambda_{\mathrm{C}}(s,z);\varphi,\Phi)}
		\, dz,
	\end{equation}
	for $\lambda_{\mathrm{C}}$ as in \eqref{eq:def-lambda-cont}.
	
	Finally, combining the above definitions and applying Lemma \ref{lem:before-spectral-expansion} and Lemma \ref{lem:main-spec-decomp} we arrive at the following.
	
	\begin{prop}\label{prop:expansion-after-Whittaker}
		For $\Re(s)$ sufficiently large we have
		\begin{equation*}
			I_2(s) = I_2^{\mathrm{D}}(s) + I_2^{\mathrm{C}}(s)
		\end{equation*}
		where $I_2,I_2^{\mathrm{D}},I_2^{\mathrm{C}}$ are as in \eqref{eq:def-I2}, \eqref{eq:def-I2-disc}, \eqref{eq:def-I2-cont}, respectively.
	\end{prop}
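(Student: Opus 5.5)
The plan is to start from the absolutely convergent expression for $I_2(s)$ in Lemma \ref{lem:before-spectral-expansion}, valid for $\Re(s)$ sufficiently large. We rewrite its innermost $[G_{n-2}]$-integral with the $N_2$-averaged form of Lemma \ref{lem:main-spec-decomp}, as in the computation preceding the statement. Replacing $h$ by $uh$ and integrating against $\ol{\psi(u)}$ over $[N_2]$ turns that integral into a spectral expansion. It has a discrete part, restricted to $\Pi_{\mathrm c}(\bar G_2)$ because non-generic $\pi$ have vanishing partial Whittaker coefficients, with coefficients $\IP{\phi,\Eis(\varphi,\ol{\lambda_{\mathrm D}(s)})}$ multiplying $\W(\varphi,-\lambda_{\mathrm D}(s))$. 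It also has a continuous part, with coefficients $\IP{\phi,\Eis(\varphi,\ol{\lambda_{\mathrm C}(s,z)})}$ multiplying $\W(\varphi,-\lambda_{\mathrm C}(s,z))$, using Remark \ref{rmk:short-name-for-partial-Whittaker}. We then substitute this expansion back into the outer $h$- and $k$-integrals of Lemma \ref{lem:before-spectral-expansion}.

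The second step is bookkeeping. The leftover factors $\ol{\W(f)}$, $\Phi(e_n\cdot)$ and $|\det h|^{n-3/2+s}$ must recombine into $\ol{\W(f)}\cdot f_s\cdot|\det h|^{n-2}$ integrated over $Z_2(\A)N_2(\A)\bs G_2(\A)$. To see this, fold the $N_2(\A)\bs G_2(\A)$ integral over the center $Z_2(\A)$. The central character of the $\W(\varphi,\cdot)$ factor and of $\ol{\W(f)}$ combine with $|\det h|^{n-3/2+s}$ and $\Phi(te_n\cdot)$. The $t$-integral over $\A^\times$ produces $f_s$ by \eqref{eq:def-f-from-phi}, exactly as in the passage from $\Phi(e_n\cdot)|\det|^{1/2+s}$ to $f_s$ in the proof of Proposition \ref{prop:n-1-case}. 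One checks that the exponents match, which explains the factor $|\det h|^{n-2}$ in \eqref{eq:main-RS-period}. The constant $\V$ appears because Lemma \ref{lem:main-spec-decomp} carries $\V^{-1}$ on the left-hand side. This gives $I_2^{\mathrm D}(s)+I_2^{\mathrm C}(s)$ with the periods $\P(-\lambda_{\mathrm D}(s);\varphi,f,f_s)$ and $\P(-\lambda_{\mathrm C}(s,z);\varphi,f,f_s)$.

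The main obstacle is justifying the interchange of the spectral sum over $\pi$, $\chi$, $\varphi$ and the $z$-integral with the $h$- and $k$-integration. For this I would prove absolute convergence of the combined sum and integral. On the spectral side, Remark \ref{rmk:extra-decay-rep-lambda} gives rapid decay of $\IP{\phi,\Eis(\varphi,\ol{\lambda})}\nu_\varphi^{O(1)}$ in the archimedean parameters. Since $\lambda_{\mathrm C}(s,z)$ with $\Re z=0$ and $\Re s$ large stays away from poles, this bound applies uniformly there. On the local side, we need a bound for $\W(\varphi,-\lambda)(h)$ that is polynomial in $\nu_\varphi$ and $|z|$ and of controlled growth in $h$, obtained from standard $\GL_2$ Whittaker function bounds. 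We also need rapid decay of $\ol{\W(f)}\Phi(e_n\cdot)$ along the positive root of $h$; the large power $|\det h|^{\Re s}$ then secures convergence of the $h$-integral over $Z_2N_2\bs G_2$. This is the content to be verified with the proof of Lemma \ref{lem:factorization-period}. Once a dominated bound summable over the spectrum is in place, Fubini gives the claimed identity $I_2(s)=I_2^{\mathrm D}(s)+I_2^{\mathrm C}(s)$.
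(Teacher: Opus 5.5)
Your proposal is correct and follows the paper's route. You take the $[N_2]$-Whittaker coefficient of Lemma \ref{lem:main-spec-decomp}, with the discrete part restricted to cuspidal $\pi$, and insert it into Lemma \ref{lem:before-spectral-expansion}. Folding the $Z_2(\A)$-integral then combines the central exponents into $|t|^{n(1/2+s)}$, which produces $f_s$ and the factor $|\det h|^{n-2}$ of $\P(\cdot;\varphi,f,f_s)$. The paper leaves both this folding and the interchange of the spectral sum with the $h,k$-integrals implicit, so your dominated-convergence discussion fills in detail rather than changing the argument.
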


	\section{Factorization of the Period and Unramified Computation}
	
	The main goal in this section is to show that the period \optionA{$\P(\lambda;\varphi,f,f_s)$}\optionB{$\P(s,\lambda;\varphi,f,\Phi)$} from \eqref{eq:main-RS-period} factorizes as a product of local periods and compute the local periods at the unramified places.
	
	\subsection{Factorization}
	
	We start by recalling Flath's theorem \cite{flath1979decomposition}. Let $M$ be a Levi of a proper parabolic $P$ of $G_n$ associated to the partition $n=n_1+\dots+n_k$ and $\sigma=\sigma_1\otimes\dots\otimes\sigma_k\in\Pi(M)$ where $\sigma_i$ are discrete series representations of $G_{n_i}(\A)$. On the other hand, for a place $v$ of $F$ we define the local representation $\Sigma_v:=\mathcal{I}\left(\otimes_{i=1}^k\sigma_{i,v}\right)$ where $\sigma_{i,v}$ are the $v$-adic factors of $\sigma_i$ under Flath's theorem. Then it follows that there is a $G_n(\A)$-equivariant isomorphism $\boxplus_{i=1}^k\sigma_i\cong \otimes'_v \Sigma_v$. We call a vector $\varphi\in\sigma_1\boxplus\dots\boxplus\sigma_k$ \emph{factorizable} if the image of $\varphi$ under the above isomorphism is a pure tensor $\otimes_v'\varphi_v$. In this case, under the above isomorphism $g\cdot\varphi\mapsto\prod_v g_v\cdot\varphi_v$ for $g\in G_n(\A)$.
	
	Let $P=\Q_{n-2}$ and $\sigma=\1_{n-2}\otimes\pi$ where $\pi$ is a cuspidal representation of $G_2(\A)$. Recalling the notations from Remark \ref{rmk:short-name-for-partial-Whittaker} for $\varphi\in\mathcal{I}(\sigma)$ and $\lambda\in\a^\ast_{P,\C}$ we write
	\begin{equation*}
		\W(\varphi,\lambda)(1):=\int_{[N_2]}\varphi_\lambda\left[\pmat{\mathrm{I}_{n-2}\\&u}\right](1)\overline{\psi(u)}\, du = W_{\varphi(1)}(1)
	\end{equation*}
	where $W_{\varphi(1)}$ denotes the $\psi$-Whittaker function of $\varphi(1)\in\pi$. Fix a finite set of places $S$ containing all the archimedean and ramified places of $F$. If $\varphi$ is factorizable then following the above definition and using uniqueness of Whittaker model we may write
	\begin{equation*}
		W_{\varphi(1)}=\frac{1}{\sqrt{L^S(1,\pi,\Ad)}}\prod_v\varphi_v(1)
	\end{equation*}
	where $\varphi_v(1)$ is an element in the $\psi_v$-Whittaker model of $\pi_v$ such that $\varphi_v(1)$ is the normalized spherical vector for all $v\notin S$. Correspondingly, we have
	\begin{equation}\label{eq:def-local-W-phi-cusp}
		\W(\varphi,\lambda)(g)=\frac{1}{\sqrt{L^S(1,\pi,\Ad)}}\prod_v\W(\varphi_v,\lambda)(g_v),\quad \W(\varphi_v,\lambda)(g_v):= e^{\left\langle \lambda,H_P(g_v)\right\rangle}\varphi_v(g_v)(1),
	\end{equation}
	for $G_n(\A)\ni g= (g_v)_v$.
	
	Now let $P=\widetilde{\Q_{n-2}}$ and $\sigma=\1_{n-2}\otimes\chi\otimes\chi^{-1}$ where $\chi\in\Pi(G_1)$. In this case, for $\varphi\in\mathcal{I}(\sigma)$ and $\lambda\in\a^\ast_{P,\C}$, we have
	\begin{align}\label{eq:bruhat-gl2}
		\W(\varphi,\lambda)
		&=\int_{[N_2]}\Eis^{\Q_{n-2}}(\varphi,\lambda)\left[\pmat{\mathrm{I}_{n-2}\\&u}\right]\overline{\psi(u)}\, du \nonumber\\
		&= \int_{N_2(\A)}\varphi_\lambda\left[\pmat{\mathrm{I}_{n-2}\\&w_2u}\right]\overline{\psi(u)}\, du
	\end{align}
	where the right hand side above converges for sufficiently dominant $\lambda$, and has analytic continuation for all $\lambda$. To see the equality \eqref{eq:bruhat-gl2} we first write $\Eis^{\Q_{n-2}}(\varphi,\lambda)(g)$ as
	\begin{equation*}
		\sum_{\gamma\in B(F)\bs G_2(F)}\varphi_\lambda\left[\pmat{\mathrm{I}_{n-2}\\&\gamma}g\right].
	\end{equation*}
	Replacing $g$ by $\pmat{\mathrm{I}_{n-2}\\&u}$ with $u\in [N_2]$, using the Bruhat decomposition for $B(F)\bs G_2(F)$, and using left $N_2(\A)$-invariance we deduce \eqref{eq:bruhat-gl2}. Now if $\varphi$ is factorizable then the above discussion allows us to write
	\begin{multline}\label{eq:def-local-W-phi-eis}
		\W(\varphi,\lambda)(g)
		=\prod_v\W(\varphi_v,\lambda)(g_v),\quad\W(\varphi_v,\lambda):=\int_{N_2(F_v)}\varphi_{v,\lambda}\left[\pmat{\mathrm{I}_{n-2}\\&w_2u}\cdot\right]\overline{\psi_v(u)}\, du,
	\end{multline}
	where the right hand side of the above infinite product converges absolutely for sufficiently dominant $\lambda$. In fact, it suffices to require that $\Re(\alpha(\lambda))$ is sufficiently large where $\alpha$ is the simple root of the lower $G_2$ block in $\Q_{n-2}$.
	
	Finally, let $P=\Q_{n-1}$ and $\varphi\in \1_{n-1}\boxplus \1$. We put
	\begin{equation*}
		\W(f,s'):=W_{\Eis(f_{s'})_{\Q_{n-2}}}^{\Q_{n-2}}.
	\end{equation*}
	It follows from \eqref{eq:whittaker-function-maximal-deg-eis} that
	\[\W(f,s')=\int_{N_2(\A)}f_{s'}\left[\begin{pmatrix}
		\mathrm{I}_{n-2}\\&w_2u
	\end{pmatrix}\right]\overline{\psi(u)}du.\]
	Thus, if $f$ is factorizable, then
	\begin{equation}\label{eq:def-partial-jacquet-intertwiner-local}
		\W(f,s')=\prod_{v}\W(f_v,s'),\quad\W(f_v,s'):=\int_{N_2(F_v)}f_{v,s'}\left[\pmat{\mathrm{I}_{n-2}\\&w_2u}\right]\overline{\psi_v(u)}\, du,
	\end{equation}
	where the product and the integral converge absolutely for sufficiently large $\Re(s')$ and are understood in general by analytic continuation. We also use the shorthands $\W(f)=\W(f,0)$ and $\W(f_v)=\W(f_v,0)$.
	
	We now define a local Rankin--Selberg period as follows
	\optionA{\begin{multline}\label{eq:main-RS-period-local}
			\P_v(\lambda;\varphi_v,f_v,f_{v,s}):=\intop_{K_{n,v}}\intop_{Z_2(F_v)N_2(F_v)\bs G_2(F_v)}
			\W(\varphi_v,\lambda)\cdot\overline{\mathcal{W}(f_v)}\cdot
			f_{v,s}\left[\begin{pmatrix}\mathrm{I}_{n-2}&\\&h\end{pmatrix}k\right]\\
			|\det h|^{n-2}\, dh\, dk,
	\end{multline}}
	\optionB{\begin{multline}\label{eq:main-RS-period-local}
			\P_v(s,\lambda;\varphi_v,\Phi_v):=\intop_{K_{n,v}}\intop_{N_2(F_v)\bs G_2(F_v)}\W(\varphi_v,\lambda)\cdot\overline{\mathcal{W}(f_v)}\cdot\Phi_v(e_n\bullet)\left[\begin{pmatrix}\mathrm{I}_{n-2}&\\&h\end{pmatrix}k\right]\\
			|\det h|^{n-3/2+s}\, dh\, dk,
	\end{multline}}
	Moreover, in view of Remark \ref{rmk:extension-def-period} we extend the domain of definition of 
	$\P_v$ as well.
	
	\begin{lem}\label{lem:factorization-period}
		Let $\Phi=\otimes'_v\Phi_v$ be a factorizable Schwartz--Bruhat function. Let the vector $\varphi$ be a factorizable vector (in the above sense) in $\1_{n-2}\boxplus\pi$ or $\1_{n-2}\boxplus\chi\boxplus\chi^{-1}$, respectively, and let $\lambda\in\a^\ast_{\widetilde{\Q_{n-2}},\C}$. Then
		\optionA{\begin{equation*}
				\P(\lambda;\varphi,f,f_s)=\prod_v\P_v(\lambda;\varphi_v,f_v,f_{v,s}),
		\end{equation*}}
		\optionB{\begin{equation*}
				\P(s,\lambda;\varphi,\Phi)=\prod_v\P_v(s,\lambda;\varphi_v,\Phi_v),
			\end{equation*}
			\rn{If you see this, than maybe you need to change the proof below}}
		for sufficiently large $\Re(s)$.
	\end{lem}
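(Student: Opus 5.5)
The plan is to reduce the global period $\P(\lambda;\varphi,f,f_s)$ to an Euler product by factorizing each of its three integrand factors and the domain of integration, and then to justify the interchange of product and integral by absolute convergence.

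First, the domain. $Z_2(\A)N_2(\A)\bs G_2(\A)$ and $K_n$ are restricted products of their local counterparts. By the measure normalizations of \S\ref{subsec:chars-and-measures}, the adelic measures $dh$ and $dk$ are products of the local ones, and $|\det h|^{n-2}=\prod_v|\det h_v|_v^{n-2}$.

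Second, the integrand factors:
\begin{itemize}
\item $\W(\varphi,\lambda)$ factorizes by \eqref{eq:def-local-W-phi-cusp} in the cuspidal case, with the global constant $L^S(1,\pi,\Ad)^{-1/2}$, which I absorb into the normalization of the local vectors at $v\in S$ or carry along as a harmless scalar. In the Eisenstein case it factorizes by \eqref{eq:def-local-W-phi-eis}.
\item $\overline{\W(f)}$ factorizes by \eqref{eq:def-partial-jacquet-intertwiner-local} at $s'=0$, understood via analytic continuation.
\item $f_s=\prod_v f_{v,s}$ factorizes by \eqref{eq:def-f-from-phi}, since $\Phi$ is factorizable and $\A^\times$, $d^\times t$ are restricted products. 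This holds whenever $\Re(s)>\frac1n-\frac12$.
\end{itemize}
With these, the integrand at $\begin{pmatrix}\mathrm{I}_{n-2}&\\&h\end{pmatrix}k$ is a pointwise product $\prod_v$ of the local integrands. Fubini for restricted products then gives $\P=\prod_v\P_v$, provided $\prod_v\int|\text{local integrand}_v|$ converges.

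The main obstacle is this absolute convergence. One subtlety is that $\W(f)$ is only defined by analytic continuation, so the local factors $\W(f_v)$ are not given by absolutely convergent integrals. I handle this in two steps.

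\emph{Unramified places $v\notin S$.} Here all data are spherical. By the Iwasawa decomposition, $h=\pmat{y_1&\\&y_2}$ modulo $Z_2N_2$ and $k\in K_{n,v}$ drops out, so each local integral becomes a sum over $y=\varpi^m$ with $m\ge0$. The spherical $\W(\varphi_v,\lambda)$ is given by Shintani's formula \eqref{shintani}, or by the Casselman–Shalika formula in the Eisenstein case. $\W(f_v)$ is an explicit spherical function, a $\GL_2$-type Whittaker function of the degenerate section, of size $\ll q^{-m(\frac{n-1}{2}-\epsilon)}$ up to polynomial factors after the analytic continuation. $f_{v,s}$ contributes $|y|^{n(\frac12+s)}$ times a local zeta value. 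Summing the resulting geometric series, the absolute local integral equals $1+O(q_v^{-1-\delta})$ once $\Re(s)$ is large, with the implied constants uniform in $v$ by the bound toward Ramanujan $\vartheta<1/2$ for $\pi$. Hence the product over $v\notin S$ of the absolute integrals converges.

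\emph{Places $v\in S$.} There are finitely many, and each local absolute integral is finite for $\Re(s)$ large. The reasons are:
\begin{itemize}
\item $\W(\varphi_v,\lambda)$ and $\W(f_v)$ have moderate growth in $h$ and decay in $y_1/y_2$, by the standard bounds for Whittaker functions;
\item $f_{v,s}(\cdot)$ restricted to $\pmat{\mathrm{I}_{n-2}\\&h}$ is bounded by $|y_2|^{n(\frac12+\Re s)}$ times a Schwartz function, which kills growth as $y_2\to\infty$ and gives decay as $y_2\to 0$ once $\Re(s)$ is large;
\item $K_{n,v}$ is compact.
\end{itemize}

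If making $\W(f_v)$ concrete through analytic continuation is awkward, my fallback is to first prove the factorization with $\W(f,s')$ for $\Re(s')$ large, where everything converges absolutely. Both sides are then holomorphic in $s'$, and since $f_s$ provides the dominant decay, uniform convergence on compacta lets me continue to $s'=0$.
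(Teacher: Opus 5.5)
Your overall route is the paper's: factor the integrand place by place, prove local absolute convergence, control the unramified Euler product, and apply Fubini on the restricted product. The paper gets the local convergence by unfolding $f_{v,s}$ back to $\Phi_v(e_n\cdot)|\det h|^{n-3/2+s}$ on $N_2\bs G_2$ and quoting the local $\GL_2\times\GL_2$ Rankin--Selberg theory; it leaves the Euler product implicit.

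\textbf{The gap.} The step where you make the Euler product explicit fails in the Eisenstein case. Take $\varphi_v(1)=1$, which is the normalization under which \eqref{eq:def-local-W-phi-eis} is a factorization. Then $\W(\varphi_v,-\lambda_{\mathrm{C}}(s,z))(1)=L_v(1+2z,\chi_v^2)^{-1}$ for $v\notin S$. So, with $q_v=N(\p_v)$, the $m=0$ term of your unramified absolute integral is $|1-\chi_v^2(\varpi_v)q_v^{-1-2z}|\cdot|\zeta_v(n(\tfrac12+s))|$.
\begin{itemize}
\item The first factor does not depend on $s$, so taking $\Re(s)$ large does nothing to it.
\item On $\Re(z)=0$, which is exactly where the lemma feeds into $I_2^{\mathrm{C}}$, this factor is only $1+O(q_v^{-1})$.
\item The product over $v$ of these absolute values is $\prod_v|L_v(1+2z,\chi_v^2)|^{-1}$ on the boundary of absolute convergence, so it does not converge absolutely.
\end{itemize}
Hence your uniform bound $1+O(q_v^{-1-\delta})$ is false there, and your criterion $\prod_v\int|\cdot|<\infty$ is not met. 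More generally, for $\Re(z)\le 0$ the right-hand side $\prod_v\P_v$ is not an absolutely convergent Euler product at all.

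\textbf{The repair.} Use the same device you already apply to $L^S(1,\pi,\Ad)^{-1/2}$. For $v\notin S$, replace $\W(\varphi_v,\lambda)$ by the normalized spherical Whittaker function as in \eqref{eq:eisenstein-local-choice}. Carry $L^S(1+2z,\chi^2)^{-1}$, defined by meromorphic continuation, as a global scalar. The remaining unramified factors are then $1+O(q_v^{-1-\delta})$ for large $\Re(s)$, and your Fubini argument goes through. Equivalently, prove the identity for $\Re(z)$ large and continue analytically in $z$. This is how the lemma is actually used in Proposition \ref{prop:before-meromorphic-continuation}, although the paper's proof does not say so.

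\textbf{Two smaller corrections.}
\begin{itemize}
\item $\W(f_v)$ does not decay at unramified places: $W_{\tau_v}(\operatorname{diag}(\varpi_v^m,1))$ is of size $q_v^{m(n-3)/2}$ (compare the bound in \S\ref{sec:bounding_error_term}). This is harmless, because $f_{v,s}$ contributes a factor $|y|^{\frac12+\Re(s)}$.
\item Your fallback of continuing in $s'$ is unnecessary. For $v\notin S$ one has $\W(f_v,s')\equiv 1$ on $K_{n,v}$, so $\W(f)=\prod_v\W(f_v)$ is pointwise a finite product already at $s'=0$.
\end{itemize}
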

	
	\begin{proof}
		First, we show that the integral \eqref{eq:main-RS-period-local} converges absolutely.
		\optionA{In fact, by unfolding, it suffices to prove the absolute convergence of
			$$\intop_{K_{n,v}}\intop_{N_2(F_v)\bs G_2(F_v)}\W(\varphi_v,\lambda)\cdot\overline{\mathcal{W}(f_v)}\cdot\Phi_v(e_n\bullet)\left[\begin{pmatrix}\mathrm{I}_{n-2}&\\&h\end{pmatrix}k\right]\\
			|\det h|^{n-3/2+s}\, dh\, dk.$$
		}
		As $K_{n,v}\ni k$ is compact we can replace $\varphi,f,\Phi$ by their corresponding $k$-average, which has no effect on convergence. \forlater{\rn{I actually never understood this phrase... DO you mean chang eeach individual function by its k average? that sounds odd to me. Or do you mean k averge of the period. In that case we need rephrasing}}Now note that $\W(\varphi_v,\lambda)\left[\begin{pmatrix}\mathrm{I}_{n-2}&\\&\bullet\end{pmatrix}\right]$ is a $\GL_2$-Whittaker function belonging to the representation $\pi_v\cdot|\det|_v^{\lambda_1}$ or $\chi_{1,v}\cdot|\bullet|_v^{\lambda_2}\boxplus\chi_{2,v}\cdot|\bullet|_v^{\lambda_3}$, where $\lambda_i$ are certain affine complex-valued functions of $\lambda$ and $\chi_{j,v}$ are unitary characters. It follows from \cite[Proof of Proposition 3.2.3]{MV2010subconvexity} that
		\begin{equation*}
			\W(\varphi_v,\lambda)\left[\begin{pmatrix}\mathrm{I}_{n-2}&\\&h\end{pmatrix}k\right]\ll_{\varphi_v,\lambda} |\det h|_v^{\frac{\Re(\lambda_2+\lambda_3)}{2}}\delta_{B_2}(h)^{\frac{1-|\Re(\lambda_2-\lambda_3)|}{2}}\min\left(1,\delta_{B_2}(h)\right)^{-N},
		\end{equation*}
		uniformly in $k\in K_{n,v}$.
		%\rn{I also don't understand the appearance of $K_{n,v}$ in the subscript. The group $K_{n,v}$ never varies throughout the article.} \jd{looking in MV there is a Sobolev norm of $W$, so the bound really depends on $k$ but we can really get a uniform bound for all $k$. Maybe we should say ''that for all $k\in K_{n,v}$ we can bound uniformly\dots'' and then drop $K_{n,v}$ subscript}
		Similarly, $\W(f_v)\left[\begin{pmatrix}\mathrm{I}_{n-2}&\\&\bullet\end{pmatrix}\right]$ is in the Whittaker model of the $\GL_2$-principal series $\1_{n-2}\boxplus|\det|^{-\frac{n-2}{2}}$, and $\Phi_v\left[e_n\begin{pmatrix}\mathrm{I}_{n-2}&\\&\bullet\end{pmatrix}\right]\in\mathcal{S}(F_v^2)$ and satisfies a similar estimate. Thus from the classical theory of local $\GL_2\times\GL_2$ Rankin--Selberg zeta integrals (see \emph{e.g.}, \cite[\S 2.3, Theorem 2.2]{cogdell2007functions}) we see that the integral in consideration converges absolutely for
		%I checked the y and z integral, both of which only have problems when they're large and the z integral gave a much less restrictive condition.
		\begin{equation*}
			\Re(s)\ge -\min\left(\Re(\lambda_1),\Re(\lambda_2)\right)-\tfrac{n-1}{2}
		\end{equation*}
		and has a meromorphic continuation to all $\C$.
		Moreover, the same also implies the absolute convergence for large $\Re(s)$ and meromorphic continuation to all $\C$
		of the global period \optionA{$\P(\lambda;\varphi,f,f_s)$}\optionB{$\P(s,\lambda;\varphi,\Phi)$}.
	\end{proof}
	\subsection{The unramified computation}\label{sec:unramified-local-choice}
	
	In what follows let $v$ be a finite place of $F$ that is coprime to the different ideal of $F$. We also assume that $\pi$ and $\chi$ are unramified at $v$. Let us choose $\Phi$ so that $\Phi_v:=\mathbbm{1}_{\o_v^n}$. Thus we compute
	\begin{equation}\label{eq:schwartz-restriction}
		\Phi_v\left[e_n\begin{pmatrix}\mathrm{I}_{n-2}&\\&\bullet\end{pmatrix}k\right] = \mathbbm{1}_{\o_v^2}(e_2\cdot)
	\end{equation}
	as $k\in K_{n,v}$. Consequently, from the relation of $f_{v,s'}$ and $\Phi_v$ as in \S\ref{sec:Schwartz-space} and the definition of $\W(f_v,s')$ (see \eqref{eq:def-partial-jacquet-intertwiner-local}) we deduce that $\W(f_v)\left[\begin{pmatrix}\mathrm{I}_{n-2}&\\&\bullet\end{pmatrix}\right]$ is a spherical vector on the  $\psi_v$-Whittaker model of $\1_{n-2}\boxplus |\cdot|_v^{-\frac{n-2}{2}}$.
	Once again from the relation of $f_{v,s'}$ and $\Phi_v$ as in \S\ref{sec:Schwartz-space}, we compute
	\begin{align*}
		\W(f_v,s')(1)
		&= \int_{F_v^\times}|t|_v^{n(1/2+s')}\int_{F_v}\mathbbm{1}_{\o_v^2}(t(1,x))\ol{\psi_v(x)}\, dx\, d^\times t\\
		&=\int_{\o_v}|t|_v^{n(1/2+s')-1}\int_{\o_v}\ol{\psi_v(x/t)}\,dx\, d^\times t=1
	\end{align*}
	for $\Re(s')$ sufficiently large. Hence, by analytic continuation in $s'$ we conclude that
	\begin{equation}\label{eq:induced-schwartz-normalization}
		\W(f_v)\left[\begin{pmatrix}\mathrm{I}_{n-2}&\\&\bullet\end{pmatrix}k\right] = W_{\tau_v},\quad \tau_v=\1_{n-2}\boxplus|\cdot|_v^{-\frac{n-2}{2}}
	\end{equation}
	This is the normalized spherical vector in the Whittaker model of $\tau_v$. In what follows, we choose the vectors $\varphi$ at $v$ and compute the local period \optionA{$\P_v$}\\\optionB{$\P_v$} as defined in \eqref{eq:main-RS-period-local}.
	
	\subsubsection{Cuspidal case}\label{sec:cuspidal-local-choice}
	
	Let $\varphi\in\1_{n-2}\boxplus\pi$ be such that $\varphi_v$ is the spherical vector with the normalization $\W(\varphi_v,0) = 1$. Recall $\lambda_{\mathrm{D}}(s)$ from \eqref{eq:def-lambda-discrete}. By sphericality we have
	\begin{equation*}
		\W(\varphi_v,-\lambda_{\mathrm{D}}(s))\left[\begin{pmatrix}\mathrm{I}_{n-2}&\\&hk'\end{pmatrix}k\right] = |\det h|_v^{\frac{n-2}{2}s}\W(\varphi_v,0)\left[\begin{pmatrix}\mathrm{I}_{n-2}&\\&h\end{pmatrix}\right],\quad k'\in K_{2,v}.
	\end{equation*}
	Thus by the above normalization we obtain
	\begin{equation}\label{eq:cuspidal-local-choice}
		\W\left(\varphi_v,-\lambda_{\mathrm{D}}(s)\right)\left[\begin{pmatrix}\mathrm{I}_{n-2}&\\&\bullet\end{pmatrix}k\right]
		= W_{\pi_v}\cdot|\det|_v^{\frac{n-2}{2}(s-1)}.
	\end{equation}
	Hence, combining \eqref{eq:cuspidal-local-choice}, \eqref{eq:induced-schwartz-normalization}, and \eqref{eq:schwartz-restriction} and recalling \eqref{eq:main-RS-period-local} we compute
	\begin{align}\label{eq:unramified-cusp}
		\optionA{\P_v\left(-\lambda_{\mathrm{D}}(s);\varphi_v,f_v,f_{v,s}\right)}
		\optionB{\P_v\left(s,-\lambda_{\mathrm{D}}(s);\varphi_v,\Phi_v\right)}
		&=L_v\left(\tfrac{n-1}{2}+\tfrac{n}{2}s,\tau_v\,\otimes\,\pi_v\right)\\
		&=L_v\left(\tfrac{n-1}{2}+\tfrac{n}{2}s,\pi_v\right)L_v\left(\tfrac{1}{2}+\tfrac{n}{2}s,\pi_v\right),\nonumber
	\end{align}
	which follows from \cite[Theorem 3.3]{cogdell2007functions} for sufficiently large $\Re(s)$.
	
	\subsubsection{Eisenstein case}\label{sec:eisenstein-local-choice}
	
	Let $\varphi\in\1_{n-2}\boxplus\chi\boxplus\chi^{-1}$ be such that $\varphi_v$ is the spherical vector with the normalization $\varphi_v(1) = 1$. Recall $\lambda_{\mathrm{C}}(s,z)$ from \eqref{eq:def-lambda-cont}. By sphericality we have
	\begin{multline*}
		\W(\varphi_v,-\lambda_{\mathrm{C}}(s,z))\left[\begin{pmatrix}\mathrm{I}_{n-2}&\\&hk'\end{pmatrix}k\right]\\
		= e^{\left\langle (z,-z), H_B( h)\right\rangle}|\det h|_v^{\frac{n-2}{2}s}\W(\varphi_v,0)\left[\begin{pmatrix}\mathrm{I}_{n-2}&\\&h\end{pmatrix}\right],\quad k'\in K_{2,v}.
	\end{multline*}
	On the other hand, a similar computation as in the deduction of \cite[eq.(6.11)]{bump19927automorphic} yields
	\begin{equation*}
		\W(\varphi_v,-\lambda_{\mathrm{C}}(s,z))(1) = L_v(1+2z,\chi_v^2)^{-1}.
	\end{equation*}
	for $\Re(z)$ sufficiently large. Thus by the above normalization we obtain
	\begin{equation}\label{eq:eisenstein-local-choice}
		\W(\varphi_v,-\lambda_{\mathrm{C}}(s,z))\left[\begin{pmatrix}\mathrm{I}_{n-2}&\\&\bullet\end{pmatrix}k\right]
		= L_v(1+2z,\chi_v^2)^{-1}W_{\left(\chi_v\cdot|\cdot|_v^{z}\,\boxplus\,\chi_v^{-1}\cdot|\cdot|_v^{-z}\right)}\cdot
		|\det|^{\frac{n-2}{2}(s-1)}.
	\end{equation}
	Hence, combining \eqref{eq:eisenstein-local-choice}, \eqref{eq:induced-schwartz-normalization}, and \eqref{eq:schwartz-restriction} and recalling \eqref{eq:main-RS-period-local} we compute
	\begin{align}\label{eq:unramified-eis}
		\optionA{\P_v\left(-\lambda_{\mathrm{C}}(s,z);\varphi_v,f_v,f_{v,s}\right)}
		\optionB{\P_v\left(s,-\lambda_{\mathrm{C}}(s,z);\varphi_v,\Phi_v\right)}
		&=\frac{L_v\left(\tfrac{n-1}{2}+\tfrac{n}{2}s,\tau_v\,\otimes\,
			\left(\chi_v\cdot|\cdot|_v^{z}\,\boxplus\,\chi_v^{-1}\cdot|\cdot|_v^{-z}\right)\right)}{L_v(1+2z,\chi_v^2)}\nonumber\\
		&=\frac{\prod_{\pm}L_v\left(\tfrac{n-1}{2}+\tfrac{n}{2}s\pm z,\chi^\pm_v\right)
			L_v\left(\tfrac{1}{2}+\tfrac{n}{2}s\pm z,\chi^\pm_v\right)}{L_v(1+2z,\chi_v^2)},
	\end{align}
	which follows from \cite[Theorem 3.3]{cogdell2007functions} for sufficiently large $\Re(s)$.	
	
	Now we state the main result of this section.
	
	\begin{prop}\label{prop:before-meromorphic-continuation}
		Let $S$ be a finite set of places of $F$ containing all the archimedean and ramified places of $F$ such that $\phi\in\mathcal{S}([\bar{G}_n])$ is right invariant by $K_{n,v}$ for each $v\notin S$. Moreover, we choose a $\Phi\in\mathcal{S}(\A^n)$ such that $\Phi_v=\mathbbm{1}_{\o_v^n}$ for all $v\notin S$, as in \S\ref{sec:unramified-local-choice}. Then, for sufficiently large $\Re(s)$ we have
		\begin{multline*}
			I_2^{\mathrm{D}}(s)=\V\sum_{\pi\in\Pi_{\mathrm{c}}(\bar{G}_2)}\sum_{\varphi\in\B\left(\1_{n-2}\boxplus\pi\right)}\left\langle\phi,\Eis(\varphi, \ol{\lambda_{\mathrm{D}}(s)})\right\rangle_{[G_n]^1 }\\
			\frac{L^S\left(\tfrac{n-1}{2}+\tfrac{n}{2}s,\pi\right)L^S\left(\tfrac{1}{2}+\tfrac{n}{2}s,\pi\right)}{\sqrt{L^S(1,\pi,\Ad)}}
			\optionA{\P_S(-\lambda_{\mathrm{D}}(s);\varphi,f,f_s)}
			\optionB{\P_S(s,-\lambda_{\mathrm{D}}(s);\varphi,\Phi)},
		\end{multline*}
		and
		\begin{multline*}
			I_2^{\mathrm{C}}(s)=\frac{\V}{2}\sum_{\chi\in\Pi(G_1)}\sum_{\varphi\in\B(\1_{n-2}\boxplus\chi\boxplus\chi^{-1})}\intop_{i\mathbb{R}}\left\langle \phi,\Eis(\varphi,\overline{\lambda_{\mathrm{C}}(s,z)})\right\rangle_{[G_n]^1 }\\
			\frac{\prod_{\pm}L^S\left(\tfrac{n-1}{2}+\tfrac{n}{2}s\pm z,\chi^\pm\right)L^S\left(\tfrac{1}{2}+\tfrac{n}{2}s\pm z,\chi^\pm\right)}{L^S(1+2z,\chi^2)}
			\optionA{\P_S(-\lambda_{\mathrm{C}}(s,z);\varphi,f,f_s)}
			\optionB{\P_S(s,-\lambda_{\mathrm{C}}(s,z);\varphi,\Phi)}
			\, dz,
		\end{multline*}
		where $I_2^{\mathrm{D}}$ and $I_2^{\mathrm{C}}$ are as in \eqref{eq:def-I2-disc} and \eqref{eq:def-I2-cont}, respectively;
		and $\P_S:=\prod_{v\in S}\P_v$ where $\P_v$ is as in \eqref{eq:main-RS-period-local}.
	\end{prop}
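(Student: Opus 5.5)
The plan is to substitute the factorization of Lemma \ref{lem:factorization-period} into the definitions \eqref{eq:def-I2-disc} and \eqref{eq:def-I2-cont}. Then we evaluate the unramified local factors via \eqref{eq:unramified-cusp} and \eqref{eq:unramified-eis}, and account for the normalization of the Whittaker function in \eqref{eq:def-local-W-phi-cusp}.

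\textbf{Choosing the basis.} Since the sums over $\B(\1_{n-2}\boxplus\pi)$ and $\B(\1_{n-2}\boxplus\chi\boxplus\chi^{-1})$ are independent of the basis (Proposition \ref{prop:inflation-prop}), we first choose bases of factorizable vectors $\varphi=\otimes_v'\varphi_v$. These should be adapted to the right $K_{n,v}$-action for $v\notin S$.

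\textbf{Reducing to spherical vectors.} Because $\phi$ is right $K_{n,v}$-invariant for $v\notin S$, the coefficient $\langle\phi,\Eis(\varphi,\bar\lambda)\rangle$ vanishes unless $\varphi_v$ is $K_{n,v}$-spherical for all $v\notin S$. Indeed, if $\varphi_v$ is orthogonal to the spherical line, averaging over $K_{n,v}$ kills the pairing, by $K$-equivariance of $\Eis$. The same argument forces $\pi$ (resp.\ $\chi$) to be unramified outside $S$. So only vectors whose components outside $S$ are the normalized spherical vectors contribute, with the normalizations of \S\ref{sec:cuspidal-local-choice} (resp.\ \S\ref{sec:eisenstein-local-choice}). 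Any rescaling needed to make these unit vectors globally is absorbed into the $S$-components, and hence into $\P_S$ together with the coefficient.

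\textbf{Factoring the period.} For such $\varphi$, Lemma \ref{lem:factorization-period} gives $\P=\prod_v\P_v$ for large $\Re(s)$.

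\begin{itemize}
\item \emph{Cuspidal case.} The local Whittaker functions in \eqref{eq:def-local-W-phi-cusp} carry the global constant $L^S(1,\pi,\Ad)^{-1/2}$, and the factors $v\notin S$ equal $L_v(\tfrac{n-1}{2}+\tfrac n2 s,\pi_v)L_v(\tfrac12+\tfrac n2 s,\pi_v)$ by \eqref{eq:unramified-cusp}.
\item \emph{Eisenstein case.} No adjoint constant appears, by \eqref{eq:def-local-W-phi-eis}. The factors $v\notin S$ are given by \eqref{eq:unramified-eis}, including the factor $L_v(1+2z,\chi_v^2)^{-1}$.
\end{itemize}

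Taking the product over $v\notin S$ produces the partial $L$-functions $L^S$, which converge absolutely for $\Re(s)$ large. In the continuous case this holds for $z\in i\R$, where $L^S(1+2z,\chi^2)^{-1}$ is fine except possibly at $z=0$ with $\chi^2=1$, where it vanishes harmlessly. What remains is $\P_S=\prod_{v\in S}\P_v$.

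\textbf{Main obstacle.} The step requiring the most care is justifying the interchanges of the infinite product over places with the sums and the integral over $z$. This needs uniform absolute convergence of the Euler products and of the $S$-adic periods in $\pi,\chi,z$. I would handle it via the convergence region from the proof of Lemma \ref{lem:factorization-period}, together with the rapid decay of the coefficients from Remark \ref{rmk:extra-decay-rep-lambda}. Polynomial bounds on $\P_S$ in the archimedean conductor then suffice, since these are standard for Rankin--Selberg integrals in a half-plane of absolute convergence.
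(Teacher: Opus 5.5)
Your proposal is correct and follows the paper's proof. The paper likewise chooses factorizable basis vectors whose components outside $S$ are the normalized spherical vectors, uses the right $K_{n,v}$-invariance of $\phi$ to discard everything else and force $\pi,\chi$ to be unramified outside $S$, and then combines Lemma \ref{lem:factorization-period} with \eqref{eq:unramified-cusp} and \eqref{eq:unramified-eis}.

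There are two small corrections, neither of which is a gap. First, no interchange of the Euler product with the spectral sums or the $z$-integral is needed, because the identity is proved term by term for each $\varphi$ and $z$. Second, on $\Re(z)=0$ the Euler product of $L^S(1+2z,\chi^2)^{-1}$ does not converge absolutely, so the factorization there comes from analytic continuation in $z$, starting from the region where $\Re(z)$ is large and \eqref{eq:def-local-W-phi-eis} holds.
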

	
	\begin{proof}
		Since $\phi$ is right invariant by $K_{n,v}$ for $v\notin S$, it implies that $\1_{n-2}\boxplus\pi$ and $\1_{n-2}\boxplus\chi\boxplus\chi^{-1}$  are unramified at $v$ and hence so are $\pi$ and $\chi$. We choose $\varphi\in\B$ in either case so that $\varphi_v$ are normalized as in \S\ref{sec:cuspidal-local-choice} and \S\ref{sec:eisenstein-local-choice}. The proof now follows by combining Lemma \ref{lem:factorization-period}, \eqref{eq:unramified-cusp}, \eqref{eq:unramified-eis}.
	\end{proof}

	\section{Meromorphic Continuation and Residues}\label{sec:meromorphic-continuation}

	The primary goal of this section is to meromorphically continue the term $I_2(s)$, which is defined for $\Re(s)$ sufficiently large in \eqref{eq:def-I2}. Our approach is via Proposition \ref{prop:expansion-after-Whittaker}. Namely, we will consider $I_2^{\mathrm{D}}(s)$ and $I_2^{\mathrm{C}}(s)$ separately, using their respective expressions given in Proposition \ref{prop:before-meromorphic-continuation}.

	\subsection{Meromorphic continuation of $I_2^{\mathrm{D}}(s)$}
	
	First, we discuss the meromorphic behavior of the local period $\P_v$ as defined in \eqref{eq:main-RS-period-local},
	which will be used to prove meromorphic continuation of $I_2^{\mathrm{D}}(s)$.
	
	\begin{lem}\label{lem:holomorphicity-local-factor-cuspidal}
		Let $\pi\in\Pi_{\mathrm{c}}(\bar{G}_2)$ and $\varphi\in\1_{n-2}\boxplus\pi$ be $s$-independent. Also, recall \optionA{$\P_v(\lambda;\varphi_v,f_v,f_{v,s})$}\optionB{$\P_v(s,\lambda;\varphi_v,f_v,\Phi_v)$} from \eqref{eq:main-RS-period-local} and $\lambda_{\mathrm{D}}(s)$ from \eqref{eq:def-lambda-discrete}. Then
		\begin{equation*}
			\optionA{
				\frac{\P_v(-\lambda_{\mathrm{D}}(s);\varphi_v,f_v,f_{v,s})}{L_v\left(\tfrac{n-1}{2}+\tfrac{n}{2}s,\pi_v\right)L_v\left(\tfrac{1}{2}+\tfrac{n}{2}s,\pi_v\right)}
			}
			\optionB{
				\frac{\P_v(s,-\lambda_{\mathrm{D}}(s);\varphi_v,\Phi_v)}{L_v\left(\tfrac{n-1}{2}+\tfrac{n}{2}s,\pi_v\right)L_v\left(\tfrac{1}{2}+\tfrac{n}{2}s,\pi_v\right)}
			}
		\end{equation*}
		can be analytically continued in $s\in\C$ as an entire function.
	\end{lem}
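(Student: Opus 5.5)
The plan is to recognize $\P_v(-\lambda_{\mathrm{D}}(s);\varphi_v,f_v,f_{v,s})$ as a finite linear combination of local $\GL_2\times\GL_2$ Rankin--Selberg zeta integrals. We then use the Jacquet--Piatetski-Shapiro--Shalika theory: the normalized zeta integral $\Psi(s,W,W',\Phi)/L(s,\pi\times\tau)$ is entire, and at the unramified places it is a polynomial in $q^{\pm s}$.

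\textbf{Step 1 (reduction to $\GL_2$).} Fix $k\in K_{n,v}$ and put
\[W_k(h)=\W(\varphi_v,-\lambda_{\mathrm D}(s))\left[\begin{pmatrix}\mathrm{I}_{n-2}&\\&h\end{pmatrix}k\right].\]
As in \eqref{eq:cuspidal-local-choice}, $W_k=W^{(k)}_{\pi_v}\cdot|\det|_v^{\frac{n-2}{2}(s-1)}$, where $W^{(k)}_{\pi_v}$ lies in the $\psi_v$-Whittaker model of $\pi_v$ and is independent of $s$, because $\varphi$ is $s$-independent. Similarly, $\W(f_v)[\mathrm{diag}(\mathrm{I}_{n-2},\bullet)k]$ is a Whittaker function $W'_k$ of $\tau_v=\1_{n-2}\boxplus|\cdot|_v^{-\frac{n-2}{2}}$ (more precisely of the corresponding $\GL_2$ principal series).

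For the third factor we use \eqref{eq:def-f-from-phi}, which gives
\[f_{v,s}\left[\begin{pmatrix}\mathrm{I}_{n-2}&\\&h\end{pmatrix}k\right]=|\det h|^{\frac12+s}\int_{F_v^\times}\Phi_k(te_2h)|t|^{n(\frac12+s)}\,d^\times t,\]
where $\Phi_k:=\Phi_v((0,\cdot)k)\in\mathcal{S}(F_v^2)$. Unfolding the $Z_2$-quotient against the $t$-integral, $\P_v$ becomes
\[\int_{K_{n,v}}\Psi\Bigl(\tfrac{n-1}{2}+\tfrac n2 s,\,W^{(k)}_{\pi_v},\,\overline{W'_k},\,\Phi_k\Bigr)\,dk,\]
the standard JPSS integral $\int_{N_2\bs G_2}W\overline{W'}\Phi(e_2h)|\det h|^{\cdot}\,dh$. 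The bookkeeping of exponents must be checked: the sum of $n-2$, $\frac{n-2}{2}(s-1)$ and $\frac12+s$, after renormalization, should give the shift $\frac{n-1}{2}+\frac n2 s$, consistent with \eqref{eq:unramified-cusp}.

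\textbf{Step 2 (entireness for fixed $k$).} By \cite[Theorem 2.2 and \S2.3]{cogdell2007functions}, applied to $\pi_v\times\tilde\tau_v$ (or $\pi_v\times\tau_v$, since $\tau_v$ is self-dual up to conjugation), $\Psi(\cdot)/L_v(\cdot,\pi_v\times\tau_v)$ is entire. Here $L_v(u,\pi_v\times\tau_v)=L_v(u,\pi_v)L_v(u-\frac{n-2}{2},\pi_v)$, which equals the stated denominator at $u=\frac{n-1}{2}+\frac n2 s$. Because $\tau_v$ is a possibly reducible principal series, we take the $L$-factor to be the product of the $\GL_2\times\GL_1$ factors, and we check that the JPSS gcd statement still holds for induced Whittaker models. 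It does, via the standard reduction to $\GL_2\times\GL_1$ integrals for principal series in the induced Whittaker model; see e.g.\ Jacquet's treatment of $\GL_2\times\GL_2$.

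In the non-archimedean case the ratio is a Laurent polynomial in $q_v^{-s}$. In the archimedean case it is entire with uniform bounds in vertical strips.

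\textbf{Step 3 (integrating over $k$).} This step is the main obstacle. We must show that the $k$-integral of these entire functions is entire, which requires locally uniform convergence in $s$.

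In the non-archimedean case, $\varphi_v$, $f_v$ and $\Phi_v$ are smooth, so the integrand depends on $k$ only through a finite quotient $K_{n,v}/K'$. The integral is then a finite sum and entireness is immediate.

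In the archimedean case, we intend to decompose $\varphi_v$ and $\Phi_v$ into $K$-finite pieces, or alternatively use continuity of the JPSS normalized functional. The map $(W,W',\Phi)\mapsto\Psi/L$ is continuous in Sobolev norms, uniformly for $s$ in compacta; this follows from Jacquet's archimedean theory, where $\Psi/L$ is a finite combination of holomorphic functions with polynomially controlled seminorms. Since $k\mapsto(W^{(k)},W'_k,\Phi_k)$ is continuous into the Fréchet spaces, the $k$-integral converges locally uniformly and defines an entire function.
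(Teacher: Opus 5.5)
Your proposal is correct and follows the paper's own argument: realize $\P_v$ as a $K_{n,v}$-average of $\GL_2\times\GL_2$ JPSS integrals built from a Whittaker vector of $\pi_v\cdot|\det|_v^{\frac{n-2}{2}(s-1)}$, a Whittaker vector of $|\cdot|_v^{0}\boxplus|\cdot|_v^{-\frac{n-2}{2}}$ and a Schwartz function on $F_v^2$, and then invoke entireness of the zeta integral divided by $L_v(u,\pi_v)L_v(u-\tfrac{n-2}{2},\pi_v)$. The differences are refinements only: the paper carries the twist in an auxiliary variable $s'$ and restricts to the diagonal at the end, and simply asserts that the compact $k$-average preserves analyticity; you instead absorb the twist directly into the zeta parameter, and you justify explicitly both the $k$-average and the use of the theory for the possibly reducible principal series $\tau_v$ (reducibility genuinely occurs, e.g.\ for $n=4$).
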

	
	\begin{proof}
		Let $s'\in\C$. First, from the definition \eqref{eq:def-local-W-phi-cusp} we note that $\W(\varphi_v,\lambda_\mathrm{D}(s'))$ is entire in $s'$. Now, as discussed in the proof of Lemma \ref{lem:factorization-period} and \S\ref{sec:cuspidal-local-choice}, \optionA{$\P_v(\lambda_{\mathrm{D}}(s');\varphi_v,f_v,f_{v,s})$}\optionB{$\P_v(s,-\lambda_{\mathrm{D}}(s');\varphi_v,f_v,\Phi_v)$} can be realized as a $
        %k\in 
        K_n$-average of a $\GL_2\times\GL_2$ local Rankin--Selberg integral of a $s$-independent Whittaker vector (that is entire in $s'$) in $\pi_v\cdot|\det|_v^{\frac{n-2}{2}(s'-1)}$, an $s,s'$-independent Whittaker vector in $|\cdot|_v^{0}\boxplus|\cdot|_v^{-\frac{n-2}{2}}$, and an $s,s'$-independent Schwartz function on $F_v^2$ at $n-\tfrac{3}{2}+s$. The $k$-average being a compact integral does not alter the analytic properties. Thus using the analytic theory of local zeta integrals (\emph{e.g.}, see \cite[Theorem 3.5]{cogdell2007functions}) we conclude that
		$$
		(s,s')\mapsto
		\optionA{ \tfrac{\P_v(\lambda_{\mathrm{D}}(s');\varphi_v,f_v,f_{v,s})}{L_v\left(\tfrac{n-1}{2}+\tfrac{n}{2}s,\pi_v\right)L_v\left(\tfrac{1}{2}+\tfrac{n}{2}s,\pi_v\right)}}
		\optionB{ \tfrac{\P_v(s,\lambda_{\mathrm{D}}(s');\varphi_v,\Phi_v)}{L_v\left(\tfrac{n-1}{2}+\tfrac{n}{2}s,\pi_v\right)L_v\left(\tfrac{1}{2}+\tfrac{n}{2}s,\pi_v\right)}}
		$$
		is entire in both variables. Restricting to $s'=-s$, the result follows.
	\end{proof}
	
	\begin{prop}\label{prop:cont-I2D}
		The expression of $I_2^{\mathrm{D}}(s)$ as given in Proposition \ref{prop:before-meromorphic-continuation} can be meromorphically continued in $s\in\C$ and is analytic on $\Re(s)=0$.
	\end{prop}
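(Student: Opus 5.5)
We would continue term by term in the discrete spectral sum and then justify the continuation by locally uniform absolute convergence. For fixed $\pi\in\Pi_{\mathrm{c}}(\bar G_2)$ and $\varphi\in\B(\1_{n-2}\boxplus\pi)$, each summand in Proposition~\ref{prop:before-meromorphic-continuation} has three factors. The first is the global inner product $\langle\phi,\Eis(\varphi,\ol{\lambda_{\mathrm D}(s)})\rangle$. The second is the partial $L$-value quotient $L^S(\tfrac{n-1}{2}+\tfrac n2 s,\pi)L^S(\tfrac12+\tfrac n2 s,\pi)/\sqrt{L^S(1,\pi,\Ad)}$. The third is the finite product $\P_S(-\lambda_{\mathrm D}(s);\varphi,f,f_s)$.

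The $L$-values are entire because $\pi$ is cuspidal on $\PGL_2$: the completed $L$-function is entire and the finitely many local factors at $S$ are meromorphic. By Lemma~\ref{lem:holomorphicity-local-factor-cuspidal}, each $\P_v$ equals $L_v(\tfrac{n-1}{2}+\tfrac n2 s,\pi_v)L_v(\tfrac12+\tfrac n2 s,\pi_v)$ times an entire function. Hence the product of the $L^S$-values with $\P_S$ is meromorphic in $s$, with poles only at the poles of these local $L$-factors. Using the bound $\vartheta<7/64$ toward Ramanujan for $\pi_v$, those poles lie in $\Re(s)\le -\tfrac{2}{n}(\tfrac12-\vartheta)<0$, so away from $\Re(s)=0$. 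The Eisenstein factor is meromorphic by the general theory. On $\Re(s)=0$ the parameter $\lambda_{\mathrm D}(s)=(s,-\tfrac{n-2}{2}s)$ is unitary, i.e.\ lies in $i\a^\ast_{\Q_{n-2}}$. There $\Eis(\varphi,\lambda)$ for $\varphi$ induced from the cuspidal-times-trivial (residual) datum $\1_{n-2}\otimes\pi$ is holomorphic, by the standard holomorphy of Eisenstein series on the unitary axis, and we would cite \cite{MW} or \cite{bernstein2024meromorphic}. Thus each summand is meromorphic and holomorphic near $\Re(s)=0$.

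The main obstacle is uniformity. We need the sum over $\pi$ and over $\varphi$ to converge absolutely and locally uniformly for $s$ in a neighborhood of the line $\Re(s)=0$, and in fact on compact sets avoiding poles. For the Eisenstein periods we use Remark~\ref{rmk:extra-decay-rep-lambda}. It gives rapid decay of $|\langle\phi,\Eis(\varphi,\ol\lambda)\rangle|\nu_\varphi^{O(1)}$ in the archimedean parameter of $\pi$ and in $K$-types, and restricts $\pi$ to bounded finite conductor, as long as $\lambda$ stays away from poles; we apply it with $\lambda=\lambda_{\mathrm D}(s)$.

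The remaining factors must then have at most polynomial growth in $\nu_\varphi$ and $C(\pi)$. For $L^S(\cdot,\pi)$ in a vertical strip, Phragmén–Lindelöf with convexity bounds gives this. For $L^S(1,\pi,\Ad)^{-1/2}$, the Hoffstein–Lockhart/Goldfeld–Hoffstein–Lieman lower bound gives polynomial control. For the local periods $\P_v$ with $v\in S$, we need polynomial bounds in a Sobolev norm of $\varphi_v$, locally uniformly in $s$ away from poles. For these we would redo the Whittaker bound from the proof of Lemma~\ref{lem:factorization-period}, tracking the dependence on $\varphi_v$ through \cite{MV2010subconvexity}. Once this is in place, Weierstrass/Morera yields the meromorphic continuation of the whole sum and its analyticity on $\Re(s)=0$.
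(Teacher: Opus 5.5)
Your proposal is correct and follows the paper's proof. It factors each summand, checks that the $L$-value/local-period factor has no poles near $\Re(s)=0$, uses holomorphy of $\Eis(\varphi,\lambda)$ on $i\a^\ast_{\Q_{n-2}}$ \cite[Proposition IV.1.11 (b)]{MW}, and concludes from the rapid decay in Remark~\ref{rmk:extra-decay-rep-lambda}. The only difference is that the paper writes $L^S(\cdot,\pi)\prod_{v\in S}L_v(\cdot,\pi_v)=\Lambda(\cdot,\pi)$, so by Lemma~\ref{lem:holomorphicity-local-factor-cuspidal} the non-Eisenstein factor is in fact entire and no bound toward Ramanujan is needed, whereas you only place the (actually removable) local poles in $\Re(s)<0$ via $\vartheta<\tfrac12$; your explicit polynomial bounds on the remaining factors spell out what the paper leaves implicit when it appeals to that Remark.
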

	
	\begin{proof}
		First, we write
		\begin{multline*}
			\frac{L^S\left(\tfrac{n-1}{2}+\tfrac{n}{2}s,\pi\right)L^S\left(\tfrac{1}{2}+\tfrac{n}{2}s,\pi\right)}{\sqrt{L^S(1,\pi,\Ad)}}\P_S(-\lambda_{\mathrm{D}}(s);\varphi,f,f_s)\\
			=\frac{\Lambda\left(\tfrac{n-1}{2}+\tfrac{n}{2}s,\pi\right)\Lambda\left(\tfrac{1}{2}+\tfrac{n}{2}s,\pi\right)}{\sqrt{\Lambda(1,\pi,\Ad)}}\prod_{v\in S}
			\optionA{\frac{\P_v(-\lambda_{\mathrm{D}}(s),\varphi_v,f_v,f_{v,s})\sqrt{L_v(1,\pi_v,\Ad)}}
				{L_v\left(\tfrac{n-1}{2}+\tfrac{n}{2}s,\pi_v\right)L_v\left(\tfrac{1}{2}+\tfrac{n}{2}s,\pi_v\right)}.}
			\optionB{\frac{\P_v(s,-\lambda_{\mathrm{D}}(s),\varphi_v,\Phi_v)\sqrt{L_v(1,\pi_v,\Ad)}}
				{L_v\left(\tfrac{n-1}{2}+\tfrac{n}{2}s,\pi_v\right)L_v\left(\tfrac{1}{2}+\tfrac{n}{2}s,\pi_v\right)}.}
		\end{multline*}
		%\rn{Should it be a square root of $L(1,\pi_v,\Ad)$ in the product?}
		% \forlater{Whenever we're  taking care of the $\Delta$-power business, we probably want to define
			% 	\[
			% 	\Lambda(s,\pi)=|\Delta_F|^{ns/2}\prod_vL_v(s,\pi).
			% 	\]}
		Using \cite[Theorem 4.2]{cogdell2007functions} along with the cuspidality of $\pi$ and Lemma \ref{lem:holomorphicity-local-factor-cuspidal} along with the choice of $\varphi$ there, we conclude that the left hand side above has analytic continuation in $s$ and is entire.
		
		It is known that $\Eis(\varphi,\lambda)$ is holomorphic on $\lambda\in i\a^\ast_P$; see \cite[Proposition IV.1.11 (b)]{MW}. Thus $\Eis\left(\ol{\varphi},\lambda_{\mathrm{D}}(s)\right)$ is holomorphic on $\Re(s)=0$ (which is equivalent to $\lambda_{\mathrm{D}}(s)\in i\a^\ast_{\Q_{n-2}}$) and is meromorphic in $s$ elsewhere. Finally, as worked out in the proof of Proposition \ref{prop:inflation-prop} (more specifically from Remark \ref{rmk:extra-decay-rep-lambda}), we know that the double sum defining $I_2^{\mathrm{D}}(s)$ converges absolutely and uniformly for $s$ lying in a compact set avoiding the poles of $\Eis\left(\ol{\varphi},\lambda_{\mathrm{D}}(s)\right)$. This yields holomorphicity on $\Re(s)=0$ and meromorphicity elsewhere.
	\end{proof}
	
	\subsection{Meromorphic continuation of $I_2^{\mathrm{C}}(s)$}\label{sec:mero_cont_for_continuous_spec}
	
	The process in this case is similar to that of $I_2^{\mathrm{D}}(s)$. We will focus on the differences.
	
	\begin{lem}\label{lem:holomorphicity-local-factor-eisenstein}
		Let $\chi\in\Pi(G_1)$ and $\varphi\in\1_{n-2}\boxplus\chi\boxplus\chi^{-1}$ be $(s,z)$-independent.
		Also, recall
		\optionA{$\P_v(\lambda;\varphi_v,f_v,f_{v,s})$}
		\optionB{$\P_v(s,\lambda;\varphi_v,\Phi_v)$} from \eqref{eq:main-RS-period-local} and $\lambda_{\mathrm{C}}(s,z)$ from \eqref{eq:def-lambda-cont}. Then
		\begin{equation*}
			\optionA{\frac{\P_v(-\lambda_{\mathrm{C}}(s,z);\varphi_v,f_v,f_{v,s})L_v(1+2z,\chi_v^2)}
				{\prod_{\pm}L_v\left(\tfrac{n-1}{2}+\tfrac{n}{2}s\pm z,\chi^\pm_v\right)
					L_v\left(\tfrac{1}{2}+\tfrac{n}{2}s\pm z,\chi^\pm_v\right)}}
			\optionB{\frac{\P_v(s,-\lambda_{\mathrm{C}}(s,z);\varphi_v,\Phi_v)L_v(1+2z,\chi_v^2)}
				{\prod_{\pm}L_v\left(\tfrac{n-1}{2}+\tfrac{n}{2}s\pm z,\chi^\pm_v\right)
					L_v\left(\tfrac{1}{2}+\tfrac{n}{2}s\pm z,\chi^\pm_v\right)}}
		\end{equation*}
		can be analytically continued in $s,z\in\C^2$ as an entire function.
	\end{lem}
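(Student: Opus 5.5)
The plan mirrors the proof of Lemma \ref{lem:holomorphicity-local-factor-cuspidal}. The extra work is to handle the $z$-dependence of the $\GL_2$ Whittaker vector and the normalizing factor $L_v(1+2z,\chi_v^2)$. I introduce an auxiliary variable $s'$ and a separate $z$. I consider $\W(\varphi_v,-\lambda_{\mathrm{C}}(s',z))\left[\begin{pmatrix}\mathrm{I}_{n-2}&\\&\bullet\end{pmatrix}k\right]$, which by \eqref{eq:def-local-W-phi-cusp}-type reasoning (more precisely \eqref{eq:def-local-W-phi-eis}) equals $|\det|_v^{\frac{n-2}{2}(s'-1)}$ times the Jacquet integral of a flat section of $\chi_v|\cdot|_v^{z}\boxplus\chi_v^{-1}|\cdot|_v^{-z}$. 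As in the proof of Lemma \ref{lem:factorization-period}, $\P_v(-\lambda_{\mathrm{C}}(s',z);\varphi_v,f_v,f_{v,s})$ is then a compact $K_{n,v}$-average of a local $\GL_2\times\GL_2$ Rankin--Selberg zeta integral. Its three inputs are:
\begin{itemize}
\item[(i)] this Whittaker vector;
\item[(ii)] the $(s,s',z)$-independent Whittaker vector $\W(f_v)$ of $\tau_v=\1_{n-2}\boxplus|\cdot|_v^{-\frac{n-2}{2}}$;
\item[(iii)] the Schwartz function $\Phi_v(e_n\cdot)$ restricted to $F_v^2$.
\end{itemize}
The integral is evaluated at the point $n-\tfrac32+s$.

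The first step is to normalize the $z$-dependence. The Jacquet integral $z\mapsto W_z$ of a flat section is entire in $z$ at non-archimedean places (and at archimedean ones by \cite[Theorem 15.4.1]{wallach1992real2}) once multiplied by $L_v(1+2z,\chi_v^2)$. In fact the standard normalization is that $L_v(1+2z,\chi_v^2)W_z$ is entire and in fact is the "good" Whittaker section, consistent with \eqref{eq:eisenstein-local-choice}. So I would rather show directly that the Jacquet integral itself is entire. Then multiplying by $L_v(1+2z,\chi_v^2)$, which is entire since it has no zeros, i.e.\ it is $1/\text{polynomial}$ inverse, causes at most extra poles. I would have to check that multiplying by $L_v(1+2z,\chi_v^2)$ is harmless. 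Since $L_v(1+2z,\chi_v^2)^{-1}$ is entire, the real claim is that $\P_v\cdot L_v(1+2z,\chi_v^2)$ divided by the four $L$-factors is entire. The cleanest route is to write $\W(\varphi_v,\cdot)=L_v(1+2z,\chi_v^2)^{-1}\widetilde W_z$ with $\widetilde W_z$ an entire (holomorphic) family in the Whittaker model of the induced representation. This follows by comparing with the normalized Whittaker functional on induced representations, which is entire and nonvanishing.

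The second step applies the analytic theory of $\GL_2\times\GL_2$ local zeta integrals with holomorphic families of Whittaker vectors, \cite[Theorem 3.5]{cogdell2007functions} in its family version. For $\widetilde W_z$, $\W(f_v)$ and $\Phi_v$, the zeta integral divided by $L_v(\tfrac{n-1}{2}+\tfrac n2 s,\tau_v\otimes(\chi_v|\cdot|^{z}\boxplus\chi_v^{-1}|\cdot|^{-z})\cdot|\det|^{\frac{n-2}{2}(s'-1)}\cdots)$ is entire in $(s,s',z)$. The $L$-factor factors into the four $\GL_1$ factors as in \eqref{eq:unramified-eis}. Restricting to $s'=s$ (the diagonal used in $\lambda_{\mathrm{C}}(s,z)$) and averaging over $K_{n,v}$ gives the claim.

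The main obstacle is uniformity of the entireness in the family parameter $z$. The standard theorem is stated for fixed representations, and the quotient $Z/L$ being entire must hold jointly holomorphically in $(s,z)$ as $z$ varies. At non-archimedean places I would argue via the Bernstein-type description: the zeta integrals span a fractional ideal of $\C[q^{\pm s},q^{\pm z}]$ generated by $L$. At archimedean places I would invoke the Jacquet–Shalika / Wallach estimates that hold locally uniformly in $z$.
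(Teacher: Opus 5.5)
Your skeleton is exactly the paper's proof: introduce $s'$, realize $\P_v$ as a $K_{n,v}$-average of a local $\GL_2\times\GL_2$ zeta integral, show the Whittaker family is entire, then apply the analytic theory of local zeta integrals. The paper only asserts that the Jacquet integral $(s',z)\mapsto\W(\varphi_v,\lambda_{\mathrm{C}}(s',z))$ is entire, citing \cite[Theorem 15.4.1]{wallach1992real2}, and then repeats the proof of Lemma \ref{lem:holomorphicity-local-factor-cuspidal}.

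The step that fails is your ``cleanest route'' for the factor $L_v(1+2z,\chi_v^2)$. The factorization $\W(\varphi_v,\cdot)=L_v(1+2z,\chi_v^2)^{-1}\widetilde W_z$ with $\widetilde W_z$ entire holds for the spherical vector (that is \eqref{eq:eisenstein-local-choice}), but not for a general flat $\varphi_v$. For instance, let $v<\infty$, $v\nmid\Delta$, $\chi_v$ unramified, and take the flat section with $\varphi_v(k)=\mathbbm{1}_{\mathfrak{o}_v^\times}(k_{n,n-1})$ for $k\in K_{n,v}$. This is left $\widetilde{\Q_{n-2}}(\mathfrak{o}_v)$-invariant, since left multiplication only rescales the last row by a unit. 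Writing $w_2n(x)$ in Iwasawa form shows that the integrand of \eqref{eq:def-local-W-phi-eis} at $g=1$ is $\mathbbm{1}_{\mathfrak{o}_v}(x)\overline{\psi_v(x)}$. Hence $\W(\varphi_v,-\lambda_{\mathrm{C}}(s,z))(1)=\vol(\mathfrak{o}_v)\neq0$ for all $(s,z)$, and $L_v(1+2z,\chi_v^2)$ times it has poles wherever $\chi_v^2(\varpi_v)N(\p_v)^{-1-2z}=1$. Non-spherical $K$-types do the same at archimedean places. It is the unnormalized Jacquet functional that is entire and nonzero; its $L$-normalization is not entire. (Also, it is $L_v(1+2z,\chi_v^2)^{-1}$, not $L_v(1+2z,\chi_v^2)$, that is entire.)

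So the factor $L_v(1+2z,\chi_v^2)$ cannot be absorbed. For such $\varphi_v$, nothing forces $\P_v/\prod_{\pm}L_v(\cdots)$ to vanish along its polar hyperplanes, so the displayed quotient should not be expected to be entire on all of $\C^2$. What your argument proves once the normalization step is dropped is that $\P_v(-\lambda_{\mathrm{C}}(s,z);\varphi_v,f_v,f_{v,s})$ divided by the four $\GL_1$ factors is entire. The paper's argument proves the same thing and no more, since it never treats this factor. Consequently the displayed quotient is holomorphic off the polar set of $L_v(1+2z,\chi_v^2)$, in particular on $\Re(z)>-\tfrac12$ because $\chi$ is unitary. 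That is all that is needed for $\widetilde{\P}_S$ in Proposition \ref{prop:cont-I2C} and Remark \ref{rmk:stronger-holomorphic-region}, where $|\Re(z)|$ stays small, so this is the form in which you should prove the lemma.

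Your concern about holomorphy jointly in $z$ is legitimate; the paper covers it only by the word ``similarly''. A fractional-ideal argument in $N(\p_v)^{\pm s},N(\p_v)^{\pm z}$ at finite places is a reasonable way to settle it, but it does not repair the normalization step.
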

	
	\begin{proof}
		We first confirm that
		\begin{equation*}
			(s',z)\mapsto\W\left(\varphi_v,\lambda_{\mathrm{C}}(s',z)\right)
		\end{equation*}
		is entire for $\varphi_v$ being an $(s',z)$-independent vector. We do this by appealing to the analytic continuation of the Jacquet intertwiner; see \cite[Theorem 15.4.1]{wallach1992real2}. The rest of the proof proceeds similarly to that of Lemma \ref{lem:holomorphicity-local-factor-cuspidal}. 
	\end{proof}
	
	\begin{prop}\label{prop:cont-I2C}
		The term $I_2^{\mathrm{C}}(s)$, initially defined for large $\Re(s)$ in Proposition \ref{prop:before-meromorphic-continuation} admits a meromorphic continuation to the whole complex plane and, in a neighbourhood of $\Re(s)=0$, it is given by
		\begin{multline*}
			M_{01}(s)+M_{11}(s)+\frac{\V}{2}\sum_{\chi\in\Pi(G_1)}\sum_{\varphi\in\B(\1_{n-2}\boxplus\chi\boxplus\chi^{-1})}
			\intop_{i\mathbb{R}}\left\langle \phi,\Eis(\varphi,\overline{\lambda_{\mathrm{C}}(s,z)})\right\rangle_{[G_n]^1 }\\
			\frac{\prod_{\pm}L^S\left(\tfrac{n-1}{2}+\tfrac{n}{2}s\pm z,\chi^\pm\right)
				L^S\left(\tfrac{1}{2}+\tfrac{n}{2}s\pm z,\chi^\pm\right)}{L^S(1+2z,\chi^2)}
			\optionA{\P_S(-\lambda_{\mathrm{C}}(s,z);\varphi,f,f_s)}
			\optionB{\P_S(s,-\lambda_{\mathrm{C}}(s,z);\varphi,\Phi)}
			\, dz
		\end{multline*}
		where $M_{01}$ and $M_{11}$ are given in \eqref{eq:M01-before-magic} and \eqref{eq:M11-before-magic}, respectively. Moreover, the third summand above is analytic on $\Re(s)=0$.
	\end{prop}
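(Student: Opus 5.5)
We start from the expression for $I_2^{\mathrm{C}}(s)$ in Proposition \ref{prop:before-meromorphic-continuation}, valid for $\Re(s)$ large, and move $s$ towards $\Re(s)=0$ with the $z$-contour fixed on $i\mathbb{R}$, tracking the poles in $z$ that cross the contour. First we factor the $z$-integrand as in the proof of Proposition \ref{prop:cont-I2D}. We complete the partial $L$-functions to $\Lambda$'s and, by Lemma \ref{lem:holomorphicity-local-factor-eisenstein}, pass the $S$-adic local factors into an entire function of $(s,z)$. The global factor is then
\[
\frac{\prod_{\pm}\Lambda\left(\tfrac{n-1}{2}+\tfrac{n}{2}s\pm z,\chi^\pm\right)\Lambda\left(\tfrac{1}{2}+\tfrac{n}{2}s\pm z,\chi^\pm\right)}{\Lambda(1+2z,\chi^2)}.
\]
This factor has poles only when $\chi$ is trivial on $\A^1$ (so $\chi=1$ after our normalization), at $\tfrac{n-1}{2}+\tfrac{n}{2}s\pm z\in\{0,1\}$ and $\tfrac12+\tfrac n2 s\pm z\in\{0,1\}$. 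For $\Re(s)$ large none of these lie on $i\mathbb{R}$. As $s$ decreases to a neighbourhood of $0$, only the poles $z=\pm(\tfrac12+\tfrac n2 s)$ coming from $\Lambda(\tfrac12+\tfrac n2s\mp z)=$ pole at $1$ (and $0$) cross $\Re(z)=0$. The poles coming from $\tfrac{n-1}{2}$ do not cross, since $n>2$; this is where $n>2$ is used.

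The factor $\langle\phi,\Eis(\varphi,\overline{\lambda_{\mathrm C}(s,z)})\rangle$ also needs control. By Remark \ref{rmk:extra-decay-rep-lambda} it decays rapidly in $z$ and in $\chi$, away from poles of $\Eis$. We must check that, for $s$ in the strip between large $\Re(s)$ and a neighbourhood of $0$ and $z$ between $i\mathbb{R}$ and the crossing poles, $\lambda_{\mathrm C}(s,z)$ stays in a region where $\Eis$ is holomorphic. The plan is to use that $\lambda_{\mathrm C}$ then has real part in the closure of the positive chamber. The singular hyperplanes of the degenerate Eisenstein series induced from $\1_{n-2}\otimes\chi\otimes\chi^{-1}$ are governed by $\Lambda(\langle\lambda,\alpha^\vee\rangle\pm\cdot)$ ratios, and one checks that they are disjoint from the $L$-function polar hyperplanes. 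We expect this step to be delicate. Given that, Cauchy's theorem yields, for $s$ near the imaginary axis, the integral over $i\mathbb{R}$ plus $2\pi i$ times the residues at $z=\pm(\tfrac12+\tfrac n2 s)$ (with $\chi=1$). We define $M_{01}(s)$ and $M_{11}(s)$ to be these two residue terms, with the factor $\tfrac{\V}{2}$ and the sum over $\varphi\in\B(\1_{n-2}\boxplus\1\boxplus\1)$ built in. Using the $z\mapsto -z$ symmetry of the integrand coming from the functional equation \eqref{eq:FE-for-Eis}, the two residues may alternatively be paired. This is the content of \eqref{eq:M01-before-magic} and \eqref{eq:M11-before-magic}. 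Each residue is meromorphic in $s$ because it is a finite-dimensional sum: $\1_{n-2}\boxplus\1\boxplus\1$ has a basis indexed by $K$-types with rapid decay, which is uniform by Remark \ref{rmk:extra-decay-rep-lambda}.

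Finally, the remaining integral is analytic on $\Re(s)=0$. For such $s$ and $z\in i\mathbb{R}$, $\lambda_{\mathrm C}(s,z)\in i\a^\ast$, where $\Eis$ is holomorphic by \cite[Proposition IV.1.11 (b)]{MW}. The $L$-ratio is holomorphic there because $\Re(\tfrac12+\tfrac n2 s\pm z)=\tfrac12$ and $\Lambda(1+2z,\chi^2)$ has no zeros on $\Re=1$. The local factors are entire. Absolute and uniform convergence in $(\chi,z)$ follows from the rapid decay in Remark \ref{rmk:extra-decay-rep-lambda}, combined with polynomial bounds on the $L$-functions and on $1/\Lambda(1+2z,\chi^2)$ along the line. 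Meromorphic continuation to all of $\C$ follows by repeating the contour-shift argument and adding further finitely many residues. The main obstacle is the bookkeeping of which poles of the Eisenstein series and of the $L$-ratio cross the contour, and keeping them separated during the shift.
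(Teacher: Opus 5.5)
Your proposal has a genuine gap: it misidentifies where the residue terms come from, and as a result it omits $M_{11}(s)$ altogether.

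\textbf{The Eisenstein poles.} Your step ``check that $\lambda_{\mathrm C}(s,z)$ stays in a region where $\Eis$ is holomorphic'' fails for $\chi=\1_1$. The closed positive chamber is exactly where the residual-spectrum poles sit. By Lemma \ref{lem:pole-zero-eis}(3), $\Eis(\varphi,\overline{\lambda_{\mathrm C}(s,z)})$ has simple polar divisors along $z=\pm(\tfrac n2 s-\tfrac{n-1}{2})$, coming from the factors $\lambda_1-\lambda_2-\tfrac{n-1}{2}$ and $\lambda_1-\lambda_3-\tfrac{n-1}{2}$. These cross $\Re(z)=0$ at $\Re(s)=\tfrac{n-1}{n}$, before any $L$-function pole does. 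You are right that these hyperplanes are disjoint from the $L$-polar ones, but that does not stop them from crossing the contour.

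\textbf{Where $M_{11}$ comes from.} The Eisenstein residue, with the two crossing poles contributing equally by \eqref{eq:residue-even}, is precisely $M_{11}(s)$. Lemma \ref{lem:eis-residue} identifies the residue of $\Eis(\varphi,\lambda_{\mathrm C}(s,z))$ as $\Eis(\beta_\varphi,\lambda_{\mathrm E}(s-\tfrac12))$, which is induced from $\1_{n-1}\boxplus\1_1$. Lemma \ref{lem:residue-projection}, via the identity $\langle\beta,\beta_\varphi\rangle=\langle\beta,\varphi\rangle$, then turns the residue of the spectral sum into the sum over $\B(\1_{n-1}\boxplus\1_1)$ in \eqref{eq:M11-before-magic}.

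\textbf{The $L$-function poles.} These sit at $z=\pm(\tfrac n2 s-\tfrac12)$, not $\pm(\tfrac12+\tfrac n2 s)$; the latter never reach $i\mathbb{R}$ for $\Re(s)\ge 0$. They cross only at $\Re(s)=\tfrac1n$. The two of them are exchanged by the evenness in Lemma \ref{lem:evenness-I2C}, and together they give the single term $M_{01}(s)$, built on $\Eis(\varphi,\overline{\lambda_{\mathrm L}(s)})$ with $\varphi\in\B(\1_{n-2}\boxplus\1_1\boxplus\1_1)$. So your assignment of these two residues to $M_{01}$ and $M_{11}$ cannot be right. Both are Eisenstein series on $\1_{n-2}\boxplus\1_1\boxplus\1_1$ related by $z\mapsto -z$, whereas $M_{11}$ lives on a different induced representation. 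The continuation you would produce is therefore missing a term.

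\textbf{The case $n=3$.} Your claim that the poles from $\xi(\tfrac{n-1}{2}+\tfrac n2 s\pm z)$ ``do not cross since $n>2$'' is false here. For $n=3$ these poles lie on $\pm z=\tfrac32 s$, which meets the contour $z\in i\mathbb{R}$ exactly when $\Re(s)=0$. Your analyticity argument on $\Re(s)=0$ only examines the $\tfrac12$-factors, so it does not cover this. What saves the statement is that $\Eis$ vanishes at $\lambda_{\mathrm C}(s,\pm\tfrac32 s)\in\{(s,-2s,s),(s,s,-2s)\}$, as used in the proof of Lemma \ref{lem:evenness-I2C}, and this zero cancels the pole.

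\textbf{A minor point.} $\1_{n-2}\boxplus\1_1\boxplus\1_1$ is not finite-dimensional. Meromorphy of the residue terms comes from absolute and uniform convergence via Remark \ref{rmk:extra-decay-rep-lambda}, not from finiteness of the sum.
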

	
	\begin{rmk}\label{rmk:abuse-def-I2C}
		By an abuse of notation, we denote the third summand in the expression in Proposition \ref{prop:cont-I2C} also by $I_2^{\mathrm{C}}(s)$ whenever it converges. In particular we do not denote the meromorphic continuation of $I_2^{\mathrm{C}}(s)$ by the same name as is usual. 
		%\rn{One alternative would be to combine Prop 7.2 and 7.4. The new statement would say $I_2$, initially defined by (...) admits a meromorphic continuation and at a neighborhood of O is given by $M_{00}+M_{11}+I_2^D+I_2^C$...}
	\end{rmk}
	%\jd{given how we currently write in Proposition 12.1 we should redefine $I_2^C(s)$ for small $s$ unless I'm missing it somewhere} \sj{How about now?} \rn{I added my 20 cents. It feels weird to spell it out but I'm convinced it's a bad idea to come up with a new name for this expression.}
	First, we record a result regarding zeros and poles of Eisenstein series that appears above.

	\begin{lem}\label{lem:pole-zero-eis}
		Let $\chi\in\Pi(G_1)$ and recall $\lambda_{\mathrm{C}}(s,z)$ from \eqref{eq:def-lambda-cont}.
		\begin{enumerate}
			\item Let $\chi^2\neq\1_1$ and $\varphi\in\1_{n-2}\boxplus\chi\boxplus\chi^{-1}$. Then $\Eis(\bar{\varphi},\lambda_{\mathrm{C}}(s,z))$ is holomorphic in the region $\Re\left(\tfrac{n}{2}s\right)\ge \Re(z)\ge 0$.
			\item Let $\chi^2=\1_1\neq \chi$ and $\varphi\in\1_{n-2}\boxplus\chi\boxplus\chi^{-1}$. Then $\Eis(\bar{\varphi},\lambda_{\mathrm{C}}(s,z))$ is meromorphic in the region $\Re\left(\tfrac{n}{2}s\right)\ge \Re(z)\ge 0$ with a simple polar divisor only at $z=\tfrac{1}{2}$.
			\item Let $\chi=\1_1$ and $\varphi\in\1_{n-2}\boxplus\chi\boxplus\chi^{-1}$. Then $\Eis(\bar{\varphi},\lambda_{\mathrm{C}}(s,z))$ is meromorphic in the region $\Re\left(\tfrac{n}{2}s\right)\ge \Re(z)\ge 0$ with simple polar divisors only at $z=\tfrac{1}{2}$ and $z=\pm\left(\tfrac{n}{2}s-\tfrac{n-1}{2}\right)$.
		\end{enumerate}
	\end{lem}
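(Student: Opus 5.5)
The plan is to locate the poles of $\Eis(\bar\varphi,\lambda)$ through the constant term along the Borel-type parabolic $\widetilde{\Q_{n-2}}$. The point is that $\bar\varphi\in\1_{n-2}\boxplus\bar\chi\boxplus\bar\chi^{-1}$ lives on a residually induced representation, so the Eisenstein series can be realized as an iterated residue of a minimal-parabolic Eisenstein series.

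Concretely, I write $\1_{n-2}$ as the residue of the Borel Eisenstein series on $G_{n-2}$ at $\rho_{n-2}$. Then $\Eis(\bar\varphi,\lambda_{\mathrm C}(s,z))$ becomes the corresponding residue of a Borel Eisenstein series on $G_n$ at the parameter
\[
\mu=\big(\rho_{n-2}+s\cdot(1,\dots,1),\ -\tfrac{n-2}{2}s-z,\ -\tfrac{n-2}{2}s+z\big),
\]
twisted by $(1,\dots,1,\bar\chi,\bar\chi^{-1})$. The standard theory (Langlands; \cite[IV]{MW}) then says that the singularities of $\Eis$ are governed by its constant term along $\widetilde{\Q_{n-2}}$. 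By \eqref{eq:constant-term-expansion}, that constant term is a sum over $W^{\widetilde{\Q_{n-2}}}\bs W/W^{\widetilde{\Q_{n-2}}}$ of intertwining operators $\M(w,\lambda)$. It therefore suffices to show two things: the normalizing factors $n(w,\lambda)$ have poles only on the stated divisors, and the normalized local operators are holomorphic in the region.

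The main computation is that of the $n(w,\lambda)$. They are products over the roots $\alpha$ flipped by $w$ (and not lying in the $G_{n-2}$ block) of ratios $\Lambda(\langle\lambda,\alpha^\vee\rangle,\chi_\alpha)/\Lambda(1+\langle\lambda,\alpha^\vee\rangle,\chi_\alpha)$. For these parameters the relevant coroot pairings are as follows.
\begin{enumerate}
\item The $G_2$-block root gives $2z$ with character $\chi^2$. Since $\Lambda(2z,\chi^2)$ can only have a pole at $2z=1$ (when $\chi^2=\1$; the pole at $2z=0$ is excluded because $\Re z\ge0$ and the denominator $\Lambda(1+2z)$ cancels it), this produces the divisor $z=\tfrac12$.
\item Roots between the $G_{n-2}$ block and the $\chi^{\pm1}$ entries, after collapsing the residual block, give telescoping products. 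What survives is $\Lambda(\tfrac n2 s\mp z+\tfrac{n-2}{2}+\tfrac12-\tfrac{n-2}{2}\cdots,\chi^{\pm})$; in total these are the arguments $\tfrac n2 s\pm z-\tfrac{n-1}{2}+\tfrac{n-1}{2}\cdots$.
\end{enumerate}
I would carry out the telescoping carefully so that each surviving factor is a completed Hecke $L$-function $\Lambda(\tfrac n2 s\pm z+c,\chi^{\pm1})$, with $c$ ranging over a consecutive set. Among these, the one that can vanish or blow up inside the region $\Re(\tfrac n2 s)\ge\Re z\ge0$ is $\Lambda(\tfrac n2s\pm z-\tfrac{n-1}{2}+1,\chi^{\pm})$, and its poles at argument $1$ or $0$ occur only when $\chi=\1$. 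This yields the divisors $z=\pm(\tfrac n2 s-\tfrac{n-1}{2})$. For $\chi^2\ne\1$, no Hecke $L$-function with character $\chi$ or $\chi^2$ has poles, so there are no polar factors at all. The nonvanishing of $\Lambda(1+\cdot)$ on $\Re\ge0$ (de la Vallée Poussin/Jacquet–Shalika for $\GL_1$) ensures that the denominators cause no poles, since in this region all denominator arguments have real part $\ge1$.

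The normalized local operators $\widetilde\M_v$ are holomorphic when the real parts of all relevant pairings are $\ge0$. I expect to verify this in the given region; for unramified places it is trivial, and for $S$ it follows from standard results for induced-from-unitary characters with nonnegative exponents. Simplicity of the poles follows because the divisors are distinct hyperplanes (here $n>2$ is used), and each comes from a single simple pole of a $\Lambda$. The main obstacle is the bookkeeping of which $w$ survive after taking the residue in the $G_{n-2}$ block, so as to confirm that spurious cancellations do not leave poles of higher order; I would handle this by the Moeglin–Waldspurger description of residual constant terms rather than by term-by-term cancellation.
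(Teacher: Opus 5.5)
\textbf{There is a genuine gap.} Your route differs from the paper's, which computes no constant terms. The paper reads the polar divisors directly off \cite[Theorem 4.17]{fine-boi}: $L_{\sigma,E}(\lambda)\Eis(\varphi,\lambda)$ is holomorphic on the closed positive chamber \eqref{eq:holomorphic-region}, where
\begin{itemize}
\item $L_{\sigma,E}\equiv1$ if $\1_1,\chi,\chi^{-1}$ are pairwise distinct;
\item $L_{\sigma,E}$ is a single linear factor in $\lambda_2-\lambda_3$ if $\chi^2=\1_1\neq\chi$;
\item $L_{\sigma,E}=(\lambda_1-\lambda_2-\tfrac{n-1}{2})(\lambda_1-\lambda_3-\tfrac{n-1}{2})(\lambda_2-\lambda_3-1)$ if $\chi=\1_1$.
\end{itemize}

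\textbf{The gap.} Poles of the normalizing factors $n(w,\lambda)$ only give an \emph{upper bound} for the polar set, and here that bound is too large. Your telescoped factor $\Lambda(\tfrac n2 s\pm z-\tfrac{n-3}{2},\chi^{\pm})$ has, for $\chi=\1_1$, poles at argument $1$ \emph{and} at argument $0$.
\begin{itemize}
\item Only the argument-$1$ pole gives the divisors $z=\pm(\tfrac n2 s-\tfrac{n-1}{2})$.
\item The argument-$0$ pole gives the hyperplanes $\tfrac n2 s\pm z=\tfrac{n-3}{2}$. These meet the region for $n\ge4$ (take $z=0$, $s=\tfrac{n-3}{n}$) and lie on its boundary for $n=3$.
\end{itemize}
So your sentence attributing both poles to the stated divisors is wrong. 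As written, your argument proves (3) only up to these extra possible divisors. Those would create spurious residues in the contour shifts of Proposition \ref{prop:cont-I2C}, so the difference matters.

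The Eisenstein series is in fact regular along these hyperplanes, but only through a cancellation. In \eqref{eq:constant-term-expansion} the terms are $\Eis^{Q}(\M(w,\lambda)\varphi_{P_w},w\lambda)$, not just $\M(w,\lambda)\varphi$. The pole of $\M(w,\lambda)$ is killed by a zero of the Levi Eisenstein series $\Eis^{Q}$.

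The simplest instance is on $\GL_3$. Take $f\in\1_2\boxplus\1_1$ spherical, $u=\lambda_1-\lambda_2$, and $Q$ the $(2,1)$ parabolic.
\begin{itemize}
\item In $\Eis(f)_{Q}=f+\Eis^{Q}(\M f)$, the operator $\M$ multiplies the spherical vector by $\xi(u-\tfrac12)/\xi(u+\tfrac32)$, which has a pole at $u=\tfrac12$.
\item $\Eis(f)$ has no pole there. Its only pole with $\Re u\ge0$ is at $u=\tfrac32$, as the Tate-integral representation in the proof of Lemma \ref{lem:res-deg-eis-original} shows.
\item The reason is that the $\GL_2$ Eisenstein series in $\Eis^{Q}(\M f)$ has parameter $\tfrac12-u$. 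At $u=\tfrac12$ it sits at its centre and vanishes identically.
\end{itemize}
Your deferred ``bookkeeping via Moeglin--Waldspurger'' is aimed at ruling out higher multiplicities, not at removing such spurious divisors. Proving this cancellation for all $w$ is precisely what \cite[Theorem 4.17]{fine-boi} supplies.

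\textbf{A smaller sign slip.} For $\lambda_{\mathrm C}(s,z)$, the pairing with the root between the $\chi$ and $\chi^{-1}$ blocks is $\lambda_2-\lambda_3=-2z$. So the ratio $\Lambda(2z,\chi^2)/\Lambda(1+2z,\chi^2)$ you use belongs to $\lambda_{\mathrm C}(s,-z)$. On the region as literally written ($\Re z\ge0$), the relevant denominator is $\Lambda(1-2z,\chi^2)$, which does vanish for $0<\Re z<\tfrac12$. Your claim that every denominator has argument of real part $\ge1$ therefore fails there. The paper's proof actually works in \eqref{eq:holomorphic-region}, i.e.\ $\Re(\tfrac n2 s)\ge-\Re z\ge0$, so the statement carries the same $z\mapsto-z$ slip. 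You should set up your computation in that chamber.
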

	
	\begin{proof}	
		We extensively use the notations from \cite[\S 4.4.1]{fine-boi}.
		
		Let $\lambda\in\a^\ast_{\widetilde{\Q_{n-2}},\C}$. For any $\chi\in\Pi(G_1)$ we have
		\begin{equation*}
			\sigma_{\1_{n-2}\otimes\chi\otimes\chi^{-1}} = \1_1^{\boxtimes n-2}\boxtimes\chi\boxtimes\chi^{-1}.
		\end{equation*}
		Thus if $\chi^2\neq \1_1$ then $\{\1_1,\chi,\chi^{-1}\}$ are pairwise non-isomorphic. Thus $L_{\1_{n-2}\otimes\chi\otimes\chi^{-1},E}(\lambda)$, as defined in \cite[eq.(4.29)]{fine-boi}, is identically $1$. Hence, \cite[Theorem 4.17 (1)]{fine-boi} implies that $\Eis(\varphi,\lambda)$ is holomorphic in (see \cite[\S3.4.10]{fine-boi} for the relevant definitions)
		\begin{equation}\label{eq:holomorphic-region}
			\Re(\lambda_1-\lambda_2),\Re(\lambda_1-\lambda_3),\Re(\lambda_2-\lambda_3) \ge 0. 
		\end{equation}
		Similarly, if $\chi^2=\1_1\neq\chi$ then working similarly we get that $(\lambda_2-\lambda_3+1)\Eis(\varphi,\lambda)$ is holomorphic in the region \eqref{eq:holomorphic-region}. Finally, if $\chi=\1_1$ then
		\begin{equation*}
			L_{\1_{n-2}\otimes\1_1\otimes\1_1,E}(\lambda)=\left(\lambda_1-\lambda_2-\tfrac{n-1}{2}\right) \left(\lambda_1-\lambda_3-\tfrac{n-1}{2}\right)(\lambda_2-\lambda_3-1),
		\end{equation*}
		and $L_{\1_{n-2}\otimes\1_1\otimes\1_1,E}$ is holomorphic in \eqref{eq:holomorphic-region}. Recalling \eqref{eq:def-lambda-cont} we conclude the proof.
	\end{proof}
	
	Let $\chi\neq\1_1$. We write 
	\begin{multline*}
		\frac{\prod_{\pm}L^S\left(\tfrac{n-1}{2}+\tfrac{n}{2}s\pm z,\chi^\pm\right)L^S\left(\tfrac{1}{2}+\tfrac{n}{2}s\pm z,\chi^\pm\right)}
		{L^S(1+2z,\chi^2)}
		\optionA{\P_S(-\lambda_{\mathrm{C}}(s,z);\varphi,f,f_s)}
		\optionB{\P_S(s,-\lambda_{\mathrm{C}}(s,z);\varphi,\Phi)}
		\\
		=\frac{\prod_{\pm}\Lambda\left(\tfrac{n-1}{2}+\tfrac{n}{2}s\pm z,\chi^\pm\right)
			\Lambda\left(\tfrac{1}{2}+\tfrac{n}{2}s\pm z,\chi^\pm\right)}{\Lambda(1+2z,\chi^2)}\\
		\times\prod_{v\in S}
		\optionA{\frac{\P_v(-\lambda_{\mathrm{C}}(s,z);\varphi_v,f_v,f_{v,s})L_v(1+2z,\chi_v^2)}
			{\prod_{\pm}L_v\left(\tfrac{n-1}{2}+\tfrac{n}{2}s\pm z,\chi_v^\pm\right)
				L_v\left(\tfrac{1}{2}+\tfrac{n}{2}s\pm z,\chi_v^\pm\right)},}
		\optionB{\frac{\P_v(-\lambda_{\mathrm{C}}(s,z);\varphi_v\Phi_v)L_v(1+2z,\chi_v^2)}
			{\prod_{\pm}L_v\left(\tfrac{n-1}{2}+\tfrac{n}{2}s\pm z,\chi_v^\pm\right)
				L_v\left(\tfrac{1}{2}+\tfrac{n}{2}s\pm z,\chi_v^\pm\right)},}
	\end{multline*}
	We record that $\Lambda(\cdot,\chi)$ is entire and $\Lambda(1+2z,\chi^2)$ has no zero on the line $\Re(z)=0$.
	Working as in the proof of Proposition \ref{prop:cont-I2D} and using Lemma \ref{lem:pole-zero-eis} (1)-(2) we conclude that the $\chi\neq\1_1$ part of the sum in the expression of $I_2^{\mathrm{C}}(s)$ in Proposition \ref{prop:before-meromorphic-continuation} has meromorphic continuation to $\Re(s)\ge 0$ and is holomorphic on $\Re(s)=0$.
	
	Now we consider the $\chi=\1_1$ case. We write $L(\cdot,\chi)=\zeta$ and
  $\Lambda(\cdot,\chi)=\xi$, and denote by $\tilde \P_S(s,z)$, the map
	\begin{equation}\label{def-PS-tilde}
		\vphi\mapsto
		\optionA{\prod_{v\mid S}\frac{\P_v(s,-\lambda_{\mathrm{C}}(s,z);\varphi_v,f_v,f_{v,s})
				\zeta_v(1+2z)}{\prod_{\pm}\zeta_v\left(\frac{n-1}{2}+\frac{n}{2}s\pm z\right)
				\zeta_v\left(\frac{1}{2}+\frac{n}{2}s\pm z\right)},}
		\optionB{\prod_{v\mid S}\frac{\P_v(s,-\lambda_{\mathrm{C}}(s,z);\varphi_v,\Phi_v)
				\zeta_v(1+2z)}{\prod_{\pm}\zeta_v\left(\frac{n-1}{2}+\frac{n}{2}s\pm z\right)
				\zeta_v\left(\frac{1}{2}+\frac{n}{2}s\pm z\right)},}
	\end{equation}
	which we regard as a holomorphic family, in $s,z$, of linear functionals on $C^\infty([G])$.
	
	We first record a useful lemma.
	
	\begin{lem}\label{lem:evenness-I2C}
		The integrand of
		\begin{multline*}
			I^1(s):=\frac{1}{2}\intop_{i\mathbb{R}}\sum_{\varphi\in\B(\1_{n-2}\boxplus\1_1\boxplus\1_1)}\left\langle \phi,\Eis(\varphi,\overline{\lambda_{\mathrm{C}}(s,z)})\right\rangle_{[G_n]^1 }\\
			\frac{\prod_{\pm}\xi\left(\frac{n-1}{2}+\frac{n}{2}s\pm z\right)\xi\left(\frac{1}{2}+\frac{n}{2}s\pm z\right)}{\xi(1+2z)}\widetilde{\P}_S(s,z)(\vphi)\, dz
		\end{multline*}
		is even as a function of $z$. Moreover, as a function of $s$, the integral has analytic continuation to $\Re(s)>\frac{n-1}{n}$.
	\end{lem}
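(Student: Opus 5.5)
The plan is to prove evenness in $z$ from the functional equation of Eisenstein series attached to the Weyl element $w$ swapping the last two diagonal entries of the Levi of $\widetilde{\Q_{n-2}}$. Analytic continuation in $s$ will come from checking that no pole of the integrand meets the contour $\Re(z)=0$ while $\Re(s)>\frac{n-1}{n}$, together with the uniform convergence statements from Remark \ref{rmk:extra-decay-rep-lambda}.

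\emph{Evenness.} Note that $w\lambda_{\mathrm{C}}(s,z)=\lambda_{\mathrm{C}}(s,-z)$, and $w$ fixes $\1_{n-2}\otimes\1_1\otimes\1_1$. By \eqref{eq:FE-for-Eis} we have
\[
\Eis(\bar\varphi,\lambda_{\mathrm{C}}(s,z))=\Eis\left(\M(w,\lambda_{\mathrm{C}}(s,z))\bar\varphi,\lambda_{\mathrm{C}}(s,-z)\right).
\]
Write $\M(w,\cdot)=n(w,\cdot)\prod_v\widetilde\M_v(w,\cdot)$. The global scalar is
\[
n(w,\lambda_{\mathrm{C}}(s,z))=\frac{\xi(2z)}{\xi(1+2z)},
\]
computed as in the $\GL_2$ case up to our normalizations; the normalized local operators are unitary for $z\in i\R$ and fix spherical vectors. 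The expression $\sum_\varphi\langle\phi,\Eis(\varphi,\bar\lambda)\rangle\,\widetilde\P_S(s,z)(\varphi)$ is independent of the orthonormal basis. I would therefore substitute the orthonormal basis $\{\widetilde\M(w)\varphi\}$, use unitarity, and reduce evenness to a local identity at each $v\in S$ (and its unramified version outside $S$). The local identity concerns the partial Whittaker function $\W(\varphi_v,-\lambda_{\mathrm{C}}(s,z))$ from \eqref{eq:def-local-W-phi-eis}. By uniqueness of Whittaker functionals on the $\GL_2$ principal series $\chi|\cdot|^{z}\boxplus\chi^{-1}|\cdot|^{-z}$, this function changes under $\widetilde\M_v(w)$ by the ratio of local factors $\zeta_v(1+2z)/\zeta_v(1-2z)$ times an $\epsilon$-type factor. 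Hence the local period in \eqref{def-PS-tilde} transforms accordingly.

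Multiplying these local ratios over all places and combining them with $n(w)$, the ratio $\xi(1+2z)^{-1}\leftrightarrow\xi(1-2z)^{-1}$ is exactly absorbed: $\xi(2z)/\xi(1-2z)$ equals $1$ by the functional equation of $\xi$, with the $\epsilon$/discriminant factors cancelling. The numerator $\prod_\pm\xi(\cdots\pm z)$ is visibly symmetric. This yields evenness. The main obstacle is the careful bookkeeping of normalizations (measure constants, $\Delta$-powers, $\epsilon$-factors at $v\in S$). I would first verify the identity at unramified places using \eqref{eq:unramified-eis}, where both sides are explicit, and then deduce the ramified places from equivariance of the Jacquet integral under $\widetilde\M_v(w)$.

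\emph{Continuation.} Fix $\Re(s)>\frac{n-1}{n}$ and $z\in i\R$. Then $\Re\bigl(\tfrac{n-1}{2}+\tfrac n2 s\pm z\bigr)>n-1>1$ and $\Re\bigl(\tfrac12+\tfrac n2 s\pm z\bigr)>\tfrac n2>1$, so the completed zeta factors are holomorphic. Also $\xi(1+2z)^{-1}$ is holomorphic on the line, vanishing at $z=0$ to cancel nothing harmful. $\widetilde\P_S(s,z)$ is entire by Lemma \ref{lem:holomorphicity-local-factor-eisenstein}. By Lemma \ref{lem:pole-zero-eis}(3), the only poles of $\Eis$ are at $z=\tfrac12$ and $z=\pm(\tfrac n2 s-\tfrac{n-1}2)$. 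The latter have real part $\pm\Re(\tfrac n2 s-\tfrac{n-1}2)\neq0$ precisely when $\Re(s)>\tfrac{n-1}{n}$, so they avoid the contour. For $\Re(s)$ small relative to $\Re(z)$, outside the region of the lemma, I would apply the just-proved evenness, i.e.\ the functional equation, to move into the region $\Re(\tfrac n2s)\ge\Re(z)\ge0$. Finally, rapid decay in $z$ and in the spectral parameter of the coefficients $\langle\phi,\Eis(\varphi,\cdot)\rangle$ (Remark \ref{rmk:extra-decay-rep-lambda}), uniform on compacta avoiding these poles, together with polynomial bounds for $\xi$ ratios and $\widetilde\P_S$ in vertical strips, gives locally uniform convergence. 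Hence $I^1(s)$ is holomorphic on $\Re(s)>\frac{n-1}{n}$, continuing the large-$\Re(s)$ definition.
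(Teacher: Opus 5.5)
Your proposal is correct. The continuation half is essentially the paper's argument: for $\Re(s)>\frac{n-1}{n}$, none of the polar hyperplanes of the $\xi$-factors, nor those of $\Eis$ from Lemma \ref{lem:pole-zero-eis}(3), meets the line $\Re(z)=0$. Your detour through evenness to enter the region $\Re(\frac n2 s)\ge\Re(z)\ge 0$ is unnecessary, since on the contour with $\Re(s)>0$ you are already in it.

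For evenness you take a genuinely different, local route. The paper never splits $\M(w,\lambda_{\mathrm{C}}(s,z))$ into $n(w)$ times normalized local operators. Instead it reassembles the integrand as $\sum_\varphi\langle\phi,\Eis(\varphi,\overline{\lambda_{\mathrm{C}}(s,z)})\rangle\,\P(-\lambda_{\mathrm{C}}(s,z);\varphi,f,f_s)$. It then notes that the partial Whittaker function $\W(\varphi,\lambda)$ inside $\P$ is by definition the Whittaker coefficient of the $\GL_2$ partial Eisenstein series $\Eis^{\Q_{n-2}}(\varphi,\lambda)$. So the same global functional equation, with the same global intertwiner (unitary on $\Re(s)=\Re(z)=0$), applies to both $\Eis$ and $\Eis^{\Q_{n-2}}$. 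The basis change $\varphi\mapsto\M\varphi$ then gives evenness in three lines, with no constants to track.

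Your route combines local coefficients $c_v$ (from uniqueness of $\GL_2$ Whittaker functionals) with $n(w)=\xi(2z)/\xi(1+2z)$ and the functional equation of $\xi$. It does work out: one gets $n(w)\prod_v c_v=\xi(2z)/\xi(1-2z)$ times $\epsilon$-factors, which equals $1$. But this amounts to re-deriving by hand the functional equation of that global Whittaker coefficient.

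Note also that at $v\in S$, uniqueness only gives proportionality, not the constant. You still have to compute $c_v$ at the archimedean places and at those dividing the different, which \eqref{eq:unramified-eis} does not cover. This is doable on spherical vectors, since $\chi=\1_1$ is unramified everywhere.

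What your approach buys is explicit local transformation laws. What the paper's buys is that all the normalization bookkeeping ($\Delta$-powers, $\epsilon$-factors, $n(w)$) is absorbed into a single global identity.
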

	\begin{proof}
		Since the integrand is meromorphic in $(s,z)$, it suffices to prove evenness
		for $\Re(s)=0=\Re(z)$. Reverse engineering via \emph{e.g.}, Proposition
		\ref{prop:before-meromorphic-continuation} the integrand equals
		\begin{equation*}
			\sum_{\varphi\in\B(\1_{n-2}\boxplus\1_1\boxplus\1_1)}\left\langle \phi,\Eis(\varphi,\overline{\lambda_{\mathrm{C}}(s,z)})\right\rangle_{[G_n]^1 }
			\optionA{\P(-\lambda_{\mathrm{C}}(s,z);\varphi,f,f_s).}
			\optionB{\P(s,-\lambda_{\mathrm{C}}(s,z);\varphi,\Phi).}
		\end{equation*}
		Temporarily, let $\M:=\M_{\1_{n-2}\times\1_1\times\1_1}\left(\left(\begin{smallmatrix}\mathrm{I}_{n-2}&&\\&&1\\&1&\end{smallmatrix}\right),\lambda_{\mathrm{C}}(s,z)\right)$. Then $\M$ is unitary as $\Re(s)=0=\Re(z)$; see \cite[Theorem 7.2(a)]{arthur2005introduction}. Using the invariance of the choices of the orthonormal basis in the sum in the above integrand we replace the $\varphi$-sum by a sum over
		\begin{equation*}
			\{\M\varphi\mid \varphi\in\B(\1_{n-2}\boxplus\1_1\boxplus\1_1)\}.
		\end{equation*}
		Now applying the functional equation of the Eisenstein series \eqref{eq:FE-for-Eis} we obtain
		\begin{equation*}
			\Eis(\M\bar{\varphi},\lambda_{\mathrm{C}}(s,z))=\Eis(\bar{\varphi},\lambda_{\mathrm{C}}(s,-z))
		\end{equation*}
		and applying the same for $\GL_2$ Eisenstein series $\Eis^{\Q_{n-2}}$ we obtain
		\begin{equation*}
			\Eis^{\Q_{n-2}}(\M\varphi,\lambda_{\mathrm{C}}(s,z))=\Eis^{\Q_{n-2}}(\varphi,\lambda_{\mathrm{C}}(s,-z)).
		\end{equation*}
		Recalling the definition of $\P$ from \eqref{eq:main-RS-period}, the first assertion follows.
		
		To prove the second assertion, note that for $\Re(s)\ge 0$ and $|\Re(z)|$ sufficiently small $\prod_{\pm}\xi\left(\frac{n-1}{2}+\frac{n}{2}s\pm z\right)\xi\left(\frac{1}{2}+\frac{n}{2}s\pm z\right)$ as a function of $(s,z)$ has at most simple polar divisors at the hyperplanes defined by
		\begin{equation}\label{eq:possible-poles}
			\pm z=\begin{cases}\tfrac{n}{2}s-\tfrac{1}{2},\quad & n>3;\\
				\tfrac{3}{2}s-\tfrac{1}{2},\, \tfrac{3}{2}s,\quad & n=3.\end{cases}
		\end{equation}
		On the other hand, for $n=3$ and $\pm z =\tfrac{3}{2}s$ we have $\lambda_{\mathrm{C}}(s,z)$ equals either $(s,-2s,s)$ or $(s,s,-2s)$, where $\Eis(\varphi,\cdot)$ has a zero; see \cite[Theorem 4.17 (2)]{fine-boi}.
		Thus for $n\ge 3$ it follows from Lemma \ref{lem:pole-zero-eis} (3) that as a function of $s$ the integral in the lemma has holomorphic continuation in $\Re(s)>\tfrac{n-1}{n}$.
	\end{proof}
	
	As an immediate corollary of the first assertion of Lemma \ref{lem:evenness-I2C} we see that if $z=s_0$ is a simple pole of the integrand in Lemma \ref{lem:evenness-I2C} then so is $-s_0$ and
	\begin{equation}\label{eq:residue-even}
		\Res_{z=s_0} = -\Res_{z=-s_0},
	\end{equation}
	where here and elsewhere $\Res$ denotes the residue operator.
	
	\begin{lem}\label{lem:res-deg-eis-original}
		Let $d\ge 2$ and for $\lambda:=(\lambda_1,\lambda_2)\in\a^\ast_{\Q_{d-1},\C}$ let $\varphi_\lambda\in\1_{d-1}\cdot|\det|^{\lambda_1}\boxplus\1_1\cdot|\cdot|^{\lambda_2}$ be a normalized factorizable vector. If $\varphi_\lambda$ is holomorphic in a sufficiently small neighbourhood of the hyperplane $\lambda_1-\lambda_2=\tfrac{d}{2}$ then $\Eis(\varphi_\lambda)$ has at most a simple polar divisor along the same hyperplane, where the residue is given by
		\begin{equation*}
			|\det|^{\lambda_1-\frac{1}{2}} \Delta^{-\frac{d-1}{2}}\frac{\xi^\ast(1)}{\xi(d)}\int_{K_d}\varphi_\lambda(k)\, dk\in\1_d\cdot|\det|^{\frac{(d-1)\lambda_1+\lambda_2}{d}},
		\end{equation*}
		for $\lambda_1-\lambda_2=\tfrac{d}{2}$.
	\end{lem}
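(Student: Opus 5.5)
The residue of $\Eis(\varphi_\lambda)$ along $\lambda_1-\lambda_2=\tfrac d2$ is the residue of its constant term along the Borel, or more simply along $\Q_{d-1}$. Since the residue turns out to be a multiple of $|\det|^{\cdot}$, which is constant along every unipotent radical, we only need to identify one constant term. By \eqref{eq:constant-term-expansion} with $Q=P=\Q_{d-1}$, the double coset representatives are $\{1,w_\ell^d\}$ (as in the proof of Proposition \ref{prop:n-1-case}). Hence
\[
\Eis(\varphi_\lambda)_{\Q_{d-1}}=\varphi_\lambda+\Eis^{\Q_{d-1}}\bigl(\M(w_\ell^d,\lambda)\varphi\bigr).
\]
The first term is holomorphic near the hyperplane. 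The partial Eisenstein series $\Eis^{\Q_{d-1}}$ is induced from the parabolic of type $1+(d-1)$ into $\Q_{d-1}$, whose Levi block $G_{d-1}$ carries the trivial representation. So its only possible singularity along $\lambda_1-\lambda_2=\tfrac d2$ comes from the intertwiner $\M(w_\ell^d,\lambda)$. The plan therefore reduces to computing that intertwiner's residue, and checking that the residue of $\Eis(\varphi_\lambda)$ is the automorphic form whose constant term is the residue of $\Eis^{\Q_{d-1}}(\M\varphi)$.

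For the intertwiner, use the factorization of \S\ref{sec:intertwiners}: $\M=n(w_\ell,\lambda)\prod_v\widetilde{\M}_v$. For the maximal degenerate series the normalizing factor is a ratio of partial zeta functions in the single variable $\lambda_1-\lambda_2$, essentially
\[
\frac{\zeta^S\bigl(\lambda_1-\lambda_2-\tfrac{d}{2}+1\bigr)}{\zeta^S\bigl(\lambda_1-\lambda_2+\tfrac d2\bigr)}.
\]
I would verify this by an unramified computation, which is a Gindikin--Karpelevich computation for the induced space $|\det|^{\lambda_1}\boxplus|\cdot|^{\lambda_2}$. At $\lambda_1-\lambda_2=\tfrac d2$ the numerator $\zeta(s)$ has a simple pole at $s=1$, so $\M$ has at most a simple pole. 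The residue is computed most cleanly without choosing $S$, by using the Godement--Jacquet/Tate realization: write $\varphi$ via the Schwartz section \eqref{eq:def-f-from-phi} (or pass through the completed $\xi$), where the intertwined section is a Fourier transform and the Tate integral gives the constant $\xi^\ast(1)/\xi(d)$. The measure normalization $\vol(\o_v)=\Delta_v^{-1/2}$ produces the factor $\Delta^{-\frac{d-1}{2}}$. At the pole, the image of $\M$ is proportional to the $K$-invariant functional $\varphi\mapsto\int_{K_d}\varphi_\lambda(k)\,dk$ times the spherical vector of the degenerate representation $|\cdot|^{\lambda_1-\frac12}\boxplus|\det|^{\cdot}$ of $\Q_{d-1}$-type. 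This is the standard fact that the residual operator factors through the projection onto the one-dimensional quotient.

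The residue is then a character on the Levi $G_1\times G_{d-1}$ that is a restriction of $|\det|^{\lambda_1-\frac12}$. The parabolic Eisenstein series $\Eis^{\Q_{d-1}}$ of a character already of the form $|\det|^\mu$ on the whole block is just that character, since the sum collapses after normalization. Consequently the residue of $\Eis(\varphi_\lambda)$ has constant term $c\,|\det|^{\lambda_1-\frac12}\int_K\varphi_\lambda$. A square-integrable-mod-center form whose cuspidal constant terms all equal a character of $\det$ must equal it, since the difference is cuspidal and orthogonal to everything; this gives the claim. Finally, the exponent bookkeeping $\lambda_1-\tfrac12=\frac{(d-1)\lambda_1+\lambda_2}{d}$ on the hyperplane confirms membership in $\1_d\cdot|\det|^{\cdot}$.

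The main obstacle is pinning down the exact constant $\Delta^{-\frac{d-1}{2}}\xi^\ast(1)/\xi(d)$, including the ramified and archimedean places and the discriminant powers coming from the measures. I would first try doing the computation globally via the Schwartz--Bruhat parametrization \eqref{eq:def-f-from-phi} and Poisson summation on $F^d$. This is the classical Godement--Jacquet derivation of the residue of the mirabolic Eisenstein series as $\hat\Phi(0)$ times Tate's residue, and it avoids local normalizations altogether. I would then extend from Schwartz sections to general holomorphic normalized $\varphi_\lambda$ by linearity and density.
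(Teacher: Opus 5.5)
Your last paragraph is essentially the paper's proof, and it makes your first three paragraphs unnecessary. The first three paragraphs also contain a step that is wrong as written.

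\textbf{What the paper does.} It uses surjectivity of $\Phi\mapsto f_\Phi$ to write
\[
\varphi_\lambda=|\det|^{\frac12+\lambda_1}\int_{\A^\times}\Phi_\lambda(te_d\,\cdot)\,|t|^{\frac d2+\lambda_1-\lambda_2}\,d^\times t,
\]
with $\Phi_\lambda$ factorizable, holomorphic in $\lambda$, and $\Phi_{\lambda,v}=\zeta_v(\tfrac d2+\lambda_1-\lambda_2)^{-1}\mathbbm{1}_{\o_v^d}$ for almost all $v$. This normalization is where $1/\xi(d)$ comes from. It then quotes the Godement--Jacquet Poisson-summation computation (Cogdell's proof of Proposition 2.1): there is a simple pole, and the residue is a multiple of $|\det|^{\lambda_1-\frac12}\hat\Phi_\lambda(0)$. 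Finally it converts $\hat\Phi_{\lambda,v}(0)$ into $\int_{K_{d,v}}\varphi_{\lambda,v}$ by polar coordinates. On the hyperplane,
\[
\int_{K_{d,v}}\varphi_{\lambda,v}=\int_{K_{d,v}}\int_{F_v^\times}\Phi_{\lambda,v}(te_dk)\,|t|_v^{d}\,d^\times t\,dk=\Delta_v^{d/2}\zeta_v(d)\hat\Phi_{\lambda,v}(0).
\]
Combined with the global factor $\Delta^{1/2}$, this gives $\Delta^{-\frac{d-1}{2}}$. This one computation yields both the simple pole and the residue. You also need no density argument: surjectivity lets you realize the given section $\varphi_\lambda$ itself.

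\textbf{The flawed step in the constant-term route.} If you want that route to stand on its own, the claim that the sum collapses is false. The partial Eisenstein series $\Eis^{\Q_{d-1}}$ sums over $(\Q_1\cap\Q_{d-1})(F)\bs\Q_{d-1}(F)\cong\mathbb{P}^{d-2}(F)$. For $d\ge 3$ this set is infinite, so feeding in a $\Q_{d-1}(F)$-invariant character makes the sum diverge. Its value exists only by meromorphic continuation in the transverse variable.

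Concretely, in the $G_{d-1}$ variable, $\Eis^{\Q_{d-1}}$ is a maximal degenerate Eisenstein series with inducing data $\1_1|\cdot|^{a}\boxplus\1_{d-2}|\det|^{b}$, where $a-b=\lambda_2-\lambda_1+\tfrac12$. On your hyperplane this equals $-\tfrac{d-1}{2}$, the reflection of its own pole at $a-b=\tfrac{d-1}{2}$ (Lemma \ref{lem:res-deg-eis-dual} with $d-1$ in place of $d$). So two of your assertions need proof:
\begin{enumerate}
\item that the only singularity comes from $\M(w_\ell^d,\lambda)$, which needs regularity of this $G_{d-1}$ series near that point;
\item that $\Eis^{\Q_{d-1}}$ returns the residual character unchanged.
\end{enumerate}
For the second, on the spherical section every non-identity term of the Borel constant term carries a factor $\xi(a-b+\tfrac{d-1}{2})^{-1}$. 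That factor vanishes at $a-b=-\tfrac{d-1}{2}$; this is the analogue of $E(z,0)=1$ on $\GL_2$.

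Your closing uniqueness step should be phrased via cuspidal support, not by saying the difference is cuspidal. The difference has vanishing $B$-constant term and cuspidal support in $B$, hence is zero. These points can be supplied, but they are exactly what the Poisson-summation argument lets you bypass.
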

	\forlater{Do we need the precise value of this constant? If not I suggest not writing it down.}
	The above lemma is known; for instance, it can be extracted from \cite[Theorem 1-5]{hanzer2015degenerate}. Here we give a short proof in our particular case of interest. 
	
	\begin{proof}
		By the surjectivity of the intertwiner between the Schwartz-space model and the induced model, we find a factorizable Schwartz--Bruhat function $\Phi_\lambda\in\mathcal{S}(\A^d)$ holomorphic in $\lambda$ so that
		$\Phi_{\lambda,v}=\zeta_v(\tfrac{d}{2}+\lambda_1-\lambda_2)^{-1}\mathbbm{1}_{\o_v^d}$ for almost every $v$ and
		\begin{equation*}
			\varphi_\lambda = |\det|^{\frac{1}{2}+\lambda_1}\int_{\A^\times}\Phi_\lambda(te_d\bullet)|t|^{\frac{d}{2}+\lambda_1-\lambda_2}\,d^\times t.
		\end{equation*}
		
		The above converges absolutely for $\Re(\lambda_1-\lambda_2)>1-\tfrac{d}{2}$ and has analytic continuation to all $\lambda$. This follows via Tate's thesis; see \cite[Proposition 3.1.6]{bump19927automorphic}. Now it follows from \cite[Proof of Proposition 2.1]{cogdell2007functions} that $\Eis(\varphi_\lambda)$ has a simple polar divisor along the hyperplane $\lambda_1-\lambda_2=\tfrac{d}{2}$ where the residue is given by
		\begin{equation*}
			\frac{\vol([G_1^1])}{d}\Res_{\lambda_1-\lambda_2=\frac{d}{2}}\frac{|\det|^{\lambda_1-\frac{1}{2}}}{\frac{\lambda_1-\lambda_2}{d}-\tfrac{1}{2}}\widehat{\Phi}_\lambda(0)=\Delta^{\frac 12}\xi^\ast(1)\frac{|\det|^{\lambda_1-\frac{1}{2}}}{\xi(d)}\prod_v\zeta_v(d)\hat{\Phi}_{\lambda,v}\mid_{\lambda_1-\lambda_2=\frac{d}{2}}(0).
		\end{equation*}
		Note that the $v$-adic factor above equals $1$ for almost every $v$.
		
		On the other hand, for $\lambda_1-\lambda_2=\tfrac{d}{2}$ we write equalities of absolutely convergent integrals
		\begin{equation*}
			\int_{K_{d,v}}\varphi_{\lambda,v}\vert_{\lambda_1-\lambda_2=\frac{d}{2}}(k)\, dk
			=\int_{K_{d,v}}\int_{F_v^\times}\Phi_{\lambda,v}(te_d k)|t|^d\,d^\times t\, dk=\Delta_v^{d/2}\zeta_v(d)\hat{\Phi}_{\lambda,v}(0).
		\end{equation*}
		This concludes the proof.
	\end{proof}
	
	A similar proof yields the analogous result for the Eisenstein series with respect to the associate parabolic, which is the content of the following lemma.
	
	\begin{lem}\label{lem:res-deg-eis-dual}
		Let $d\ge 2$ and for $\lambda:=(\lambda_1,\lambda_2)\in\a^\ast_{\Q_1,\C}$ let $\varphi_\lambda\in\1_1\cdot|\cdot|^{\lambda_1}\boxplus\1_{d-1}\cdot|\det|^{\lambda_2}$ be a normalized factorizable vector. If $\varphi_\lambda$ is holomorphic in a sufficiently small neighbourhood of the hyperplane $\lambda_1-\lambda_2=\tfrac{d}{2}$ then $\Eis(\varphi_\lambda)$ has at most a simple polar divisor along the same hyperplane where the residue is given by
		\begin{equation*}
			-|\det|^{\frac{1}{2}+\lambda_2}\cdot \Delta^{-\frac{d-1}{2}}\frac{\xi^\ast(1)}{\xi(d)}\int_{K_d}\varphi_\lambda(k)\, dk\in\1_d\cdot|\det|^{\frac{\lambda_1+(d-1)\lambda_2}{d}},
		\end{equation*}
		for $\lambda_1-\lambda_2=\tfrac{d}{2}$.
	\end{lem}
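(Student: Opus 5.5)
We follow the proof of Lemma \ref{lem:res-deg-eis-original} with the roles of the two blocks swapped. There are two natural routes, and we would use the second and keep the first as a check.

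The first route is to apply the functional equation \eqref{eq:FE-for-Eis}. Let $w$ be the Weyl element carrying $\Q_1$ to $\Q_{d-1}$. Then
\[
\Eis(\varphi_\lambda)=\Eis\bigl(\M(w,\lambda)\varphi_\lambda, w\lambda\bigr), \qquad w\lambda=(\lambda_2,\lambda_1),
\]
and $\M(w,\lambda)\varphi_\lambda$ lies in $\1_{d-1}\cdot|\det|^{\lambda_2}\boxplus\1_1\cdot|\cdot|^{\lambda_1}$. The relevant hyperplane then becomes $\lambda_2-\lambda_1=-\tfrac d2$, which is not the hyperplane treated in Lemma \ref{lem:res-deg-eis-original}. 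We would therefore only use this route to cross-check the sign and the constant.

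The second route, which is the actual plan, is to parametrize the associate section by a Schwartz function using the contragredient (row-vector) model. By surjectivity of the Schwartz-space intertwiner, choose a factorizable $\Phi_\lambda\in\mathcal{S}(\A^d)$, holomorphic in $\lambda$, with $\Phi_{\lambda,v}=\zeta_v(\tfrac d2+\lambda_1-\lambda_2)^{-1}\mathbbm{1}_{\o_v^d}$ for almost all $v$, such that
\[
\varphi_\lambda(g)=|\det g|^{\frac12+\lambda_2}\int_{\A^\times}\Phi_\lambda\bigl(t e_1 {}^t g^{-1}\bigr)|t|^{\frac d2+\lambda_1-\lambda_2}\,d^\times t .
\]
Here the stabilizer of the line through $e_1$ under $g\mapsto {}^tg^{-1}$ is $\Q_1$. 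One checks the equivariance directly: it lands in $\1_1|\cdot|^{\lambda_1}\boxplus\1_{d-1}|\det|^{\lambda_2}$. Equivalently, $g\mapsto\varphi_\lambda({}^tg^{-1})$ is a section of the type in Lemma \ref{lem:res-deg-eis-original}, with parameters swapped and negated.

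We then run the Godement--Jacquet/Tate argument of \cite[Proof of Proposition 2.1]{cogdell2007functions}. Unfold the Eisenstein series as a theta series over $F^d\setminus\{0\}$ and apply Poisson summation. This produces two boundary terms, involving $\Phi_\lambda(0)$ and $\hat\Phi_\lambda(0)$, and the pole at $\lambda_1-\lambda_2=\tfrac d2$ comes from the $\hat\Phi_\lambda(0)$ term. The difference from the original lemma is that the Tate variable now enters through $|t|^{\frac d2+\lambda_1-\lambda_2}$ rather than as a term producing $(\tfrac{\lambda_1-\lambda_2}{d}-\tfrac12)^{-1}$. Consequently the polar factor appears with the opposite orientation, namely $\bigl(\tfrac12-\tfrac{\lambda_1-\lambda_2}{d}\bigr)^{-1}$ up to normalization, and this accounts for the minus sign. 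The prefactor in front of the residue is $|\det|^{\frac12+\lambda_2}$, read off from the $|\det g|^{\frac12+\lambda_2}$ in the parametrization.

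Finally, we match $\hat\Phi_\lambda(0)$ with $\int_{K_d}\varphi_\lambda(k)\,dk$ place by place, exactly as before:
\[
\int_{K_{d,v}}\varphi_{\lambda,v}(k)\,dk=\Delta_v^{d/2}\zeta_v(d)\,\hat\Phi_{\lambda,v}(0)
\]
on the hyperplane. This uses the $K$-invariance of the measure under $k\mapsto {}^tk^{-1}$. Collecting the volume factors gives the constant $\Delta^{-\frac{d-1}{2}}\xi^\ast(1)/\xi(d)$. The residue is the constant function times $|\det|^{\frac12+\lambda_2}$, and on the hyperplane its central character exponent is $(\lambda_1+(d-1)\lambda_2)/d$, as claimed.

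The main obstacle is bookkeeping the sign of the residue, which depends on orienting the Tate variable correctly in the transposed model. We would verify it against the functional-equation route: by the constant term computation \eqref{eq:constant-term-expansion}, the two Eisenstein series have leading exponents whose poles must cancel appropriately. The constant and the sign can also be cross-checked on $d=2$ using the explicit $\GL_2$ Eisenstein series.
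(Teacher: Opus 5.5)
Your overall plan (transposed Schwartz model plus the Godement--Jacquet/Tate computation) can be made to work. However, two steps as written fail, and they are exactly the two points where this lemma differs from Lemma~\ref{lem:res-deg-eis-original}.

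\textbf{The parametrization.} Your formula is not in $\1_1\cdot|\cdot|^{\lambda_1}\boxplus\1_{d-1}\cdot|\det|^{\lambda_2}$. For $p\in\Q_1$ it transforms with an extra factor $|\det p|$. For instance, at central $g=z$ the substitution $t\mapsto tz$ gives $|z|^{d+\lambda_1+(d-1)\lambda_2}$ instead of $|z|^{\lambda_1+(d-1)\lambda_2}$. The correct prefactor is $|\det g|^{\lambda_2-\frac12}$. One sees this from $\varphi_\lambda(g)=\varphi^1_\lambda(w^{-1}g^{-\top})$, with $\varphi^1_\lambda$ as below, together with the parametrization of Lemma~\ref{lem:res-deg-eis-original}.

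So the factor $|\det|^{\frac12+\lambda_2}$ in the residue is not read off from the prefactor. It arises because Poisson summation in this model reads
\[
\sum_{\xi\in F^d}\Phi(t\xi g^{-\top})=|t|^{-d}|\det g|\sum_{\xi\in F^d}\hat\Phi(t^{-1}\xi g).
\]
Hence the $\hat\Phi(0)$-term carries $|\det g|^{\lambda_2-\frac12}\cdot|\det g|$. This is parallel to $|\det|^{\frac12+\lambda_1}\mapsto|\det|^{\lambda_1-\frac12}$ in Lemma~\ref{lem:res-deg-eis-original}.

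\textbf{The sign.} Your mechanism for the minus sign is wrong. In your (corrected) model the Eisenstein series is
\[
|\det g|^{\lambda_2-\frac12}\int_{F^\times\bs\A^\times}\sum_{\xi\neq 0}\Phi_\lambda(t\xi g^{-\top})|t|^{\frac d2+\lambda_1-\lambda_2}\,d^\times t .
\]
This is the very theta integral of Lemma~\ref{lem:res-deg-eis-original} evaluated at $g^{-\top}$, with the same Tate exponent. There, too, the Tate variable enters as $|t|^{\frac d2+\lambda_1-\lambda_2}$, and that is what produces $(\frac{\lambda_1-\lambda_2}{d}-\frac12)^{-1}$.

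So the $\hat\Phi_\lambda(0)$-term is again a positive constant times $|\det g|^{\lambda_2+\frac12}\hat\Phi_\lambda(0)/(\lambda_1-\lambda_2-\frac d2)$. There is no flip to $(\frac12-\frac{\lambda_1-\lambda_2}{d})^{-1}$: measured in $\lambda_1-\lambda_2$, the residue has the same sign as in Lemma~\ref{lem:res-deg-eis-original}.

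The minus sign is a convention on the normal coordinate. Residues are taken in the $\GL_{d-1}$-block parameter, which is $\lambda_1$ in Lemma~\ref{lem:res-deg-eis-original} and $\lambda_2$ here, and $\lambda_1-\lambda_2$ decreases in $\lambda_2$. This is how the lemma is used in Proposition~\ref{prop:reg-main-term}: there $\M\bar f\cdot\M f_s$ has $\GL_{n-1}$-parameter $s-\frac12$, but $\lambda_1-\lambda_2=\frac n2-ns$.

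Your proposed $d=2$ check would expose this rather than confirm your explanation. For $d=2$ the two lemmas concern the same Eisenstein series and the same hyperplane, where $|\det|^{\lambda_1-\frac12}=|\det|^{\lambda_2+\frac12}$; the two formulas then differ only by the sign. Your first route is not a clean check either, since $\M(w,\lambda)\varphi_\lambda$ itself has a pole along this hyperplane.

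\textbf{Comparison with the paper.} The paper never reruns Poisson summation. It sets $\varphi^1_\lambda:=\varphi_\lambda(w\,\bullet^{-\top})$ with $w=w_\ell^d$, which lies in $\1_{d-1}\cdot|\det|^{-\lambda_2}\boxplus\1_1\cdot|\cdot|^{-\lambda_1}$. It then observes $\Eis(\varphi^1_\lambda)(g^{-\top})=\Eis(\varphi_\lambda)(g)$ by reindexing the coset sum, and notes that the hyperplane $(-\lambda_2)-(-\lambda_1)=\frac d2$ is unchanged. Lemma~\ref{lem:res-deg-eis-original} then applies directly, and the proof finishes with $k\mapsto wk^{-\top}$. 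The sign comes from the $\GL_{d-1}$-parameter of $\varphi^1_\lambda$ being $-\lambda_2$.

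The Weyl element is essential, including for your ``equivalently'' remark: $g\mapsto\varphi_\lambda(g^{-\top})$ alone is equivariant for the opposite parabolic ${}^t\Q_1$, not for $\Q_{d-1}$. Once corrected, your route is this reduction unwound into coordinates. Your final matching of $\hat\Phi_\lambda(0)$ with $\int_{K_d}\varphi_\lambda$ via $k\mapsto k^{-\top}$ is fine.
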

	
	\begin{proof}
		Temporarily, let $w=w_\ell^d$ as in \eqref{eq:def-main-intertwiner} and $\varphi^1_\lambda:=\varphi_\lambda\left(w\bullet^{-\top}\right)$ which lies in $\1_{d-1}\cdot|\det|^{-\lambda_2}\boxplus\1_1\cdot|\cdot|^{-\lambda_1}$. Consequently, for sufficiently dominant $\lambda$ we have
		\begin{equation*}
			\Eis(\varphi^1_\lambda)(g^{-\top})=\sum_{\gamma\in\Q_{d-1}(F)\bs G_d(F)}\varphi_\lambda(w\gamma^{-\top}w^{-1}g)=\sum_{\gamma\in\Q_1(F)\bs G_d(F)}\varphi_\lambda(\gamma g)=\Eis(\varphi_\lambda)(g).
		\end{equation*}
		Thus employing Lemma \ref{lem:res-deg-eis-original} we obtain the residue equals
		\begin{equation*}
			-|\det g|^{\frac{1}{2}+\lambda_2}\Delta^{-\frac{d-1}{2}}\frac{\xi^\ast(1)}{\xi(d)}\int_{K_d}\varphi_\lambda(wk^{-\top})\, dk\Big\vert_{\lambda_1-\lambda_2=\frac{d}{2}},
		\end{equation*}
		which yields the claim after the change of variable $k\mapsto wk^{-\top}$.
	\end{proof}
	
	\begin{lem}\label{lem:eis-residue}
		Let $\varphi\in\1_{n-2}\boxplus\1_1\boxplus\1_1$ be $(s,z)$-independent. Then there exists $\beta_\varphi\in\1_{n-1}\boxplus\1_1$ so that
		\begin{equation*}
			\Res_{z=\frac{n-1}{2}-\frac{n}{2}s}\Eis(\varphi,\lambda_{\mathrm{C}}(s,z))=\Eis(\beta_\varphi,\lambda_{\mathrm{E}}(s-\tfrac{1}{2})),
		\end{equation*}
		where $\lambda_{\mathrm{E}}$ is as in \eqref{eq:def-lambda-E},
		for $s$ in general position.
	\end{lem}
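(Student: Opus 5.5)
We plan to realize $\Eis(\varphi,\lambda_{\mathrm{C}}(s,z))$ as an iterated Eisenstein series: first induce from the Borel-type parabolic $\widetilde{\Q_{n-2}}$ (partition $(n-2)+1+1$) to the parabolic $\Q_{n-1}$ (partition $(n-1)+1$), and then from $\Q_{n-1}$ to $G_n$. Concretely, by transitivity of induction (and, for sufficiently dominant parameters, by splitting the sum over $\widetilde{\Q_{n-2}}(F)\bs G_n(F)$ through $\Q_{n-1}(F)$), we write
\[
\Eis(\varphi,\lambda)=\Eis\Big(g\mapsto \Eis^{\Q_{n-1}}(\varphi,\lambda)(g)\Big),
\]
where the inner partial Eisenstein series lives on the Levi $G_{n-1}\times G_1$ of $\Q_{n-1}$. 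On the $G_{n-1}$ block it is an Eisenstein series induced from $\1_{n-2}\cdot|\det|^{\lambda_1}\boxplus\1_1\cdot|\cdot|^{\lambda_2}$, that is, exactly of the shape treated in Lemma \ref{lem:res-deg-eis-original} with $d=n-1$. By meromorphic continuation, the identity persists for $(s,z)$ in general position.

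Next we locate the pole. With $\lambda_{\mathrm{C}}(s,z)=(s,-\tfrac{n-2}{2}s-z,-\tfrac{n-2}{2}s+z)$, the difference $\lambda_1-\lambda_2=\tfrac{n}{2}s+z$, shifted by the $\rho$-normalization of the partial induction inside $G_{n-1}$, equals $\tfrac{d}{2}=\tfrac{n-1}{2}$ precisely when $z=\tfrac{n-1}{2}-\tfrac n2 s$. This matches the polar divisor in Lemma \ref{lem:pole-zero-eis}(3). We therefore apply Lemma \ref{lem:res-deg-eis-original} fibrewise, with $\varphi$ restricted to $\begin{pmatrix}m&\\&a\end{pmatrix}k$. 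The residue of the inner series is a constant times $\int_{K_{n-1}}\varphi(\begin{pmatrix}k'&\\&1\end{pmatrix}k)\,dk'$ times a character of $\det m$. This character equals $|\det m|^{\frac{(n-2)\lambda_1+\lambda_2}{n-1}}$ up to the $\rho$-shifts, and the factor $|a|^{\lambda_3}$ comes along on the $G_1$ block. Hence the residue defines a vector $\beta_\varphi\in\1_{n-1}\boxplus\1_1$, namely $\beta_\varphi(k):=c\int_{K_{n-1}}\varphi(\mathrm{diag}(k',1)k)\,dk'$ for a global constant $c$ (coming from $\xi^\ast(1)/\xi(n-1)$ and discriminant powers). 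This $\beta_\varphi$ is independent of $(s,z)$, because $\varphi$ is.

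Finally we compute the resulting parameter on $\Q_{n-1}$. Substituting $z=\tfrac{n-1}{2}-\tfrac n2 s$, the $G_{n-1}$ exponent, once averaged, becomes $\tfrac{(n-2)s-\frac{n-2}{2}s-z}{n-1}$ plus the $\rho$-correction, and the $G_1$ exponent becomes $-\tfrac{n-2}{2}s+z=\tfrac{n-1}{2}-(n-1)s$. We check that this pair equals $\lambda_{\mathrm{E}}(s-\tfrac12)=(s-\tfrac12,-(n-1)(s-\tfrac12))$ modulo $\a^\ast_{G}$. Since the sum of the entries weighted by block sizes must be $0$ in $\a^\ast_{\Q_{n-1}}/\a^\ast_G$, only the difference needs to match, and it does: $\lambda_1-\lambda_2=n(s-\tfrac12)$. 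Taking the residue commutes with the outer Eisenstein summation, by uniform convergence for dominant $s$ followed by meromorphic continuation. This gives $\Res\Eis(\varphi,\lambda_{\mathrm{C}})=\Eis(\beta_\varphi,\lambda_{\mathrm{E}}(s-\tfrac12))$.

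The main obstacle is the interchange of the residue with the outer Eisenstein series, together with the bookkeeping of $\rho$-shifts between normalized partial induction and full induction. First we would verify the identity in a region where the outer sum converges absolutely and the inner pole hyperplane is met. This requires $\Re s$ large, so $\Re z$ is very negative; there we use the functional equation in $z$ (the evenness from Lemma \ref{lem:evenness-I2C}), or alternatively we argue through the constant term along $\Q_{n-1}$ via \eqref{eq:constant-term-expansion}. We would then extend to general $s$ by analytic continuation.
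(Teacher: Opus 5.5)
Your proposal follows the paper's proof essentially step for step. You split the sum through $\Q_{n-1}(F)\bs G_n(F)$, regard $\Eis^{\Q_{n-1}}(\varphi,\lambda_{\mathrm{C}}(s,z))$ as a $G_{n-1}$ maximal degenerate Eisenstein series, apply Lemma \ref{lem:res-deg-eis-original} with $d=n-1$ along $z+\tfrac{n}{2}s=\tfrac{n-1}{2}$, interchange the residue with the absolutely convergent outer sum, and continue meromorphically. The suggested appeal to a functional equation in $z$ is unnecessary (and Lemma \ref{lem:evenness-I2C} concerns the basis-summed spectral integrand, not $\Eis(\varphi,\cdot)$ for a fixed $\varphi$): the paper simply computes the residue as a limit with $\Re(z)<0$ and $\Re(s)>\tfrac{n-1}{n}$, where the outer sum already converges and Lemma \ref{lem:res-deg-eis-original} supplies the continuation of the inner series.
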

	
	%\rn{I think it's better to write this lemma as saying that \begin{equation*} \Res_{\lambda_1-\lambda_2=\frac{n-1}{2}}\Eis(\varphi,\lambda)=\Eis(\beta_\varphi,\lambda') \end{equation*} where \begin{equation*} \lambda'=\left(\lambda_1-\tfrac{1}{2},\lambda_3\right)=\left(\lambda_2+\tfrac{n-2}{2},\lambda_3\right)\in\a^\ast_{\Q_{n-1},\C}. \end{equation*} Idk it is essentially the same but looks more general. About the secondary main term we need a simliar but slightly different result. So in order to avid double work, we can write a Lemma about computing the residue of an Eisenstein series in $\1_i\boxplus\1_j\boxplus\1_k$. Of course if $j\neq 1$, we cannot quote Cogdell.... }
	\begin{proof}
		Let, temporarily, $\Re(s)$ and $-\Re(z)$ be sufficiently positive (equivalently, $\lambda_{\mathrm{C}}(s,z)$ is sufficiently dominant). We thus write
		\begin{align}\label{eq:eisenstein-in-stages}
			\Eis(\varphi,\lambda_{\mathrm{C}}(s,z))
			&=\sum_{\gamma\in\Q_{n-1}(F)\bs G_n(F)}\sum_{\gamma'\in\widetilde{\Q_{n-2}}(F)\bs\Q_{n-1}(F)}\varphi_{\lambda_{\mathrm{C}}(s,z)}(\gamma'\gamma\cdot)\nonumber\\
			&=\sum_{\gamma\in\Q_{n-1}(F)\bs G_n(F)}\Eis^{\Q_{n-1}}(\varphi,\lambda_{\mathrm{C}}(s,z))(\gamma\cdot).
		\end{align}
		Doing induction in stages and writing $\Q_{n-1}':=\widetilde{\Q_{n-2}}\cap G_{n-1}\cong\widetilde{\Q_{n-2}}\bs\Q_{n-1}$ we obtain
		\begin{equation*}
			\delta_{\Q_{n-1}}^{-s}\cdot\varphi_{\lambda_{\mathrm{C}}(s,z)}\left[\begin{pmatrix}\bullet&\\&1\end{pmatrix}g\right]\in\1_{n-2}\cdot|\det|^{\frac{1}{2}}\boxplus\1_1\cdot|\cdot|^{-\frac{n-2}{2}-z+\frac{1}{2}-\frac{n}{2}s}
		\end{equation*}
		for $g\in G_n(\A)$. Moreover, we realize
		\begin{equation*}
			\Eis^{\Q_{n-1}}(\varphi,\lambda_{\mathrm{C}}(s,z))(g) = \Eis_{\Q'_{n-1}}\left(\delta_{\Q_{n-1}}^{s}\cdot\varphi_{\lambda^n_{\mathrm{C}}(s,z)}\left[\begin{pmatrix}\bullet&\\&1\end{pmatrix}g\right],\lambda^{n-1}_{\mathrm{C}}(s,z)\right)(1)
		\end{equation*}
		as a $G_{n-1}$ maximal degenerate Eisenstein series, where
		\begin{equation*}
			\lambda^{n-1}_{\mathrm{C}}(s,z):=(\tfrac{1}{2},\tfrac{1}{2}-z-\tfrac{n}{2}s)\in\a^\ast_{\Q'_{n-1},\C};\quad\lambda^n_{\mathrm{C}}(s,z):=(0,0,z+\tfrac{n}{2}s)\in\a^\ast_{\widetilde{\Q_{n-2}},\C}.
		\end{equation*}
		Thus from Lemma \ref{lem:res-deg-eis-original} it follows that the above degenerate Eisenstein series has a simple polar divisor along the hyperplane
		\begin{equation*}
			z+\tfrac{n}{2}s=\tfrac{n-1}{2}\quad\iff\quad z=\tfrac{n-1}{2}-\tfrac{n}{2}s
		\end{equation*}
		and that there exists $\beta_\varphi\in\1_{n-1}\boxplus\1_1$ so that \begin{equation}\label{eq:residue-degenerate-eis}
			\Res_{z=\frac{n-1}{2}-\frac{n}{2}s}\Eis^{\Q_{n-1}}(\varphi,\lambda_{\mathrm{C}}(s,z))(g)=\left(\beta_\varphi\right)_{\lambda_{\mathrm{E}}(s-\frac{1}{2})}(g),
		\end{equation}
		where $\lambda_{\mathrm{E}}(s)$ is as in the statement of the lemma.

		Now we compute the required residue using the expression \eqref{eq:eisenstein-in-stages}. As the sum in \eqref{eq:eisenstein-in-stages} is absolutely convergent uniformly in $s,z$ lying in compact sets in a sufficiently dominant cone, we can interchange the sum and the residue operator. To compute the residue of the summands we realize them as $z$-limits staying in a sufficiently dominant cone. In other words, we compute the limit as ${z\to \frac{n-1}{2}-\frac{n}{2}s}$ with $\Re(z)<0$ and $\Re(s)>\frac{n-1}{n}$. Thus using \eqref{eq:residue-degenerate-eis} we obtain
		\begin{equation*}
			\Res_{z=\frac{n-1}{2}-\frac{n}{2}s}\Eis(\varphi,\lambda_{\mathrm{C}}(s,z))=\sum_{\gamma\in\Q_{n-1}(F)\bs G_n(F)}(\beta_\varphi)_{\lambda_{\mathrm{E}}(s-\frac{1}{2})}(\gamma\cdot)=\Eis\left(\beta_{\varphi},\lambda_{\mathrm{E}}(s-\tfrac{1}{2})\right)
		\end{equation*}
		for $\Re(s)$ sufficiently large. We conclude by meromorphic continuation.
	\end{proof}

	\begin{lem}\label{lem:residue-projection}
		Let $\mathcal{L}(s,z)$ be a linear functional on $C^\infty([G])$ that is holomorphic and satisfies $\mathcal{L}(s,z)(\varphi)\ll_{s,z}\nu_\varphi^{O(1)}$ continuously in $(s,z)$, in a sufficiently small neighbourhood of the hyperplane $z+\tfrac{n}{2}s=\tfrac{n-1}{2}$. Then
		\begin{multline*}
			\Res_{z=\frac{n-1}{2}-\frac{n}{2}s}\sum_{\varphi\in\B(\1_{n-2}\boxplus\1_1\boxplus\1_1)}\left\langle\phi,\Eis\left(\varphi,\overline{\lambda_{\mathrm{C}}(s,z)}\right)\right\rangle_{[G_n]^1 }\mathcal{L}(s,z)(\varphi)\\
			=\sum_{\beta\in\B(\1_{n-1}\boxplus\1_1)}\left\langle\phi,\Eis\left(\beta,\overline{\lambda_{\mathrm{E}}(s-\tfrac{1}{2})}\right)\right\rangle_{[G_n]^1 }\mathcal{L}\left(s,\tfrac{n-1}{2}-\tfrac{n}{2}s\right)(\beta)
		\end{multline*}
		where $\lambda_{\mathrm{E}}(s)$ is as in \eqref{eq:def-lambda-E}.
	\end{lem}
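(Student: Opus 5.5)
The plan is to make both sides independent of the choice of orthonormal basis and then move the residue inside the $\varphi$-sum, using Lemma \ref{lem:eis-residue} term by term. Start in a region where everything converges absolutely. Remark \ref{rmk:extra-decay-rep-lambda} shows that
\[
\sum_{\varphi}\bigl|\langle\phi,\Eis(\varphi,\overline{\lambda_{\mathrm C}(s,z)})\rangle\bigr|\,\nu_\varphi^{O(1)}
\]
converges uniformly on compact sets away from the polar divisors. Since $\mathcal{L}(s,z)(\varphi)\ll\nu_\varphi^{O(1)}$ locally uniformly, the left-hand sum is meromorphic near the hyperplane $z+\tfrac n2 s=\tfrac{n-1}{2}$. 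To control it on the hyperplane itself, I would multiply by the linear form $(z+\tfrac n2 s-\tfrac{n-1}{2})$, which by Lemma \ref{lem:pole-zero-eis}(3) clears the simple pole. The residue is then the value of the regularized sum on the hyperplane, and it can be computed term by term provided the bound of Remark \ref{rmk:extra-decay-rep-lambda} persists for the regularized Eisenstein series. I would get this from a Cauchy-integral argument on a small circle in $z$ around the hyperplane, where the unregularized bounds hold uniformly. The justification is then dominated convergence.

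Termwise, Lemma \ref{lem:eis-residue} (applied to $\bar\varphi$ and conjugated) gives
\[
\Res\,\langle\phi,\Eis(\varphi,\overline{\lambda_{\mathrm C}})\rangle=\langle\phi,\Eis(\beta_\varphi,\overline{\lambda_{\mathrm E}(s-\tfrac12)})\rangle .
\]
Here $\varphi\mapsto\beta_\varphi$ is a linear $G_n(\A)$-equivariant map $\1_{n-2}\boxplus\1_1\boxplus\1_1\to\1_{n-1}\boxplus\1_1$. The reason is that, by the proof of Lemma \ref{lem:eis-residue} and Lemma \ref{lem:res-deg-eis-original}, $\beta_\varphi(g)$ is, up to a constant $c=\Delta^{-\frac{n-2}{2}}\xi^\ast(1)/\xi(n-1)$ and a power of $|\det|$, the $K_{n-1}$-average
\[
\int_{K_{n-1}}\varphi\left[\begin{pmatrix}k&\\&1\end{pmatrix}g\right]dk .
\]
This is exactly the orthogonal projection, up to scaling, realizing $\1_{n-1}\boxplus\1_1$ as the quotient of the induced-in-stages space. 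The left side becomes $\sum_\varphi\langle\phi,\Eis(\beta_\varphi,\overline{\lambda_{\mathrm E}})\rangle\,\mathcal L(\varphi)$, with $\mathcal L$ evaluated at $z=\tfrac{n-1}{2}-\tfrac n2 s$.

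The key identity is that for the unitary inner products on the induced spaces, the map $T:\varphi\mapsto\beta_\varphi$ satisfies $TT^\ast=\mathrm{Id}$ on $\1_{n-1}\boxplus\1_1$, with the normalizations fixed by Lemma \ref{lem:res-deg-eis-original}. Granting this, I choose $\B(\1_{n-2}\boxplus\1_1\boxplus\1_1)$ adapted to the decomposition $\ker T\oplus(\ker T)^\perp$, taking the elements of $(\ker T)^\perp$ of the form $T^\ast\beta$ for $\beta\in\B(\1_{n-1}\boxplus\1_1)$. Basis independence is available from Proposition \ref{prop:inflation-prop} and \cite{baruch2005bessel}. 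Terms in $\ker T$ drop out, and the remaining terms give
\[
\sum_\beta\langle\phi,\Eis(\beta,\cdot)\rangle\,\mathcal L(T^\ast\beta).
\]
One then still has to identify $\mathcal L(T^\ast\beta)$ with $\mathcal L(\beta)$, where $\mathcal L$ is extended to $\1_{n-1}\boxplus\1_1$ via Remark \ref{rmk:extension-def-period}. Since $T^\ast\beta$ is literally $\beta$ viewed as a function in the larger induced space ($\beta$ is left-invariant under the $G_{n-1}$-block, hence lies in $\1_{n-2}\boxplus\1_1\boxplus\1_1$ after the appropriate twist), this should hold by construction.

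The main obstacle is this normalization: verifying that $T^\ast\beta$ equals the inclusion of $\beta$ and that $TT^\ast=\mathrm{Id}$ exactly. The point to check is that the constant from Lemma \ref{lem:res-deg-eis-original} cancels against the volume $\vol([G^1_{n-1}])$ coming from the inner products on $\1_{n-1}$ versus $\1_{n-2}\otimes\1_1$, using \eqref{volume-Gk1}. I would check this directly: for $\beta$ in $\1_{n-1}\boxplus\1_1$, its inclusion is $K_{n-1}$-invariant in the relevant block, so its $K_{n-1}$-average is itself. One then computes $\|\beta\|$ in both induced models. If the constants do not cancel exactly, they would have to be absorbed into the extension convention of Remark \ref{rmk:extension-def-period}.
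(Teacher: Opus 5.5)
Your first two steps coincide with the paper's: you move the residue inside the $\varphi$-sum (your Cauchy-integral justification is fine) and apply Lemma \ref{lem:eis-residue} termwise. The gap is in the last step.

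Write $T\varphi=\beta_\varphi$, and let $\iota$ denote regarding $\beta\in\1_{n-1}\boxplus\1_1$ as a function in $\1_{n-2}\boxplus\1_1\boxplus\1_1$. Both induced inner products are a volume times $\int_{K_n}$, so $\langle\iota\beta,\iota\beta'\rangle=r\langle\beta,\beta'\rangle$, where \eqref{volume-Gk1} gives
\[
r=\frac{\vol([M_{\widetilde{\Q_{n-2}}}]^1)}{\vol([M_{\Q_{n-1}}]^1)}=\frac{\vol([G_{n-2}^1])\vol([G_1^1])}{\vol([G_{n-1}^1])}=\Delta^{-\frac{n-2}{2}}\frac{\xi^\ast(1)}{\xi(n-1)},
\]
which is not $1$ in general. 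Now $T^\ast=\iota$ is true; it is exactly the paper's claim $\langle\beta,\beta_\varphi\rangle=\langle\beta,\varphi\rangle$. But it forces $TT^\ast=T\iota=r\,\mathrm{Id}$, so your two ``key'' identities are incompatible, and $TT^\ast=\mathrm{Id}$ is false.

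Consequently $\{T^\ast\beta\}$ is orthogonal but not orthonormal. Using it as an orthonormal basis would produce $r$ times the claimed right-hand side. Your fallback of absorbing constants into Remark \ref{rmk:extension-def-period} is not available either. $\mathcal{L}(\beta)$ is $\mathcal{L}$ applied to the function $\beta$, and Proposition \ref{prop:M11-magic} relies on the exact constant.

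Meanwhile, the identity that actually carries the lemma, $T^\ast=\iota$, is only asserted ``by construction''. It is not formal: it amounts to the residue constant of Lemma \ref{lem:res-deg-eis-original} matching the volume ratio $r$. This is precisely where the paper does its work.

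To repair the argument, drop $TT^\ast$ entirely. Expanding $\beta_\varphi$ over $\B(\1_{n-1}\boxplus\1_1)$ and swapping sums, as the paper does, gives
\[
\sum_\beta\left\langle\phi,\Eis\left(\beta,\overline{\lambda_{\mathrm E}(s-\tfrac12)}\right)\right\rangle\mathcal{L}(T^\ast\beta)
\]
for any bounded $T$. So only $T^\ast=\iota$ is needed. Equivalently, normalize your adapted basis as $r^{-1/2}T^\ast\beta$.

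You can prove $T^\ast=\iota$ by the direct computation you hint at. On $K_n$, Lemma \ref{lem:res-deg-eis-original} gives $T\varphi(k)=c_0\int_{K_{n-1}}\varphi(\operatorname{diag}(k',1)k)\,dk'$ with $c_0=\Delta^{-\frac{n-2}{2}}\xi^\ast(1)/\xi(n-1)$. Using the left $\operatorname{diag}(K_{n-1},1)$-invariance of $\beta$ and the invariance of $dk$, you get
\[
\langle T\varphi,\beta\rangle=c_0\vol([M_{\Q_{n-1}}]^1)\int_{K_n}\varphi\bar\beta,\qquad \langle\varphi,\iota\beta\rangle=\vol([M_{\widetilde{\Q_{n-2}}}]^1)\int_{K_n}\varphi\bar\beta.
\]
These agree because $c_0=r$.

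This is a legitimate and more elementary alternative to the paper's argument. The paper instead writes $\varphi$ and $\beta_\varphi$ as iterated residues of Eisenstein series built from a common $\alpha\in\boxplus_{i=1}^n\1_1$ (M{\oe}glin--Waldspurger). It then compares both pairings with $\langle\beta_{B_n},\alpha\rangle$ via \cite[Proposition 4.4]{fine-boi}. The paper's route avoids tracking the residue constant but imports the normalizations of \cite{fine-boi}. Yours depends on the exact constant in Lemma \ref{lem:res-deg-eis-original} and on \eqref{volume-Gk1}.
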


	\begin{proof}
		As $\phi$ is Schwartz, the $\langle\phi,\Eis\left(\varphi,\overline{\lambda_{\mathrm{C}}(s,z)}\right)\rangle$ decays rapidly in $\varphi$ uniformly in $z$ in a compact set away from the poles of the Eisenstein series. So, we can and will interchange the sum with the residue operator. Applying Lemma \ref{lem:eis-residue} we obtain the residue in the lemma equals
		\begin{equation*}
			\sum_{\varphi\in\B(\1_{n-2}\boxplus\1_1\boxplus\1_1)}\left\langle\phi,\Eis\left(\beta_\varphi,\overline{\lambda_{\mathrm{E}}(s-\tfrac{1}{2})}\right)\right\rangle_{[G_n]^1 }\mathcal{L}\left(s,\tfrac{n-1}{2}-\tfrac{n}{2}s\right)(\varphi).
		\end{equation*}
		Temporarily assume that $\Re(s)$ is sufficiently large. Then writing $\Eis\left(\beta_\varphi,\overline{\lambda_{\mathrm{E}}(s-\tfrac{1}{2})}\right)$ as an absolutely convergent sum and expanding $\beta_\varphi$ over an orthonormal basis of $\1_{n-1}\boxplus\1_1$ we write the above as
		\begin{equation*}
			\sum_{\beta\in\B(\1_{n-1}\boxplus\1_1)}\left\langle\phi,\Eis\left(\beta,\overline{\lambda_{\mathrm{E}}(s-\tfrac{1}{2})}\right)\right\rangle_{[G_n]^1 }\sum_{\varphi\in\B(\1_{n-2}\boxplus\1_1\boxplus\1_1)}\langle\beta,\beta_\varphi\rangle_{\1_{n-1}\boxplus\1_1}\mathcal{L}\left(s,\tfrac{n-1}{2}-\tfrac{n}{2}s\right)(\varphi)
		\end{equation*}
		where the interchange of the sums is justified because of the rapid decay of the inner products in $\beta$. We claim that
		\begin{equation*}
			\langle\beta,\beta_\varphi\rangle_{\1_{n-1}\boxplus\1_1} = \langle\beta,\varphi\rangle_{\1_{n-2}\boxplus\1_1\boxplus\1_1},
		\end{equation*}
		which immediately yields the lemma.
		
		Now we prove the claim. Note that by the classification of the residual representations by M{\oe}glin--Waldspurger (see \cite[\S 4.1.2]{fine-boi}) we can find a non-zero element $\alpha$ in the cuspidal representation $\boxplus_{i=1}^n\1_1$ such that
		\begin{equation*}
			\varphi=\Res_{\lambda=\left(\rho^{B_{n-2}},0,0\right)}\Eis^{\widetilde{\Q_{n-2}}}(\alpha,\lambda),\quad\beta_\varphi=\Res_{\lambda=\left(\rho^{B_{n-1}},0\right)}\Eis^{\Q_{n-1}}(\alpha,\lambda),
		\end{equation*}
		where the above $\Res$ denotes iterated residues in the sense of \cite{MW}.
		It follows from \cite[Proposition 4.4]{fine-boi} that
		\begin{equation*}
			\langle\beta,\beta_\varphi\rangle_{\1_{n-1}\boxplus\1_1} =\langle\beta_{B_n},\alpha\rangle_{\boxplus_{i=1}^n\1_1},\quad\left\langle\beta_{\widetilde{\Q_{n-2}}},\varphi\right\rangle_{\1_{n-2}\boxplus\1_1\boxplus\1_1}=\left\langle(\beta_{\widetilde{\Q_{n-2}}})_{B_n},\alpha\right\rangle_{\boxplus_{i=1}^n\1_1}.
		\end{equation*}
		Noting that $\beta_P=\beta$ for any standard parabolic $P$ we obtain
		\begin{equation*}
			\langle\beta,\beta_\varphi\rangle_{\1_{n-1}\boxplus\1_1}=\langle\beta,\varphi\rangle_{\1_{n-2}\boxplus\1_1\boxplus\1_1}
		\end{equation*}
		concluding the proof.
	\end{proof}
	
	\begin{rmk}
		We emphasize that \cite[Proposition 4.4]{fine-boi} is applicable here since we are adopting the same normalizations as in \cite{fine-boi}, in particular, the compatibility of measures given by \eqref{dg-for-iwasawa-PK}.
	\end{rmk}
	
	\begin{rmk}\label{rmk:stronger-holomorphic-region}
		We will need to know the analytic behaviour of $\Eis(\varphi,\lambda_{\mathrm{C}}(s,z))$ for $\varphi\in\1_{n-2}\boxplus\1_1\boxplus\1_1$ in a slightly larger region than the one described in Lemma \ref{lem:pole-zero-eis}. Let $\epsilon>0$ be fixed but arbitrarily small. Using \cite[Theorem 4.17 (1)]{fine-boi} and the standard zero-free region of $\xi(1+2z)$, we define an even continuous function $\kappa:\R\to (0,\epsilon)$ so that the integrand in Lemma \ref{lem:evenness-I2C} remains holomorphic in  $|\Re(z)|<2\kappa(\Im(z))$ except for the possible simple polar hyperplanes described in \eqref{eq:possible-poles}.
	\end{rmk}

	\begin{proof}[Proof of Proposition \ref{prop:cont-I2C}]
		As discussed after the proof of Lemma \ref{lem:pole-zero-eis} it suffices to continue $I^1(s)$, the $\chi=\1_1$ summand of the sum in the expression of $I_2^{\mathrm{C}}(s)$, as defined in Lemma \ref{lem:evenness-I2C}. We continue $I^1(s)$ arbitrarily to the right of $\Re(s)=\tfrac{n-1}{n}$ using the first assertion of Lemma \ref{lem:evenness-I2C}. Let $\tfrac{n-1}{n}<\Re(s)<\frac{n-1}{n}+\kappa(\Im(s))$ where $\kappa$ is from Remark \ref{rmk:stronger-holomorphic-region}. We shift the contour of the above to $\Re(z)=-\kappa(\Im(z))$. In this process, we pick the pole at $z=-\tfrac{n}{2}s+\tfrac{n-1}{2}$ up, where the residue is given by
		\begin{multline}\label{eq:residue-1}
			\Res_{z=\frac{n-1}{2}-\frac{n}{2}s}\sum_{\varphi\in\B(\1_{n-2}\boxplus\1_1\boxplus\1_1)}\left\langle \phi,\Eis(\varphi,\overline{\lambda_{\mathrm{C}}(s,z)})\right\rangle_{[G_n]^1 }\\
			\frac{\xi(n-1)\xi(\tfrac{n}{2})\xi(ns)\xi(ns-\tfrac{n-2}{2})}{\xi(n-ns)}\widetilde{\P}_S\left(s,\tfrac{n-1}{2}-\tfrac{n}{2}s\right)(\vphi).
		\end{multline}
		We notice that the integral over the deformed contour as a function of $s$ is holomorphic in $\tfrac{n-1}{n}-\kappa(\Im(s))<\Re(s)<\frac{n-1}{n}+\kappa(\Im(s))$. Now we let $\tfrac{n-1}{n}-\kappa(\Im(s))<\Re(s)<\frac{n-1}{n}$ and deform the contour of this new integral back to $\Re(z)=0$. In this process, we pick the pole at $z=\tfrac{n}{2}s-\tfrac{n-1}{2}$ up, where the residue is also given by \eqref{eq:residue-1} thanks to \eqref{eq:residue-even}. Thus we have continued $I^1(s)$ in the region $\tfrac{1}{n}<\Re(s)<\tfrac{n-1}{n}$, where it is given by the sum of the integral in Lemma \ref{lem:evenness-I2C} and the residue given in \eqref{eq:residue-1}.
		
		We briefly take an interlude to describe the residues that appear in \eqref{eq:residue-1}.
		Reverse engineering we realize the sum in \eqref{eq:residue-1} as
		\begin{equation*}
			\sum_{\varphi\in\B(\1_{n-2}\boxplus\1_1\boxplus\1_1)}\left\langle \phi,\Eis(\varphi,\overline{\lambda_{\mathrm{C}}(s,z)})\right\rangle_{[G_n]^1 }
			\optionA{\P(-\lambda_{\mathrm{C}}(s,z);\varphi,f,f_s)}
			\optionB{\P(s,-\lambda_{\mathrm{C}}(s,z);\varphi,\Phi)}
		\end{equation*}
		and then apply Lemma \ref{lem:residue-projection}. Thus we obtain that the expression in \eqref{eq:residue-1} equals
		\begin{multline}\label{eq:M11-before-magic}
			M_{11}(s):=\xi(n-1)\xi(\tfrac{n}2)\V\sum_{\varphi\in\B(\1_{n-1}\boxplus\1_1)}\left\langle\phi,\Eis\left(\varphi,\overline{\lambda_{\mathrm{E}}(s-\tfrac{1}{2})}\right)\right\rangle_{[G_n]^1 }\\
			\times\frac{\xi(ns)\xi(ns-\tfrac{n-2}{2})}{\xi(n-ns)}\widetilde{\P}_S(s,\tfrac{n-1}2-\tfrac n2s)(\varphi),
		\end{multline}
		with $\lambda_{\mathrm{E}}(s)$ as in \eqref{eq:def-lambda-E} and $\tilde \P_S$ given by \eqref{def-PS-tilde}.
		Now we go back to $I^1(s)$ for $\tfrac{1}{n}<\Re(s)<\tfrac{n-1}{n}$. We perform a similar $z$-contour shifting
		manoeuvre as above to meromorphically continue it in $0\le\Re(s)<\tfrac{1}{n}$. We notice that in this process we pick up two poles at $z=\pm\left(\tfrac{n}{2}s-\tfrac{1}{2}\right)$. Once again applying \eqref{eq:residue-even} we obtain that the residue equals
		\begin{multline}\label{eq:M01-before-magic}
			M_{01}(s):=\xi^\ast(1)\xi(\tfrac{n}{2})\V\sum_{\varphi\in\B(\1_{n-2}\boxplus\1_1\boxplus\1_1)}\left\langle\phi,\Eis\left(\varphi,\ol{\lambda_{\mathrm{L}}(s)}\right)\right\rangle_{[G_n]^1 }\\
			\times\xi(\tfrac {n-2}2+ns)\widetilde{\P}_S(s,\tfrac n2s-\tfrac 12)(\varphi)
		\end{multline}
		where, again, $\tilde \P_S$ is given by \eqref{def-PS-tilde} and
		\begin{equation}\label{eq:def-lambda-L}
			\lambda_{\mathrm{L}}(s):=\lambda_{\mathrm{C}}(s,\tfrac{n}{2}s-\tfrac{1}{2})=\left(s,\tfrac12-(n-1)s,s-\tfrac{1}{2}\right)\in\a^\ast_{\widetilde{\Q_{n-2}},\C}.
		\end{equation}
		This concludes the proof.
	\end{proof}

	\subsection{Comparing $M_{11}$ to an inner product}
	
	The goal of this section is to show the following result.
	\begin{prop}\label{prop:M11-magic}
		Recall $M_{11}(s)$ from \eqref{eq:M11-before-magic}. We have
		\begin{equation*}
			M_{11}(s)=\left\langle \phi, \Eis\left(\M f\cdot \M\ol{f_s}\right)\right\rangle_{[\bar{G}_n]}
		\end{equation*}
		where $\M$ is as in \eqref{eq:def-main-intertwiner}.
	\end{prop}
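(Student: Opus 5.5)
The plan is to show that both sides of Proposition \ref{prop:M11-magic} are the same kind of spectral sum over $\B(\1_{n-1}\boxplus\1_1)$, and then to compare their coefficients as linear functionals on $\1_{n-1}\boxplus\1_1$.

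\emph{Rewrite the right-hand side.} I would first rewrite $\langle\phi,\Eis(\M f\cdot\M\ol{f_s})\rangle$ in the same shape as $M_{11}(s)$. The vector $\M\bar f\cdot\M f_s$ lies in a twist of $\1_1\boxplus\1_{n-1}$. Applying the functional equation \eqref{eq:FE-for-Eis} with $\M^\vee$ moves it back to $\1_{n-1}\boxplus\1_1$. Checking the exponents, including the extra $\rho$-shift coming from multiplying two induced vectors, the resulting parameter should be exactly $\lambda_{\mathrm E}(s-\tfrac12)$. Expanding the resulting vector over $\B(\1_{n-1}\boxplus\1_1)$, as in the proof of Lemma \ref{lem:residue-projection}, gives
\[
\langle\phi,\Eis(\M f\cdot\M\ol{f_s})\rangle
=\sum_{\beta}\left\langle\phi,\Eis\left(\beta,\overline{\lambda_{\mathrm E}(s-\tfrac12)}\right)\right\rangle\,\ell_s(\beta),
\]
where $\ell_s(\beta)$ is a $K_n$-integral of $\beta$ against $\M^\vee(\M\bar f\cdot\M f_s)$. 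Both sides are meromorphic in $s$, so it suffices to prove the identity for $s$ in an open set, say $\tfrac1n<\Re s<\tfrac{n-1}{n}$ in general position. Comparing with \eqref{eq:M11-before-magic}, it is then enough to prove, for every factorizable $\beta\in\1_{n-1}\boxplus\1_1$,
\[
\xi(n-1)\xi(\tfrac n2)\V\,\frac{\xi(ns)\xi(ns-\tfrac{n-2}{2})}{\xi(n-ns)}\,\widetilde\P_S\left(s,\tfrac{n-1}{2}-\tfrac n2 s\right)(\beta)=\ell_s(\beta).
\]

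\emph{Factorize both sides.} By Remark \ref{rmk:extension-def-period} and Lemma \ref{lem:factorization-period}, the left side is (global constant) $\times\prod_v\P_v(-\lambda_{\mathrm C}(s,\tfrac{n-1}{2}-\tfrac n2 s);\beta_v,f_v,f_{v,s})$, suitably normalized. The right side $\ell_s$ also factorizes, since $\M$ and the $K$-integral factor into $\prod_v\widetilde\M_v$ times the scalar $n(w_\ell,\cdot)$, which is a ratio of completed zeta functions. At $v\notin S$ I would compute both sides with the unramified normalizations of \S\ref{sec:unramified-local-choice}. There $\W(\beta_v,\cdot)$ is a spherical Whittaker function of $|\cdot|^{a}\boxplus|\cdot|^{b}$ with $\{a,b\}$ determined by $\lambda_{\mathrm C}$ at the residue point. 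The unramified $\GL_2\times\GL_2$ computation gives the four zeta factors, and these should cancel against $\xi(n-1)\xi(n/2)\xi(ns)\xi(ns-\tfrac{n-2}{2})/\xi(n-ns)$ together with the constants $\xi^*(1)/\xi(n)$ and $\Delta$-powers from Lemma \ref{lem:res-deg-eis-original}. This bookkeeping is routine and should also fix the global constants, including $\V$ via \eqref{volume-Gk1}.

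\emph{Main obstacle: the local identity at $v\in S$.} Here I need, for each $v\in S$,
\[
\P_v\left(-\lambda_{\mathrm C}(s,\tfrac{n-1}{2}-\tfrac n2 s);\beta_v,f_v,f_{v,s}\right)=c_v(s)\int_{K_{n,v}}\beta_v\,\overline{\M_v^\vee(\M_v\bar f_v\cdot\M_v f_{v,s})}\,dk,
\]
with $c_v(s)$ exactly the ratio of local factors seen at the unramified places. My first attempt is a uniqueness argument. Both sides are $G_n(F_v)$-invariant trilinear forms in $(\beta_v,\bar f_v,f_{v,s})$ on the relevant degenerate principal series, and for generic $s$ the space of such forms should be one-dimensional. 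Equality would then follow by comparing at one test vector, for instance at a place where everything is unramified, or via the $\Phi_v$-model with an explicit $\Phi_v$.

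Since I cannot cite a multiplicity-one statement for such trilinear forms, the fallback is a direct unfolding. Write $\P_v$ as an integral over $N_2\backslash G_2\times K_n$. Write $\W(f_v)$ through the $\Phi_v$-model \eqref{eq:def-f-from-phi}, and $\W(\beta_v)$ through \eqref{eq:def-local-W-phi-eis}. Then apply the local $\GL_2$ functional equation, in effect uniqueness of $\GL_2$ Whittaker functionals, to move the Jacquet integrals onto $f_v$ and $f_{v,s}$; this converts them into $\M_v f_v$ and $\M_v f_{v,s}$. Folding the $N_2\backslash G_2\times K_n$ integral back to $P\backslash G_n$ should then produce the $K$-integral on the right, with the $\gamma$-factors giving $c_v(s)$. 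I expect this fallback to be where most of the work lies.
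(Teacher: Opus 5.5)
\paragraph{The gap is the local identity.} Your global reduction matches the paper's: apply \eqref{eq:FE-for-Eis} with $\M^\vee$ to land in $\1_{n-1}\boxplus\1_1$ at $\lambda_{\mathrm E}(s-\tfrac12)$, expand over an orthonormal basis, and compare coefficients as linear functionals. The gap is the step you flag as the main obstacle. That local identity is essentially the whole content of the proposition; in the paper it is Lemma~\ref{lem:local-period-equality-M11}.

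Your first route assumes that $G_n(F_v)$-invariant trilinear forms on the three relevant degenerate principal series (all induced from $\Q_{n-1}$) form a one-dimensional space for generic $s$. You cannot simply assert this. It would need an analysis of every $G_n(F_v)$-orbit on $(\mathbb{P}^{n-1})^3$, including the non-open ones (two or three lines coinciding, three distinct lines spanning a plane) and their transversal jets. At archimedean places it would also need a distributional argument. The authors state in the introduction that they could not find such a multiplicity-one result in the literature.

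Your fallback, as described, does not work either. In $\P_v$ the Jacquet integrals are one-dimensional $N_2$-integrals on $\beta_v$ and $f_v$, and $f_{v,s}$ appears bare. In your target $\int_K\beta_v\,\overline{\M_v^\vee(\M_v\bar f_v\cdot\M_vf_{v,s})}$, the intertwiners integrate over $F_v^{n-1}$ and act on $f_v$ and $f_{v,s}$. No $\GL_2$ functional equation turns $\W(f_v)$ into $\M_vf_v$, and nothing in $\P_v$ can produce $\M_vf_{v,s}$.

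\paragraph{The missing mechanism.}
\begin{enumerate}
\item Use unitarity (adjointness) of $\M$ on the unitary axes, followed by meromorphic continuation, to rewrite the coefficient as $\V'\int_{K_n}\M^\vee(\M\beta_{\lambda}\cdot\M\bar f)\cdot f_s$ with $\lambda=\lambda_{\mathrm E}(\tfrac12-s)$. Now the intertwiners act on the same two vectors that carry Whittaker functions in $\P_v$, and $f_s$ is untouched.
\item Write $\M^\vee$ of the product as an integral over $F_v^{n-1}$ and apply Plancherel on $L^2(F_v^{n-1})$. This gives an integral over the dual space of a product of Fourier coefficients $\W_1(\M\beta_\lambda)\,\overline{\W_1(\M f)}$.
\item Since $\GL_{n-1}$ acts transitively on nonzero dual vectors, write $x'=ye_{n-1}k$ with $y\in F_v^\times$ and $k\in K_{n-1,v}$. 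Left equivariance then collapses the dual integral to a $y$-integral. After integrating against $f_s$ over $K_{n,v}$, this is exactly $\P_v$ in Iwasawa coordinates, provided $\W_1(\M\varphi)$ is a scalar multiple of $\W(\varphi)$.
\item That proportionality comes from three facts. The Weyl element and unipotent defining $\W_1$ factor through those of $\M_1$ and $\M_2$ in \eqref{eq:def-two-intertwiners}. Schur's lemma gives $\M_1\circ\M\propto\M_2$ in general position. Uniqueness of $\GL_2$ Whittaker functionals gives $\W(\M_2\varphi)\propto\W(\varphi)$.
\end{enumerate}
The resulting scalar depends only on $s$ and the place. So it is computed by evaluating both sides on spherical vectors at that same place $v$, not by comparing at a different unramified place as your first route suggests.
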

	
	\begin{proof}
		Note that the adjoint $\M^\vee$ is the intertwiner $\M_{\1_1\times\1_{n-1}}\left(\left(\begin{smallmatrix}&\mathrm{I}_{n-1}\\1&\end{smallmatrix}\right),0\right)$.
		First, we check that
		\begin{equation*}
			\beta(f,s):=\exp\left\langle-\lambda_{\mathrm{E}}(s-\tfrac{1}{2}),H_{\Q_{n-1}}(\cdot)\right\rangle\times \M^\vee\left(\M\bar{f}\cdot \M f_s\right)\in\1_{n-1}\boxplus\1_1
		\end{equation*}
		where $\lambda_{\mathrm{E}}$ is as in \eqref{eq:def-lambda-E}.
		First, we use the functional equation of the Eisenstein series \eqref{eq:FE-for-Eis} to write
		\begin{equation*}
			\Eis\left(\M\bar{f}\cdot \M f_s\right)=\Eis\left(\M^\vee\left(\M\bar{f}\cdot \M f_s\right)\right).
		\end{equation*}
		Then for $\Re(s)$ sufficiently positive we write the Eisenstein series on the right hand side above as
		\begin{equation*}
			\sum_{\gamma\in\Q_{n-1}(F)\bs G_n(F)}\exp\left\langle\lambda_{\mathrm{E}}(s-\tfrac{1}{2}),H_{\Q_{n-1}}(\cdot)\right\rangle\beta(f,s)(\gamma),
		\end{equation*}
		which is absolutely convergent.
		Now expanding $\beta(f,s)$ over an orthonormal basis of $\1_{n-1}\boxplus\1_1$ and interchanging sums (justified by their absolute convergence) we obtain
		\begin{equation*}
			\Eis\left(\M\bar{f}\cdot \M f_s\right)=\sum_{\varphi\in\B(\1_{n-1}\boxplus\1_1)}\Eis\left(\bar{\varphi},\lambda_{\mathrm{E}}(s-\tfrac{1}{2})\right)\IP{\beta(f,s),\bar{\varphi}}_{\1_{n-1}\boxplus\1_1}.
		\end{equation*}
		The above equality now holds for all $s$ as both sides are meromorphic in it.
		%On the other hand, if $\Re(s)=\tfrac{1}{2}$ then $M(\tfrac{1}{2}-s)$ is unitary (see \cite[Theorem 7.2 (a)]{arthur2005introduction}), and consequently, $\{M(\tfrac{1}{2}-s){\varphi}\mid\varphi\in\B\mathbf({1}_{n-1}\boxplus\1_1)\}$ forms an orthonormal basis of $\1_1\boxplus\1_{n-1}$. Thus changing the basis in the above expansion of $\Eis\left(M(0)\bar{f}\cdot M(0) f_s\right)$ and using the functional equation of the Eisenstein series (see \emph{e.g.}, \cite[Theorem 2.3.4]{bernstein2024meromorphic}), namely,
		%\begin{equation*}
		%    \Eis\left(M(s-\tfrac{1}{2})\bar{\varphi},w_\ell\lambda_{\mathrm{E}}(s-\tfrac{1}{2})\right)=\Eis\left(\bar{\varphi},\lambda_{\mathrm{E}}(s-\tfrac{1}{2})\right),\quad \varphi\in\1_{n-1}\boxplus\1_1;
		%\end{equation*}
		%we obtain
		%\begin{equation*}
		%   \Eis\left(M(0)\bar{f}\cdot M(0) f_s\right)=\sum_{\varphi\in\B(\1_{n-1}\boxplus\1_1)}\Eis(\bar{\varphi},\lambda_{\mathrm{E}}(s-\tfrac{1}{2}))\IP{\beta(f,s),M(s-\tfrac{1}{2})\bar{\varphi}}_{\1_1\boxplus\1_{n-1}}.
		%\end{equation*}
		%Thus it suffices to prove that
		%\begin{equation*}
		%   \IP{\beta(f,s),\bar{\varphi}}_{\1_{n-1}\boxplus\1_1}=\xi^\ast(1)\xi(\tfrac{n}{2})\frac{\xi(ns)\xi(ns-\tfrac{n-2}{2})}{\xi(n-ns)}\widetilde{\P}\left(s,\Phi,\lambda_{\mathrm{E}}(\tfrac{1}{2}-s),\varphi\right).
		%\end{equation*}
		
		We know from \cite[Theorem 7.2 (a)]{arthur2005introduction} that $\M$ acts unitarily on unitary representations. Thus on $\Re(s)=\tfrac{1}{2}$ we have
		\begin{align*}
			\IP{\beta(f,s),\bar{\varphi}}_{\1_{n-1}\boxplus\1_1}&=\IP{\M\varphi_{\lambda_{\mathrm{E}}(\frac{1}{2}-s)},\M f\cdot\M \ol{f_s}}_{\1_1\cdot|\cdot|^{(n-1)(\frac{1}{2}-s)}\boxplus\1_{n-1}\cdot|\det|^{s-\frac{1}{2}}}\\
			&=\V'\int_{K_n}\M\varphi_{\lambda_{\mathrm{E}}(\frac{1}{2}-s)}\cdot\M\bar{f}\cdot \M f_{s},
		\end{align*}
		where
		\begin{equation}\label{def-mathcal-V-prime}
			%\V'=\vol([\overline{M}_{\Q_{n-1}}]),
			\V'=\vol([{M}_{\Q_{n-1}}]^1).
		\end{equation}
		We similarly obtain that the right-hand side above is the meromorphic continuation of
		\begin{equation*}			\IP{\M^\vee\left(\M\varphi_{\lambda_{\mathrm{E}}(\frac{1}{2}-s)}\cdot\M\bar{f}\right),\ol{f_s}}_{\1_{n-1}\cdot|\det|^s\boxplus\1_1\cdot|\cdot|^{-(n-1)s}}=\V'\int_{K_{n}}\M^\vee\left(\M\varphi_{\lambda_{\mathrm{E}}(\frac{1}{2}-s)}\cdot\M\bar{f}\right)\cdot f_{s}
		\end{equation*}
		defined on $\Re(s)=0$. Since both $\phi$ and $\Eis(\M f\cdot \M\overline{f_s})$ have trivial central characters, we see that
		\[
		M_{11}(s)=\frac{\left\langle \phi, \Eis\left(\M f\cdot \M\ol{f_s}\right)\right\rangle_{[G_n]^1}}{\vol([Z_n]^1)}.
		\]
		Thus we reduce to showing that, for $s$ in general position, we have
		\begin{equation*}
			\frac{1}{\vol([Z_n]^1)}\int_{K_{n}}\M^\vee\left(\M\varphi_{\lambda_{\mathrm{E}}(\frac{1}{2}-s)}\cdot\M\bar{f}\right)\cdot f_s
			=\frac{\V}{\V'}
			\optionA{\P(\lambda_{\mathrm{E}}(\tfrac{1}{2}-s);\varphi,f,f_s),}
			\optionB{\P(s,\lambda_{\mathrm{E}}(\tfrac{1}{2}-s);\varphi,\Phi),}
		\end{equation*}
		where, we recall, $\mathcal{V}$ is given by \eqref{def-mathcal-V}.
		We assume that $\varphi\in\B$ above are factorizable. The proof now
		follows by combining Lemma \ref{lem:local-period-equality-M11}, the functional equation of $\xi$ and the identity
		\[
		%\frac{\V}{\V'}=\frac{\xi(2)\cdots\xi(n-2)}{\xi(2)\cdots\xi(n-2)\xi(n-1)}=\frac{1}{\xi(n-1)}.
		\vol([Z_n]^1)\frac{\V}{\V'}
		%%=\red{\Delta^{-\frac{n-2}{2}}}\frac{\xi^\ast(1)\xi^\ast(1)\xi(2)\cdots\xi(n-2)}{\xi^\ast(1)\xi(2)\cdots\xi(n-2)\xi(n-1)\xi^\ast(1)}
		%=\frac{\Delta^{-\frac{n-2}2}\xi^\ast(1)}{\xi(n-1)\vol([G_1^1])}
		=\Delta^{-\frac{n-1}2}\xi(n-1)^{-1},
		%\vol(\o^\times)},
	\]
	which is a direct consequence of \eqref{volume-Gk1}.
\end{proof}

%lem 712 here

\begin{lem}\label{lem:local-period-equality-M11}
	Let $v$ be any local place of $F$. We have
	\begin{multline*}
		\int_{K_{n,v}}\M^\vee_v\left(\M_v\varphi_{\lambda_{\mathrm{E}}(\frac{1}{2}-s),v}\cdot\M_v\bar{f}_v\right)\cdot f_{s,v}\\
		=\Delta_v^{-n+\frac 32-ns}\frac{\zeta_v(1-ns)\zeta_v(-\frac{n-2}{2})}{\zeta_v(n-1)\zeta_v(ns)\zeta_v(\tfrac n2)}
		\optionA{\P_v\left(\lambda_{\mathrm{E}}(\tfrac{1}{2}-s);\varphi_v,f_v,f_{v,s}\right)}
		\optionB{\P_v\left(s,\lambda_{\mathrm{E}}(\tfrac{1}{2}-s);\varphi_v,\Phi_v\right)}
	\end{multline*}
	for $s$ in general position.
\end{lem}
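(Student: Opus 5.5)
We will prove the identity by unfolding both sides into integrals over $G_n(F_v)$, or over a parabolic quotient of it, and matching them. Both sides are meromorphic in $s$, so it suffices to work in a region where every integral converges absolutely and then continue.

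On the left we first move $\M^\vee_v$ across the pairing. Since $\M_v^\vee$ is adjoint to $\M_v$ for the $K_{n,v}$-pairing between dual induced representations, the left side equals
\[
\int_{K_{n,v}}\M_v\varphi_{\lambda_{\mathrm{E}}(\frac12-s),v}\cdot\M_v\bar f_v\cdot\M_v f_{s,v}.
\]
This is a $K_{n,v}$-integral of a product of three vectors induced from the parabolic $\Q_1$, whose Levi is $G_1\times G_{n-1}$. Because $\M_v f_{s,v}$ comes from the Schwartz function $\Phi_v$, we can unfold it. Via a local Godement--Jacquet/Fourier transform description, $\M_vf_{s,v}$ is the section attached to $\hat\Phi_v$ in the dual variable, which is where the $\Delta_v$-powers will come from. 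Unfolding converts the $K_{n,v}$-integral into an integral over $\Q_1'(F_v)\bs G_n(F_v)$, where $\Q_1'$ is a mirabolic-type subgroup, against $\hat\Phi_v(e_1\,\cdot)$, with $e_1=(1,0,\dots,0)$, and a power of $|\det|$.

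Next we exploit the fact that $\M_v\varphi_v$ and $\M_v\bar f_v$ are trivial on the $G_{n-1}$ block. Writing the intertwiners as integrals over $U_{\Q_1}$ and using the Bruhat-type parametrization, as in \eqref{eq:bruhat-gl2} and \eqref{eq:def-partial-jacquet-intertwiner-local}, we reorganize the integral in the Iwasawa coordinates $\pmat{\mathrm{I}_{n-2}&\\&h}k$ with $h\in N_2(F_v)\bs G_2(F_v)$. The goal is that one Jacquet-type integral over $N_2$ emerges and reproduces the $\psi_v$-partial Whittaker functions $\W(\varphi_v,\lambda)$ and $\overline{\W(f_v)}$, with $f_{v,s}$ remaining. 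This is the shape of $\P_v$ in \eqref{eq:main-RS-period-local}. In effect we are identifying two trilinear forms on $\1_{n-1}\boxplus\1_1$. Both are $G_n(F_v)$-invariant, and the space of such invariant functionals should be one-dimensional for $s$ in general position, a GL(2) Whittaker uniqueness phenomenon. This would reduce the lemma to computing a single proportionality constant.

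To compute that constant we evaluate both sides on unramified data, taking $\Phi_v=\mathbbm 1_{\o_v^n}$, $\varphi_v$ spherical and $\psi_v$ of conductor $\p_v^{d_v}$. For the right side, \eqref{eq:unramified-cusp}-type computations give a ratio of $\GL_2\times\GL_2$ $L$-factors evaluated at $\lambda_{\mathrm{E}}(\frac12-s)$, namely products of $\zeta_v$ at the parameters $ns$, $ns-\frac{n-2}{2}$, $n-1$ and $\frac n2$. For the left side, the Gindikin--Karpelevich formula gives $\M_v$ on spherical vectors as $\zeta_v(a)/\zeta_v(a+1)$-type factors, and the remaining $K$-integral of spherical vectors is $1$. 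Comparing the two yields the factor $\zeta_v(1-ns)\zeta_v(-\frac{n-2}{2})/(\zeta_v(n-1)\zeta_v(ns)\zeta_v(\frac n2))$, together with $\Delta_v$-powers coming from the self-dual measures.

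The main obstacle is making the uniqueness step rigorous at ramified and archimedean places, since the constant must be independent of the data and not merely checked on unramified vectors. If no clean multiplicity-one reference is available, we fall back on carrying out the unfolding explicitly for general $\Phi_v$. This means a direct change of variables that converts $\int_{K_{n,v}}\M_v(\cdot)\M_v(\cdot)\M_v(\cdot)$ into the Rankin--Selberg integral \eqref{eq:main-RS-period-local}, using Fourier inversion in the $U_{\Q_1}$ variable to create the $\psi_v$-twist. The constant then becomes visible directly as Tate gamma factors; these are exactly the $\zeta_v$-ratios above, and the functional-equation normalization with $\Delta_v^{-n+\frac32-ns}$ accounts for the conductor of $\psi_v$.
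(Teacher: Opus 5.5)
\paragraph{The gap.} Your argument depends on one claim: that the $G_n(F_v)$-invariant trilinear forms on the three degenerate principal series (at parameters $\lambda_{\mathrm{E}}(\tfrac12-s)$, $0$, $\lambda_{\mathrm{E}}(s)$) form a one-dimensional space for generic $s$. The spherical evaluation only fixes the constant once you know the two sides are proportional with a scalar that does not depend on the data. That multiplicity-one statement is not a $\GL_2$ Whittaker-uniqueness phenomenon. It concerns $G_n(F_v)$-invariant distributions on $(\mathbb{P}^{n-1})^3$; note that $\P_v$ is an integral over triples of collinear points, which is not the open orbit. Proving it would require an orbit-by-orbit Bruhat analysis, including transversal derivatives at archimedean places. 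The paper says explicitly that it could not find such a trilinear uniqueness result, and it arranges the proof to avoid one. As written, your proof rests on an unproved assertion. Your fallback (``a direct change of variables'') does not explain how the functions it produces are identified with $\W(\varphi_v,\lambda)$ and $\W(f_v)$, and that identification is exactly where the difficulty lies.

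\paragraph{The missing idea.} You need to reduce to \emph{linear} uniqueness on $\GL_2$.
\begin{enumerate}
\item Leave $\M_v^\vee$ acting on the product. Moving $\M_v^\vee$ onto $f_{s,v}$ is legitimate, but it is the reverse of the step taken in the proof of Proposition~\ref{prop:M11-magic}, and you would have to undo it.
\item Write $\M_v^\vee(\M_v\varphi_\lambda\cdot\M_v\bar f_v)$ as an integral over $x\in F_v^{n-1}$ of that product evaluated at $w_\ell\pmat{\mathrm{I}_{n-1}&x\\&1}$.
\item Apply Plancherel on $F_v^{n-1}$ to this product of two functions, so it becomes a pairing of two Fourier coefficients; this is where the $\psi_v$-twist comes from.
\item Use polar coordinates $x'=ye_{n-1}k$ on the dual space and left equivariance. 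The left side becomes $\int_{K_n}\int_{F_v^\times}\W_1(\M\varphi_\lambda)\,\overline{\W_1(\M f)}\,f_s[\cdots]$, where $\W_1$ is the Jacquet-type integral over all of $F_v^{n-1}$ against $\overline{\psi_0(e_{n-1}x)}$.
\item Show that $\W_1(\M\varphi)$ is a data-independent multiple of $\W(\varphi)$. Factor $w_\ell\pmat{\mathrm{I}_{n-1}&x\\&1}$ as the Weyl/unipotent pair defining $\M_1$ followed by the pair defining $\M_2$; this gives $\W_1(\M\varphi)=\W(\M_1\M\varphi)$.
\item By Schur's lemma, for parameters in general position, $\M_1\circ\M$ is a scalar multiple of $\M_2$.
\item Since $\M_2$ is an intertwiner on the lower $\GL_2$ block, uniqueness of Whittaker functionals on $\GL_2$ gives $\W(\M_2\varphi)\propto\W(\varphi)$.
\end{enumerate}
This establishes proportionality with an $s$-dependent scalar that is independent of $\varphi_v$ and $\Phi_v$. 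Your spherical computation then determines the constant, as in the paper.
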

\forlater{\rn{the constant matching is straightforward at places not dividing $\Delta$ but slightly annoying at those dividing $\Delta$. That's why I prefered the golbal varsion}}
\begin{proof}
	We only work at the $v$-adic place and so drop $v$ from the subscripts in this proof.
	We abbreviate $\lambda_{\mathrm{E}}(\tfrac{1}{2}-s)$ as $\lambda$. We recall $\P$ from \eqref{eq:main-RS-period-local}
	and we fold the central average. Moreover we use Iwasawa coordinates for $h\in Z_2N_2\backslash G_2$,
	so that we can write \optionA{$\P(\lambda;\vphi,f,f_s)$}\optionB{$\P_v(s,\lambda;\vphi,\Phi)$} as
	\begin{equation*}
		\int_{K_n}\int_{{F^\times}}\W\left(\varphi_\lambda\right)\cdot\overline{\W(f)}\cdot f_{s}{\left[\begin{pmatrix}
				\mathrm{I}_{n-2}\\&y\\&&1
			\end{pmatrix}k\right]\,\frac{d^\times y}{|y|}\,dk}.
	\end{equation*}
	Note that from the proof of Lemma \ref{lem:factorization-period} it follows that for $\Re(s)$ sufficiently large the above converges absolutely.
	%I agree. More precisely, I'm getting that $\W(\vphi_\lambda)$ when restricted to the lower
	%$\GL_2$ belongs in $|\cdots|^{\frac12-s}\boxplus|\cdot|^{(n-1)s-n+\frac32}$. So the condition
	%in the proof of Lemma \eqref{lem:factorization-period} becomes 
	%	$$
	%        s+\min(\tfrac12-s,ns-n+\tfrac32)> -\tfrac{n-1}2 \iff
	%        \min(\tfrac{n}{2},ns-\tfrac{n-2}{2}) > 0,
	%	$$
	%which holds for large $\Re(s)$.
	We claim that the above equals
	\begin{equation*}
		\int_{{K_n}}\M^\vee\left(\M\varphi_\lambda\cdot\M\bar{f}\right)\cdot f_{s}
	\end{equation*}
	up to a $s$-dependent multiplicative scalar for $\Re(s)$ sufficiently large. Evaluating both sides on a spherical vector and employing meromorphic continuation yields the identity claimed in the lemma.
	
	Now we prove the claim. For $\Re(s)$ sufficiently large  we write
	\begin{equation*}
		\M^\vee\left(\M\varphi_\lambda\cdot\M\bar{f}\right)=\int_{F^{n-1}}\M \varphi_\lambda\cdot \M \bar{f}\left[\begin{pmatrix}&1\\\mathrm{I}_{n-1}&\end{pmatrix}\begin{pmatrix}\mathrm{I}_{n-1}&x\\&1\end{pmatrix}\cdot\right]\, dx
	\end{equation*}
	as an absolutely convergent integral. Applying Plancherel for $L^2(F^{n-1})$ we write the above as
	\begin{multline*}
		\int_{(F^{n-1})^\top}\left(\int_{F^{n-1}}\M\varphi_\lambda\left[\begin{pmatrix}&1\\\mathrm{I}_{n-1}&\end{pmatrix}\begin{pmatrix}\mathrm{I}_{n-1}&x_1\\&1\end{pmatrix}\cdot\right]\overline{\psi_0(x'x_1)}\, dx_1\right)\\
		\left(\int_{F^{n-1}}\M\bar{f}\left[\begin{pmatrix}&1\\\mathrm{I}_{n-1}&\end{pmatrix}\begin{pmatrix}\mathrm{I}_{n-1}&x_2\\&1\end{pmatrix}\cdot\right]{\psi_0(x'x_2)}\, dx_2\right)\, dx'.
	\end{multline*}
	
	Let $\Phi(x')$ be the integrand above. Using the last formula in the proof of Lemma 7.7 (which we can upgrade to a Lemma), we see that
	\[
	\Delta^{\frac{n-1}{2}}\zeta(n-1)=\int_{(F^{n-1})^\top}\Phi(x')\,dx'=\int_{K_{n-1}}\int_{F^\times}\Phi(ye_{n-1}k)|y|^{n-1}\,d^\times y\,dk.
	\]
	Thus changing variables $x_j\mapsto \left(\begin{smallmatrix}
		\mathrm{I}_{n-2}\\&y
	\end{smallmatrix}\right) x_j$ and employing left equivariance of $\M\varphi_\lambda$ and $\M f$ we obtain the above equals
	\begin{equation*}
		{\int_{K_{n-1}}\int_{F^\times}\W_1(\M \varphi_\lambda)
			\ol{\W_1(\M f)}\left[\begin{pmatrix}\mathrm{I}_{n-2}\\&y\\&&1\end{pmatrix}\begin{pmatrix}k\\&1\end{pmatrix}\cdot\right]\, |y|^{s-\frac{1}{2}}d^\times y,}
	\end{equation*}
	where for $\varphi$ being $\M \varphi_\lambda$ or $\M f$ we define
	\begin{equation*}
		\W_1(\varphi)(g):=\int_{F^{n-1}}\varphi\left[\begin{pmatrix}&1\\\mathrm{I}_{n-1}&\end{pmatrix}\begin{pmatrix}\mathrm{I}_{n-1}&x\\&1\end{pmatrix}g\right]\overline{\psi_0(e_{n-1}x)}\, dx,\quad g\in G_n.
	\end{equation*}
	We evaluate the above at $k\in K_n$, multiply it by $f_s(k)$, use the left equivariance of $f_s$ and integrate over $K_n$. Appealing once again to the unimodularity of $K_n$ and since $\vol(K_{n-1})=1$, we see that
	\begin{equation*}
		{\int_{K_n}\M^\vee\left(\M \varphi_\lambda\cdot\M \bar{f}\right)\cdot f_{s}=\int_{K_n}\int_{F^\times}\W\left(\varphi_\lambda\right)\cdot\overline{\W(f)}\cdot f_{s}\left[\begin{pmatrix}
				\mathrm{I}_{n-2}\\&y\\&&1
			\end{pmatrix}k\right]\,\frac{d^\times y}{|y|}\,dk.}
	\end{equation*}
	Thus it suffices to show that $\W_1(\M \varphi)$ equals $\W(\varphi)$ up to a multiplicative scalar.
	
	Denote $\M_1:=\M_{\1_1\times\1_{n-2}\times\1_1}\left(\left(\begin{smallmatrix}&\mathrm{I}_{n-2}&\\1&&\\&&1\end{smallmatrix}\right),0\right)$ and $\M_2:=\M_{\1_{n-2}\times\1_1\times\1_1}\left(\left(\begin{smallmatrix}\mathrm{I}_{n-2}&&\\&&1\\&1&\end{smallmatrix}\right),0\right)$, that is (locally or globally defined),
	\begin{multline}\label{eq:def-two-intertwiners}
		\M_1\varphi:=\int_{F^{n-2}}\varphi\left[\begin{pmatrix}&1&\\\mathrm{I}_{n-2}&&\\&&1\end{pmatrix}\begin{pmatrix}\mathrm{I}_{n-2}&x&\\&1&\\&&1\end{pmatrix}\right]\, dx,\\ \M_2\varphi:=\int_F\varphi\left[\begin{pmatrix}\mathrm{I}_{n-2}&&\\&&1\\&1&\end{pmatrix}\begin{pmatrix}\mathrm{I}_{n-2}&&\\&1&x\\&&1\end{pmatrix}\right]\, dx,
	\end{multline}
	whenever they converge absolutely, otherwise by meromorphic continuation.
	Note that $\M_2$ can be realized as an intertwiner on a $G_2(F)$ (irreducible, in practice) representation $\sigma\ni\varphi\left[\begin{pmatrix}\mathrm{I}_{n-2}&\\&\bullet\end{pmatrix}\right]$. Thus applying uniqueness of the Whittaker model (on $G_2$) we obtain that $\W(\M_2\varphi)$ and $\W(\varphi)$ are equal up to a multiplicative scalar; see \emph{e.g.} \cite[Proposition 4.5.9]{bump19927automorphic}. Now using that
	\begin{equation*}
		\begin{pmatrix}&1\\\mathrm{I}_{n-1}&\end{pmatrix}\begin{pmatrix}\mathrm{I}_{n-1}&x\\&1\end{pmatrix}\\
		=\begin{pmatrix}&1&\\\mathrm{I}_{n-2}&&\\&&1\end{pmatrix}\begin{pmatrix}\mathrm{I}_{n-2}&\tilde{x}&\\&1&\\&&1\end{pmatrix}\begin{pmatrix}\mathrm{I}_{n-2}&&\\&&1\\&1&\end{pmatrix}\begin{pmatrix}\mathrm{I}_{n-2}&&\\&1&x_n\\&&1\end{pmatrix}.
	\end{equation*}
	for $x=(\tilde{x},x_n)\in F^{n-2}\times F$ we write
	\begin{equation*}
		\W_1(\M \varphi) = \int_F\M_1\left(\M \varphi\right)\left[\begin{pmatrix}\mathrm{I}_{n-2}&&\\&&1\\&1&\end{pmatrix}\begin{pmatrix}\mathrm{I}_{n-2}&&\\&1&x\\&&1\end{pmatrix}\right]\overline{\psi_0(x)}\, dx.
	\end{equation*}
	Let $\sigma$ be an irreducible representation of $G_{n-2}$ and let $\chi_1,\chi_2$ be characters of $G_1$. Then $\M$ maps $\sigma\boxplus\chi_1\boxplus\chi_2$ to $\chi_2\boxplus\sigma\boxplus\chi_1$ and $\M_1$ maps the last representation to $\sigma\boxplus\chi_2\boxplus\chi_1$. Thus by Schur's lemma $\M_1\circ\M$ is equal to $\M_2$ up to a multiplicative scalar (assuming $\sigma$ and $\chi_j$ lie in general positions). This concludes the proof.
\end{proof}

\subsection{Comparing $M_{01}$ to an inner product}
Similarly to the previous section, we prove the following result.
\begin{prop}\label{prop:M01-magic}
	Recall $M_{01}(s)$ from \eqref{eq:M01-before-magic}. We have
	\begin{equation*}
		M_{01}(s)=\left\langle \phi, \Eis\left( f\cdot \mathbf{M} \ol{f_s}\right)\right\rangle_{[\bar{G}_n]}
	\end{equation*}
	where $\M$ is as in \eqref{eq:def-main-intertwiner}.
\end{prop}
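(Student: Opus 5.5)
The plan is to follow the proof of Proposition \ref{prop:M11-magic} closely, with the roles of the two intertwiners reshuffled. First, $f\cdot\M\ol{f_s}$ already lies in $\1_1\cdot|\cdot|^{\frac12+s}\boxplus\1_{n-2}\cdot|\det|^s\boxplus\1_1\cdot|\cdot|^{-\frac12-(n-1)s}$. This is the three-block representation $\1_{n-2}\boxplus\1_1\boxplus\1_1$ transported by a Weyl element. So instead of folding back to $\1_{n-1}\boxplus\1_1$, we apply the functional equation \eqref{eq:FE-for-Eis} with the intertwiner $\M_1^\vee$, the inverse direction of $\M_1$ in \eqref{eq:def-two-intertwiners}. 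This gives
\[
\Eis(f\cdot\M\ol{f_s})=\Eis\bigl(\M_1^\vee(f\cdot\M\ol{f_s})\bigr),
\qquad
\M_1^\vee(f\cdot\M\ol{f_s})\in\1_{n-2}\boxplus\1_1\boxplus\1_1 \text{ twisted by } \lambda_{\mathrm L}(s)\text{ (up to conjugation)}.
\]
One has to check that the resulting exponent is exactly $\lambda_{\mathrm L}(s)$ from \eqref{eq:def-lambda-L}. This is the expected bookkeeping: the exponents $(\tfrac12+s,\,s,\,-\tfrac12-(n-1)s)$ rearrange to $(s,\tfrac12-(n-1)s,s-\tfrac12)$ after swapping the first $G_1$ block past the $G_{n-2}$ block and adding the half-sum shift coming from induction in stages.

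For $\Re(s)$ large, we unfold the Eisenstein series and expand the induced vector over $\B(\1_{n-2}\boxplus\1_1\boxplus\1_1)$, exactly as in that proof. This yields
\[
\langle\phi,\Eis(f\cdot\M\ol{f_s})\rangle=\sum_{\varphi}\langle\phi,\Eis(\varphi,\ol{\lambda_{\mathrm L}(s)})\rangle\,
\bigl\langle \M_1^\vee(\ol f\cdot\M f_s)\text{-coefficient},\varphi\bigr\rangle .
\]
Comparing with \eqref{eq:M01-before-magic}, it then suffices to prove, for factorizable $\varphi$, a coefficient identity. Using the unitarity of the intertwiners on the unitary axis and meromorphic continuation, the coefficient is
\[
\V''\int_{K_n}\M_1\varphi_{\lambda_{\mathrm L}(s)}\cdot\ol f\cdot\M f_s,
\]
with $\V''$ the volume of $[M_{\widetilde{\Q_{n-2}}}]^1$. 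We then need this to equal $\V$ times $\xi^\ast(1)\xi(\tfrac n2)\xi(\tfrac{n-2}{2}+ns)\widetilde{\P}_S(s,\tfrac n2 s-\tfrac12)(\varphi)$ times the unramified $L$-factors. The volume ratio comes out of \eqref{volume-Gk1}, as in the $M_{11}$ case.

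The main step is the local analogue of Lemma \ref{lem:local-period-equality-M11}. Here we must show that $\int_{K_{n,v}}\M_{1,v}\varphi_{\lambda,v}\cdot\ol f_v\cdot\M_v f_{s,v}$ is proportional to $\P_v(\lambda_{\mathrm L}(s);\varphi_v,f_v,f_{v,s})$ with $\lambda=\lambda_{\mathrm L}(s)$. Write $\P_v$ in Iwasawa coordinates as $\int_{K_n}\int_{F^\times}\W(\varphi_\lambda)\ol{\W(f)}f_s[\mathrm{diag}(\mathrm I_{n-2},y,1)k]\,d^\times y/|y|$. The plan is to transfer the $N_2$-Whittaker integral defining $\W(f)$ onto the other two factors.

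In the $M_{11}$ argument, Plancherel on $F^{n-1}$ split a product of two intertwined vectors; here we would apply Plancherel on $F$ to the one-variable integral. Concretely, we write $\M_v f_s$ via the factorization $w_\ell=(\text{block swap})\cdot(\text{rank one }w_2)$ used at the end of that proof. The $F^{n-2}$ part then intertwines $\varphi$ into $\M_1\varphi$, while the $w_2$ part paired with $\ol f$ produces $\W(f)$, after Fourier transforming $x\mapsto\psi_0(x'x)$ and rescaling $x'=y$.

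Finally, by Schur's lemma and uniqueness of $G_2$ Whittaker models, $\W_1$-type integrals of $\M_1\varphi$ agree with $\W(\varphi)$ up to a scalar. The scalar is fixed by evaluating on spherical vectors, using \eqref{eq:unramified-eis} at $z=\tfrac n2s-\tfrac12$ and the Tate-type computation ending the proof of Lemma \ref{lem:res-deg-eis-original}.

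I expect the hardest part to be matching the constants precisely, namely $\xi^\ast(1)$, $\xi(\tfrac n2)$ and $\xi(\tfrac{n-2}2+ns)$ together with the $\Delta_v$ powers. The factor $\xi^\ast(1)$ arises from a residue of $\xi(\tfrac12+\tfrac n2s-z)$ rather than from an intertwiner, so its bookkeeping must be done by hand.
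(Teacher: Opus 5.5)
Your outline has the same skeleton as the paper's proof, but you organize the local step differently, and you silently use the one fact on which the paper's proof turns.

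The shared ingredients are:
\begin{itemize}
\item the functional equation into $\1_{n-2}\boxplus\1_1\boxplus\1_1$ at $\lambda_{\mathrm L}(s)$;
\item the expansion over $\B(\1_{n-2}\boxplus\1_1\boxplus\1_1)$;
\item the coefficient written as $\V''$ times a $K_n$-integral;
\item a one-variable Parseval identity;
\item the volume identity from \eqref{volume-Gk1}.
\end{itemize}

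The paper never moves the block swap onto $\varphi$. Its first step is
\[
\M_1\left(\bar f\cdot\M f_s\right)=\bar f\cdot(\M_1\circ\M)f_s=\bar f\cdot\M_2 f_s .
\]
This holds for two reasons. First, the Weyl element and the unipotent matrices in \eqref{eq:def-two-intertwiners} defining $\M_1$ lie in $\mathrm{diag}(G_{n-1},1)$ with $|\det|=1$, and $\bar f$ is invariant under such matrices. Second, $\M_1\circ\M=\M_2$ globally by \eqref{eq:FE-for-intertwiner}. The coefficient then becomes $\V''\int_{K_n}\varphi_{-\lambda_{\mathrm L}(s)}\,\bar f\,\M_2 f_s=\V''\int_{K_n}\M_2(\varphi_{-\lambda_{\mathrm L}(s)}\bar f)\,f_s$. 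Parseval on $F$ applied to the rank-one integral $\M_2$ produces $\W(\varphi_{-\lambda_{\mathrm L}(s)})\,\ol{\W(f)}$ directly. This gives Lemma \ref{lem:local-period-equality-M01} with the explicit constant $\Delta_v^{-1/2}\zeta_v(1)^{-1}$ at every place, with no Schur's lemma, no uniqueness of Whittaker models, and no spherical normalization.

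Your version pairs $\M_1^\ast\varphi$ against the full $\M f_s$ and then factors $w_\ell$. It can be made to work, but your step ``the $F^{n-2}$ part intertwines $\varphi$'' relies on exactly this invariance of $\bar f$, and you need to state it. Done locally, the cancellation also leaves a nontrivial scalar, either $\M(w_1)\M(w_1^{-1})$ or the constant in a $\W_1\propto\W$ comparison. You would then have to fix that scalar by spherical computations at every $v\in S$, including archimedean places, where \eqref{eq:unramified-eis} is not available.

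Several details also need correcting.
\begin{itemize}
\item $f\cdot\M\ol{f_s}$ (equivalently $\bar f\cdot\M f_s$) has exponents $(\tfrac12-(n-1)s,\,s,\,s-\tfrac12)$ on the blocks $(1,n-2,1)$. The exponents you wrote belong to $\M\bar f\cdot f_s$, which is the $M_{10}$ term. The block swap alone sends the correct exponents to $\lambda_{\mathrm L}(s)$; there is no half-sum shift.
\item The intertwiner you need is $\M_1$ itself, which as defined maps $\1_1\boxplus\1_{n-2}\boxplus\1_1$ to $\1_{n-2}\boxplus\1_1\boxplus\1_1$, not its reverse.
\item For the $K_n$-integral to be an invariant pairing, $\varphi$ must carry $-\lambda_{\mathrm L}(s)$. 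So the coefficient involves $\varphi_{-\lambda_{\mathrm L}(s)}$, and the target is $\P_v(-\lambda_{\mathrm L}(s);\varphi_v,f_v,f_{v,s})$, matching $\widetilde{\P}_S(s,\tfrac n2 s-\tfrac12)$ in \eqref{eq:M01-before-magic}. With $+\lambda_{\mathrm L}(s)$ the identity you propose is false.
\item $\xi^\ast(1)$ needs no hand bookkeeping. The local factor $\zeta_v(1)^{-1}$ from Parseval cancels the $\zeta_v(1)=L_v(\tfrac12+\tfrac n2 s-z)$ at $z=\tfrac n2 s-\tfrac12$ in the unramified value of $\P_v$. The remaining global $\xi^\ast(1)$ is absorbed by $\V\,\xi^\ast(1)\vol([Z_n]^1)=\Delta^{-1/2}\V''$, which follows from \eqref{volume-Gk1}.
\end{itemize}
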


\begin{proof}
	We proceed similarly to the proof of Proposition \ref{prop:M11-magic}. Recall the intertwiner $\M_1$ from \eqref{eq:def-two-intertwiners}. Note that
	\begin{equation*}
		\beta(f,s):=\exp\left\langle-\lambda_{\mathrm{L}}(s),H_{\tilde{\Q_{n-2}}}(\cdot)\right\rangle\times \M_1\left(\bar{f}\cdot \M f_s\right)\in\1_{n-2}\boxplus\1_1\boxplus\1_1
	\end{equation*}
	where $\lambda_{\mathrm{L}}(s)$ is as in \eqref{eq:def-lambda-L}. Thus spectrally expanding and using functional equation of the Eisenstein series \eqref{eq:FE-for-Eis} we write
	\begin{equation*}
		\Eis\left( \bar{f}\cdot\M {f_s}\right)=\sum_{\varphi\in\B(\1_{n-2}\boxplus\1_1\boxplus\1_1)}\Eis(\bar{\varphi},\lambda_{\mathrm{L}}(s))\left\langle\beta(f,s),\bar{\varphi}\right\rangle_{\1_{n-2}\boxplus\1_1\boxplus\1_1}.
	\end{equation*}
	For $\Re(s)$ sufficiently negative, it follows that by the invariance of $\bar{f}$ and the functional equation of Intertwining operators (see \eqref{eq:FE-for-intertwiner}), one has 
	\begin{equation*}
		\M_1\left(\bar{f}\cdot\M f_s\right)=\bar{f}\cdot\left(\M_1\circ\M \right)f_s=\bar{f}\cdot\M_2 f_s,
	\end{equation*} 
	where $\M_2$ is the intertwiner in \eqref{eq:def-two-intertwiners}. Now by analytic continuation the above holds for $s$ in general position. Suppose now that $\Re(s)=0$. Then by unitarity of intertwiners at unitary representations
	$$
	\langle \beta(f,s),\overline{\vphi}\rangle_{\1_{n-2}\boxplus\1_1\boxplus\1_1}=\V''\int_{K_n} \vphi_{-\lambda_{\mathrm{L}}(s)}\cdot\bar{f}\cdot \M_2 f_s=\V''\int_{K_n} \M_2\left(\vphi_{-\lambda_{\mathrm{L}}(s)}\cdot\bar{f}\right)\cdot  f_s,
	$$
	where $\V''=\vol([M_{\tilde{\Q_{n-2}}}]^1)$.
	Thus, just like in the proof of Proposition \ref{prop:M11-magic}, it suffices to show that
	\begin{equation*}
		\frac{1}{\vol([Z_n]^1)}\left\langle\beta(f,s),\bar{\varphi}\right\rangle_{\1_{n-2}\boxplus\1_1\boxplus\1_1}=\V\xi^\ast(1)\xi(\tfrac{n}{2})\xi(\tfrac {n-2}2+ns)\optionA{\widetilde{\P}_S\left(-\lambda_{\mathrm{L}}(s);\varphi,f,f_s\right)}\optionB{\widetilde{\P}_S\left(s,-\lambda_{\mathrm{L}}(s);\varphi,\Phi\right)}.
	\end{equation*}
	We apply Lemma \ref{lem:local-period-equality-M01} and use the local computation \eqref{eq:unramified-eis}, recalling the definition \eqref{eq:def-lambda-L} of $\lambda_{\mathrm L}(s)$.
	We conclude by observing that
	$$
	\V\times \xi^\ast(1)\times \vol([Z_n]^1)
	%=\frac{\zeta^*(1)\vol([M_{\tilde{Q}_{n-2}}])}{\vol([G_1^1])}
	=\Delta^{-1/2}\V'',
	$$
	which is a direct consequence of \eqref{volume-Gk1}.
\end{proof}

%lem 714 here

\begin{lem}\label{lem:local-period-equality-M01}
	Let $v$ be any local place of $F$. We have
	\begin{equation*}
		\int_{K_{n,v}}\M_2\left(\varphi_{-\lambda_{\mathrm{L}}(s),v}\cdot\bar{f}_v\right)\cdot f_{s,v}
		=\Delta_v^{-1/2}\zeta_v(1)^{-1}
		\optionA{\P_v\left(-\lambda_{\mathrm{L}}(s);\varphi_v,f_v,f_{v,s}\right)}
		\optionB{\P_v\left(s,-\lambda_{\mathrm{L}}(s);\varphi_v,\Phi_v\right)}
	\end{equation*}
	for $s$ in general position.
\end{lem}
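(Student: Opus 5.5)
We follow the pattern of the proof of Lemma \ref{lem:local-period-equality-M11}, which is in fact simpler here because only $\M_2$, an intertwiner of the lower $G_2$ block, appears. Drop $v$ and abbreviate $\lambda:=-\lambda_{\mathrm{L}}(s)$. The plan has four steps: show that the left hand side equals $\P(\lambda;\varphi,f,f_s)$ up to a scalar independent of the vectors, compute that scalar, then match it with the unramified data via \eqref{eq:unramified-eis}, and finally extend to general $s$ by meromorphic continuation.

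\textbf{Step 1: reduce both sides to the same integral.} First, as in Lemma \ref{lem:local-period-equality-M11}, write $\P(\lambda;\varphi,f,f_s)$ in Iwasawa coordinates on $Z_2N_2\bs G_2$ as
\[
\int_{K_n}\int_{F^\times}\W(\varphi,\lambda)\,\overline{\W(f)}\,f_s\left[\pmat{\mathrm{I}_{n-2}\\&y\\&&1}k\right]\frac{d^\times y}{|y|}\,dk,
\]
which converges absolutely for $\Re(s)$ large by the proof of Lemma \ref{lem:factorization-period}. On the other side, for $s$ in a convergent range we write
\[
\M_2(\varphi_\lambda\bar f)(g)=\int_F \varphi_\lambda\bar f\left[\pmat{\mathrm{I}_{n-2}\\&w_2u_x}g\right]dx
\]
and apply Plancherel on $F$. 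This expresses the integral as
\[
\int_F \W_2(\varphi_\lambda)(\xi)\,\overline{\W_2(f)(\xi)}\,d\xi ,
\]
where $\W_2(\cdot)(\xi)$ denotes the $\GL_2$ Jacquet integral with character $\psi_0(\xi\,\cdot)$.

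\textbf{Step 2: identify the Jacquet integrals.} The change of variables $\xi\mapsto y$ by conjugating with $\mathrm{diag}(\mathrm{I}_{n-2},y,1)$, together with left equivariance, turns $\W_2(\cdot)(\xi)$ into $\W(\varphi,\lambda)$ and $\W(f)$ evaluated at $\mathrm{diag}(\mathrm{I}_{n-2},y,1)g$. The Jacobian and modular factors then combine with the equivariance of $f_s$. Next, integrate against $f_s(k)$ over $K_n$ and unfold $K_n$ against the $y$-variable. The only subtlety is that $\varphi_\lambda$ already has the induced form in the $G_2$-block, so $\W(\varphi,\lambda)$ in \eqref{eq:def-local-W-phi-eis} is literally the Jacquet integral of $\varphi_\lambda$ and no extra uniqueness argument is needed. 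For the factor $\bar f$, write $\bar f=\bar f_{\Q_{n-2}}$-type restriction, which lies in $|\cdot|^0\boxplus|\cdot|^{-\frac{n-2}{2}}$ on the block, and use \eqref{eq:def-partial-jacquet-intertwiner-local}. This shows that the two sides are proportional, with a constant depending only on $s$ and the measures.

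\textbf{Step 3: compute the constant.} Evaluate both sides at the unramified data: $\Phi_v=\mathbbm{1}_{\o_v^n}$, $\varphi_v$ spherical with $\varphi_v(1)=1$, and $\psi_0$ of conductor $\p^{d}$. The right hand side is then given by \eqref{eq:unramified-eis} at $z=\frac n2 s-\frac12$, with the conductor of $\psi$ contributing powers of $\Delta_v$. The left hand side is $\M_2$ applied to a spherical vector, times $f_s$, integrated over $K_n$. This is the Gindikin--Karpelevich factor
\[
\frac{\zeta_v(\alpha)}{\zeta_v(\alpha+1)}
\]
for the appropriate exponent $\alpha$, times $\Delta_v^{-1/2}$ from the measure on $F$, times the $K_n$-integral $f_{s}(1)$, which again is computed from \eqref{eq:def-f-from-phi}.

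\textbf{Main obstacle.} The step I expect to be hardest is the constant bookkeeping: checking that all the $L$-factors cancel to leave exactly $\Delta_v^{-1/2}\zeta_v(1)^{-1}$. The delicate points are the Plancherel normalization at places dividing $\Delta$ and the conversion $d^\times y=\Delta_v^{1/2}\zeta_v(1)\,dy/|y|$; the latter should be precisely the source of the $\zeta_v(1)^{-1}$. To make this reliable I would carry out the computation in Step 1 exactly rather than up to scalar, tracking the Fourier transform self-duality $\hat{\hat f}(x)=f(-x)$ for the measure $dx$, which is self-dual with respect to $\psi_0$. The identity then holds for $\Re(s)$ large, and meromorphic continuation of both sides in $s$ gives it for $s$ in general position.
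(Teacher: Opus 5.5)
Your proposal is essentially the paper's proof. Both arguments expand $\M_2$ as the integral over $F$, apply Parseval on $L^2(F)$, trade the dual variable for the torus element $\mathrm{diag}(\mathrm{I}_{n-2},y,1)$ via left equivariance to get $\W(\varphi_{-\lambda_{\mathrm{L}}(s)})\overline{\W(f)}$, and then evaluate at $k$, multiply by $f_s(k)$ and integrate over $K_n$.

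Done exactly, as you suggest at the end, this already yields the constant without any unramified comparison. Parseval costs nothing because $dx$ is self-dual for $\psi_0$, and $\Delta_v^{-1/2}\zeta_v(1)^{-1}$ comes entirely from $dy=\Delta_v^{-1/2}\zeta_v(1)^{-1}|y|\,d^\times y$. So your Step 3 is superfluous, and on its own it could not settle the archimedean places or the places dividing $\Delta$, where the lemma is also claimed.
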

\begin{proof}
	As in the proof of Lemma \ref{lem:local-period-equality-M11} we only work at the $v$-adic place in this proof and therefore drop the subscript $v$ from the notations.
	%Moreover, we only show that both sides are equal up to a scalar that only depends on $s$.

	For $\Re(s)$ sufficiently positive, we write
	\begin{equation*}
		\M_2\left(\varphi_{-\lambda_{\mathrm{L}}(s)}\cdot\bar{f}\right)=\int_F\varphi_{-\lambda_{\mathrm{L}}(s)}\cdot\bar{f}\left[\begin{pmatrix}\mathrm{I}_{n-2}&&\\&&1\\&1&\end{pmatrix}\begin{pmatrix}\mathrm{I}_{n-2}&&\\&1&x\\&&1\end{pmatrix}\cdot\right]\, dx.
	\end{equation*}
	Applying Parseval for $L^2(F)$, changing variable, and equivariance we write the above as
	\begin{equation*}
		\Delta^{-1/2}\zeta(1)^{-1}\int_{F^\times}\W\left(\varphi_{-\lambda_{\mathrm{L}}(s)}\right)\cdot\overline{\W(f)}\left[\begin{pmatrix}\mathrm{I}_{n-2}&&\\&y&\\&&1\end{pmatrix}\cdot\right]|y|^{\frac{1}{2}+s+n-2}\, d^\times y.
	\end{equation*}
	The result now follows by evaluating the above expression at $k\in K_{n}$ and integrating over $K_n$.
\end{proof}

\section{Proof of Theorem \ref{thm:spectral-expansion}}\label{sec:proof-of-first-main-thm}

We now have most of the ingredients to prove Theorem \ref{thm:spectral-expansion}. In this section we complete the proof of this theorem.

\begin{prop}\label{prop:reg-main-term}
	Let $f_s$ be as before and let $\M$ be as in \eqref{eq:def-main-intertwiner}. Then $\Eis(\bar{f}\cdot f_s)+\Eis(\M\bar{f}\cdot \M f_s)$ is holomorphic at $s=0$.
\end{prop}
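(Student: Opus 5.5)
Our plan is to locate the possible poles of each summand at $s=0$ via Lemma \ref{lem:res-deg-eis-original} and Lemma \ref{lem:res-deg-eis-dual}, and then show the two residues cancel.

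\emph{The first summand.} The vector $\bar f\cdot f_s$ lies in $\1_{n-1}\cdot|\det|^{\lambda_1}\boxplus\1_1\cdot|\cdot|^{\lambda_2}$ with $(\lambda_1,\lambda_2)=(\tfrac12+s,-(n-1)(\tfrac12+s))$, so
\[
\lambda_1-\lambda_2=n(\tfrac12+s),
\]
which equals $\tfrac n2$ exactly at $s=0$. Moreover $f_s$ is holomorphic near $s=0$: the Tate integral \eqref{eq:def-f-from-phi} converges for $\Re(s)>\tfrac1n-\tfrac12$. Hence Lemma \ref{lem:res-deg-eis-original} (with $d=n$) shows that $\Eis(\bar f\cdot f_s)$ has at most a simple pole at $s=0$. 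Since $\lambda_1-\lambda_2-\tfrac n2=ns$, its residue in $s$ is $\tfrac1n$ times the constant
\[
|\det|^{0}\,\Delta^{-\frac{n-1}{2}}\frac{\xi^\ast(1)}{\xi(n)}\int_{K_n}\bar f f\,dk,
\]
which is a constant function on $[\bar G_n]$.

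\emph{The second summand.} The vector $\M\bar f\cdot\M f_s$ lies in $\1_1\cdot|\cdot|^{\mu_1}\boxplus\1_{n-1}\cdot|\det|^{\mu_2}$, and we read off $\mu$ from the exponents of $\M f$ and $\M f_s$. The central character must be trivial, so $\mu_1+(n-1)\mu_2=0$. The action of $w_\ell$ on $\lambda_{\mathrm E}(s)+\rho$ gives $\mu_1-\mu_2=\tfrac n2-ns$. The pole therefore again sits at $s=0$, with $\mu_1-\mu_2-\tfrac n2=-ns$. We need $\M f_s$ to be holomorphic near $s=0$. On the Godement--Jacquet side, $\M f_s$ equals $f^\vee_{-s,\hat\Phi}$ up to a $\Delta$-power, via Poisson summation/local functional equation for \eqref{eq:def-f-from-phi}. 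This is holomorphic near $s=0$ because the Tate integral converges at $-s$ in that range. Applying Lemma \ref{lem:res-deg-eis-dual}, the $s$-residue is
\[
-\tfrac{1}{n}\cdot(-1)\,\Delta^{-\frac{n-1}{2}}\frac{\xi^\ast(1)}{\xi(n)}\int_{K_n}\M\bar f\,\M f\,dk.
\]
The sign of the Lemma \ref{lem:res-deg-eis-dual} residue combines with the sign of the variable change $\mu_1-\mu_2-\tfrac n2=-ns$.

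\emph{Cancellation.} It remains to show that the two constants cancel, i.e.\ that
\[
\int_{K_n}\bar f f=\int_{K_n}\overline{\M f}\,\M f,
\]
which is $\langle f,f\rangle_{\Ind}=\langle\M f,\M f\rangle_{\Ind}$. This is the unitarity of $\M$ on the unitary representation $\1_{n-1}\boxplus\1_1$ at $\lambda=0$ (\cite[Theorem 7.2(a)]{arthur2005introduction}), applied as in the proof of Proposition \ref{prop:M11-magic}. Note that $\M$ is holomorphic at $\lambda=0$ on this degenerate induced representation. With the signs arranged as above, the residues have opposite signs and cancel, so the sum is holomorphic at $s=0$.

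The main obstacle is the careful bookkeeping of signs and exponents in the second summand. This includes identifying $\mu$ exactly, checking holomorphy of $\M f_s$ near $s=0$, and confirming that the orientation of the hyperplane variable produces the opposite sign rather than the same one. We would double-check this by computing both residues with a spherical $\Phi=\otimes\mathbbm 1_{\o_v^n}$ at all finite places, where every quantity is explicit via $\xi$. The equality of the constants for general $f$ then follows from the unitarity identity.
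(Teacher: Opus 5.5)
Your route is the paper's route. You take the residues of the two summands at $s=0$ from Lemma \ref{lem:res-deg-eis-original} and Lemma \ref{lem:res-deg-eis-dual}, then cancel them via $\|\M f\|=\|f\|$ by unitarity of $\M$. Your exponents ($\lambda_1-\lambda_2-\frac n2=ns$ and $\mu_1-\mu_2-\frac n2=-ns$) are correct. Your holomorphy checks for $f_s$ and $\M f_s$ near $s=0$ are also correct, and the latter is more than the paper writes.

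The gap is in the sign, which is the one step the proposition hinges on. Your displayed second residue is $-\frac1n\cdot(-1)\cdot C\int_{K_n}|\M f|^2=+\frac Cn\|\M f\|^2$, where $C=\Delta^{-\frac{n-1}{2}}\xi^\ast(1)/\xi(n)>0$. This has the \emph{same} sign as your first residue $+\frac Cn\|f\|^2$. After unitarity the two residues therefore add up to $\frac{2C}{n}\|f\|^2\neq 0$. Your closing sentence that they ``have opposite signs and cancel'' contradicts your own display, and the sign check you defer to the end is exactly what is missing.

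The mistake is that you count a single sign twice. Measure residues in the variable $\lambda_1-\lambda_2$: this is the variable in which Lemma \ref{lem:res-deg-eis-original} is proved, and your conversion factor $\frac1n$ presupposes it. In that variable, the residue in Lemma \ref{lem:res-deg-eis-dual} carries no independent sign. Its proof passes to $\varphi_\lambda(w\bullet^{-\top})$, which sends $(\lambda_1,\lambda_2)$ to $(-\lambda_2,-\lambda_1)$ and so leaves $\lambda_1-\lambda_2$ unchanged. You can also see this at $d=2$: there the two lemmas describe the very same Eisenstein series on $|\cdot|^{\lambda_1}\boxplus|\cdot|^{\lambda_2}$. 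Since $\lambda_1-\frac12=\lambda_2+\frac12$ on the hyperplane, their stated formulas differ precisely by that minus sign.

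So the minus sign in Lemma \ref{lem:res-deg-eis-dual} is the orientation reversal $\mu_1-\mu_2-\frac n2=-ns$ itself. It must be applied once, not multiplied by a further $-1$. Doing so gives $\Res_{s=0}\Eis(\M\bar f\cdot\M f_s)=-\frac Cn\|\M f\|^2=-\frac Cn\|f\|^2$, which cancels the first residue. This is how the paper argues: it takes the second residue to be $-c\|\M f\|^2$ with the same constant $c$ as the first, and concludes by unitarity of $\M$.
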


\begin{proof}
	Note that $\bar{f}\cdot f_s$ is a holomorphic section in $\1_{n-1}\cdot|\det|^{\frac{1}{2}+s}\boxplus\1_1\cdot|\cdot|^{-(n-1)(\frac{1}{2}+s)}$. Invoking Lemma \ref{lem:res-deg-eis-original} we see that $\Eis(\bar{f}\cdot f_s)$ has a simple pole at $s=0$ and its residue there is given by
	\begin{equation*}
		n\Delta^{-\frac{n-1}{2}}\frac{\xi^\ast(1)}{\xi(n)}\cdot\int_{K_n}|f|^2(k)\, dk=c\|f\|^2,
	\end{equation*}
	where $c$ is some constant independent of $f$.
	%c=\frac{n\Delta^{-\frac{n^2+n}{2}}}{\xi^{\ast}(1)\cdots\xi(n)}
	On the other hand, $\M\bar{f}\cdot\M f_s$ is a holomorphic section around $s=0$ lying in
	$\1_1\cdot|\cdot|^{(n-1)(\frac{1}{2}-s)}\boxplus\1_{n-1}\cdot|\det|^{s-\frac{1}{2}}$. Invoking Lemma \ref{lem:res-deg-eis-dual} we see that $\Eis(\M\bar{f}\cdot \M f_s)$ has a simple pole at $s=0$ and its residue there is given by
	\begin{equation*}
		- c\|\M f\|^2 = -c\| f\|^2,
	\end{equation*}
	where $c$ is the same constant as above and we used that $\M$ is unitary on unitary representations. Hence, the residues cancel each other, which completes the proof.
\end{proof}

\begin{lem}\label{lem:res-eis-middle}
	Let $\Q:=\Q_1\cap\Q_{n-1}$, $\lambda:=(\lambda_1,\lambda_2,\lambda_3)\in\a^\ast_{\Q,\C}$, and $\varphi_\lambda$ be a holomorphic section in $\1_1\cdot|\cdot|^{\lambda_1}\boxplus\1_{n-2}\cdot|\det|^{\lambda_2}\boxplus\1_1\cdot|\cdot|^{\lambda_3}$. Then for $\lambda_2$ in a generic position $\Eis(\varphi_\lambda)$ has a simple polar divisor along the hyperplane $\lambda_1-\lambda_3=1$ where its residue is given by
	\begin{equation*}
		{\Delta^{-\frac{1}{2}}}\frac{\xi^\ast(1)}{\xi(2)}\cdot\Eis\left(\beta_{\varphi_{\lambda}}\right)
	\end{equation*}
	where $\beta_{\varphi_\lambda}\in\1_{n-2}\cdot|\det|^{\lambda_2}\boxplus\1_2\cdot|\cdot|^{\frac{\lambda_1+\lambda_3}{2}}$ is given by
	\begin{equation*}
		\beta_{\varphi_\lambda}(g)=\int_{K_2}{\M}_1\varphi_\lambda\left[\begin{pmatrix}\mathrm{I}_{n-2}&\\&k\end{pmatrix}g\right]\, dk
	\end{equation*}
	for $\lambda_1-\lambda_3=1$ and $\M_1$ is as in \eqref{eq:def-two-intertwiners}.
\end{lem}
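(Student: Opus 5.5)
We would follow the proof of Lemma \ref{lem:eis-residue}: build the Eisenstein series in stages through the Levi $G_{n-2}\times G_2$, and reduce the pole to the familiar $\GL_2$ Eisenstein series. The section sits on the parabolic $\Q=\Q_1\cap\Q_{n-1}$ (blocks $1,n-2,1$), and the hyperplane $\lambda_1-\lambda_3=1$ involves the two outer $\GL_1$ blocks, which are not adjacent. So the first step is to use the functional equation \eqref{eq:FE-for-Eis} with the intertwiner $\M_1$ of \eqref{eq:def-two-intertwiners}. This moves the first $\GL_1$ block past the $\GL_{n-2}$ block:
\begin{equation*}
\Eis(\varphi_\lambda)=\Eis\left(\M_1\varphi_\lambda,(\lambda_2,\lambda_1,\lambda_3)\right),\qquad \M_1\varphi_\lambda\in \1_{n-2}\cdot|\det|^{\lambda_2}\boxplus\1_1\cdot|\cdot|^{\lambda_1}\boxplus\1_1\cdot|\cdot|^{\lambda_3}.
\end{equation*}
For $\lambda_2$ in general position, $\M_1\varphi_\lambda$ is holomorphic near the hyperplane $\lambda_1-\lambda_3=1$. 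Its normalizing factor is a ratio of $\xi$-values in $\lambda_1-\lambda_2$, and near that hyperplane this ratio has neither zeros nor poles once $\lambda_2$ is generic.

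Next we write, for sufficiently dominant parameters, as in \eqref{eq:eisenstein-in-stages}:
\begin{equation*}
\Eis\left(\M_1\varphi_\lambda,\cdot\right)=\sum_{\gamma\in\Q_{n-2}(F)\bs G_n(F)}\Eis^{\Q_{n-2}}\left(\M_1\varphi_\lambda,\cdot\right)(\gamma\,\cdot).
\end{equation*}
For fixed $g$, the inner partial Eisenstein series is a $\GL_2$ Eisenstein series, in the lower block, attached to the principal series $|\cdot|^{\lambda_1}\boxplus|\cdot|^{\lambda_3}$. We apply Lemma \ref{lem:res-deg-eis-original} with $d=2$, which gives a simple pole along $\lambda_1-\lambda_3=1$. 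The residue is
\begin{equation*}
|\det|^{\lambda_1-\frac12}\Delta^{-\frac12}\frac{\xi^\ast(1)}{\xi(2)}\int_{K_2}\M_1\varphi_\lambda\left[\begin{pmatrix}\mathrm{I}_{n-2}&\\&k\end{pmatrix}g\right]dk.
\end{equation*}
Here $\lambda_1-\tfrac12=\tfrac{\lambda_1+\lambda_3}{2}$ on the hyperplane, and the $K_2$-average gives exactly $\beta_{\varphi_\lambda}(g)$, an element of $\1_{n-2}\cdot|\det|^{\lambda_2}\boxplus\1_2\cdot|\det|^{\frac{\lambda_1+\lambda_3}2}$.

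Finally, as in the proof of Lemma \ref{lem:eis-residue}, we interchange the residue with the absolutely convergent outer $\gamma$-sum. We approach the hyperplane inside a dominant cone: take $\Re(\lambda_2)$ very large relative to $\lambda_1,\lambda_3$, and then continue meromorphically in $\lambda_2$. The sum of the residues is $\Eis(\beta_{\varphi_\lambda})$, which gives the stated formula, first in the convergent range and then for generic $\lambda_2$ by meromorphic continuation.

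The main obstacle is this interchange. We need uniform convergence of the outer sum in a neighbourhood of the hyperplane, and the hyperplane must meet the region where the outer sum converges. Concretely, the cone where $\Re(\lambda_2-\lambda_1)$ and $\Re(\lambda_2-\lambda_3)$ are large must intersect $\{\lambda_1-\lambda_3=1\}$ in an open set. It does, since $\lambda_2$ is free. Near the hyperplane we bound the inner $\GL_2$ series uniformly by subtracting off its polar part, which is exactly the method already used in Lemma \ref{lem:eis-residue}.

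A secondary point is that the hyperplane must be the only pole. The other potential poles of $\Eis(\varphi_\lambda)$, coming from $\lambda_1-\lambda_2$ or $\lambda_2-\lambda_3$ taking special values (compare Lemma \ref{lem:pole-zero-eis}(3)), are avoided because $\lambda_2$ is generic.
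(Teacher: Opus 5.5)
Your proposal is correct and follows essentially the same route as the paper. Both use the functional equation with $\M_1$, induction in stages through $\Q_{n-2}$, the $\GL_2$ residue from Lemma~\ref{lem:res-deg-eis-original} with $d=2$, and meromorphic continuation in $\lambda_2$; your justification for interchanging the residue with the outer $\gamma$-sum is in fact more explicit than the paper's. One cosmetic slip: because $\rho_{\widetilde{\Q_{n-2}}}$ and $\rho_{B_2}$ differ, the lower-block $\GL_2$ section lies in $|\cdot|^{\lambda_1-\frac{n-2}{2}}\boxplus|\cdot|^{\lambda_3-\frac{n-2}{2}}$, so your prefactor $|\det|^{\lambda_1-\frac12}$ lives in the $\GL_2$ variable, is evaluated at the identity, and should simply be dropped; this changes neither the pole nor the constant.
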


\begin{proof}
	We follow a similar strategy as in the proof of Lemma \ref{lem:eis-residue}. Temporarily, assume that $\lambda_2$ is in generic position and $(\lambda_2,\lambda_1,\lambda_3)\in\a^\ast_{\tilde{\Q_{n-2}},\C}$ is sufficiently dominant. Then using the functional equation \eqref{eq:FE-for-Eis} we write
	\begin{equation*}
		\Eis(\varphi_\lambda) = \Eis(\M_1\varphi_\lambda)=\sum_{\gamma\in\tilde{\Q_{n-2}}(F)\bs G_n(F)}\M_1\varphi_\lambda(\gamma\cdot)
	\end{equation*}
	as an absolutely convergent sum, where $\M_1$ is as in \eqref{eq:def-two-intertwiners}. Writing $\tilde{\Q_{n-2}}\bs G_n$ as $\tilde{\Q_{n-2}}\bs\Q_{n-2}\times\Q_{n-2}\bs G_n$ we write the right hand side above as
	\begin{equation*}
		\sum_{\gamma\in\Q_{n-2}(F)\bs G_n(F)}\Eis^{\Q_{n-2}}(\M_1\varphi_\lambda)(\gamma\cdot).
	\end{equation*}
	Moreover, we realize
	\begin{equation*}
		\Eis^{\Q_{n-2}}(\M_1\varphi_\lambda)(g)= \Eis_{B_2}\left(\M_1\varphi_\lambda\left[\begin{pmatrix}\mathrm{I}_{n-2}&\\&\bullet\end{pmatrix}g\right]\right)(1),
	\end{equation*}
	and note
	\begin{equation*}
		\M_1\varphi_\lambda\left[\begin{pmatrix}\mathrm{I}_{n-2}&\\&\bullet\end{pmatrix}g\right]\in|\cdot|^{\lambda_1-\frac{n-2}{2}}\boxplus|\cdot|^{\lambda_3-\frac{n-2}{2}}.
	\end{equation*}
	We compute using \cite[eq.(4.7)-(4.10)]{fine-boi} that
	\begin{equation*}
		\M_1\varphi_\lambda= \frac{\xi(\frac{n-1}{2}+\lambda_2-\lambda_1)}{\xi(\frac{n-1}{2}+\lambda_1-\lambda_2)}\tilde{\M}_1\varphi_\lambda
	\end{equation*}
	where $\tilde{\M}_1\varphi_\lambda$ is holomorphic for $\Re(\lambda_1-\lambda_2)\ge 0$; see \cite[Theorem 4.2 (3)]{fine-boi}. As $\lambda_2$ is in generic position $\M_1\varphi_\lambda$ is holomorphic around $\lambda_1-\lambda_3=1$. Thus from the standard $\GL_2$ theory or using Lemma \ref{lem:res-deg-eis-original} we know that the above Eisenstein series has a simple pole along the hyperplane $\lambda_1-\lambda_3=1$ where its residue is given by ${\Delta^{-\frac{1}{2}}}\frac{\xi^\ast(1)}{\xi(2)}\cdot\frac{\xi(\frac{n-1}{2}+\lambda_2-\lambda_1)}{\xi(\frac{n-1}{2}+\lambda_1-\lambda_2)}\cdot\tilde{\beta}_{\varphi_\lambda}(g)$ where $\tilde{\beta}_{\varphi_\lambda}\in\1_{n-2}\cdot|\det|^{\lambda_2}\boxplus\1_2\cdot|\cdot|^{\frac{\lambda_1+\lambda_3}{2}}\big\vert_{\lambda_1-\lambda_3=1}$ is given by
	\begin{equation*}
		\tilde{\beta}_{\varphi_\lambda}(g)=\int_{K_2}\tilde{\M}_1\varphi_\lambda\left[\begin{pmatrix}\mathrm{I}_{n-2}&\\&k\end{pmatrix}g\right]\, dk\Bigg\vert_{\lambda_1-\lambda_3=1}.
	\end{equation*}
	We conclude by meromorphic continuation.
\end{proof}

\begin{rmk}\label{rem:upgrade-n=4}
	In practice, we need a slight upgrade of Lemma \ref{lem:res-eis-middle}. Namely, instead of keeping $\lambda_2$ at a generic position we need to compute the residue there as $\lambda\to(\tfrac{1}{2},0,-\tfrac{1}{2})$. For $n\neq 4$ using the continuity of the $\Eis$ intertwiner we obtain that the residue in Lemma \ref{lem:res-eis-middle} equals
	\begin{equation*}
		\frac{\xi^\ast(1)\xi(\frac{n-2}{2})}{\xi(2)\xi(\frac{n}{2})}\cdot\Eis\left(\xi_{\varphi_{(\frac{1}{2},0,-\frac{1}{2})}}\right)
	\end{equation*}
	\forlater{\rn{Do we need to know the exact constant here? I suggest not writing any constant that are not necessary and are sensible to choices of measures, for obvious reasons.}}
	However, for $n=4$ we see that $\M_1\varphi_\lambda$ has a simple polar divisor along the hyperplane $\lambda_1-\lambda_2=\tfrac{1}{2}$. On the other hand, $\beta_{\varphi_\lambda}$ is a holomorphic section around $(\tfrac{1}{2},0,-\tfrac{1}{2})$ and thus by \cite[Theorem 4.17 (2)]{fine-boi} we deduce that $\Eis\left(\beta_{\varphi_\lambda}\right)$ has a zero when $\lambda_2=\tfrac{\lambda_1+\lambda_3}{2}$. Hence, the residue in Lemma \ref{lem:res-eis-middle} is regular at $(\tfrac{1}{2},0,-\tfrac{1}{2})$ also in this case.
\end{rmk}

\begin{prop}\label{prop:reg-second-main-term}
	Let $f_s$ be as before. Then $\Eis(\M\bar{f}\cdot f_s)+\Eis(\bar{f}\cdot \M f_s)$ is holomorphic at $s=0$.
\end{prop}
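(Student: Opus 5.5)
The plan mirrors the proof of Proposition \ref{prop:reg-main-term}. I will locate the singularity at $s=0$ of each of the two Eisenstein series, compute its residue, and show that the two residues cancel.

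First, I identify the sections. $\M\bar f\cdot f_s$ is a holomorphic (near $s=0$) section of
\[
\1_1\cdot|\cdot|^{\lambda_1}\boxplus\1_{n-2}\cdot|\det|^{\lambda_2}\boxplus\1_1\cdot|\cdot|^{\lambda_3},\qquad \lambda=\left(\tfrac{1}{2},\,s,\,-\tfrac12-(n-1)s\right),
\]
since $\M\bar f\in\1_1|\cdot|^{\frac{n-1}{2}}\boxplus\1_{n-1}|\det|^{-\frac12}$ and $f_s\in\1_{n-1}|\det|^{\frac12+s}\boxplus\1_1|\cdot|^{-(n-1)(\frac12+s)}$ after normalization. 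Similarly, $\bar f\cdot\M f_s$ is a section of the same induced representation with parameter $\lambda'=\left(\tfrac12+(n-1)s,\,-s,\,-\tfrac12\right)$. At $s=0$ both parameters equal $(\tfrac12,0,-\tfrac12)$, so $\lambda_1-\lambda_3=1+(n-1)s$ and $1+(n-1)s$ respectively, and both lie on the polar hyperplane of Lemma \ref{lem:res-eis-middle}. The other potential hyperplanes, $\lambda_1-\lambda_2=\tfrac{n-1}{2}$ and $\lambda_2-\lambda_3=\tfrac{n-1}{2}$, are not met at $s=0$, because $n>2$ gives $\tfrac12\neq\tfrac{n-1}{2}$. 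This will be checked using Lemma \ref{lem:pole-zero-eis}-type information, i.e.\ \cite[Theorem 4.17]{fine-boi}.

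Next, I compute the residues. I apply Lemma \ref{lem:res-eis-middle}, together with its upgrade in Remark \ref{rem:upgrade-n=4}, to conclude that each of $\Eis(\M\bar f\cdot f_s)$ and $\Eis(\bar f\cdot\M f_s)$ has at most a simple pole at $s=0$. Parametrizing by $\lambda_1-\lambda_3-1=(n-1)s$, the residues in $s$ are $\frac{1}{n-1}c\,\Eis(\beta_{\M\bar f\cdot f_0})$ and $\frac{1}{n-1}c\,\Eis(\beta_{\bar f\cdot \M f_0})$ respectively, with the same constant $c=\Delta^{-1/2}\xi^*(1)/\xi(2)$ (suitably modified by the factor in Remark \ref{rem:upgrade-n=4}, which is identical for both terms). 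Here $\beta_\varphi(g)=\int_{K_2}\M_1\varphi\left[\left(\begin{smallmatrix}\mathrm{I}_{n-2}&\\&k\end{smallmatrix}\right)g\right]dk$. It therefore suffices to show that
\[
\Eis\left(\beta_{\M\bar f\cdot f}\right)+\Eis\left(\beta_{\bar f\cdot\M f}\right)=0,
\]
and the natural target is the stronger identity $\beta_{\M\bar f\cdot f}=-\beta_{\bar f\cdot\M f}$ as sections of $\1_{n-2}\boxplus\1_2$ at the limiting parameter.

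The key step, and the main obstacle, is this cancellation identity. I would argue as follows. Both $\M_1(\M\bar f\cdot f_s)$ and $\M_1(\bar f\cdot\M f_s)$ lie in $\1_{n-2}\boxplus\1_1\boxplus\1_1$, and $\M_1\circ\M=\M_2$ up to normalization (as in the proof of Lemma \ref{lem:local-period-equality-M11}). Hence, as in the proof of Proposition \ref{prop:M01-magic},
\[
\M_1(\bar f\cdot\M f_s)=\bar f\cdot\M_2 f_s .
\]
By the symmetric argument, $\M_1(\M\bar f\cdot f_s)=\M_2\bar f\cdot f_s$ up to the same scalar. Taking the $K_2$-average projects onto the residual (trivial) representation $\1_2$ of the lower $\GL_2$ block. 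There $\M_2$, the rank-one $\GL_2$ intertwiner on $|\cdot|^{1/2}\boxplus|\cdot|^{-1/2}$ evaluated at the reducibility point, acts by the scalar obtained from the residue of the $\GL_2$ intertwiner. Its pole is exactly what produces a sign flip between the two orderings, mirroring the sign in Lemma \ref{lem:res-deg-eis-dual} relative to Lemma \ref{lem:res-deg-eis-original}. Concretely, I will compute the $K_2$-averages via the functional equation \eqref{eq:FE-for-intertwiner}. I will check that $\int_{K_2}\M_2\bar f\cdot f$ and $\int_{K_2}\bar f\cdot\M_2 f$ agree (they are adjoint pairings of unitary sections after averaging), while the scalar residue factors of $\M_2$ at $\lambda_2-\lambda_3=\pm1$ come with opposite signs. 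This yields the cancellation. The delicate points are, first, tracking the normalizing scalars in Remark \ref{rem:upgrade-n=4}, including the extra zero/pole bookkeeping for $n=4$, and second, justifying the exchange of the limit $s\to0$ with the residue, using the holomorphy of $\tilde\M_1$ from \cite[Theorem 4.2]{fine-boi}.
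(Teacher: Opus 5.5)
There is a genuine gap, and it starts with the parameters, which you have miscomputed. In fact $\M\bar f\cdot f_s$ lies in $\1_1|\cdot|^{\frac12+s}\boxplus\1_{n-2}|\det|^{s}\boxplus\1_1|\cdot|^{-\frac12-(n-1)s}$; you dropped the $+s$ in the first slot (compare the paper's description of the section in $M_{10}$). Meanwhile $\bar f\cdot\M f_s$ lies in $\1_1|\cdot|^{\frac12-(n-1)s}\boxplus\1_{n-2}|\det|^{s}\boxplus\1_1|\cdot|^{-\frac12+s}$. So $\lambda_1-\lambda_3$ equals $1+ns$ for the first and $1-ns$ for the second: the two families cross the polar hyperplane $\lambda_1-\lambda_3=1$ from \emph{opposite} sides, not in the same direction. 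Lemma \ref{lem:res-eis-middle} then gives $s$-residues $\tfrac{c}{n}\Eis(\beta_{\M\bar f\cdot f})$ and $-\tfrac{c}{n}\Eis(\beta_{\bar f\cdot\M f})$ with the same constant $c$. The statement therefore reduces to the \emph{equality} $\Eis(\beta_{\M\bar f\cdot f})=\Eis(\beta_{\bar f\cdot\M f})$, and the crossing directions supply the sign.

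Your key step, $\beta_{\M\bar f\cdot f}=-\beta_{\bar f\cdot\M f}$, is thus aimed at a false identity, and the sign mechanism you propose does not exist.

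Using $\M_1\circ\M=\M_2$, the two sections are $\int_{K_2}\M_2\bar f\cdot f$ and $\int_{K_2}\bar f\cdot\M_2 f$. Here $\M_2$ acts on the lower $\GL_2$-block in which $f$ lives, i.e.\ on a twist of $|\cdot|^{\frac{n-2}{4}}\boxplus|\cdot|^{-\frac{n-2}{4}}$. For $n\neq4$ this is not the reducible $|\cdot|^{1/2}\boxplus|\cdot|^{-1/2}$ you invoke. That representation is the lower block of $\M_1\varphi_\lambda$, and its reducibility is what produces the Eisenstein pole in the first place. $\M_2$ contributes the same normalizing scalar (a ratio of completed zeta values) to both terms, so no relative sign can come from it.

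Moreover, the normalized averages $\int_{K_2}\tilde\M_2\bar f\cdot f$ and $\int_{K_2}\bar f\cdot\tilde\M_2 f$ are invariant bilinear pairings on the same pair of representations. The paper shows, via Schur's lemma and a check on spherical vectors, that they coincide. Hence the two $\beta$'s are equal and in general nonzero, so they cannot be negatives of each other.

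The agreement you mention in passing (``they are adjoint pairings'') is exactly the paper's key local step. Once the parameters are corrected, it closes the argument by itself. Note also that the paper deforms $\bar f$ to $\bar f_{s'}$, so the pole sits at $s=s'$ with $\lambda_2=2s'$ in generic position, as Lemma \ref{lem:res-eis-middle} requires. Only then does it let $s'\to0$ via Remark \ref{rem:upgrade-n=4}.
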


\begin{proof}
	Let $s'\in\C$ be at a generic position. We readily check that
	\begin{equation*}
		\bar{f}_{s'}\cdot\M f_s\in\1_1\cdot|\cdot|^{\frac{1}{2}+s'-(n-1)s}\boxplus\1_{n-2}\cdot|\det|^{s'+s}\boxplus\1_1\cdot|\cdot|^{-\frac{1}{2}-(n-1)s'+s}
	\end{equation*}
	and apply Lemma \ref{lem:res-eis-middle} and the discussion afterwards to obtain that $\Eis(\bar{f}_{s'}\cdot\M f_s)$ has a simple pole at $s=s'$ and its residue is given by $d\cdot \Eis(\beta_{s'})$ for some $d>0$ and
	\begin{equation*}
		\beta_{s'}:=\int_{K_2}{\M}_1\left(\bar{f}_{s'}\cdot \M f_{s'}\right)\left[\begin{pmatrix}\mathrm{I}_{n-2}&\\&k\end{pmatrix}\cdot\right]\, dk.
	\end{equation*}
	Note that due to the left invariance of $f$ and functional equation of intertwiners \eqref{eq:FE-for-intertwiner}
	we have
	\begin{equation*}
		\M_1\left(\bar{f}_{s'}\cdot\M f_{s'}\right)=\bar{f}_{s'}\cdot(\M_1\circ\M) f_{s'} = \bar{f}_{s'}\cdot\M_2 f_{s'},
	\end{equation*}
	where $\M_2$ is as defined in \eqref{eq:def-two-intertwiners}.
	Thus from \cite[eq.(4.7)-eq.(4.10)]{fine-boi} it follows that the above integral evaluates to
	\begin{equation*}
		\beta_{s'}=\frac{\xi(\frac{n-2}{2}+ns')}{\xi(\frac{n}{2}+ns')}\tilde{\beta}_{s'},\quad \tilde{\beta}_{s'}:=\int_{K_2}\bar{f}_{s'}\cdot\tilde{\M}_2 f_{s'}\left[\begin{pmatrix}\mathrm{I}_{n-2}&\\&k\end{pmatrix}\cdot\right]\, dk.
	\end{equation*}
	Similarly,
	\begin{equation*}
		\M\bar{f}_{s'}\cdot f_s\in\1_1\cdot|\cdot|^{\frac{1}{2}-(n-1)s'+s}\boxplus\1_{n-2}\cdot|\det|^{s'+s}\boxplus\1_1\cdot|\cdot|^{-\frac{1}{2}+s'-(n-1)s}
	\end{equation*}
	is a holomorphic section around $s=s'$ and thus $\Eis(\M\bar{f}_{s'}\cdot f_s)$ has a simple 
	pole at $s=s'$ with residue $-d\cdot\Eis(\beta^\vee_{s'})$ where
	\begin{equation*}
		\beta^\vee_{s'}=\frac{\xi(\frac{n-2}{2}+ns')}{\xi(\frac{n}{2}+ns')}\tilde{\beta}^\vee_{s'},\quad\tilde{\beta}^\vee_{s'}:=\int_{K_2}\tilde{\M}_2\bar{f}_{s'}\cdot f_{s'}\left[\begin{pmatrix}\mathrm{I}_{n-2}&\\&k\end{pmatrix}\cdot\right]\, dk.
	\end{equation*}
	Hence, it suffices to show that $\Eis\left(\beta_{s'}\right)=\Eis\left(\beta^\vee_{s'}\right)$ at $s'=0$, which will follow from the regularity of the two sides at $s'=0$ (see Remark \ref{rem:upgrade-n=4}) and $\tilde{\beta}_{s'}=\tilde{\beta}^\vee_{s'}$ at $s'=0$. We will show the last equality locally. Indeed, for any place $v$ we check that
	\begin{align*}
		\tilde{\beta}_{0,v}(g)&=\int_{K_{2,v}}\bar{f}_v\cdot \tilde{\M}_{2,v}f_v\left[\begin{pmatrix}\mathrm{I}_{n-2}&\\&k\end{pmatrix}g\right]|\det k|^{-\frac{n-2}{2}}\, dk\\
		&=\int_{B_{2,v}\bs G_{2,v}}\left(|\det|^{-\frac{n-2}{4}}\cdot g\bar{f}_v\right)\cdot \tilde{\M}_{2,v}\left(|\det|^{-\frac{n-2}{4}}\cdot g\bar{f}_v\right)\left[\begin{pmatrix}\mathrm{I}_{n-2}&\\&\bullet\end{pmatrix}\right].
	\end{align*}
	The right hand side above defines a $G_{2,v}$-equivariant bilinear pairing between $|\cdot|_v^{\frac{n-2}{4}}\boxplus|\cdot|_v^{-\frac{n-2}{4}}$ and its dual. Similarly, $\tilde{\beta}^\vee_{0,v}(g)$ defines the same on the same pair of vectors. Thus by Schur's lemma we have $\tilde{\beta}_{0,v}(g)$ equals $\tilde{\beta}^\vee_{0,v}(g)$ up to a multiplicative $f,g$-independent scalar. Checking the equality at $g=1$ and for normalized unramified $f_v$ we conclude the proof.
\end{proof}

\begin{proof}[Proof of Theorem \ref{thm:spectral-expansion}]
	Let $s\in\C$ with sufficiently large $\Re(s)$. We start with Proposition \ref{prop:whittaker-cleanup}, and the definitions \eqref{eq:def-I1} and \eqref{eq:def-I2}. We obtain
	\begin{equation*}
		\int_{[\bar{G}_n]}\phi\cdot\overline{\Eis(f)}\cdot\Eis(f_s) = I_1(s)+I_2(s) = M_{00}(s)+M_{10}(s) + I_2^{\mathrm{D}}(s) + I_2^{\mathrm{C}}(s)
	\end{equation*}
	where the second equality follows from Proposition \ref{prop:n-1-case} and Proposition \ref{prop:expansion-after-Whittaker}.  Next we do meromorphic continuation of $I_2^{\mathrm{D}}(s)$ and $I_2^{\mathrm{C}}(s)$ to a sufficiently small punctured neighbourhood of $s=0$ using Proposition \ref{prop:cont-I2D} and Proposition \ref{prop:cont-I2C} and write
	\begin{multline*}
		\int_{[\bar{G}_n]}\phi\cdot\overline{\Eis(f)}\cdot\Eis(f_s) = M_{00}(s)+M_{10}(s) +M_{10}(s)+M_{11}(s)\\
		+\sum_{\pi\in\Pi_{\mathrm{c}}(\bar{G}_2)}\frac{L^S\left(\tfrac{n-1}{2}+\tfrac{n}{2}s,\pi\right)L^S\left(\tfrac{1}{2}+\tfrac{n}{2}s,\pi\right)}
		{\sqrt{L^S(1,\pi,\Ad)}}\\
		\sum_{\varphi\in\B\left(\1_{n-2}\boxplus\pi\right)}
		\left\langle\phi,\Eis(\varphi, \ol{\lambda_{\mathrm{D}}(s)})\right\rangle_{[\bar{G}_n]}
		\optionA{\P_S(-\lambda_{\mathrm{D}}(s);\varphi,f,f_s)}
		\optionB{\P_S(s,-\lambda_{\mathrm{D}}(s);\varphi,\Phi)}
		\\
		+\frac{1}{2}\sum_{\chi\in\Pi(G_1)}\frac{\prod_{\pm}L^S\left(\tfrac{n-1}{2}+\tfrac{n}{2}s\pm z,\chi^\pm\right)
			L^S\left(\tfrac{1}{2}+\tfrac{n}{2}s\pm z,\chi^\pm\right)}{L^S(1+2z,\chi^2)}\\\sum_{\varphi\in\B(\1_{n-2}\boxplus\chi\boxplus\chi^{-1})}\intop_{i\mathbb{R}}
		\left\langle \phi,\Eis(\varphi,\overline{\lambda_{\mathrm{C}}(s,z)})\right\rangle_{[\bar{G}_n]}
		\optionA{\P_S(-\lambda_{\mathrm{C}}(s,z);\varphi,f,f_s)}
		\optionB{\P_S(s,-\lambda_{\mathrm{C}}(s,z);\varphi,\Phi)}
		\, dz.
	\end{multline*}
	Finally, we apply Proposition \ref{prop:M11-magic} and Proposition \ref{prop:reg-main-term} to conclude that $M_{00}+M_{11}$ is regular at $s=0$. Similarly, Proposition \ref{prop:M01-magic} and Proposition \ref{prop:reg-second-main-term} yield that $M_{01}+M_{10}$ is regular at $s=0$. The remaining terms are also regular at $s=0$. We conclude by abbreviating
	\begin{equation}\label{def-main-PSf-disc}
		\P_S(\vphi,f):=\P_S(0;\varphi,f,f)=\prod_{v\in S}\P_v(0;\varphi_v,f_v,f_v),
	\end{equation}
	and
	\begin{equation}\label{def-main-PSf-cont}
		\P_S(z,\vphi,f):=\P_S((0,z,-z);\varphi,f,f)=\prod_{v\in S}\P_v((0,z,-z);\varphi_v,f_v,f_v),
	\end{equation}
	where $\P_v(\cdots)$ is given by \eqref{eq:main-RS-period-local}. \forlater{\rn{This is not optimal because when one's reading thm A he's sent here and needs to go back to \eqref{eq:main-RS-period-local} to fully undertand what's going on. Perhaps better to give the above definition together with the statement of thm A, although this makes the statement even longer}}
\end{proof}

\part{Moment Asymptotics}

\section{Local Choices}\label{sec:choices}

We recall the number field $F$ and the additive character $\psi_0$ of $F$ as before. For the rest of the paper, let $\q=\prod_{v<\infty} \p_v^{r_v}$ be an integral ideal coprime to the different ideal of $F$. Define the diagonal matrix $x$ given locally by
\begin{equation}\label{eq:def-x}
	x:=(x_v)_v\in G_n(\A):\quad
	\begin{cases}
		x_v:=\pmat{\varpi_v^{-r_v}\\&\ddots\\&&\varpi_v^{-r_v}\\&&&1},\quad &\text{for }v\mid \q;\\
		x_v:=1,\quad &\text{otherwise}.
	\end{cases}
\end{equation}
For each $v\mid\infty$ by $\tau_v$ we denote an arbitrarily small but fixed positive real number. The actual value of $\tau_v$ may differ from line to line.

We choose a factorizable $\Phi=\otimes_v \Phi_v\in\mathcal{S}(\A^n)$ as follows:
\begin{itemize}
	\item For all finite $v\nmid \q$, we choose $\Phi_{v}:=\mathbbm{1}_{\o_v^n}$.
	\item For all $v\mid \q$, we let
	$\Phi_v(u_1,\dots,u_{n-1}, u_n):=\mathbbm{1}_{\o_v}(u_1)\cdots \mathbbm{1}_{\o_v}(u_{n-1})\mathbbm{1}_{\o_v^\times}(u_n)$.
	\item Finally, for each $v\mid\infty$ we let $\Phi_v$ be a smooth non-negative function with compact support inside the ball of radius $\tau_v$ around $(0,\dots,0,1)$ in $F_v^n$ and is identically $1$ in the ball of radius $\tau_v/2$. We normalize $\Phi_v$ according to \eqref{eq:arch-normalization-f} below.
\end{itemize}
From this $\Phi$ (resp.\ $\Phi_v$) we construct a vector $f_s=f_{s,\Phi}$ (resp.\ $f_{s,v}$) as defined in equation \eqref{eq:def-f-from-phi}. As before, we abbreviate $f_0$ and $f_{0,v}$ as $f$ and $f_v$, respectively.

Note that with this choice, we have an explicit description of $f_{s,v}(k_v x_v)$ for $k_v\in K_{n,v}$ and $v\mid \q$. Writing explicitly the bottom row of $k_v$ as $(k_{n1},\dots,k_{nn})$, one sees that
\[f_{s,v}(k_vx_v)=\int_{F_v^\times}\prod_{j=1}^{n-1}\mathbbm{1}_{\o_v}(tk_{nj}\varpi_v^{-r_v})\mathbbm{1}_{
	\o_v^\times}(tk_{nn})|\det (tx_v)|^{1/2+s}\,d^\times t.
\]
Notice that the last indicator function tells us that one must have $|t|=|k_{nn}|^{-1}$, which is $\ge 1$. If $|t|>1$ then $|k_{nn}|<1$ and therefore there must exist $i< n$ such that $|k_{ni}|=1$ but this implies that $|tk_{ni}\varpi_v^{-r_v}|=|t||\varpi_v^{-r_v}|>1$, which means that the integrand vanishes. Hence the integral is supported on $|t|=1$ and from this we deduce that
\begin{equation}\label{eq:test_vector_on_kx}
	f_{s,v}(k_vx_v)
	=  |\det x_v|^{1/2+s}\mathbbm{1}_{K_0\left(\p_v^{r_v}\right)}(k_v).
\end{equation}
We note that a similar computation shows that $f_{s,v}$ is $K_{n,v}$-invariant for all finite $v\nmid\q$.

On the other hand, if $v\mid\infty$ the choice $\Phi_v$ ensures that
\begin{equation}\label{eq:supp-f-arch}
	f_v\left[\pmat{\mathrm{I}_{n-1}&\\ \bullet&1}\right]\text{ is supported on a $\tau_v$-radius ball around the origin},
\end{equation}
and
\begin{equation}\label{eq:away-from-zero}
	\int_{F_v^{n-1}}f_v\left[\pmat{\mathrm{I}_{n-1}&\\ c&1}\right]\, dc \gg 1
\end{equation}
which follows from the local version of \eqref{eq:def-f-from-phi} and the choice of $\Phi$ above. Finally, we normalize $\Phi_v$ above so that
\begin{equation}\label{eq:arch-normalization-f}
	\int_{F_v^{n-1}} \left \lvert f_v\left[\pmat{\mathrm{I}_{n-1}&\\ c&1}\right]\right\rvert^2 \, dc=1.
\end{equation}
The above is possible as $\Phi_v$ has a positive $L^2$-mass.

For later purposes, we also denote $\Phi_0$ to be the factorizable Schwartz--Bruhat function such that $\Phi_{0,v}=\mathbbm{1}_{\o_v^n}$ at all $v<\infty$ and $\Phi_{0,v}=\Phi_{v}$, as chosen above, for $v\mid\infty$. Let $f^0_s\defeq f_{s,\Phi_0}$ be the induced vector, constructed from $\Phi_0$ as in \eqref{eq:def-f-from-phi}. Once again, for simplicity we write $f^0$ for $f^0_0$.

%Finally, from $f_s$ and $f_s^0$ we construct corresponding Eisenstein series $\Eis(f_s)$ and $\Eis(f_s^0)$, as in \eqref{eq:def-eis-from-f}.

We fix $\pi_0$ a cuspidal automorphic representation for $\bar{G}_n(F)$ that is unramified at every finite place and $\theta$-tempered with $\theta<\frac{1}{n^2+1}$ at every place of $F$.
%By \cite{luo1999generalized} there exists $0\le\vartheta_{0,v}\leq \tfrac12-\tfrac{1}{n^2+1}$ such that $\pi_{0,v}$ is $\vartheta_{0,v}$-tempered for all place $v$. Later we will make further assumptions on the temperedness of $\pi_{0,v}$ \sj{why not do it now?} \jd{prefer to mention now as well}.
Let $\phi_0\in\pi_0$ be a cusp form with Whittaker function $W_0=\bigotimes_v W_{0,v}$, such that $W_{0,v}$ is the normalized spherical vector $W_{\pi_{0,v}}$ for all $v<\infty$. For each $v\mid\infty$ we choose $W_{0,v}$ so that 
\begin{equation}\label{eq:arch-normalization-W}
	\lVert W_{0,v} \rVert_{\pi_{0,v}}=1
\end{equation}
and
\begin{equation}\label{eq:support-arch-whittaker}
	W_{0,v}\left[\pmat{\bullet&\\&1}\right]\text{ is supported on a $\tau_v$-radius ball centered at the identity}
\end{equation}
as an element in $C_c^{\infty} \left(N_{n-1}(F_v)\backslash G_{n-1}(F_v),\psi_v\right)$. Here and in the rest of the paper, for a generic representation $\sigma$ of $G_n(F_v)$ we adopt the unitary inner product on it as defined in \eqref{eq:def-inner-prod-whittaker}.

\section{The First Spectral Expansion}\label{sec:firstspecexp}

Recall the choices of $\phi_0$ and $\Eis(f)$ from \S \ref{sec:choices}. Let $R$ denote the right regular representation of $\bar{G}_n(\A)$ on $L^2([\bar{G}_n])$. The goal of this section is to prove Proposition \ref{prop:spec_exp_second_moment}, namely, the quantitative spectral expansion of
\begin{equation*}
	\mathrm{vol}^{-1}(K_0(\q))\frac{\zeta_\q(1)\zeta_\q(\frac{n}{2})^2}{\zeta_\q(n)}\int_{[\bar{G}_{n}]}|\phi_0|^2|R(x)\Eis(f)|^2
\end{equation*}
where $x$ is as in \eqref{eq:def-x}. As $\phi_0$ is cuspidal the above converges absolutely.

We first prove Lemma \ref{lem:abstract-rankin-selberg} below. Its proof exists in \cite[\S 4.2]{Jana2020RS}, however with a different set of notations. We recall the relevant notations here and give a bare sketched proof.

We realize the above integral as an $[\bar{G}_n]$-inner product of $\phi_0\cdot R(x)\Eis(f)$ with itself. Using Parseval (\emph{e.g.}, from \eqref{eq:spectral-decomposition}) we obtain that the above equals
\begin{equation*}
	\mathrm{vol}^{-1}(K_0(\q))\frac{\zeta_\q(1)\zeta_\q(\frac{n}{2})^2}{\zeta_\q(n)}\sum_{P}n_P^{-1}\sum_{\pi\in\Pi^{G_n}(M)}\intop_{i\a^\ast_P/i\a^\ast_G}\sum_{\varphi\in\B(\mathcal{I}(\pi))}\left|\left\langle\phi_0\cdot R(x)\Eis(f),\Eis(\varphi,\lambda)\right\rangle\right|^2\,d\lambda.
\end{equation*}
Working as in \cite[Lemma 4.1]{Jana2020RS} we reduce the above automorphic Plancherel integral to the generic subspectrum, equivalently by Langlands classification, to $\pi\in\Pi_{\mathrm{c}}^{G_n}(M)$. Then we appeal to the standard $\GL_n\times\GL_n$ Rankin--Selberg theory of Jacquet--Piatetski-Shapiro--Shalika (see \cite[\S 2.3]{cogdell2007functions}) and sphericality of $\phi_0$ to write
\begin{multline*}
	\mathrm{vol}^{-1}(K_0(\q))\frac{\zeta_\q(1)\zeta_\q(\frac{n}{2})^2}{\zeta_\q(n)}\sum_{\varphi\in\B(\mathcal{I}(\pi))}\left|\left\langle\phi_0\cdot R(x)\Eis(f),\Eis(\varphi,\lambda)\right\rangle\right|^2\\=
	\frac{\left|L\left(\frac{1}{2},\pi_0\otimes\tilde{\mathcal{I}(\pi,\lambda)}\right)\right|^2}{\ell(\mathcal{I}(\pi,\lambda))}\prod_{v\mid\q}H_v\left(\mathcal{I}(\pi_v,\lambda)\right)\prod_{v\mid\infty}h_v\left(\mathcal{I}(\pi_v,\lambda)\right)\
\end{multline*}
where for $v\mid \q$ and an irreducible generic unitary representation $\sigma$ of $G_n(F_v)$ we define
\begin{equation}\label{eq:def-H-sigma}
	H_v(\sigma):=\mathrm{vol}^{-1}(K_0(\p_v^{r_v}))\frac{\zeta_v(1)\zeta_v(\frac{n}{2})^2}{\zeta_v(n)}\Delta_v^{\mu_1}\sum_{W\in\B(\sigma)}\frac{|Z_v(W_{0,v},\ol{W},R(x_v)f_v)|^2}{|L_v\left(\frac{1}{2},\pi_{0,v}\otimes\sigma\right)|^2},
\end{equation}
where $\mu_1$ is a constant, depending only on $n$ to be chosen later (see the proof of Lemma \ref{lem:spec_wt_ramified}),
and for $v\mid\infty$ we define
\begin{equation}\label{eq:def-h-sigma}
	h_v(\sigma):=\sum_{W\in\B(\sigma)}\left|Z_v(W_{0,v},\ol{W},f_v)\right|^2.
\end{equation}
Here $Z_v(W_1,W_2,f_{s,v}):=\Psi_v(\tfrac{1}{2}+s,W_1,W_2,\Phi_v)$ and $\Psi_v$ is the local zeta integral as in \cite[\S 2.3]{cogdell2007functions}. Finally, the harmonic weight $\ell(\cdot)$ above is given by $\mathcal{L}$ in \cite[Lemma 4.1]{JaNu2021reciprocity}. Thus we obtain the following lemma.

\begin{lem}\label{lem:abstract-rankin-selberg}
	Recall the choices of $\phi_0$ and $\Eis(f)$ from \S \ref{sec:choices} and $x$ from \eqref{eq:def-x}. Then there exists a constant $\mu_1$ depending only on $n$ such that
	\begin{multline*}
		\mathrm{vol}^{-1}(K_0(\q))\frac{\zeta_\q(1)\zeta_\q(\frac{n}{2})^2}{\zeta_\q(n)}\int_{[\bar{G}_{n}]}|\phi_0\cdot R(x)\Eis(f)|^2\\ = \Delta^{-\mu_1}\sum_{P}n_P^{-1}\sum_{\pi\in\Pi^G_{\mathrm{c}}(M)}\intop_{i\a^\ast_P/i\a^\ast_G}\frac{|L(\frac{1}{2},\mathcal{I}(\pi,\lambda)\otimes\tilde{\pi}_0)|^2}{\ell(\mathcal{I}(\pi,\lambda))}\mathcal{H}\left(\mathcal{I}(\pi,\lambda)\right)
	\end{multline*}
	where $\mathcal{H}:=\prod_{v\mid\q}H_v\prod_{v\mid\infty}h_v$ with $H_v$ and $h_v$ as in \eqref{eq:def-H-sigma} and \eqref{eq:def-h-sigma}, respectively.
\end{lem}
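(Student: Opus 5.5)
The plan is to follow the sketch already laid out before the statement, making each step precise; it is essentially the argument of \cite[\S 4.2]{Jana2020RS}. First, since $\phi_0$ is cuspidal and $\Eis(f)$ has moderate growth, the product $\phi_0\cdot R(x)\Eis(f)$ lies in $L^2([\bar{G}_n])$. The integral on the left is therefore $\|\phi_0\cdot R(x)\Eis(f)\|^2$. I would apply the Plancherel form of the Langlands decomposition \eqref{eq:spectral-decomposition}. A direct appeal is not quite legitimate, because the product is not Schwartz. To get around this, I would first convolve with a smooth approximate identity, or pair the product against the Schwartz spectral expansion of test functions, and then pass to the limit using the $L^2$-isometry. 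This gives the sum over $P$, $\pi\in\Pi^{G}(M)$, $\lambda\in i\a^\ast_P/i\a^\ast_G$ and $\varphi\in\B(\mathcal{I}(\pi))$ of the quantities $|\langle\phi_0\cdot R(x)\Eis(f),\Eis(\varphi,\lambda)\rangle|^2$.

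Second, I would discard the non-generic spectrum, as in \cite[Lemma 4.1]{Jana2020RS}. For each $\varphi$, I would unfold $\Eis(f)$ in the inner product, as in Lemma \ref{lem:partial-unfolding}. The Fourier expansion of the cuspidal $\phi_0$ only involves the full Whittaker coefficient. The pairing therefore reduces to $\int_{N_n(\A)\bs G_n(\A)}W_{\phi_0}\overline{W_{\Eis(\varphi,\lambda)}}\,R(x)f\cdots$, and this vanishes unless $\Eis(\varphi,\lambda)$ is generic. By Langlands' classification, the generic discrete data $\pi\in\Pi(M)$ are exactly the cuspidal ones, so only $\pi\in\Pi^{G}_{\mathrm{c}}(M)$ survive. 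This unfolding needs care when $\lambda$ is on the unitary axis, and I would justify it by continuation from the region where $f_s$ has $\Re(s)$ large.

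Third, I would factor the remaining term for generic $\Pi=\mathcal{I}(\pi,\lambda)$. For this I would take a factorizable orthonormal basis of $\Pi$, using the unitary Whittaker inner products \eqref{eq:def-inner-prod-whittaker}, with the spherical vector at every $v\notin S_\q\cup S_\infty$. By \cite[\S 2.3]{cogdell2007functions}, the global pairing becomes a constant times the product over $v$ of $Z_v(W_{0,v},\overline{W_v},R(x_v)f_v)$, and the constant converts the global Whittaker normalization into the local one. For unramified $v$, the local integral equals $L_v(\tfrac12,\pi_{0,v}\otimes\tilde\Pi_v)$. Summing over the basis factorizes the sum over $\varphi$ into local sums, which are precisely $H_v$ at $v\mid\q$ and $h_v$ at $v\mid\infty$. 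Collecting the $L$-factors at $v\mid\q$ gives the completed $L(\tfrac12,\Pi\otimes\tilde\pi_0)$. This works because $\pi_0$ is unramified there, so the normalizing factor in $H_v$ restores $L_v$. The global normalizing constants should combine into $\ell(\Pi)^{-1}$ as in \cite[Lemma 4.1]{JaNu2021reciprocity}.

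The hardest step is the bookkeeping of constants: the ratio between the global Petersson norm and the product of local Whittaker norms (which gives $L(1,\Pi,\Ad)$-type factors absorbed into $\ell$), the factor $\zeta_\q(1)\zeta_\q(\tfrac n2)^2/\zeta_\q(n)\vol^{-1}(K_0(\q))$, and the discriminant powers coming from the measure normalization at $v\mid\Delta$. My approach would be to absorb all $\Delta$-dependence into a single exponent $\mu_1$. The factors $\Delta_v^{\mu_1}$ built into $H_v$ are meant to make the product over all places come out as exactly $\Delta^{-\mu_1}$. To fix $\mu_1$, I would compare against the unramified volume formula \eqref{volume-Gk1}.
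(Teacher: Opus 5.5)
Your proposal is correct and takes essentially the same route as the paper's sketch. Both arguments apply Parseval to $\phi_0\cdot R(x)\Eis(f)$, reduce to the cuspidally induced (generic) spectrum as in \cite[Lemma 4.1]{Jana2020RS} via vanishing of Whittaker coefficients, and then use JPSS unfolding with a factorizable basis; this produces the local weights $H_v$ and $h_v$, the finite (not completed) $L$-function, and the harmonic weight $\ell$. One small correction: your approximation detour in the first step is unnecessary, because a cusp form times an Eisenstein series that is holomorphic on the unitary axis is rapidly decreasing together with all its derivatives, hence already Schwartz (and in any case the $L^2$ Plancherel formula applies directly).
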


\subsection{Finite ramified computations}
Our goal is to show the following result.

\begin{lem}\label{lem:spec_wt_ramified}
	Fix $v<\infty$. Let $\sigma$ be an irreducible generic unitary representation of $\bar{G}_n(F_v)$. Recall $H_v$ from \eqref{eq:def-H-sigma}. If $c(\sigma)\le r_v$ then we have the lower bound
	\[H_v(\sigma)\geq 1\]
	Otherwise, $H_v(\sigma)$ vanishes identically.
\end{lem}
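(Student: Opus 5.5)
We will compute the local zeta integral $Z_v(W_{0,v},\ol{W},R(x_v)f_v)$ directly, using the Iwasawa decomposition in which the Godement--Jacquet-type section $f_v$ is simplest. Recall that $Z_v(W_1,W_2,f_{s,v})=\Psi_v(\tfrac12+s,W_1,W_2,\Phi_v)$. Unfolding the $F_v^\times$-integral defining $f_v$ rewrites it as
\[
Z_v(W_{0,v},\ol{W},R(x_v)f_v)=\int_{N_n\bs \bar{G}_n(F_v)}W_{0,v}(g)\ol{W(g)}f_v(gx_v)\,dg .
\]
We then write $g=n\,a\,k$ with $a=\pmat{h&\\&1}$, $h\in N_{n-1}\bs G_{n-1}$, and $k\in K_{n,v}$, folding the center. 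By \eqref{eq:test_vector_on_kx}, $f_v(gx_v)$ is a multiple of $|\det h|^{1/2}\mathbbm{1}_{K_0(\p_v^{r_v})}(k)$; to make this precise we use $x_v^{-1}K_0(\p^{r})x_v$-type conjugations and the fact that $f_v(ax_v)$ only depends on the last row. Because $\phi_0$ is spherical, $W_{0,v}(gk)=W_{0,v}(g)$. Hence
\[
Z_v=c\int_{K_0(\p_v^{r_v})}\int_{N_{n-1}\bs G_{n-1}}W_{\pi_{0,v}}\ol{(\sigma(k)W)}\left[\pmat{h&\\&1}\right]|\det h|^{s'}\,dh\,dk ,
\]
with $c$ an explicit power of $N(\p_v)$ coming from $|\det x_v|^{1/2}$.

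Next we average over $k$. This replaces $W$ by $\vol(K_0(\p_v^{r_v}))\,\mathrm{pr}W$, where $\mathrm{pr}$ is the orthogonal projection onto $\sigma^{K_0(\p_v^{r_v})}$. If $c(\sigma)>r_v$ this space is zero, so every summand in \eqref{eq:def-H-sigma} vanishes and $H_v(\sigma)=0$. If $c(\sigma)\le r_v$, we choose the orthonormal basis $\B(\sigma)$ to contain an orthonormal basis of $\sigma^{K_0(\p_v^{r_v})}$ that includes the $L^2$-normalized newvector $W_\sigma/\|W_\sigma\|$. All other basis vectors contribute zero, and every term is nonnegative. Therefore $H_v(\sigma)$ is bounded below by the contribution of the newvector alone. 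This is why we drop the old vectors rather than compute them.

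For the newvector we invoke the JPSS theory \cite{JPSS1981conducteur} together with the explicit Shintani--Miyauchi formula \eqref{shintani}. The integral
\[
\int W_{\pi_{0,v}}\ol{W_\sigma}\left[\pmat{h&\\&1}\right]|\det h|^{s'}\,dh
\]
equals $L_v(\tfrac12,\pi_{0,v}\otimes\tilde\sigma_{\mathrm{ur}})$, where the $\GL_n\times\GL_{n-1}$-type unramified computation reduces via Cauchy identities for Schur polynomials. Since $\pi_{0,v}$ is unramified, this coincides with $L_v(\tfrac12,\pi_{0,v}\otimes\tilde\sigma)$ and cancels the denominator in \eqref{eq:def-H-sigma}. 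We also need $\|W_\sigma\|^2$ for the inner product \eqref{eq:def-inner-prod-whittaker}. The Rankin--Selberg integral of $|W_\sigma|^2$ is $L(1,\sigma_{\mathrm{ur}}\otimes\tilde\sigma_{\mathrm{ur}})/\zeta(n)$, so with that normalization $\|W_\sigma\|=1$.

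Collecting the constants gives
\[
H_v(\sigma)=\vol^{-1}(K_0)\,\frac{\zeta_v(1)\zeta_v(\frac n2)^2}{\zeta_v(n)}\,\Delta_v^{\mu_1}\,c^2\,\vol(K_0)^2\ge\cdots .
\]
The main obstacle is precisely this bookkeeping. We must check that the power of $N(\p_v)^{r_v}$ from $|\det x_v|^{1/2}$ exactly balances $\vol(K_0(\p_v^{r_v}))$, and we must choose $\mu_1$ to absorb the $\Delta_v$ factors from the measure normalizations. The remaining factor $\zeta_v(1)\zeta_v(\tfrac n2)^2/\zeta_v(n)$ is at least $1$, which yields $H_v\ge1$, with equality when $r_v=0$ once the zeta factors are paired appropriately. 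If the direct evaluation of $c$ is messy, we will first carry out the check for $\sigma$ unramified and $r_v=0$, then track how each quantity scales with $r_v$.
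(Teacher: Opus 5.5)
Your architecture is the paper's. The $K_0(\p_v^{r_v})$-average projects onto $\sigma^{K_0(\p_v^{r_v})}$, which gives the vanishing for $c(\sigma)>r_v$. For $c(\sigma)\le r_v$, positivity lets you keep only the normalized newvector. The gap is in evaluating that newvector term, which is the entire content of the bound $H_v(\sigma)\ge1$. Write $Z^\sharp(s,W_1,W_2):=\int_{N_{n-1}\bs G_{n-1}}W_1\ol{W_2}\left[\pmat{h&\\&1}\right]|\det h|^{s-1}\,dh$. Three of your inputs are wrong.

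(i) For $r_v\ge1$ one has $\vol(K_0(\p_v^{r_v}))=N(\p_v^{r_v})^{-(n-1)}\zeta_v(n)/\zeta_v(1)$. So $|\det x_v|\vol(K_0(\p_v^{r_v}))=\zeta_v(n)/\zeta_v(1)$, not $1$. The newvector term therefore equals $\zeta_v(\tfrac n2)^2\,|Z^\sharp(\tfrac12,W_0,W_\sigma)|^2\big/\bigl(\|W_\sigma\|^2|L_v(\tfrac12,\pi_0\otimes\ol{\sigma})|^2\bigr)$.

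(ii) The Shintani--Cauchy computation gives
\[
Z^\sharp(s,W_{\pi_1},W_{\pi_2})=\bigl(1-\delta_{c(\pi_1)=0=c(\pi_2)}N(\p_v)^{-ns}\bigr)\,L_v(s,\pi_{1,\mathrm{ur}}\otimes\ol{\pi}_{2,\mathrm{ur}}).
\]
Restricting to $\pmat{h&\\&1}$ only sees dominant $\nu$ with last entry $0$. When both vectors are spherical, this drops the factor $\zeta_v(ns)$ coming from the shifts $\nu\mapsto\nu+(k,\dots,k)$, $k\ge0$; this is the central integral of the full unramified Rankin--Selberg integral. If $c(\pi_2)\ge1$, the Schur polynomials have fewer than $n$ variables, so the restriction is automatic and no factor is lost. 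You use the corrected value at $s=1$ for the norm but the uncorrected one at $s=\tfrac12$ for the zeta integral, which is inconsistent.

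(iii) Consequently, for $c(\sigma)=0$ one has $Z^\sharp(\tfrac12,W_0,W_\sigma)=\zeta_v(\tfrac n2)^{-1}L_v(\tfrac12,\pi_0\otimes\ol{\sigma})$ and $\|W_\sigma\|^2=1$. For $c(\sigma)\ge1$ one has $Z^\sharp=L_v(\tfrac12,\pi_0\otimes\ol{\sigma})$ but $\|W_\sigma\|^2=\zeta_v(n)$, since the $\zeta_v(n)$ in \eqref{eq:def-inner-prod-whittaker} is no longer cancelled.

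With the correct values, the newvector contributes exactly $1$ when $c(\sigma)=0$, and $\zeta_v(\tfrac n2)^2/\zeta_v(n)\ge1$ when $1\le c(\sigma)\le r_v$; this is the lemma. Your value $\zeta_v(1)\zeta_v(\tfrac n2)^2/\zeta_v(n)$ is the contribution in neither case. You reach a true conclusion only because each error inflates the bound.

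The unramified case has no slack at all. The $\zeta_v(\tfrac n2)^2$ in \eqref{eq:def-H-sigma} is there to cancel the $\zeta_v(\tfrac n2)^{-2}$ from $|Z^\sharp|^2$, and the $\zeta_v(1)/\zeta_v(n)$ is there to cancel the volume factor. So the split into $c(\sigma)=0$ versus $c(\sigma)\ge1$ is a missing idea, not bookkeeping.

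Your fallback (check $r_v=0$, then scale in $r_v$) would not reveal this. The volume $\vol(K_0(\p_v^{r_v}))$ does not scale uniformly from $r_v=0$ to $r_v=1$, and no ramified $\sigma$ occurs at $r_v=0$.
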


\begin{proof}
	
	We give a proof in the case where $v\nmid \Delta$. It is only a matter of bookkeeping to see that the same proof works if $v\mid \Delta$ up to introducing a term of the form $\Delta_v^{\mu_1}$, where $\mu_1$ only depends on $n$. This is the reason why we needed to introduce this factor back in \eqref{eq:def-H-sigma}. We drop subscripts $v$ for the rest of proof. For $W\in\sigma$ we have the absolutely convergent expression
	\[
	Z\left(W_0,\overline{W},R(x)f\right)
	=\int_{Z_n(F)N_n(F)\backslash {G}_n(F)}W_0(g)
	\overline{W(g)}f(gx)\, dg,
	\]
	which follows from \cite[Lemma 3.1]{JaNu2021reciprocity}. Writing $g$ in Iwasawa coordinates, $g=\left(\begin{smallmatrix}h&\\&1\end{smallmatrix}\right)k$ with $h\in N_{n-1}(F)\backslash G_{n-1}(F)$ and $k\in K_n$, and applying \eqref{eq:test_vector_on_kx} and sphericality of $W_0$ we write the above as
	\[
	|\det x|^{\frac{1}{2}}\int_{N_{n-1}(F)\backslash G_{n-1}(F)}
	W_0\left[\pmat{h&\\&1}\right] 
	|\det h|^{-\frac12}\int_{K_0(\p^r)}\overline{W\left[\pmat{h&\\&1}k\right]}\,dk\,dh.
	\]
	The inner-$K_0(\p^r)$-integral projects $W$ onto $\sigma^{K_0(\p^r)}$, which is trivial unless $c(\sigma)\le r$. This proves the second part.
	
	Assume now $c(\sigma)\le r$. We choose a basis $\B(\sigma)$ containing the normalized
	newvector $\frac{W_\sigma}{\| W_\sigma \|}$ of $\sigma$. Then, by positivity and invariance of the newvector we have
	\begin{equation}\label{eq:primary-bound-local-weight}
		H(\sigma)\ge|\det x|\vol\left(K_0(\p^r)\right)\frac{\zeta(1)\zeta(\frac{n}{2})^2}{\zeta(n)}\,\frac{|Z^\sharp(\tfrac{1}{2},W_0,W_\sigma)|^2}
		{\|W_\sigma\|^2_\sigma|L(\frac12,\pi_0\otimes \overline{\sigma})|^2},
	\end{equation}
	where
	\[Z^\sharp(s,W_1,W_2):=
	\int_{N_{n-1}(F)\backslash G_{n-1}(F)}W_1\left[\pmat{h&\\&1}\right]
	\overline{W_2\left[\pmat{h&\\&1}\right]}|\det h|^{s-1}\,dh.\]
	Assume that $W_j$ are normalized newvectors $W_{\pi_j}$ where $\pi_j$ are generic irreducible representations of $\bar{G}_n(F)$.
	Then from \eqref{shintani} and an application of Cauchy identity \cite[Theorem 38.1]{bump2004lie} it follows that
	\begin{equation}\label{eq:cauchy-identity}
		Z^\sharp(s,W_1,W_2) = \left(1-\delta_{c(\pi_1)=0=c(\pi_2)}N(\p)^{-ns}\right)L\left(s,\pi_{1,\mathrm{ur}}\otimes\ol{\pi}_{2,\mathrm{ur}}\right)
	\end{equation}
	for $\Re(s)\ge 0$; see, \emph{e.g.} \cite[\S3.1.3]{cogdell2007functions} or \cite[Proof of Theorem 3.5]{jo2023local} for a similar computation.
	As $\pi_0$ is unramified it follows that
	\begin{equation}\label{Local-RS-unramified-and-ramified}
		Z^\sharp(\tfrac{1}{2},W_0,W_\sigma)\\
		=\begin{cases}
			\zeta(\frac n2)^{-1}L(\frac12,\pi_0\otimes \overline{\sigma}),
			\text{ if }\sigma\text{ is unramified},\\
			L(\tfrac12,\pi_0\otimes \overline{\sigma}),\text{ otherwise}.
		\end{cases}
	\end{equation}
	On the other hand, plugging in $s=1$ in \eqref{eq:cauchy-identity} and recalling the definition of the inner product from \eqref{eq:def-inner-prod-whittaker} we obtain
	\begin{equation}\label{eq:upper-bound-newvector-norm}
		\|W_\sigma\|^2_\sigma = \frac{\zeta(n)}{L(1,\sigma_{\mathrm{ur}}\otimes\tilde{\sigma}_{\mathrm{ur}})}Z^\sharp(1,W_\sigma,W_\sigma)=\begin{cases}
			1,
			&\text{ if }\sigma\text{ is unramified},\\
			\zeta(n),&\text{ otherwise}.
		\end{cases}.
	\end{equation}
	Finally, we use the identity
	\forlater{Does this volume formula need reference? What we need is a reference for $[K_v:K_0(\p^r)]$}
	$\vol\left(K_0(\p^r)\right)=N(\p^r)^{-(n-1)}\frac{\zeta(n)}{\zeta(1)}$ and combine \eqref{eq:primary-bound-local-weight}, \eqref{Local-RS-unramified-and-ramified}, and \eqref{eq:upper-bound-newvector-norm} to get
	\[H(\sigma)\ge\begin{cases}1,&\text{ if }\sigma\text{ is unramified},\\ \frac{\zeta(\frac{n}{2})^2}{\zeta(n)},&\text{ otherwise}.\end{cases}\]
	Noting that $\frac{\zeta(\frac{n}{2})^2}{\zeta(n)}\ge 1$ we conclude.
\end{proof}

\subsection{Analysis of the archimedean weight}

We prove the following proposition.

\begin{lem}\label{lem:spec_wt_arch}
	Fix $v$ to be a place such that $v\mid\infty$ and $0\le\vartheta<\tfrac{1}{2}$. Let $\sigma$ be an irreducible generic unitary $\vartheta$-tempered representation of $\bar{G}_n(F_v)$. Recall $h_v$ from \eqref{eq:def-h-sigma}. For every $C\ge 1$ there exists a $\tau>0$ such that the following is true
	$$h_v(\sigma)\gg_{C,\pi_{0,v}} 1\quad\text{if}\quad C(\sigma)\le C$$
	where $W_{0,v}$ and $\Phi_v$ in the definition of $h_v$ have $\tau$-dependent support condition as described in \S\ref{sec:choices}. 
\end{lem}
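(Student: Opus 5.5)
By positivity, to bound $h_v(\sigma)$ from below it suffices to exhibit a single unit vector $W\in\sigma$ with $|Z_v(W_{0,v},\ol{W},f_v)|\gg 1$. I will pick $W$ to be localized near the identity in the Kirillov model.

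\emph{Unfolding the zeta integral.} Write $g=\pmat{h&\\&1}\pmat{\mathrm{I}_{n-1}&\\c&1}$ with $h\in N_{n-1}\bs G_{n-1}$ and $c\in F_v^{n-1}$, using the open Bruhat-type cell and the appropriate measure. By \eqref{eq:supp-f-arch}, only $|c|\le\tau$ contributes. For such $c$, smoothness of $W_{0,v}$ and $W$ gives
\[
W\left[\pmat{h&\\&1}\pmat{\mathrm{I}_{n-1}&\\c&1}\right]=W\left[\pmat{h&\\&1}\right]+O\big(\tau\,\|\mathcal{D}W\|\big),
\]
with $\mathcal{D}$ a fixed Lie algebra element, and the same holds for $W_{0,v}$. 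By \eqref{eq:support-arch-whittaker}, $h$ ranges only over a $\tau$-ball around $1$, so $|\det h|^{s}$ and $|\det h|^{-1/2}$ are $1+O(\tau)$ there. Thus the zeta integral equals
\[
\Big(\int_{N_{n-1}\bs G_{n-1}}W_{0,v}\ol{W}\Big[\pmat{h&\\&1}\Big]dh\Big)\Big(\int f_v\Big[\pmat{\mathrm{I}_{n-1}&\\c&1}\Big]dc\Big)
\]
plus an error of size $O\big(\tau\cdot\mathcal{S}(W)\cdot\mathcal{S}(W_{0,v})\big)$, where $\mathcal{S}$ denotes a fixed Sobolev norm. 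By \eqref{eq:away-from-zero}, the second factor is $\gg 1$. For the first factor, use the Kirillov model: the map $W\mapsto W|_{G_{n-1}}$ has dense image containing $C_c^\infty(N_{n-1}\bs G_{n-1},\psi)$. Take $W$ whose restriction is $W_{0,v}\left[\pmat{\bullet&\\&1}\right]$ itself, suitably normalized. Then the first factor equals $\|W_{0,v}\|_{L^2}^2$, and this is comparable to the $\sigma$-norm of $W$ via \eqref{eq:def-inner-prod-whittaker}, where the archimedean inner product is exactly this integral. Hence $\langle W_0, W\rangle$-type main term is $\asymp 1$ after normalizing $\|W\|_\sigma=1$.

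\emph{The main obstacle} is uniformity. The circularity that $\tau$ controls both the support of $W_{0,v}$ and the error must be broken. I would do this by noting that the error requires $\mathcal{S}(W)\ll_{C}1$ uniformly over $\sigma$ with $C(\sigma)\le C$. Since $W|_{G_{n-1}}$ is a fixed compactly supported function, its Sobolev norm in $\sigma$ (involving the action of the full $\mathfrak{g}_n$, including the lower unipotent and central directions) is bounded polynomially in $C(\sigma)$. Here $\vartheta<\tfrac12$ ensures that the Kirillov realization and inner products behave uniformly, for instance as in the analysis of \cite[\S3]{JaNu2021reciprocity}, where the $\mathfrak{g}_n$-action on the Kirillov model is given by differential operators whose coefficients depend polynomially on the infinitesimal character. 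Fixing $W_{0,v}$'s profile first (rescaled to a $\tau$-ball) makes the main term scale like $\tau^{\dim}$ in both the main term and the error, but the error retains an extra factor of $\tau$ times derivatives of size $\tau^{-1}$ per derivative. To avoid this loss, I would choose $\Phi_v$'s radius $\tau'$ much smaller than $W_{0,v}$'s radius $\tau$. Only $\Phi_v$'s support enters the $c$-error, so taking $\tau'\ll_{C,\tau}1$ makes the error negligible against the main term. This gives $h_v(\sigma)\ge |Z_v|^2\gg_{C,\pi_{0,v}}1$.
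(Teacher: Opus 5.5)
Your proposal is essentially the paper's proof. Both use positivity with a single unit vector $W$ whose Kirillov restriction is $W_{0,v}\left[\pmat{\bullet&\\&1}\right]$, Bruhat coordinates, truncation of $c$ via \eqref{eq:supp-f-arch}, and a first-order expansion in $c$ (the paper uses the mean value theorem and \cite[Lemma 3.1]{JaNu2021reciprocity}). The main term $\asymp\|W_{0,v}\|^2\int f_v$ then comes from \eqref{eq:support-arch-whittaker}, \eqref{eq:arch-normalization-W} and \eqref{eq:away-from-zero}.

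Your decoupling of the radii is a real sharpening of the paper's ``take $\tau$ small'' step. You fix $W_{0,v}$ first, then shrink the support of $\Phi_v$, using $f_v\ge 0$ so that the error is relative to $\int f_v$. The paper's version glosses over the $\tau$-dependence of the derivatives of $W_{0,v}$, and your fix is allowed by the convention of \S\ref{sec:choices} that $\tau_v$ may change from line to line.

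One correction is needed. Only the terms containing the unperturbed $W\left[\pmat{h&\\&1}\right]=W_{0,v}\left[\pmat{h&\\&1}\right]$ are confined to the $\tau$-ball; the product of the two remainders is not. Bounding its $h$-integral needs Whittaker decay bounds together with $\vartheta<\tfrac12$, which gives absolute convergence against $|\det h|^{-1/2}$. That is where the temperedness hypothesis actually enters, not in the Kirillov realization.
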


\begin{proof}
	We drop $v$ from the subscript in this proof. A more complicated version of this proposition is proved in \cite[Proposition 7.1]{Jana2020RS}, albeit when $F_v=\R$. Here we give a short and simple proof of this weaker statement but for $F_v$ is $\R$ or $\C$.
	
	We start by choosing $\B(\sigma)\ni W$ such that $W\left[\left(\begin{smallmatrix}\bullet&\\&1\end{smallmatrix}\right)\right]=W_{\pi_0}\left[\left(\begin{smallmatrix}\bullet&\\&1\end{smallmatrix}\right)\right]$. Then by positivity and using Bruhat coordinates we write
	\begin{equation*}
		h(\sigma) \ge \left\vert\,\intop_{F^{n-1}}\intop_{N_{n-1}(F)\bs G_{n-1}(F)}W_{\pi_0}\left[\begin{pmatrix}h&\\c&1\end{pmatrix}\right]\overline{W\left[\begin{pmatrix}h&\\c&1\end{pmatrix}\right]} f\left[\begin{pmatrix}\mathrm{I}_{n-1}&\\c&1\end{pmatrix}\right]\,\frac{dh}{|\det h|^{\frac{1}{2}}}\, dc\right\vert^2.
	\end{equation*}
	First of all, from \eqref{eq:supp-f-arch} we restrict the $c$-integral on a $\tau$-radius ball. Using the mean value theorem and \cite[Lemma 3.1]{JaNu2021reciprocity} we obtain
	\begin{equation*}
		W\left[\begin{pmatrix}h&\\c&1\end{pmatrix}\right]=W\left[\begin{pmatrix}h&\\&1\end{pmatrix}\right] + \delta_{B_n}\left[\begin{pmatrix}h&\\&1\end{pmatrix}\right]O_{\sigma}(\tau),
	\end{equation*}
	and a similar statement for $W_{\pi_0}$; \emph{cf}.\ \cite[Lemma 7.2]{Jana2020RS}.
	Working as in \cite[Proof of Proposition 7.1]{Jana2020RS} we write the above integral as
	\begin{equation*}
		\intop_{N_{n-1}(F)\bs G_{n-1}(F)}\left\vert W_{\pi_0}\left[\begin{pmatrix}h&\\&1\end{pmatrix}\right]\right\vert^2\,\frac{dh}{|\det h|^{\frac{1}{2}}}\intop_{F^{n-1}} f\left[\begin{pmatrix}\mathrm{I}_{n-1}&\\c&1\end{pmatrix}\right]\, dc+O_{\sigma,\pi_0}(\tau),
	\end{equation*}
	where the implicit constants in the above $O$-term depend on $\pi_0$ and $\sigma$ only polynomially in their conductors. The choice of $W_{\pi_0}$ as in \eqref{eq:support-arch-whittaker} yields that the above $h$-integral is $\asymp \|W_{\pi_0}\|^2$. Applying the normalization \eqref{eq:arch-normalization-W} and \eqref{eq:away-from-zero}, and making $\tau$ sufficiently small in terms of $\sigma$ and $\pi_0$ we conclude.
\end{proof}

\subsection{Plancherel average of the spectral weight}\label{sec:Planch_Whitt_spec_wt}

In this section we calculate the average value of the spectral weight $H_v$ and $h_v$, as defined in \S\ref{sec:choices}, as follows.

We denote the isomorphism class of irreducible unitary representations of $\bar{G}_n(F_v)$ by $\widehat{\bar{G}_n(F_v)}$ and equip it with a local Plancherel density $d\mu_{\loc}$, so that the Whittaker--Plancherel formula holds. Namely, for $J\in \mathcal{S}\left(N_n(F_v)\bs\bar{G}_n(F_v),\psi_v\right)$ we have
\begin{equation}\label{eq:whittaker-plancherel}
	\int_{N_n(F_v)\bs \bar{G}_n(F_v)}|J|^2 = \int_{\widehat{\bar{G}_n(F_v)}}\sum_{W\in\B'(\sigma)}\left|\int_{N_n(F_v)\bs\bar{G}_n(F_v)} J\cdot\overline{W}\right|^2 \, d\mu_{\loc}\sigma,
\end{equation}
where $\B'$ denotes an orthonormal basis of the Whittaker model of $\sigma$ where the inner product is defined by the un-normalized (compared to \eqref{eq:def-inner-prod-whittaker}) integral over $N_{n-1}(F_v)\bs G_{n-1}(F_v)$; see \cite[p.160]{Delorme2013}. It is known that $d\mu_{\loc}$ is supported on tempered representations.

\begin{lem}\label{lem:average-value-spec-wt}
	For every place $v\mid\q$ we have
	\[\int_{\hat{\bar{G}_n(F_v)}} H_v(\sigma)\, d\mu_{\loc}\sigma \asymp \mathrm{vol}^{-1}\left(K_0\left(\p_v^{r_v}\right)\right),\]
	where the implied constant is absolute.
\end{lem}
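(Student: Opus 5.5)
Since every term in $H_v(\sigma)$ is nonnegative, the integral in question is a Plancherel mass, and we will compute it by the Whittaker--Plancherel formula \eqref{eq:whittaker-plancherel}. Fix $v\mid\q$ and drop $v$ from the notation. For $\sigma$ tempered, $L(\tfrac12,\pi_0\otimes\sigma)$ is bounded above and below by absolute constants (because $\pi_0$ is unramified and $\theta$-tempered with $\theta$ small), so up to an absolute factor we can discard the denominator in \eqref{eq:def-H-sigma}. We must also pass from the normalized inner product \eqref{eq:def-inner-prod-whittaker} to the unnormalized one used in $\B'$. The normalizing factor $\zeta(n)/L(1,\sigma_{\mathrm{ur}}\otimes\tilde\sigma_{\mathrm{ur}})$ is likewise bounded above and below for tempered $\sigma$, so this changes $H(\sigma)$ only by an absolute constant. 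The prefactor $\zeta(1)\zeta(\tfrac n2)^2/\zeta(n)$ is also absolutely bounded.

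The main step is to write the zeta integral as a Whittaker pairing. From the proof of Lemma \ref{lem:spec_wt_ramified},
\[
Z(W_0,\overline W,R(x)f)=\int_{N_n\backslash \bar G_n} J\,\overline W,\qquad J(g):=W_0(g)\,f(gx),
\]
after folding the center; note that $f(\cdot x)$ is left $N_n$-invariant. Because $J$ is smooth and compactly supported modulo $N_nZ_n$, formula \eqref{eq:whittaker-plancherel} applies (if necessary after a truncation argument, or by noting that $f(\cdot x)$ is supported where $|t|=1$). Summing over $W$ and integrating against $d\mu_{\loc}$ then gives
\[
\int H\,d\mu_{\loc}\asymp \vol^{-1}(K_0(\p^r))\int_{N_n\backslash\bar G_n}|W_0(g)|^2|f(gx)|^2\,dg .
\]

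The remaining integral can be evaluated in Iwasawa coordinates $g=\left(\begin{smallmatrix}h&\\&1\end{smallmatrix}\right)k$ using \eqref{eq:test_vector_on_kx}. It becomes
\[
|\det x|\,\vol(K_0(\p^r))\int_{N_{n-1}\backslash G_{n-1}}\Bigl|W_0\Bigl[\begin{pmatrix}h&\\&1\end{pmatrix}\Bigr]\Bigr|^2|\det h|^{-1}\,dh .
\]
To justify this we use right $K$-invariance of $W_0$. We also need $|f(gx)|^2$ to carry the correct $|\det|$-power: $f$ transforms by $|\det h|^{1/2}$, so $|f|^2$ gives $|\det h|^{1}$, and combined with $\delta^{-1}$ from the Iwasawa measure this yields the $|\det h|^{-1}$ above, which is the $s=1$ normalization. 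By \eqref{eq:cauchy-identity} with $s=1$, the $h$-integral equals $(1-N(\p)^{-n})L(1,\pi_0\times\tilde\pi_0)$, and this is $\asymp1$ by the temperedness bound on $\pi_0$. Collecting terms, the factor $\vol(K_0(\p^r))$ coming from the support condition combines with $|\det x|$ and the prefactor $\vol^{-1}(K_0(\p^r))$ in the definition of $H$. Since $\vol^{-1}$ enters the definition of $H$ squared relative to this single volume factor, we have to track carefully how $|\det x|=N(\p)^{r(n-1)}$ balances against $\vol(K_0(\p^r))^{-1}\asymp N(\p)^{r(n-1)}$. The outcome should be $\vol^{-1}(K_0(\p^r))$ up to absolute constants.

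I expect the main obstacle to be this bookkeeping of powers of $|\det x|$ and volumes: pinning down exactly which $|\det|$-exponents appear in $f(gx)$ (including the $\tfrac12+s$ twist at $s=0$ and the central folding), so that the final exponent comes out to precisely $\vol^{-1}(K_0(\p^r))$. A secondary issue is verifying that \eqref{eq:whittaker-plancherel} applies to $J$, that is, that $J$ is Schwartz on $N_n\backslash\bar G_n$, which follows from the support property of $f(\cdot x)$ and the rapid decay of $W_0$.
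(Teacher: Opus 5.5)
Your proposal is correct and follows the paper's proof essentially step for step: use temperedness to strip off the bounded $L$-factors and the inner-product normalization, apply the Whittaker--Plancherel formula \eqref{eq:whittaker-plancherel} to $J=W_0\cdot R(x)f$, and evaluate $\int_{N_n\backslash\bar G_n}|J|^2$ in Iwasawa coordinates via \eqref{eq:test_vector_on_kx} and \eqref{eq:cauchy-identity} at $s=1$. The bookkeeping you flag as the main obstacle closes at once, because $H_v$ carries only a single factor $\vol^{-1}(K_0(\p^r))$ and $|\det x|\,\vol(K_0(\p^r))=N(\p^r)^{n-1}\cdot N(\p^r)^{-(n-1)}\zeta(n)/\zeta(1)=\zeta(n)/\zeta(1)$; two small corrections are that the $h$-integrand should carry $|\det h|^{0}$ rather than $|\det h|^{-1}$ (which is what your $s=1$ evaluation actually uses), and that $J$ is Schwartz because of the decay of $W_0$, not because it is compactly supported modulo $N_nZ_n$.
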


\forlater{Do we want to keep this lemma? Also this is slightly weaker than saying that if we take product over $v\mid \q$ the constant is absolute}

\begin{proof}
	We drop the subscript $v$ in the proof.
	We start by renormalizing the weight function first
	\[H_0(\sigma)\defeq\frac{1}{\zeta(\frac{n}{2})^2}\cdot\frac{L(1,\sigma_{\mathrm{ur}}\otimes \tilde{\sigma}_{\mathrm{ur}})}{L(1,\pi_0\otimes \tilde{\pi}_0)} \lvert L(1/2,\tilde{\sigma}\otimes\pi_0)\rvert^2\cdot H(\sigma).\]
	As $\pi_0$ and $\sigma$ are tempered it follows that 
	\[H(\sigma)\asymp H_0(\sigma)\]
	with the implied constant independent of $\sigma$, $\pi_0$ and $\p^r$.
	We thus have
	\begin{equation*}
		\int_{\widehat{\bar{G}_n(F)}} H_0(\sigma)\,
		d\mu_{\loc}\sigma= \frac{\vol^{-1}(K_0(\p^r))\zeta(1)}{L(1,\pi_0\otimes \tilde{\pi}_0)}
		\int_{\widehat{G_n(F)}} \sum_{W\in\B'(\sigma)}
		\Big\lvert \int_{N_n(F)\bs \bar{G}_n(F)} W_0 \cdot\overline{W}\cdot
		R(x)f\Big \rvert^2 \, d \mu_{\loc}\sigma.
	\end{equation*} 
	By \eqref{eq:whittaker-plancherel} the above integral evaluates to
	\begin{align*}
		\int_{N_n(F)\bs \bar{G}_n(F)} \left\lvert W_0\cdot R(x)f\right\rvert^2
		&=\int_{N_{n-1}(F)\backslash G_{n-1}(F)} \left\lvert W_0\left[\begin{pmatrix}a&\\&1\end{pmatrix}\right]\right\rvert^2\,\frac{da}{\delta_{B_{n-1}}(a)}\int_{K}|f(kx)|^2\,dk\\
		&=\frac{L(1,\pi_0\otimes \tilde{\pi}_0)}{\zeta(n)} N(\p^r)^{n-1}\vol(K_0(\p^r)).
	\end{align*}
	The first equality above follows after using Iwasawa coordinates and sphericality of $W_0$. The second equality follows from \eqref{eq:test_vector_on_kx} and the normalization \eqref{eq:arch-normalization-W}. 
\end{proof}

\subsection{Combining estimates}

We finish this section combining Lemma \ref{lem:abstract-rankin-selberg}, Lemma \ref{lem:spec_wt_ramified}, Lemma \ref{lem:spec_wt_arch}, and Lemma \ref{lem:average-value-spec-wt}, which yields the following proposition.

\begin{prop}\label{prop:spec_exp_second_moment}
	Let $(C_v)_{v\mid \infty}$ be a collection of fixed real numbers satisfying $C_v\geq 1$ and $\q$, $x$, and $\pi_0\ni\phi_0$ be as in \S\ref{sec:choices}. Then there exists a spectral weight function $\H$ defined on the generic automorphic representations for $\bar{G}_n(F)$, factorizing as
	\begin{equation*}
		\H(\sigma)=\prod_{v<\infty}H_v(\sigma_v)\prod_{v\mid\infty}h_v(\sigma_v),\quad \sigma=\otimes'_v\sigma_v,
	\end{equation*}
	such that we have the spectral expansion of the period
	\begin{multline*}
		\mathrm{vol}^{-1}(K_0(\q))\frac{\zeta_\q(1)\zeta_\q(\frac{n}{2})^2}{\zeta_\q(n)}\int_{[\bar{G}_{n}]}|\phi_0\cdot R(x)\Eis(f)|^2 = \\ \Delta^{-\mu_1}\sum_{P}n_P^{-1}\sum_{\pi\in\Pi_{\mathrm{c}}(\bar{M})}\intop_{i\a^\ast_P/i\a^\ast_G}\frac{|L\left(\frac{1}{2},\mathcal{I}(\pi,\lambda)\otimes\tilde{\pi}_0\right)|^2}{\ell(\mathcal{I}(\pi,\lambda))}\mathcal{H}\left(\mathcal{I}(\pi,\lambda)\right)\, d\lambda,
	\end{multline*}
	where $\mu_1$ is a constant depending only on $n$ and $\H$ satisfies the following properties:
	\begin{itemize}
		\item If $v<\infty$ and $c(\sigma_v)>r_v$ then $H_v(\sigma_v)=0$;
		\item If $v<\infty$ and $c(\sigma_v)\le r_v$ then $H_v(\sigma_v)\geq 1$, with equality when $r_v=0$; 
		\item If $v\mid\infty$ and $C(\sigma_v)\leq C_v$ then $h_v(\sigma_v) \gg_{C_v} 1$;
		\item $\int_{\widehat{\bar{G}_n(F_v)}}H_v(\sigma_v)\, d\mu_{\loc}\sigma_v\asymp \mathrm{vol}^{-1}\left(K_0\left(\p_v^{r_v}\right)\right)$;
	\end{itemize}
	as $N(\q)\to\infty$. 
\end{prop}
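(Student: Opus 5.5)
The proposition is essentially an assembly of the four preceding lemmas, so the plan is to define $\H$ explicitly and then check each bullet point against one of them. For $v\mid\q$ I will take $H_v$ as in \eqref{eq:def-H-sigma} and for $v\mid\infty$ I will take $h_v$ as in \eqref{eq:def-h-sigma}. For the finite places $v\nmid\q$ (where $r_v=0$) I will set $H_v\equiv1$ on representations with $c(\sigma_v)=0$ and $H_v=0$ otherwise. With this definition the spectral identity is exactly Lemma \ref{lem:abstract-rankin-selberg}, once the generic spectrum is restricted to representations unramified outside $\q$. That restriction is automatic: for $v\nmid\q$ finite, both $\phi_0$ and $R(x)\Eis(f)$ are $K_{n,v}$-invariant, because $f_v$ is spherical there and $x_v=1$. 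Consequently $\langle\phi_0\cdot R(x)\Eis(f),\Eis(\varphi,\lambda)\rangle$ vanishes unless $\mathcal{I}(\pi_v,\lambda)$ is unramified. For unramified $\sigma_v$ the unramified Rankin--Selberg computation normalizes the local factor to $1$, which is already absorbed in the $L$-function and in $\ell$. So the extended $H_v$ equals $1$ there, and in particular equality holds when $r_v=0$.

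Next I verify the bullets. The first two, for $v\mid\q$, are Lemma \ref{lem:spec_wt_ramified}: $H_v$ vanishes when $c(\sigma_v)>r_v$ and is $\ge1$ when $c(\sigma_v)\le r_v$. For $v\nmid\q$ they hold by the definition above. The fourth bullet is Lemma \ref{lem:average-value-spec-wt} for $v\mid\q$. For $v\nmid\q$ finite, the Plancherel mass of the unramified tempered spectrum is $\vol(K_{n,v})^{-1}=1=\vol^{-1}(K_0(\p_v^0))$, so it holds there as well.

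The third bullet is where some care is needed, and I expect it to be the main obstacle. Lemma \ref{lem:spec_wt_arch} requires $\vartheta$-temperedness with $\vartheta<\tfrac12$, and it also lets the support parameter $\tau_v$ depend on $C_v$. For the first point, every local component $\mathcal{I}(\pi_v,\lambda)$ of a generic unitary automorphic representation of $\PGL_n$ with $\lambda$ imaginary is $\vartheta$-tempered with $\vartheta\le\tfrac12-\tfrac1{n^2+1}<\tfrac12$, by the Luo--Rudnick--Sarnak bounds. For the second point, I fix $C_v$ first and then choose $\tau_v$ as in Lemma \ref{lem:spec_wt_arch}. This is consistent with \S\ref{sec:choices}, where $\tau_v$ is arbitrarily small but fixed, so $\H$ depends implicitly on $(C_v)_v$ and on $\pi_0$. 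The resulting bound $h_v(\sigma_v)\gg_{C_v}1$ must be uniform over the bounded-conductor set. I plan to get this from the remark in the proof of Lemma \ref{lem:spec_wt_arch} that the error term $O_{\sigma,\pi_0}(\tau)$ depends only polynomially on $C(\sigma)$, which is at most $C_v$ here. Finally, since all these statements are local and $\q$ enters only through $r_v$ and $x_v$, the bounds hold uniformly as $N(\q)\to\infty$, and this completes the assembly.
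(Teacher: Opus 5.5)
Your proposal is correct and is the paper's own argument: the proposition is obtained by combining Lemmas \ref{lem:abstract-rankin-selberg}, \ref{lem:spec_wt_ramified}, \ref{lem:spec_wt_arch} and \ref{lem:average-value-spec-wt}. Your extra bookkeeping (setting $H_v=1$ on unramified and $H_v=0$ on ramified $\sigma_v$ for finite $v\nmid\q$, invoking Luo--Rudnick--Sarnak to meet the $\vartheta<\tfrac12$ hypothesis of Lemma \ref{lem:spec_wt_arch}, and fixing $\tau_v$ in terms of $C_v$) only makes explicit what the paper leaves implicit, with one small inaccuracy: for finite $v\nmid\q$ the unramified mass of the Whittaker--Plancherel density $d\mu_{\loc}$ need not be exactly $\vol(K_{n,v})^{-1}=1$, since it is normalized through the Whittaker inner product rather than as the group Plancherel measure, but it is still $\asymp 1$, which is all the fourth bullet needs (and the paper only justifies that bullet for $v\mid\q$).
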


\section{Explicit Computations of the Degenerate Terms}\label{sec:degenerate_calculations}

Recall the choices of the cuspidal representation $\pi_0\ni\phi_0$ and the induced vectors $f,f^0$ from \S\ref{sec:choices}. We apply Theorem \ref{thm:spectral-expansion} to spectrally expand the period integral
\begin{equation*}
	\int_{[\bar{G}_{n}]}|\phi_0|^2\cdot\overline{R(x)\Eis(f)}\cdot R(x)\Eis(f_s),
\end{equation*}
for $\Re(s)$ sufficiently small, as
\begin{align*}
	M_{00}(s)+M_{10}(s) +M_{01}(s)+M_{11}(s)+I_2^{\mathrm{D}}(s)+I_2^{\mathrm{C}}(s)
\end{align*}    
where 
\begin{align*}
	M_{00}(s)&=\IP{|\phi_0|^2,\Eis\left(R(x)f\cdot\ol{R(x)f_s}\right)},\\
	M_{01}(s)&=\IP{|\phi_0|^2,\Eis\left(R(x)f\cdot\ol{\M (R(x)f_s)}\right)},\\
	M_{10}(s)&=\IP{|\phi_0|^2,\Eis\left(\M(R(x)f)\cdot\ol{R(x)f_s}\right)},\\
	M_{11}(s)&=\IP{|\phi_0|^2,\Eis\left(\M(R(x)f)\cdot\ol{\M (R(x)f_s)}\right)}
\end{align*}
and $I_2^{\mathrm{D}}(s)$ and $I_2^{\mathrm{C}}(s)$ are defined using Proposition \ref{prop:cont-I2D} and Proposition \ref{prop:cont-I2C} (and Remark \ref{rmk:abuse-def-I2C}), respectively. 
In this section, we analyze the degenerate terms $M_{ij}(s)$ when $s\neq 0$. For that we define local factors $h_{ij,v}$ for each place $v\mid \q$ as follows:
\begin{align*}
	h_{00,v}(s)&=N(\p_v^{r_v})^{(n-1)s}\frac{\zeta_v(n)}{\zeta_v(1)\zeta_v(\tfrac n2)\zeta_v\left(\tfrac n2+ns\right)},\\
	h_{01,v}(s)&=N(\p_v^{r_v})^{-\frac{n-2}{2}-s}\frac{\zeta_v(n)}{\zeta_v^2(1)\zeta_v(\tfrac n2)},\\
	h_{10,v}(s)&=N(\p_v^{r_v})^{-\frac{n-2}{2}+(n-1)s}\frac{\zeta_v(n)}{\zeta_v^2(1)\zeta_v(\tfrac n2+ns)},\\
	h_{11,v}(s)&=N(\p_v^{r_v})^{-s} 
	\frac{\zeta_v(n)\zeta_v(n-1-ns)}{\zeta_v(1)\zeta_v(n-1)\zeta_v(\frac n2)\zeta_v(\tfrac n2-ns)}\\ 
	&\times\left(1-N(\p_v)^{-1-r_v(n-1-ns)}\frac{
		\zeta_v(n-1)\zeta_v(\tfrac n2)\zeta_v(\frac n2-ns)}
	{ \zeta_v(\frac{n-2}{2}) \zeta_v(ns)\zeta_v(\tfrac {n-2}2-ns)}\right).
\end{align*}
Finally, we define $M^0_{ij}(s)$ by the above inner products which equal $M_{ij}(s)$ but replacing $R(x)f_s$ by $f^0_s$. For example, $M^0_{11}(s):=\IP{|\phi_0|^2,\Eis\left(\M f^0\cdot\ol{\M f^0_s}\right)}$.
%we recall from \S \ref{sec:choices} that we denote $f^0_s$ for the vector $f_{\Phi_0,s}$, where $\Phi_0=\prod_v \Phi_{0,v}$ satisfies that $\Phi_{0,v}$ is spherical and normalized at all finite places and equal to $\Phi_v$ at archimedean places.  With that we define inner products
%\begin{align*}
%	M^0_{00}(s)&=\IP{|\phi_0|^2,\Eis(f^0 \ol{f^0_s})},\\
%	M^0_{01}(s)&=\IP{|\phi_0|^2,\Eis(f^0 \ol{\M f^0_s}))},\\
%	M^0_{10}(s)&=\IP{|\phi_0|^2,\Eis(\M f^0\ol{ f^0_s})},\\
%	M^0_{11}(s)&=\IP{|\phi_0|^2,\Eis(\M f^0 \ol{\M f^0_s})}.
%\end{align*}
%It follows from the proof of Proposition \ref{prop:reg-main-term} and Proposition \ref{prop:reg-second-main-term} that $M_{ij}(s)$ define meromorphic functions on $\C$ with at most simple poles at $s=0$ such that $M_{00}+M_{11}$ and $M_{01}+M_{10}$ are regular at $s=0$. A similar statement holds for $M_{ij}^0(s)$. \sj{Is this paragraph needed here?} \jd{I guess we just wanted to be clear that as long as $s\neq 0$ we're good until the end of the section}
Our main result of this section is the following relations between $M$, $M^0$, $h$.

\begin{prop}\label{prop:main_terms_calcs}
	We have the following explicit expressions for our four degenerate terms:
	\begin{equation*}
		M_{ij}(s)=M^0_{ij}(s) h_{ij,\q}(s),\quad\text{for } i,j\in\{0,1\},
	\end{equation*}
	for $s$ lying at a general position.
\end{prop}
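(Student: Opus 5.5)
Our plan is to reduce each identity to a purely local statement at the places $v\mid\q$. For every $i,j$, the section inside the Eisenstein series defining $M_{ij}(s)$ is a product of two sections, each of which is $f$, $f_s$ or their $\M$-images translated by $x$. Every such product is factorizable. At all places $v\nmid\q$ it coincides with the corresponding product built from $f^0,f^0_s$, because there $x_v=1$ and $\Phi_v=\Phi_{0,v}$. So it suffices to compare, at each $v\mid\q$, the local product section built from $R(x_v)f_v$ with the one built from $f^0_v$.

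The key observation is that $M^0_{ij}$ and $M_{ij}$ are pairings of $|\phi_0|^2$ against Eisenstein series induced from the same degenerate representation. Since $\phi_0$ is spherical at every finite place, only the $K_{n,v}$-average of the local section matters. Concretely, $\langle|\phi_0|^2,\Eis(\beta)\rangle=\langle|\phi_0|^2,\Eis(\beta^{K})\rangle$, where $\beta^K$ is the projection to $K_{n,v}$-invariants at each finite $v$. This holds by unfolding for $\Re(s)$ large and then for general $s$ by meromorphic continuation. The relevant induced representations are $\1_{n-1}|\det|^{\cdot}\boxplus\1_1|\cdot|^{\cdot}$ for $M_{00}$ and the $\M$-twisted analogues $\1_1\boxplus\1_{n-2}\boxplus\1_1$ or $\1_1\boxplus\1_{n-1}$ for the others. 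Each has a one-dimensional space of spherical vectors, whose generator is normalized by its value at $1$. Hence $\beta_v^{K}=c_{ij,v}(s)\,(\beta^0_v)$ with
\[
c_{ij,v}(s)=\frac{\int_{K_{n,v}}\beta_v(k)\,dk}{\beta^0_v(1)},
\]
and it remains to show that $c_{ij,v}(s)=h_{ij,v}(s)$.

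For $M_{00}$ the computation is immediate from \eqref{eq:test_vector_on_kx}. The integral $\int_K \overline{f_v}f_{s,v}(kx_v)\,dk$ equals $|\det x_v|^{1+s}\vol(K_0(\p_v^{r_v}))$, and $|\det x_v|^{1+s}=N(\p_v^{r_v})^{(n-1)(1+s)}$. The spherical normalization $f^0_{s,v}(1)=\zeta_v(\tfrac n2+ns)$ supplies the zeta factors, and the identity $\vol(K_0(\p^r))=N(\p^r)^{-(n-1)}\zeta(n)/\zeta(1)$ then gives $h_{00,v}$.

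For the three terms involving $\M$, we first compute the local intertwined vector $\M_v(R(x_v)f_{s,v})$ explicitly. We write $\M$ as an integral over $F_v^{n-1}$, as in the proof of Lemma~\ref{lem:local-period-equality-M11}, and work in the Schwartz-function model. In that model the intertwiner is essentially a partial Fourier transform of $\Phi_v$ in the first $n-1$ coordinates, via Godement--Jacquet or Tate. With the choice $\Phi_v=\1_{\o_v}^{\otimes(n-1)}\otimes\1_{\o_v^\times}$ translated by $x_v$, this transform is again an indicator function, scaled by $N(\p_v^{r_v})$-powers.

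We then integrate the product with the other factor over $K_{n,v}$. We plan to do this by decomposing $K_{n,v}$ according to the valuations of the bottom row and the first column. This yields finite geometric sums, and those sums produce the factor in parentheses in $h_{11,v}$. That $K$-integral for $M_{11}$, with both factors intertwined, is the step we expect to be the main obstacle; we would attack it by the same Plancherel trick used in Lemma~\ref{lem:local-period-equality-M11}, which converts it into an integral over $F_v^\times$ of $\GL_2$ Whittaker functions. Finally, we divide by the corresponding unramified values, namely the Gindikin--Karpelevich factors $\zeta_v(n-1-ns)/\zeta_v(ns)$ and the like, computed from $f^0$. Both sides are meromorphic in $s$, so the resulting identity holds for $s$ in general position.
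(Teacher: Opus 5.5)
Your proposal is correct and follows essentially the paper's route. The $K$-averaging argument is a repackaging of the paper's unfolding and factorization of the $K_n$-integral over the places $v\mid\q$, and the cases $(0,0)$, $(1,0)$, $(0,1)$ are the same explicit computations via \eqref{eq:test_vector_on_kx} and the Fourier-transform formula for $\M_v f_v$. For $M_{11}$ the paper likewise uses Lemma~\ref{lem:local-period-equality-M11}, first inserting a spherical $\M\varphi_\lambda$ (constant on $K_{n,v}$) and using unitarity of the intertwiner, to reach a $\GL_2$ Whittaker integral over $F_v^\times$, which it evaluates with Shintani's formula; your alternative of computing that $K_{n,v}$-integral directly, by sorting $k$ according to the valuations of the row of $k^{-\top}$ entering $\widehat{\Phi}_v$, also works and reproduces $h_{11,v}(s)$ exactly.
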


\subsection{Analysis of $M_{00}(s)$, $M_{01}(s)$ and $M_{10}(s)$.}\label{ssec:local-calcs-M00-M01-M10}

In this subsection, we prove Proposition \ref{prop:main_terms_calcs} for $(i,j)=(1,0)$; while the proofs of the cases $(i,j)=(0,0)$ and $(i,j)=(0,1)$ are completely analogous.

Fix $s$ with sufficiently large $\Re(s)$. Let $\Q:=\Q_1\cap\Q_{n-1}$, as before. We unfold $M_{10}(s)$ and use Iwasawa coordinates to write
\begin{align*}
	M_{10}(s)
	%&=\IP{|\phi_0|^2,\Eis(\ol{\M_v(R(x)f)}R(x)f_s)}\\
	&=\int_{\left[Z_n\backslash M_\Q\right]\times K_n}|\phi_0^2|_\Q(mk) \overline{\M f}(mkx)f_s(mkx)\,\frac{dm\,dk}{\delta_{\tilde\Q}(m)}\\
	&=\int_{\left[Z_n\backslash M_\Q\right]\times K_n}|\phi_0^2|_\Q(mk)\frac{\delta^{1/2}_{\Q_{1}}(m)\delta^{1/2+s}_{\Q_{n-1}}(m)}{\delta_{\tilde\Q}(m)}\ol{\M f}(kx)f_s(kx)\, dm\, dk.
\end{align*}
Recall the choice of $f^0$ from \S\ref{sec:choices}. Note that for all $v\nmid\q$ we have $f_s=f^0_s$ and thus for sufficiently large $\Re(s)$ we have
\begin{equation}\label{eq:factorization-f-f0}
	f_s = f_{s}^{0,\q}f_{s,\q}\quad\text{and}\quad\M f_s = \M^\q f_{s}^{0,\q}\cdot \M_\q f_{s,\q},
\end{equation}
both of which now hold for $s$ in general position by virtue of meromorphicity.

Since $\phi_0$ is spherical at every $v\mid\q$, factorizing the above $K_n$-integral over $K_{n,v}$ integrals we obtain
\begin{multline}\label{eq:M10-before-local-comp}
	M_{10}(s)=\int_{\left[Z_n\backslash M_\Q\right]\times K^\q_{n}}|\phi_0^2|_{\tilde\Q}(mk)\frac{\delta^{1/2}_{\Q_{1}}(m)\delta^{1/2+s}_{\Q_{n-1}}(m)}{\delta_{\tilde\Q}(m)}\ol{\M^\q f^{0,\q}}(k)f^{0,\q}_s(k)\, dm\, dk\\
	\times \int_{K_{n,\q}}\ol{\M_\q f_\q}(k x_\q)f_{s,\q}(kx_\q)\, dk.
\end{multline}
%Clearly, for $v\nmid \q$ we have 
%\[\int_{K_{n,v}}\ol{\M_v f_{v}}(k_vx_v)f_{s,v}(k_vx_v)\, dk_v=\int_{K_{n,v}}\ol{ \M_v f_{v}^0}(k_v) f^0_{s,v}(k_v)\, dk_v.\]
By \eqref{eq:test_vector_on_kx}, for every $v\mid \q$ we have
\[\int_{K_{n,v}}\ol{\M_v f_{v}}(k_vx_v) f_{s,v}(k_vx_v)\, dk_v=\lvert \det x_v\rvert^{1/2+s}\int_{K_0(\p_v^{r_v})}\ol{\M_v f_{v}}(k_vx_v)\, dk_v.\]
Note that the choice of $f_v$ as in \S\ref{sec:choices} ensures that $g\mapsto f_{s,v}(gx_v)$ is right $K_0(\p_v^{r_v})$-invariant, and consequently, so is $g\mapsto \M_v f_{s,v}(gx_v)$. Moreover, by \cite[eq.(3.5)]{Jana2020RS}, we can write
\begin{align*}
	\M_v f_v(g)= \int_{F_v^\times}\widehat{\Phi}_v(te_nwg^{-\top}) \lvert t\rvert^{\frac{n}{2}} \,d^\times t.
\end{align*}
Recalling our choices in \S \ref{sec:choices}, we calculate
$$
\hat\Phi(x_1,...,x_n)=\mathbbm{1}_{\o_v}(x_1)...\mathbbm{1}_{\o_v}(x_{n-1})(\,\!\mathbbm{1}_{\o_v}(x_n)-|\varpi_v|_v^{-1}\mathbbm{1}_{\o_v}(\varpi_v x_n)\,\!).
$$
In particular, we get
\begin{align*}
	\M_v f_v(1)= \zeta_v(1)^{-1}\int_{F_v^\times}\mathbbm{1}_{\o_v}(t) \lvert t\rvert^{\frac{n}{2}} \,d^\times t=\frac{\zeta_v(\tfrac n2)}{\zeta_v(1)}
\end{align*}
and, for $k_v\in K_0(\p_v^{r_v})$, one deduces that
\[
\M_v f_{v}(k_vx_v)=\M_v f_{v}(x_v)=N(\p_v^{r_v})^{1/2}\,
\M_v f_{v}(1)=N(\p_v^{r_v})^{1/2}\frac{\zeta_v(\tfrac n2)}{\zeta_v(1)}.
\]
Therefore,
\[\int_{K_{n,v}}\ol{\M_v f_{v}}(k_vx_v)f_{s,v}(k_vx_v)\, dk_v=N(\p_v^{r_v})^{-\frac{n-2}2+(n-1)s}\frac{\zeta_v(n)\zeta_v(\tfrac n2)}{\zeta_v(1)^2}.\]
One also has the unramified computation
\[\int_{K_{n,v}}\ol{\M_v f^0_{v}}(k_v) f^0_{s,v}(k_v)\, dk_v=\ol{\M_v f^0_{v}}(1) f^0_{s,v}(1)=\zeta_v(\tfrac n2)\zeta_v(\tfrac n2+ns),\]
where, similarly we get $\M_v f^0_{v}(1)=f^0_{v}(1)=\zeta_v(\tfrac n2)$.
Consequently, we deduce that
\[
\int_{K_{n,v}}\ol{ \M_v f_{v}}(k_vx_v) f_{s,v}(k_vx_v)\, dk_v=h_{10,v}(s)\int_{K_{n,v}}\ol{ \M_v f_{v}^0}(k_v) f^0_{s,v}(k_v)\, dk_v.
\]
Applying this to \eqref{eq:M10-before-local-comp} and folding the integral back, we arrive at
\[
M_{10}(s)=M^0_{10}(s) h_{10,\q}(s).
\]
Since both sides of the equality above are meromorphic in $s$ we conclude the proof. 

\subsection{Analysis of $M_{11}(s)$.}\label{ssec:local-calcs-M11}

In this section, we prove Proposition \ref{prop:main_terms_calcs} for $(i,j)=(1,1)$. The argument follows the same lines as those of the previous cases. The only difference is that the ramified computation is more complicated.

For $\Re(s)$ sufficiently negative, we start again by unfolding the integral $M_{11}(s)$. Performing a similar maneuver as in \eqref{eq:M10-before-local-comp} with the factorization of the integral using \eqref{eq:factorization-f-f0} we obtain
\begin{multline*}
	M_{11}(s)=\int_{\left[Z_n\backslash M_{\Q_1}\right]\times K^\q_{n}}|\phi_0^2|_{\Q_1}(mk)\delta^{-s}_{\Q_1}(m)\ol{\M^\q f^{0,\q}}(k)\M^\q f^{0,\q}_s(k)\, dm\, dk\\
	\times \int_{K_{n,\q}}\ol{\M_\q f_\q}(k x)\M^\q f_{s,\q}(kx_\q)\, dk.
\end{multline*}
As before, thus it suffices to show that
\begin{equation}\label{eq:local-calc-of-M_11}
	\int_{K_{n,v}} \overline{\M f_v}(kx_v)\, \M f_{s,v}(kx_v)\, dk
	=h_{11,v}(s)\int_{K_{n,v}} \overline{\M f^0_v}(k)\, \M f^0_{s,v}(k)\, dk,
\end{equation}
for every $v\mid \q$ and for sufficiently large $\Re(s)$.

We fix a place $v\mid \q$ until the end of the section and drop any subscript. Let $\varphi\in \1_{n-1}\boxplus \1_1$ be a spherical vector such that $\M\varphi_\lambda(1)\neq 0$, where $\lambda=\lambda_{\mathrm{E}}(\tfrac12-s)$ as defined in \eqref{eq:def-lambda-E}. For a moment assume $\Re(s)=0$. Using the fact that $\tilde{\M}$ is unitary on $\1_{n-1}\cdot|\cdot|^s\boxplus|\cdot|^{-(n-1)s}$ we obtain from Lemma \ref{lem:local-period-equality-M11} that there exist an $s$-dependent scalar $c_s^1\neq0$ such that
\begin{align*}
	\int_{K_n} \overline{\M R(x)f}\cdot \M R(x)f_{s}&=\frac{1}{\M \varphi_{\lambda}(1)}\int_{K_n}\M \varphi_{\lambda} \cdot\overline{\M R(x)f}\cdot \M R(x)f_{s}\\
	&=\frac{c_s^1}{\M \varphi_{\lambda}(1)}\int_{K_n}\M_v^\vee\left(\M \varphi_{\lambda}\cdot\overline{\M R(x)f}\right)\cdot R(x)f_{s},
\end{align*}
which by meromorphic continuation holds for all $s\in \C$. Therefore Lemma \ref{lem:local-period-equality-M11} yields that there is an $s$-dependent scalar $c_s^2\neq 0$ such that
\begin{multline*}
	\int_{K_n} \overline{\M R(x)f}\cdot \M R(x)f_{s}\\
	=c_s^2\intop_{K_n}\intop_{Z_2(F)N_2(F)\bs G_2(F)}\W(\varphi_\lambda)\cdot\overline{\mathcal{W}(R(x)f)}\cdot R(x)f_s\left[\begin{pmatrix}\mathrm{I}_{n-2}&\\&h\end{pmatrix}k\right]
	|\det h|^{n-2}\, dh\, dk
\end{multline*}
where the last equality follows from \eqref{eq:main-RS-period-local}.
%after folding over $Z_2(F)$. 
Similarly, we also obtain
\begin{multline*}
	\int_{K_n} \overline{\M f^0}(k)\, \M f^0_{s}(k)\, dk\\=c_s^2\intop_{K_n}\intop_{Z_2(F)N_2(F)\bs G_2(F)}\W(\varphi_\lambda)\cdot\overline{\mathcal{W}(f^0)}\cdot f_s^0\left[\begin{pmatrix}\mathrm{I}_{n-2}&\\&h\end{pmatrix}k\right]
	|\det h|^{n-2}\, dh\, dk
\end{multline*}
with the same $s$-dependent constant, $c_s^2$, as before. Thus to prove \eqref{eq:local-calc-of-M_11} it suffices to show that
\begin{multline}\label{eq:equivalent-form-of-lemma-M11}
	\intop_{K_n}\intop_{Z_2(F)N_2(F)\bs {G}_2(F)}\W(\varphi_\lambda)\cdot\overline{\mathcal{W}(R(x)f)}\cdot R(x)f_s\left[\begin{pmatrix}\mathrm{I}_{n-2}&\\&h\end{pmatrix}k\right]
	|\det h|^{n-2}\, dh\, dk\\
	=h_{11}(s)\intop_{K_n}\intop_{Z_2(F)N_2(F)\bs G_2(F)}\W(\varphi_\lambda)\cdot\overline{\mathcal{W}(f^0)}\cdot f^0_s\left[\begin{pmatrix}\mathrm{I}_{n-2}&\\&h\end{pmatrix}k\right]
	|\det h|^{n-2}\, dh\, dk.
\end{multline}
We start with the integral on the right-hand side. First, using uniqueness of the spherical vectors we compute
\begin{equation}\label{eq:GL2-realization}
	\W(\varphi_\lambda)=\zeta(n-ns)^{-1}W_{\pi_s},\quad\W(f^0)=W_{\pi_{1/2}},\quad\Phi^0(e_n\bullet)=\mathbbm{1}_{\o^2}(e_2\cdot),
\end{equation}
when evaluated on $\left(\begin{smallmatrix}\mathrm{I}_{n-2}\\&\bullet\end{smallmatrix}\right)$, for $\pi_s:=|.|^{\frac{1}{2}-s}\boxplus|.|^{(n-1)s-n+\frac{3}{2}}$.
%Using Iwasawa coordinates and dropping the $K_n$-integral using sphericality we write it as
%\begin{equation*}
%	\int_{N_2(F)\backslash G_2(F)}\W(\varphi_{\lambda})\cdot\ol{\W(f^0)}\cdot
%	\Phi^0(e_n\cdot)\left[\pmat{\mathrm{I}_{n-2}&\\&h}\right]|\det h|^{n-3/2+s}\, dh.
%\end{equation*}
%We now reinterpret the partial Whittaker functions $\W(\varphi_\lambda)$ and $\W(f)$ in terms of genuine Whittaker functions of ``smaller'' representations. On one hand, it is not difficult to see that the map $h\mapsto \W(\varphi_\lambda)\pmat{\mathrm{I}_{n-2}\\&h}$ defines an element of the Whittaker model of the representation $\pi_\lambda=|.|^{1/2-s}\boxplus|.|^{(n-1)s-n+3/2}$. Since $\varphi$ is spherical, a direct computation for $\Re(s)$ sufficiently negative and analytic continuation for $s\in \C$ yields
%$\W(\varphi_\lambda)(1)=\zeta(n-ns)^{-1}$. Hence we infer that
%\[\W(\varphi_\lambda)\left[\pmat{\mathrm{I}_{n-2}\\&h}\right]=\zeta(n-ns)^{-1}W_{\pi_s}(h),\]
%where $W_{\pi_s}$ denotes the normalized spherical vector in the Whittaker model of $\pi_\lambda$. Similarly, one obtains
%\[\W(f)\left[\pmat{\mathrm{I}_{n-2}\\&h}\right]=W_{\pi}(h),\]
%where $W_\pi$ is the the normalized spherical in the Whittaker model of $\pi=1\boxplus |\cdot|^{-\frac{n-2}2}$. The reason for the slight different formula is the normalizing factor involved in passing from the Schwarz-function model to the induced model. Finally, we observe that
%\[\Phi^0\left[e_n\begin{pmatrix}	1_{n-2}\\&h\end{pmatrix}\right]=\mathbf 1_{\o^2}(e_2h),\]
Thus by the classical unramified $\GL(2)\times\GL(2)$ computation the right hand side of \eqref{eq:equivalent-form-of-lemma-M11} equals
\begin{equation}\label{eq:unramified-M11-zeta-version}
	%\int_{\P_{n-2}(F)\bs \bar{G}_n(F)}\W(\varphi_{\lambda})
	%\overline{\mathcal{W}(f^0)} f^0_{s}=\frac{1}{\zeta(n-ns)}L(n-3/2+s,\pi_\lambda\times \overline{\pi})\notag\\
	h_{11}(s)\frac{\zeta(n-1)\zeta(\tfrac n2)\zeta(ns)\zeta(ns-\frac{n-2}{2})}{\zeta(n-ns)};
\end{equation}
\emph{cf}.\ the relevant computation in \S\ref{sec:unramified-local-choice}.

We now focus on the integral on the left-hand side of \eqref{eq:equivalent-form-of-lemma-M11}.
Using the Iwasawa coordinates and changing variable in the $k$-integral we write this as
\begin{equation*}
	%\int_{\P_{n-2}(F)\bs \bar{G}_n(F)}\W(\varphi_{\lambda})
	%\overline{\mathcal{W}(R(x)f)} R(x)f_{s}=|\det x|^{\frac 12+s}\\
	\intop_{K_n}\intop_{F^\times}\W(\varphi_\lambda)\cdot\ol{\W(R(x)f)}\cdot R(x)f_s
	\left[\pmat{\mathrm{I}_{n-2}\\&y\\&&1}k\right]|y|^{n-3}
	\,d^\times y\, dk.
\end{equation*}
Using \eqref{eq:test_vector_on_kx}, and right $K_0(\p^r)$-invariance of $\varphi_\lambda$ and $\W(R(x)f)$ we rewrite the above as
%Now we see that
%$$
%\Phi\left[e_n\pmat{\mathrm{I}_{n-2}\\&zy\\&&z}kx\right]=\mathbbm{1}_{\o^\times}(z)\mathbbm{1}_{K_0(\p^r)}(k)
%$$
%and by the same argument as in previous section we see that for $k\in K_0(\p^r)$, $x^{-1}kx\in K_0(\p^0)$ and hence $\W(f)$ is right $x^{-1}kx$-invariant. Therefore we see that
\begin{multline}\label{eq:intWWf-equal-vol-sth-intWW}
	%\int_{\P_{n-2}(F)\bs \bar{G}_n(F)}\W(\varphi_{\lambda})\overline{\mathcal{W}(f)}f_{s}=
	\vol(K_0(\p^r))N(\p^r)^{n-\frac32+(n-1)s}\\
	\times\int_{F^\times}\W(\varphi_\lambda)\left[\pmat{\mathrm{I}_{n-2}\\&y\\&&1}\right]
	\ol{\W(f)}\left[\pmat{\mathrm{I}_{n-2}\\&y\varpi^{-r}\\&&1}\right]
	|y|^{n-5/2+s} \, d^\times y.
\end{multline}
%As in the spherical case our strategy relies on reduction to analyzing a $\GL(2)\times \GL(2)$-integral. We already know that
%\[\W(\varphi_\lambda)\left[\pmat{\mathrm{I}_{n-2}\\&h}\right]=\zeta(n-ns)^{-1}W_{\pi_s}(h),\]
%where $W_{\pi_s}$ is the normalized spherical vector in the Whittaker model of $\pi_\lambda$. We now need a similar formula  $\W_f$, but this is a little more complicated since $f$ is not spherical. \
Now, recalling the definition of $\Phi$ from \S\ref{sec:choices}
%\[\Phi(u_1,\dots,u_n)=\mathbbm{1}_{\o}(u_1)\cdots\mathbbm{1}_{\o}(u_{n-1})\mathbbm{1}_{\!\!\\o^\times}(u_n)\]
and writing $\mathbbm{1}_{\o^\times}(u)=\mathbbm{1}_{\o}(u)-\mathbbm{1}_{\o}(u\varpi^{-1})$ we write
\[
\Phi(e_n g)=\Phi^0\left(e_n g\right)-\Phi^0\left[e_n g\pmat{\mathrm{I}_{n-1}\\&\varpi^{-1}}\right],
\]
for $g\in G_n(F)$. Consequently, from \eqref{eq:GL2-realization} it follows that
\[
\W(f)\left[\pmat{\mathrm{I}_{n-2}\\&h}\right]=W_{\pi_{1/2}}(h)-N(\p)^{-1/2}W_{\pi_{1/2}}\left[ h\pmat{1\\&\varpi^{-1}}\right],
\]
for $h\in G_2(F)$. In particular, we have
\begin{equation*}
	\W(f)\left[\pmat{\mathrm{I}_{n-2}\\&y\varpi^{-r}\\&&1}\right]
	%&=W_\pi\left[\pmat{y\varpi^{-r}\\&1}\right]-N(\p)^{-1/2}W_{\pi_{1/2}}\left[\pmat{y\varpi^{-r}\\&1/\varpi}\right]
	=W_{\pi_{1/2}}\left[\pmat{y\varpi^{-r}\\&1}\right] -N(\p)^{-\frac {n-1}2}W_{\pi_{1/2}}\left[\pmat{y\varpi^{-r+1}\\&1}\right].
\end{equation*} 
Thus using \eqref{eq:GL2-realization} we write the integral in \eqref{eq:intWWf-equal-vol-sth-intWW} as
\begin{equation}\label{eq:intWW-equal-Ar-minus-Ar-1}
	%\int_{F^\times}\W(\varphi_\lambda)\pmat{\mathrm{I}_{n-2}\\&y\\&&1}
	%\ol{\W(f)}\pmat{\mathrm{I}_{n-2}\\&y\varpi^{-r}\\&&1}
	%|y|^{n-5/2+s} d^\times y
	\frac{A(\varpi^{-r})-N(\p)^{-\frac {n-1}2}A(\varpi^{-r+1})}{\zeta(n-ns)},
\end{equation}
where
\begin{equation*}%\label{eq:before-shintani}
	A(z):=\int_{F^\times}W_{\pi_s}\left[\pmat{y\\&1}\right]
	\ol{W_{\pi_{1/2}}}\left[\pmat{yz\\&1}\right]
	|y|^{n-5/2+s}\, d^\times y
	%\notag\\
	%&=\int_{F^\times}W_{\pi_s}\left[\pmat{yz^{-1}\\&1}\right]\ol{\W_{\pi_{1/2}}}\left[\pmat{y\\&1}\right]|yz^{-1}|^{n-5/2+s}\, d^\times y.
\end{equation*}
%Using information on the supports of $W_{\pi_{1/2}}$ and $W_{\pi_s}$, we see that we can restrict our integration to those $y$ for which $|y|,|yz^{-1}|\le 1$. Since we only consider cases where $|z|\ge 1$ it suffices to consider $|y|\leq 1$. By Shintani's formula \cite{Shintani1976explicit}, we have, that
Using Shintani's formula \eqref{shintani}  (or \cite[Theorem 4.6.5]{bump19927automorphic}) we write
\[W_{\pi_s}\left[\pmat{y\\&1}\right]=
%\left(a_1|y|^{(n-1)s-n+2}+a_2|y|^{1-s}\right)\cdot\mathbbm{1}_{y\in\o},
\left(\zeta(n-1-ns)|y|^{(n-1)s-n+2}+\zeta(ns-n+1)|y|^{1-s}\right)\cdot\mathbbm{1}_\o(y),\]
%where
%\[a_1=\frac{N(\p)^{-(n-1)s+n-\frac32}}{N(\p)^{-(n-1)s+n-\frac32}-N(\p)^{-\frac12+s}}=\zeta(n-1-ns)\]
%and
%\[a_2=-\frac{N(\p)^{-\frac12+s}}{N(\p)^{-(n-1)s+n-\frac32}-N(\p)^{-\frac12+s}}=-N(\p)^{-(n-1)+ns}\zeta(n-1-ns).\]
%Plugging the above equality in \eqref{eq:before-shintani}, we obtain that
and consequently, after changing variable $y\mapsto yz^{-1}$, we obtain
\begin{multline*}
	A(z)=%\zeta(n-1-ns)\int_{|y|\leq 1}\overline{W_{\pi_{1/2}}}  \pmat{y\\&1}\left(|yz^{-1}|^{ns-\frac12}-N(\p)^{-(n-1)+ns}|yz^{-1}|^{n-\frac32}\right)\, d^\times y
	\zeta(n-1-ns)\int_{F^\times}\overline{W_{\pi_{1/2}}}  \left[\pmat{y\\&1}\right]\mathbbm{1}_{z\o}(y)|yz^{-1}|^{ns-\frac12}\, d^\times y\\
	+\zeta(ns-n+1)\int_{F^\times}\overline{W_{\pi_{1/2}}}  \left[\pmat{y\\&1}\right]\mathbbm{1}_{z\o}(y)|yz^{-1}|^{n-\frac32}\, d^\times y
\end{multline*}
Note that $\mathbbm{1}_{z\o}(y)=1$ for $z=\varpi^{-r}$ or $z=\varpi^{-r+1}$ and $\left(\begin{smallmatrix}y&\\&1\end{smallmatrix}\right)$ in the support of $W_{\pi_{1/2}}$. Thus using the unramified computation of local $\GL_2\times\GL_1$ zeta integral (see \cite[Theorem 3.3]{cogdell2007functions}) we evaluate
%The inner integral can be computed in terms of certain $\GL(2)$ $L$-functions. More precisely, we have
\begin{align*}
	A(z)&=\zeta(n-1-ns)|z|^{\frac12-ns} L(ns,\ol{\pi_{1/2}})+\zeta(ns-n+1)|z|^{\frac32-n}L(n-1,\ol{\pi_{1/2}})\\
	&=|z|^{\frac 12-ns}\zeta(n-1-ns)\zeta(ns)\zeta(ns-\tfrac{n-2}{2})+
	|z|^{\frac32-n}\zeta(ns-n+1)\zeta(n-1)\zeta(\tfrac{n}{2}).
\end{align*}
%Therefore we have
%\begin{multline}\label{eq:Ar-minus-Ar-1}
%	A(\varpi^{-r})-N(\p)^{-\frac {n-1}2}A(\varpi^{-r+1})=\zeta(n-1-ns)\Bigg(N(\p^r)^{1/2-ns}\frac{ \zeta(ns)\zeta(ns-\tfrac{n-2}{2})}{\zeta(\frac n2-ns)} \\
%	-N(\p)^{-(n-1)+ns}N(\p^r)^{\frac 32-n}\frac{\zeta(n-1)\zeta(\tfrac{n}{2})}{\zeta(-\frac{n-2}{2})}\Bigg).
%\end{multline}
Thus, plugging-in the above expression of $A(z)$ in \eqref{eq:intWW-equal-Ar-minus-Ar-1} we compute \eqref{eq:intWWf-equal-vol-sth-intWW} (which is the left hand side of \eqref{eq:equivalent-form-of-lemma-M11}) as
%and \eqref{eq:Ar-minus-Ar-1}, 
\begin{equation*}
	%\int_{\P_{n-2}(F)\bs \bar{G}_n(F)}\W(\varphi_{\lambda})\overline{\mathcal{W}(f)} f_{s}=\frac{\zeta(n-1-ns)\zeta(ns)\zeta(ns-\tfrac{n-2}{2})\zeta(n)}{\zeta(n-ns)\zeta(\frac n2-ns)\zeta(1)}\\\times 
	N(\p^r)^{-s}\frac{\zeta(n)\zeta(n-1-ns)\zeta(ns)\zeta(ns-\tfrac{n-2}{2})}{\zeta(1)\zeta(n-ns)\zeta(\frac n2-ns)}\Bigg(1
	-N(\p^{r+1})^{ns+1-n}\frac{\zeta(n-1)\zeta(\tfrac{n}{2})\zeta(\tfrac n2-ns)}{\zeta(-\frac{n-2}{2})\zeta(ns)\zeta(ns-\frac{n-2}{2})}\Bigg).
\end{equation*}
%Combining the above equation with 
Recalling \eqref{eq:unramified-M11-zeta-version} and using the fact that $\zeta(s)=-N(\p)^s\zeta(-s)$ we conclude the proof of 
\eqref{eq:equivalent-form-of-lemma-M11} and hence that of Proposition \ref{prop:main_terms_calcs}.

\section{Bounding the spectral weights on the dual side}\label{sec:bounding_error_term}
%We define
%\begin{multline*}
%\mathrm{Err}^{\mathrm{D}}\defeq 
%\sum_{\pi\in\Pi_{\mathrm{c}}(\bar{G}_2)}\sum_{\varphi\in\B\left(\1_{n-2}\boxplus\pi\right)}\left\langle|\phi_0|^2,\Eis(\varphi,0)\right\rangle_{[\bar{G}_n]}
%\Lambda\left(\tfrac{n-1}{2},\pi\right)\Lambda\left(\tfrac{1}{2},\pi\right)\\
%    \times\prod_{v\in S}|\det x_v|^{1/2}\frac{\P_v(0,R(x_v)\Phi_v,0,\varphi_v)}{L_v\left(\tfrac{n-1}{2},\pi\right)L_v\left(\tfrac{1}{2},\pi\right)},
%\end{multline*}
%and
%\begin{multline*}
%  \mathrm{Err}^{\mathrm{C}}\defeq\frac{1}{2}\sum_{\chi\in\Pi(G_1)}\sum_{\varphi\in\B(\1_{n-2}\boxplus\chi\boxplus\chi^{-1})}\intop_{i\mathbb{R}}\left\langle |\phi_0|^2,\Eis(\varphi,\overline{\lambda_{\mathrm{C}}(0,z)})\right\rangle_{[\bar{G}_n]} \\       \frac{\prod_{\pm}\Lambda\left(\tfrac{n-1}{2}\pm z,\chi^\pm\right)\Lambda\left(\tfrac{1}{2}\pm z,\chi^\pm\right)}{\Lambda(1+2z,\chi^2)}\prod_{v\in S}|\det x_v|^{1/2}\frac{\P_v(0,R(x_v)\Phi_v,-\lambda_{\mathrm{C}}(0,z),\varphi_v)L_v(1+2z,\chi_v^2)}{\prod_{\pm}L_v\left(\tfrac{n-1}{2}\pm z,\chi_v^\pm\right)L_v\left(\tfrac{1}{2}\pm z,\chi_v^\pm\right)}.
%\end{multline*}
Recall $I_2^{\mathrm D}$ and $I_2^{\mathrm C}$ from Proposition \ref{prop:before-meromorphic-continuation} and their meromorphic continuation to a neighbourhood of $s=0$ via Proposition \ref{prop:cont-I2D} and Proposition \ref{prop:cont-I2C}, respectively. In particular, recall the \emph{new} definition of $I^{\mathrm C}_2(s)$ near $s=0$ via Remark \ref{rmk:abuse-def-I2C}.

We fix $\phi$ to be $|\phi_0|^2$ and $\Phi,f$ to be as chosen in \S\ref{sec:choices}. In this section we prove the following result.

\begin{prop}\label{prop:error_term_bound}
	With the above choices of $\phi$ and $\Phi$ we have
	\[I_2^{\mathrm{D}}(0)\ll_{\pi_0,\epsilon} N(\q)^{-\frac{n-1}{2}+\vartheta_2+\epsilon},\]
	\[I_2^{\mathrm{C}}(0)\ll_{\pi_0,\epsilon} N(\q)^{-\frac{n-1}{2}+\epsilon},\]
	where $0\le\vartheta_2\le\tfrac{7}{64}$ is a bound towards the Generalized Ramanujan Conjecture for $\GL_2$.
\end{prop}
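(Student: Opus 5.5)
We bound the spectral expressions for $I_2^{\mathrm D}(0)$ and $I_2^{\mathrm C}(0)$ from Proposition \ref{prop:before-meromorphic-continuation} (continued to $s=0$ by Proposition \ref{prop:cont-I2D} and Proposition \ref{prop:cont-I2C}) term by term, with absolute values. The global coefficient $\langle|\phi_0|^2,\Eis(\varphi,\cdot)\rangle$ does not depend on $\q$: $\phi=|\phi_0|^2$ is fixed and spherical at all finite places, so by Remark \ref{rmk:extra-decay-rep-lambda} only $\pi$ (resp.\ $\chi$) unramified at every finite place contribute. These coefficients decay rapidly in the archimedean conductor and in $z$, with polynomial losses in $\nu_\varphi$ allowed. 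The global $L$-values $L(\tfrac{n-1}2,\pi)L(\tfrac12,\pi)/\sqrt{L(1,\pi,\Ad)}$ and their continuous analogues on $\Re z=0$ are polynomially bounded in the conductor and $|z|$; the denominator is controlled by the standard lower bounds for $L(1,\pi,\Ad)$ and $\zeta(1+2z)$. So everything reduces to the local periods $\P_v$ at $v\mid\q$, with the archimedean $\P_v$ bounded polynomially in $C(\pi_v)$ and $|z|$ via the Whittaker bounds used in Lemma \ref{lem:factorization-period}.

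\textbf{The $\q$-adic period.} Fix $v\mid\q$ and put $r=r_v$. Here $\pi_v$ is unramified and $\vartheta_2$-tempered (tempered, with $\Re z=0$, in the continuous case). I would compute $\P_v(0;\varphi_v,R(x_v)f_v,R(x_v)f_v)$ exactly as for the left side of \eqref{eq:equivalent-form-of-lemma-M11}. Use Iwasawa coordinates and \eqref{eq:test_vector_on_kx}, which restricts $k$ to $K_0(\p^r)$ with an overall factor $|\det x_v|^{1/2}$, and use the right $K_0(\p^r)$-invariance of both $\varphi_v$ and $\W(R(x_v)f_v)$. This should give, as in \eqref{eq:intWWf-equal-vol-sth-intWW},
\[
\vol(K_0(\p^r))\,N(\p^r)^{n-\frac32}\int_{F_v^\times}W_{\pi_v}\!\left[\pmat{y\\&1}\right]\overline{\W(f_v)}\!\left[\pmat{\mathrm{I}_{n-2}\\&y\varpi^{-r}\\&&1}\right]|y|^{n-\frac52}\,d^\times y .
\]
Next substitute the decomposition $\W(f)=W_{\pi_{1/2}}-N(\p)^{-\frac{n-1}2}W_{\pi_{1/2}}(\cdot\,\mathrm{diag}(\varpi,1))$ and Shintani's formula \eqref{shintani} for $W_{\pi_v}$. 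After the change of variable $y\mapsto y\varpi^{r}$ (and $y\mapsto y\varpi^{r-1}$ for the second term), one obtains $N(\p^r)^{\mathrm{power}}$ times an unramified $\GL_2\times\GL_2$ zeta value, plus a finite correction in which the two terms nearly cancel.

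The Hecke eigenvalues of $\pi_v$ enter as $|y|^{1/2\pm\vartheta_2}$, and these are what produce the loss $N(\p^r)^{\vartheta_2}$. Combining $\vol(K_0(\p^r))\asymp N(\p^r)^{-(n-1)}$, $|\det x_v|^{1/2}=N(\p^r)^{(n-1)/2}$ and $N(\p^r)^{n-3/2}$ with the Shintani growth should give
\[
\P_v\ll_\epsilon N(\p_v^{r_v})^{-\frac{n-1}2+\vartheta_2+\epsilon},
\]
relative to the normalization of the main term. In the continuous case $\vartheta_2$ is replaced by $0$, since $\Re z=0$. Multiplying over $v\mid\q$ then gives the claimed power of $N(\q)$, with $N(\q)^\epsilon$ absorbing the divisor-type constants.

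\textbf{Main obstacle.} The delicate step is checking the exact exponent bookkeeping in this local computation. In particular, one must confirm that the leading terms of $A(\varpi^{-r})-N(\p)^{-\frac{n-1}2}A(\varpi^{-r+1})$ combine to save the full factor $N(\p^r)^{-\frac{n-1}2}$ rather than only $N(\p^r)^{-\frac{n-2}2}$, which is the size of the secondary main term $\mathbf F$. I would first do this computation with generic Satake parameters $\alpha^{\pm1}$ and only then insert $|\alpha|\le N(\p)^{\vartheta_2}$. A secondary point is the uniformity of the archimedean and global bounds near $s=0$. For $I_2^{\mathrm C}$, the contour $\Re z=0$ must be kept away from the polar hyperplanes \eqref{eq:possible-poles}; at $s=0$ these sit at $z=\pm\tfrac12$ and at $\pm\tfrac{n-1}2$, so the line is safe. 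The sums and integrals then converge absolutely by the rapid decay, which yields both bounds of Proposition \ref{prop:error_term_bound}.
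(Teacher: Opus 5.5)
Your overall plan matches the paper's: bound term by term in absolute value, use the rapid decay of $\langle|\phi_0|^2,\Eis(\varphi,\cdot)\rangle$ (Remark \ref{rmk:extra-decay-rep-lambda}) and Weyl's law for the spectral sums, control the global $L$-values polynomially, and push all $\q$-dependence into the local period at $v\mid\q$ written as in \eqref{eq:intWWf-equal-vol-sth-intWW}. There is, however, a genuine gap in the continuous part when $n=3$.

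\textbf{The gap at $n=3$, $\chi=\1_1$.} You claim the line $\Re z=0$ is safe, but this misreads \eqref{eq:possible-poles}. For $n=3$ there is the additional hyperplane $\pm z=\tfrac32 s$, which at $s=0$ is exactly $z=0$, i.e.\ on the contour. For $\chi=\1_1$ the factor $\xi(1+z)\xi(1-z)$ has a double pole at $z=0$, and $\xi(1+2z)^{-1}$ cancels it only to first order. So the $L$-ratio has a simple pole on the line, and your assertion that the continuous $L$-values are polynomially bounded on $\Re z=0$ fails there. Bounding the factors separately in absolute value then produces a divergent $\int dz/|z|$.

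The missing idea is that the full integrand is nevertheless holomorphic at $z=0$. This is because $\Eis(\varphi,\lambda_{\mathrm C}(0,z))$ vanishes at $z=0$; this is the zero from \cite[Theorem 4.17 (2)]{fine-boi}, already used in the proof of Lemma \ref{lem:evenness-I2C}. One then deforms the $\chi=\1_1$ contour slightly off $z=0$, using the zero-free region of $\xi(1+2z)$ as in Remark \ref{rmk:stronger-holomorphic-region}. No pole is crossed, and all factors are holomorphic on the new contour. The $\q$-adic bound there loses only $N(\q)^{O(\kappa)}$ with $\kappa$ as small as you like, which is absorbed into $N(\q)^{\epsilon}$.

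\textbf{The $\q$-adic period.} The step you single out as the main obstacle is not one: no cancellation between the two pieces of $\W(f_v)$ is needed. The paper simply applies the triangle inequality with three pointwise bounds:
\begin{itemize}
\item $W_\tau(\mathrm{diag}(y\varpi^{-r},1))\ll (N(\p^r)|y|)^{-\frac{n-3}{2}}\mathbbm{1}_{\p^r}(y)$;
\item the analogous bound for the shifted term;
\item $W_\sigma(\mathrm{diag}(y,1))\ll_\epsilon |y|^{-\frac{n-3}{2}-\vartheta_2-\epsilon}\mathbbm{1}_{\o}(y)$.
\end{itemize}
The full saving comes from the support condition $y\in\p^r$ together with the prefactor $\vol(K_0(\p^r))N(\p^r)^{n-\frac32}\asymp N(\p^r)^{-\frac12}$. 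The first piece is then $\ll N(\p^r)^{-\frac{n-2}{2}}\int_{\p^r}|y|^{\frac12-\vartheta_2-\epsilon}\,d^\times y\asymp N(\p^r)^{-\frac{n-1}{2}+\vartheta_2+\epsilon}$. The second piece carries an extra factor of about $N(\p)^{-\frac12-\vartheta_2}$.

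Note also that $N(\p^r)^{n-\frac32}$ already contains $|\det x_v|^{\frac12}$. Listing the latter as a further factor, as you do, double counts $N(\p^r)^{\frac{n-1}{2}}$.

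Your exact computation via Shintani's formula would also work, provided you keep the Schur-polynomial form. If you split it into two geometric terms, the coefficients $(\alpha-\alpha^{-1})^{-1}$ are not uniformly bounded. Either way, it is more than is needed.
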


\begin{proof}
	We start with $I_2^\mathrm{D}(0)$. Let $S:=\{v\mid\mathfrak{D}\q\}\cup\{v\mid\infty\}$ and recall $\phi_0$ from
	\S\ref{sec:choices}, which is spherical at all $v<\infty$. This ensures that the $\pi$-sum in the expression of
	$I_2^{\mathrm D}$ is supported on the ones that are unramified at all $v<\infty$. Moreover, we choose
	$\B(\1_{n-2}\boxplus\pi)\ni\varphi$ so that $\varphi$ is factorizable, $\varphi_v$ is the normalized spherical vector
	in $(\1_{n-2}\boxplus\pi)_v$ for $v<\infty$ and $\varphi_v$ is the $\Delta$-eigenvector for $v\mid\infty$, where
	$\Delta$ is the Laplacian on $G_n(F_\infty)$ as described in the proof of Proposition \ref{prop:inflation-prop}.
	
	First, we estimate $\P_S(0;\varphi,R(x)f,R(x)f)$ for the above choice of $\varphi$.
	By the proof of Lemma \ref{lem:factorization-period} we see that the integral defining
	$\P_v$ in \eqref{eq:main-RS-period-local} converges absolutely for $s=0$ (and $\lambda=0$).
	Moreover, the trivial estimates
	\begin{equation*}
		\optionA{\P_v(0;\varphi_v,f_v,f_v)}
		\optionB{\P_v(0,0;\varphi_v,\Phi_v)}
		\begin{cases}\ll_F 1&\text{ if }v\mid\mathfrak{D},\\
			\ll_{\pi_0}\nu_\varphi^{O(1)}&\text{ if }v\mid\infty,\end{cases}
	\end{equation*}
	where $\nu_\varphi$ is the $\Delta$-eigenvalue of $\varphi$, follow from the same proof.
	
	We focus on estimating it for $v\mid\q$ now, which from \eqref{eq:main-RS-period-local} equals
	% By the characterization of local constituents of unramified cuspidal automorphic representations, this requires bounding
	\[\intop_{K_{n,v}}\intop_{N_2(F_v)\bs \bar{G}_2(F_v)}\W(\varphi_{\lambda,v})\cdot\overline{\mathcal{W}(R(x_v)f_v)}\cdot R(x_v)f_v\left[\begin{pmatrix}\mathrm{I}_{n-2}&\\&h\end{pmatrix}k\right]
	|\det h|^{n-2}\, dh\, dk,\]
	where $\varphi_v\in\left(\1_{n-2}\boxplus\1_1\boxplus\1_1\right)_v$ and $\lambda=(0,\nu,-\nu)\in\a^\ast_{\Q_{n-2},\C}$ for some $\nu\in \mathbb{C}$ with $|\Re(\nu)|\le \vartheta_2<\tfrac{1}{2}$. Invoking the same argument in \S \ref{ssec:local-calcs-M11} that we used to compute the left hand side of \eqref{eq:equivalent-form-of-lemma-M11}, we rewrite the above local integral as
	%    \begin{multline*}
		%        \int_{N_2(F)\backslash\GL_2(F)\times K_{n}}\W(\varphi,\lambda)\left[\pmat{\mathrm{I}_{n-2}&\\&h_2}k\right]\ol{\W(f)}\left[\pmat{\mathrm{I}_{n-2}&\\&h_2}kx\right] \\
		%    \times
		%    \Phi\left[e_n\pmat{\mathrm{I}_{n-2}&\\&h_2}kx\right]|\det h_2|^{n-3/2}\, dh_2\, dk\\
		%    =\vol(K_0(\p^r))N(\p^r)^{\frac{n-2}{2}}\int_{F^\times}\W(\varphi,\lambda)\pmat{\mathrm{I}_{n-2}\\&y\\&&1}
		%    \ol{\W(f)}\pmat{\mathrm{I}_{n-2}\\&y\varpi^{-r}\\&&1}
		%    |y|^{n-5/2} d^\times y.
		%    \end{multline*}
	%Changing variable $y\to y\varpi^r$, we get
	\begin{multline}\label{eq:local_period-error-term}
		\vol(K_0(\p_v^{r_v}))N(\p_v^{r_v})^{n-\frac32}\\
		\times\int_{F_v^{\times}}\W(\varphi_{\lambda,v})\left[\pmat{\mathrm{I}_{n-2}\\&y\\&&1}\right]
		\ol{\W(f_v)}\left[\pmat{\mathrm{I}_{n-2}\\&y\varpi_v^{-r_v}\\&&1}\right]|y|^{n-5/2}\,d^\times y,
	\end{multline}
	\emph{cf}.\ \eqref{eq:intWWf-equal-vol-sth-intWW}.
	Similarly, using the arguments used to obtain \eqref{eq:GL2-realization} and \eqref{eq:intWW-equal-Ar-minus-Ar-1} we write
	\begin{align*}%\label{eq:explicit-whitt-formula-err-term}
		&\W(f_v)\left[\pmat{\mathrm{I}_{n-2}\\&y\varpi_v^{-r_v}\\&&1}\right]=W_\tau\left[\pmat{y\varpi_v^{-r_v}\\&1}\right] -N(\p_v)^{-\frac {n-1}2}W_\tau\left[\pmat{y\varpi_v^{-r_v+1}\\&1}\right]\\
		&\ll\left(N(\p_v^{r_v})|y|\right)^{-\frac{n-3}{2}}\mathbbm{1}_{\p_v^{r_v}}(y)+\left(N(\p_v^{r_v-1})|y|\right)^{-\frac{n-3}{2}}N(\p_v)^{-\frac {n-1}2}\mathbbm{1}_{\p_v^{r_v-1}}(y),
	\end{align*}
	for $\tau:=\1_{n-1,v}\boxplus |\cdot|_v^{-\frac{n-2}2}$, and
	\begin{equation*}
		\W(\varphi_{\lambda})\left[\pmat{\mathrm{I}_{n-2}\\&y&\\&&1}\right]= \frac1{\zeta_v(1+2\nu)}W_{\sigma}\left[\pmat{y&\\&1}\right]\ll_\epsilon |y|^{-\frac{n-3}{2}-\vartheta_2-\epsilon}\mathbbm{1}_\o(y)
	\end{equation*}
	for $\sigma:=|\cdot|_v^{\nu-\frac{n-2}2}\boxplus |\cdot|_v^{-\nu-\frac{n-2}2}$. Here the implied constants, in particular, do not depend on $v$ (and $\q$).
	
	%Using Shintani's formula \eqref{shintani}  (or \cite[Theorem 4.6.5]{bump19927automorphic}) we estimate
	%\begin{equation*}
	%	\left|
	%	W_{\tau}
	%	\left[\begin{pmatrix}
		%		\varpi_v^m&\\ &1
		%	\end{pmatrix}\right]
	%	\right|
	%	\leq \begin{cases}
		%		\zeta_v(\tfrac{n-2}{2})N(\p_v)^{m((n-3)/2)} \quad &\text{for } m\geq 0,\\
		%		0 \quad &\text{otherwise}.
		%	\end{cases}
	%\end{equation*}
	%and
	%\begin{equation*}
	%	\left|
	%	W_{\sigma}
	%	\begin{pmatrix}
		%		\varpi_v^m&\\ &1
		%	\end{pmatrix}
	%	\right|
	%	\leq \begin{cases}
		%		\frac{\zeta_v(2\vartheta_2)}{\zeta_v(2(m+1)\vartheta_2)}N(\p_v)^{m((n-3)/2+\vartheta_2)} \quad &\text{for } m\geq 0,\\
		%		0 \quad &\text{otherwise},
		%	\end{cases}
	%\end{equation*}	
	%We use triangle inequality for \eqref{eq:explicit-whitt-formula-err-term} and bound individual integrals in \eqref{eq:local_period-error-term} as
	Thus we estimate \eqref{eq:local_period-error-term} as
	%\begin{multline*}
	%	\zeta_v(1+2\nu)^{-1}\int_{F_v^{\times}}\W_{\sigma}\left[\pmat{y\\&1}\right]
	%	\ol{\W_\tau}\left[\pmat{y\varpi_v^{-r_v}\\&1}\right]|y|^{n-5/2}d^\times y\\ \leq N(\p_v^{r_v})^{-(n-2)/2+\vartheta_2}\sum_{m\geq0}
	%	\tfrac{\zeta_v(\frac{n-2}{2})\zeta_v(2\vartheta_2)}{\zeta_v(1+2\nu)\zeta_v(2(m+1)\vartheta_2)}
	%	N(\p_v)^{-m(1/2-\vartheta_2)}
	%\end{multline*}
	%and similarly
	%\begin{multline*}
	%	\zeta_v(1+2\nu)^{-1}N(\p_v)^{-\frac{n-1}{2}}\int_{F_v^{\times}}\W_{\sigma}\left[\pmat{y\\&1}\right]
	%	\ol{\W_\tau}\left[\pmat{y\varpi^{-r+1}\\&1}\right]|y|^{n-5/2}d^\times y\\ \leq N(\p_v^{r_v})^{-(n-2)/2+\vartheta_2}N(\p_v)^{-1/2-\vartheta_2}\sum_{m\geq0}
	%	\tfrac{\zeta_v(\frac{n-2}{2})\zeta_v(2\vartheta_2)}{\zeta_v(1+2\nu)\zeta_v(2(m+1)\vartheta_2)}
	%	N(\p_v)^{-m(1/2-\vartheta_v)}.
	%\end{multline*}
	\begin{multline*}
		\ll_\epsilon\vol(K_0(\p_v^{r_v}))N(\p_v^{r_v})^{n-\frac32}\left(N(\p_v^{r_v})^{-\frac{n-3}{2}}\int_{\p_v^{r_v}}|y|^{\frac{1}{2}-\vartheta_2-\epsilon}\, d^\times y\right.\\
		\left.+N(\p_v^{r_v-1})^{-\frac{n-3}{2}}N(\p_v)^{-\frac {n-1}2}\int_{\p_v^{r_v-1}}|y|^{\frac{1}{2}-\vartheta_2-\epsilon}\, d^\times y\right)
	\end{multline*}
	As $\vartheta_2<\tfrac{1}{2}$ the integrals above are absolutely convergent and bounded by
	\begin{equation*}
		\ll_\epsilon N(\p_v^{r_v})^{-\frac{n-1}{2}+\vartheta_2+\epsilon}\left(1+N(\p_v)^{-\frac{1}{2}-\vartheta_2}\right).
	\end{equation*}
	%Both sums in the two displays above are absolutely convergent and absolutely bounded by a costant $c_0$. Therefore using the fact that for any $c>0$ and any $\epsilon>0$ we have
	%\[\prod_{v\mid \q}c\ll_{\epsilon,c} N(\q)^{\epsilon}\]
	%and multiplying the bounds over all places $v\mid \q$ gives
	Consequently, using the convexity bound and standard zero-free region (see \cite[Theorem 3]{brumley2006effective}) of the cuspidal $L$-functions we estimate
	\begin{equation*}
		%\prod_{v\mid \q}|\det x_v|^{1/2}\P_v(0,0;\varphi_v,R(x_v)f_v)\leq  N(\q)^{\frac{n-1}2}\vol(K_0(\q))N(\q)^{\vartheta_2}\prod_{v\mid \q}c_0\\
		%\ll_\epsilon N(\q)^{-\frac{n-1}2+\vartheta_2+\epsilon}.
		\frac{L^S\left(\tfrac{n-1}{2}+\tfrac{n}{2}s,\pi\right)L^S\left(\tfrac{1}{2}+\tfrac{n}{2}s,\pi\right)}
		{\sqrt{L^S(1,\pi,\Ad)}}
		\optionA{\P_S(0;\varphi,R(x)f,R(x)f)}
		\optionB{\P_S(0,0;\varphi,\Phi)}
		\ll_{F,\pi_{0,\infty},\epsilon}N(\q)^{-\frac{n-1}{2}+\vartheta_2+\epsilon} C(\pi_\infty)^{O(1)}\nu_\varphi^{O(1)}.
	\end{equation*}
	%We're left to show that the remaining spectral average
	%\begin{equation*}
	%	\sum_{\pi\in\Pi_{\mathrm{c}}(\bar{G}_2)}\sum_{\varphi\in\B\left(\1_{n-2}\boxplus\pi\right)}\left\langle\phi,\Eis(\varphi, \ol{\lambda_{\mathrm{D}}(0)})\right\rangle_{[\bar{G}_n]}\P^\q(0,-\lambda_{\mathrm{D}}(0),\varphi,f,\Phi),
	%\end{equation*}
	%converges absolute. We recall from unramified calculations \eqref{eq:unramified-cusp} that for a finite place $v\nmid \q$ we have
	%\[\P_v(0,-\lambda_{\mathrm{D}}(0),\varphi,f,\Phi)=L_v\left(\tfrac{n-1}{2},\pi\right)
	%L_v\left(\tfrac{1}{2},\pi\right).\]
	%Moreover, by Lemma \ref{lem:holomorphicity-local-factor-cuspidal} for the archimidean places $v\mid \infty$ the local zeta integrals\\ $\P_v(0,-\lambda_{\mathrm{D}}(0),\varphi,f,\Phi)$ are bounded.
	%Therefore by the convexity bound we can write 
	%\[\P^\q(0,-\lambda_{\mathrm{D}}(0),\varphi,f,\Phi)\ll L^\q\left(\tfrac{n-1}{2},\pi\right)
	%L^\q\left(\tfrac{1}{2},\pi\right) \ll_\epsilon N(\q)^\epsilon C(\pi)^{O(1)}. \]
	%Invoking Remark \ref{rmk:extra-decay-rep-lambda} we can bound the inner sum of $I_2^D(0)$ as
	%\[\sum_{\varphi\in\B\left(\1_{n-2}\boxplus\pi\right)}
	%\left\lvert\left\langle|\phi_0|^2,\Eis(\varphi,0)\right\rangle_{[\bar{G}_n]}\right\rvert\ll _{N,\phi_0} C(\pi)^{-N}\]
	%for any $N>0$. Therefore,
	Now recalling Remark \ref{rmk:extra-decay-rep-lambda} we see that $I_2^\mathrm{D}(0)$ is estimated by
	\begin{align*}
		&\ll_{F,\pi_0,\epsilon} N(\q)^{-\frac{n-1}2+\vartheta_2+\epsilon}
		\sum_{\substack{\pi\in\Pi_{\mathrm{c}}(\bar{G}_2) \\ \pi^\infty\text{ is unramified}}}C(\pi_\infty)^{O(1)}\sum_{\substack{\varphi\in\B\left((\1_{n-2}\boxplus\pi)\right)\\\varphi^\infty\text{ is normalized spherical}}}
		\left\lvert\left\langle|\phi_0|^2,\Eis(\varphi,0)\right\rangle_{[G^1_n]}\nu_\varphi^{O(1)}\right\rvert\\
		&\ll_{N,\pi_0,\epsilon} N(\q)^{-\frac{n-1}2+\vartheta_2+\epsilon} \sum_{\pi\in\Pi_{\mathrm{c}}(\bar{G}_2)} C(\pi)^{-N}\ll_{\phi_0,\epsilon} N(\q)^{-\frac{n-1}2+\vartheta_2+\epsilon},
	\end{align*}
	where the last estimate follows from Weyl's law for some sufficiently large $N$; see \emph{e.g.}, \cite[eq.(2.16), \S2.6.5]{MV2010subconvexity}.

	We now estimate $I_2^\mathrm{C}(0)$, whose proof follows lines similar to those of $I^{\mathrm D}_2(0)$. For $n>3$, unitary $\chi$ that are unramified at the finite places, $\varphi\in\1_{n-2}\boxplus\chi\boxplus\chi^{-1}$ normalized spherical at finite places, and $\lambda=(0,z,-z)$ with $\Re(z)=0$ we obtain
	\begin{multline*}
		\frac{\prod_{\pm}L^S\left(\tfrac{n-1}{2}\pm z,\chi^\pm\right)L^S\left(\tfrac{1}{2}\pm z,\chi^\pm\right)} {L^S(1+2z,\chi^2)}
		\optionA{\P_S(\lambda;\varphi,R(x)f,R(x)f)}
		\optionB{\P_S(0,\lambda;\varphi,\Phi)}\\
		\ll_{F,\pi_0,\epsilon} N(\q)^{-\frac{n-1}{2}+\epsilon}
		\left((1+|z|)C(\chi)\right)^{O(1)}\nu_\varphi^{O(1)},
	\end{multline*}
	where the improvement in the exponent of $N(\q)$ is due to the temperedness of the continuous spectrum.
	%Again, bounding zeta integral at the archimedean places by a constant, using the convexity bound for the numerator and the standard lower bound for Hecke\(L\)-functions on the line \(\Re (s)=1\)
	%\forlater{Find a reference for lower bound for Hecke $L$-functions at $\Re(s)=1$ for number fields}\\
	%	we can write 
	%Using Remark \ref{rmk:extra-decay-rep-lambda} we can bound the inner sum of 
	Consequently, we estimate $I_2^{\mathrm C}(0)$ by $O_\epsilon\left(N(\q)^{-\frac{n-1}2+\epsilon}\right)$.
	%\[\sum_{\varphi\in\B(\1_{n-2}\boxplus\chi\boxplus\chi^{-1})}\left\lvert\left\langle |\phi_0|^2,\Eis(\varphi,\overline{\lambda_{\mathrm{C}}(0,z)})\right\rangle_{[\bar{G}_n]}\right\rvert\ll_{N,\phi_0} ((1+|z|)C(\chi))^{-N}\]
	%for any $N>0$.
	For $n=3$ the same maneuver gives the same estimate for the $\chi\neq 1$ part of $I_2^{\mathrm C}(0)$.
	
	The case with $n=3$ and $\chi=1$ is slightly more subtle, as the quotient of $L$-functions has a
	simple pole at $z=0$. However, this pole cancels with the zero of $\Eis(\varphi,(0,z,-z))$ at $z=0$ (\emph{cf}.\ the proof of Lemma \ref{lem:evenness-I2C}). Thus using the standard zero-free region of $\xi(z)$ and using the idea as in Remark \ref{rmk:stronger-holomorphic-region} we can deform the $z$-contour of the $\chi=1$ summand of $I_2^{\mathrm C}$ to avoid $z=0$ without crossing any pole. On this new contour the quotient of $L$-functions and the Eisenstein series remain holomorphic. Working as before we estimate similarly and conclude the proof.
	%This pole is removable in the full integrand by the zero of the corresponding Eisenstein family above, as in the proof of Lemma \ref{lem:evenness-I2C}. On a fixed compact neighbourhood of this point we apply the same rapid-decay argument to the resulting holomorphic family.
	%We then have
	%\begin{align*}
	%	I_2^\mathrm{C}(0)&\ll_\epsilon N(\q)^{-\frac{n-1}2+\epsilon}
	%	\sum_{\chi\in\Pi(G_1)}\sum_{\varphi\in\B(\1_{n-2}\boxplus\chi\boxplus\chi^{-1})}\intop_{i\mathbb{R}}\left\lvert\left\langle |\phi_0|^2,\Eis(\varphi,\overline{\lambda_{\mathrm{C}}(0,z)})\right\rangle_{[\bar{G}_n]}\right\rvert\\
	%	&\qquad\qquad\qquad\qquad\times\left\lvert\frac{\prod_{\pm}L^\q\left(\tfrac{n-1}{2}\pm z,\chi^\pm\right)L^\q\left(\tfrac{1}{2}\pm z,\chi^\pm\right)}{L^\q(1+2z,\chi^2)}\right\rvert\, dz,\\
	%	&\ll_{\epsilon,N,\phi_0} N(\q)^{-\frac{n-1}2+\epsilon}\sum_{\chi\in \Pi(G_1)}C(\chi)^{-N}\intop_{i\mathbb{R}}(1+|z|)^{-N}\, dz \ll_{\epsilon,N,\phi_0} N(\q)^{-\frac{n-1}2+\epsilon} .\\
	%\end{align*}
	%where the final bound follows, for instance , from \cite[Theorem 1.1]{petrow2024weyl} \rn{This is definitely an overkill since we only require upper bounds but I can't think of an simpler statement somewhere else} \sj{just use convexity bound?} \rn{I think the point here was about an estimate on the number of characters of bounded conductor...}
\end{proof}

\section{Proof of Theorem \ref{thm:second_moment_asymptotic}}\label{sec:proof_of_thm_C}

In this section we prove Theorem \ref{thm:second_moment_asymptotic}. We recall the choices $\phi_0\in\pi_0$, $\Phi$ and $\Phi^0$ from \S\ref{sec:choices}. Recall $M_{ij}(s)$ from Proposition \ref{prop:main_terms_calcs}. It follows from Proposition \ref{prop:reg-main-term} and Proposition \ref{prop:reg-second-main-term} that the expressions $M_{11}(s)+M_{00}(s)$ and $M_{01}(s)+M_{10}(s)$ are regular at $s=0$, respectively. First, we asymptotically evaluate the limits of the above two expressions as $s\to 0$.

\begin{prop}\label{prop:main_term_asymp}
	We have an explicit expression for the main term
	\[M:=\lim_{s\to 0}\,\left(M_{00}(s) + M_{11}(s)\right)=\frac{\zeta_\q(n)}{\zeta_\q(1)\zeta^2_\q(\tfrac{n}{2})}\left(\Delta^{\mu_2}L(1,\pi_0,\Ad) \frac{n\zeta^*(1)\zeta(\frac{n}{2})^2}{\zeta(n)}\log N(\q)+B_\q\right),\] 
	where  $\mu_2$ is a constant that only depends on $n$ and $B_\q$ is an explicit $\q$-dependent term described in \eqref{eq:def-Bq} and is $O_{F,\pi_0}(1)$. \forlater{\rn{I don't see the point in describing $B_\q$. It's given in terms of very obscure things anyways.}\jd{we don't go into much detail in describing it. We should write it as $B_\q$ and once we do, we need to mention how it depends on $\q$ (for $n=2$ it's not $O(1)$)} \sj{I agree with Kuba. Also, it's just one line. Also, it is not that obscure. Essentially just involves zeta}\rn{I guess I want to understand the point. What we think we are achieving}}
\end{prop}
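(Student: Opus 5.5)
We reduce everything to the unramified terms $M^0_{00}$ and $M^0_{11}$ via Proposition \ref{prop:main_terms_calcs}, which gives $M_{00}(s)+M_{11}(s)=M^0_{00}(s)h_{00,\q}(s)+M^0_{11}(s)h_{11,\q}(s)$ for $s$ near but distinct from $0$. By the proof of Proposition \ref{prop:reg-main-term} (applied with $f^0$ in place of $f$), each of $M^0_{00}$ and $M^0_{11}$ has at most a simple pole at $s=0$, and the two residues are $c\|f^0\|^2\langle|\phi_0|^2,1\rangle$ and $-c\|\M f^0\|^2\langle|\phi_0|^2,1\rangle$. These are equal up to sign, since $\M$ is unitary and the residual Eisenstein series is a constant multiple of $|\det|^{0}$. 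We therefore write $M^0_{00}(s)=\frac{R}{s}+a_0+O(s)$ and $M^0_{11}(s)=-\frac{R}{s}+a_1+O(s)$. Expanding $h_{ij,\q}(s)=h_{ij,\q}(0)+s\,h'_{ij,\q}(0)+O(s^2)$ then gives
\[
M=R\bigl(h'_{00,\q}(0)-h'_{11,\q}(0)\bigr)+a_0h_{00,\q}(0)+a_1h_{11,\q}(0),
\]
provided $h_{00,\q}(0)=h_{11,\q}(0)$. Without this equality a genuine pole would survive, contradicting Proposition \ref{prop:reg-main-term} applied to $R(x)f$.

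The first step is to check $h_{00,v}(0)=h_{11,v}(0)$ directly from the formulas. At $s=0$ the extra bracket in $h_{11,v}$ contains $\zeta_v(ns)^{-1}\to 0$, so the bracket tends to $1$. Also $\zeta_v(n-1)/\zeta_v(n-1)=1$, and both expressions reduce to $\zeta_v(n)/(\zeta_v(1)\zeta_v(\tfrac n2)^2)$. The product over $v\mid\q$ is exactly the prefactor $\frac{\zeta_\q(n)}{\zeta_\q(1)\zeta_\q(n/2)^2}$ in the statement.

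The second step is the logarithmic derivatives. In $h_{00,v}$ the factor $N(\p_v^{r_v})^{(n-1)s}$ contributes $(n-1)r_v\log N(\p_v)$, and in $h_{11,v}$ the factor $N(\p_v^{r_v})^{-s}$ contributes $-r_v\log N(\p_v)$. Their difference therefore contributes $n\log N(\q)$ times $h_{00,\q}(0)$. The remaining pieces are the derivatives of the zeta quotients. The delicate piece is the bracket in $h_{11,v}$, since its derivative at $s=0$ comes from $1/\zeta_v(ns)$ vanishing to first order. This produces a term of the form $-N(\p_v)^{-1-r_v(n-1)}\cdot(\text{explicit})\cdot n\log N(\p_v)$, which is $O(1)$ and exponentially small in $r_v$. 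All these non-$\log N(\q)$ contributions, together with $a_0,a_1$, are collected into $B_\q$. They are $O_{F,\pi_0}(1)$ because $\sum_{v\mid\q}\log N(\p_v)/N(\p_v)\ll\log\log N(\q)$, and the corrections weighted by powers $N(\p_v)^{-1}$ are bounded.

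The third step is to compute $R$. By Lemma \ref{lem:res-deg-eis-original} with $d=n$, the residue is $n\Delta^{-\frac{n-1}{2}}\frac{\xi^*(1)}{\xi(n)}\|f^0\|^2_{K}$, paired with $\int|\phi_0|^2$. We have $\|f^0\|^2_K=\prod_{v<\infty}\zeta_v(\tfrac n2)^2$ times the archimedean normalization \eqref{eq:arch-normalization-f}, which gives $\zeta(\tfrac n2)^2$ after separating the archimedean zeta factors. The standard Rankin--Selberg/adjoint relation expresses $\langle\phi_0,\phi_0\rangle$ as $\Delta^{\ast}L(1,\pi_0,\Ad)$ times archimedean factors, normalized to $1$ by \eqref{eq:arch-normalization-W}. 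Collecting the $\Delta$-powers into $\mu_2$ yields the leading coefficient. I expect this constant bookkeeping to be the main obstacle: it means tracking the archimedean versus finite split of $\xi$ and the $\Delta$ exponents. The structural argument above is routine by comparison. I would first carry it out for $F=\mathbb{Q}$ and compare with \cite{Jana2020RS} as a check.
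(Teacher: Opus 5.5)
Your proposal follows the paper's proof. You reduce to $M^0_{00},M^0_{11}$ via Proposition \ref{prop:main_terms_calcs}, use $h_{00,\q}(0)=h_{11,\q}(0)$ and the opposite residues to get $M=M^0h_{00,\q}(0)+R\,\partial_{s=0}\bigl(h_{00,\q}(s)-h_{11,\q}(s)\bigr)$ with $M^0=a_0+a_1$, and read off $n\log N(\q)$ from the factors $N(\q)^{(n-1)s}$ and $N(\q)^{-s}$. Your treatment of the bracket in $h_{11,v}$, through the simple zero of $\zeta_v(ns)^{-1}$, is also what the paper does.

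The step that fails as written is the bound $B_\q=O_{F,\pi_0}(1)$. You justify it by $\sum_{v\mid\q}\log N(\p_v)/N(\p_v)\ll\log\log N(\q)$. That sum is not bounded in $\q$ (take $\q$ to be the product of all primes of norm at most $y$), so it gives at best $O(\log\log N(\q))$. Nothing in your argument uses $n>2$ either. The correct reason is that no exponent-one term ever appears. By \eqref{eq:def-Bq}, apart from the $\q$-independent $M^0$, $B_\q$ is $nR$ times a sum over $v\mid\q$ of $\log N(\p_v)$ multiplied by $N(\p_v)^{-n/2}$, $N(\p_v)^{-(n-1)}$ or $N(\p_v)^{-1-r_v(n-1)}$, times uniformly bounded zeta values. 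Since $n\ge 3$ and $r_v\ge 1$, each term is $\ll\log N(\p_v)\,N(\p_v)^{-3/2}$, which is summable over all primes of $F$. This is exactly where $n>2$ enters: for $n=2$ the first exponent $n/2$ would equal $1$ and the corresponding sum need not be bounded.

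A smaller remark on $R$. The paper does not combine Lemma \ref{lem:res-deg-eis-original} with a Petersson-norm formula. Instead it unfolds $M^0_{00}$ into the $\GL_n\times\GL_n$ Rankin--Selberg integral, so the only archimedean input is $\int_{N_n(F_\infty)\bs\bar{G}_n(F_\infty)}|W_{0,\infty}f_\infty|^2$. This equals $1$ by \cite[Lemma 5.1]{Jana2020RS} together with \eqref{eq:arch-normalization-f} and \eqref{eq:arch-normalization-W}, which matches those normalizations directly; note that \eqref{eq:arch-normalization-f} is an integral over the opposite unipotent, not over $K_\infty$. If you keep your route, you must convert $\int_{K_\infty}|f_\infty|^2$ to that normalization. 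Also, Lemma \ref{lem:res-deg-eis-original} gives the residue in the variable $\lambda_1-\lambda_2=\tfrac n2+ns$, so the residue in $s$ is that quantity divided by $n$, not multiplied by $n$. Since the factor $n$ in the final coefficient comes entirely from $(n-1)-(-1)$, $R$ itself must come out without a factor of $n$.
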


\begin{proof}
	Recall from \S\ref{sec:degenerate_calculations} and Proposition \ref{prop:main_terms_calcs} that $M_{ij}(s)=M_{ij}^0(s)h_{ij,\q}(s)$ for $(i,j)=(0,0)$ and $(1,1)$, where $h_{ij,v}$ satisfy
	%\begin{align*}
	%	h_{00,v}(s)&=N(\p_v^{r_v})^{(n-1)s}\frac{\zeta_v(n)}{\zeta_v(1)\zeta_v(\tfrac n2)\zeta_v\left(\tfrac n2+ns\right)}\\
	%	h_{11,v}(s)&=N(\p_v^{r_v})^{-s} 
	%	\frac{\zeta_v(n)\zeta_v(n-1-ns)}{\zeta_v(1)\zeta_v(n-1)\zeta_v(\frac n2)\zeta_v(\tfrac n2-ns)}\\ 
	%	&\times\left(1-N(\p_v)^{-1-r(n-1-ns)}\frac{
		%		\zeta_v(n-1)\zeta_v(\tfrac n2)\zeta_v(\frac n2-ns)}
	%	{ \zeta_v(\frac{n-2}{2}) \zeta_v(ns)\zeta_v(\tfrac {n-2}2-ns)}\right).
	%\end{align*}
	\[h_{00,v}(0)=h_{11,v}(0)=\frac{\zeta_v(n)}{\zeta_v(1)\zeta_v(\tfrac n2)^2}.\]
	%By Proposition \ref{prop:reg-main-term} the two limits
	%\[M\defeq\lim_{s\to 0}M_{00}(s)+M_{11}(s)\]
	%and 
	%\[M^0\defeq\lim_{s\to 0}M_{00}^0(s)+M_{11}^0(s)\]
	%exist.
	It follows from the proof of Proposition \ref{prop:reg-main-term} that $M_{00}^0$ and $M_{11}^0$ have simple poles at $s=0$ and
	\begin{equation*}
		R\defeq \Res_{s=0}M_{00}^0(s)=-\Res_{s=0}M_{11}^0(s).
	\end{equation*}
	Moreover, from a standard Rankin--Selberg computation we obtain that for some constant $\mu_2$, depending only on $n$, we have
	\begin{equation*}
		R=\Delta^{\mu_2}L(1,\pi_0,\Ad)\zeta^*(1) \frac{\zeta(\frac{n}{2})^2}{\zeta(n)}\int_{N_n(F_\infty)\bs \bar{G}_n(F_\infty)}\lvert W_{0,\infty}f_\infty \rvert^2=\Delta^{\mu_2}L(1,\pi_0,\Ad) \frac{\zeta^*(1)\zeta(\frac{n}{2})^2}{\zeta(n)},
	\end{equation*}
	where  the final equality follows from \cite[Lemma 5.1]{Jana2020RS} and the choices of normalizations \eqref{eq:arch-normalization-f} and \eqref{eq:arch-normalization-W}.
	Thus following a similar computation as in \cite[\S 5]{Jana2020RS} we obtain
	\begin{align*}
		M=M^0\cdot h_{00,\q}(0)+R\cdot\partial_{s=0}\left(h_{00,\q}(s)-h_{11,\q}(s)\right),
	\end{align*}
	where $M^0:=\lim_{s\to 0}\,\left(M^0_{00}(s) + M^0_{11}(s)\right)$. We compute
	%The first summand is
	%\[M^0\frac{\zeta_\q(n)}{\zeta_\q(1)\zeta_\q(\tfrac n2)^2}.\]
	%The contribution from $h_{00}$ is given by
	\begin{equation*}
		\partial_{s=0}h_{00,\q}(s)
		%\frac{\zeta_\q(n)}{\zeta_\q(1)\zeta_\q(\tfrac n2)} \frac{d}{ds}\left(N(\q)^{(n-1)s}\frac{1}{\zeta_\q\left(\tfrac n2+ns\right)}\right)|_{s=0}\\
		=\frac{\zeta_\q(n)}{\zeta_\q(1)\zeta_\q(\tfrac n2)^2} \left( (n-1)\log N(\q)+
		%\frac{d}{ds}\left(\frac{\zeta_\q(\tfrac n2)}{\zeta_\q(\tfrac n2+ns)}\right)|_{s=0}
		n\sum_{v\mid \q} \zeta_v(\tfrac{n}{2}) \frac{\log N(\p_v)}{N(\p_v)^{\frac{n}{2}}}\right).
	\end{equation*} 
	%Notice that, since $n>2$,
	%\[
	%\frac{d}{ds}\left(\frac{\zeta_\q(\tfrac n2)}{\zeta_\q(\tfrac n2+ns)}\right)=\frac{n}{\zeta_\q(\frac{n}{2})} \sum_{v\mid \q} \zeta_v(\tfrac{n}{2}) \frac{\log N(\p_v)}{N(\p_v)^{\frac{n}{2}}}\ll 1.
	%\]
	Noting that $\zeta_v(ns)^{-1}$ has a zero at $s=0$ we compute
	\begin{multline*}
		\partial_{s=0}h_{11,\q}(s)=\frac{\zeta_\q(n)}{\zeta_\q(1)\zeta_\q(\tfrac n2)^2}\left(-\log N(\q)+n\sum_{v\mid\q}\log N(\p_v)\left(\frac{\zeta_v(n-1)}{N(\p_v)^{n-1}}-\frac{\zeta_v\left(\tfrac n2\right)}{{N(\p_v)^{n/2}}}\right)\right.\\
		\left.-n\sum_{v\mid \q}\frac{\log N(\p_v)}{N(\p_v)^{1+r_v(n-1)}} \frac{\zeta_v(n-1)\zeta^2_v(\frac n2)}
		{\zeta_v^2(\frac{n-2}{2})}\right)
	\end{multline*}
	%To analyze the remaining term,
	%\begin{multline*}
	%	R\frac{d}{ds}\Bigg(\prod_{v\mid \q}N(\p_v^{r_v})^{-s} 
	%	\frac{\zeta_v(n)\zeta_v(n-1-ns)}{\zeta_v(1)\zeta_v(n-1)\zeta_v(\frac n2)\zeta_v(\tfrac n2-ns)}\\ 
	%	\times\left(1-N(\p_v)^{-1-r(n-1-ns)}\frac{
		%		\zeta_v(n-1)\zeta_v(\tfrac n2)\zeta_v(\frac n2-ns)}
	%	{ \zeta_v(\frac{n-2}{2}) \zeta_v(ns)\zeta_v(\tfrac {n-2}2-ns)}\right)\Bigg)|_{s=0}
	%\end{multline*}
	%and we note that $\zeta_v(ns)^{-1}=1-N(\p)^{-ns}$ vanishes at $s=0$. Because of that, the only surviving term in $\frac{d}{ds}(\prod_{v\in S} h_{11,v}(s))|_{s=0}$ is
	%\begin{multline*}
	%	\frac{d}{ds}\left( N(\q)^{-s} 
	%	\frac{\zeta_\q(n)\zeta_\q(n-1-ns)}{\zeta_\q(1)\zeta_\q(n-1)\zeta_\q(\frac n2)\zeta_\q(\tfrac n2-ns)}\right)|_{s=0}\\ 
	%	-n \frac{\zeta_\q(n)}{\zeta_\q(1)\zeta_\q^2(\frac n2)}\sum_{v\mid \q}\frac{\log N(\p_v)}{N(\p_v)^{1+r(n-1)}} \frac{
		%		\zeta_v(n-1)\zeta^2_v(\frac n2)}
	%	{ \zeta_v^2(\frac{n-2}{2})}.
	%\end{multline*}
	%Moreover we have
	%\begin{align*}
	%	\frac{d}{ds}\left(N(\q)^{-s}\frac{\zeta_\q(\tfrac n2)\zeta_\q(n-1-ns)}{\zeta_\q(n-1)\zeta_\q(\tfrac n2-ns)}\right)|_{s=0}=-\log N(\q)+\frac{d}{ds}\left(\frac{\zeta_\q(\tfrac n2)\zeta_\q(n-1-ns)}{\zeta_\q(n-1)\zeta_\q(\tfrac n2-ns)}\right)|_{s=0}.
	%\end{align*}
	%Working as before, we show that
	%$$\frac{d}{ds}\left(\frac{\zeta_\q(n-1-ns)\zeta_\q(\frac n2)}{\zeta_\q(n-1)\zeta_\q(\frac n2-ns)}\right)|_{s=0}\ll 1.$$
	Combining, we obtain
	\[ M=\frac{\zeta_\q(n)}{\zeta_\q(1)\zeta^2_\q(\tfrac{n}{2})}\left(nR\cdot \log N(\q) + B_\q\right),\]
	where 
	\begin{equation}\label{eq:def-Bq}
		%B_\q:=M^0+R\left(n\sum_{v\mid \q} \zeta_v(\tfrac{n}{2}) \frac{\log N(\p_v)}{N(\p_v)^{\frac{n}{2}}}-\partial_{s=0}\frac{\zeta_\q(n-1-ns)\zeta_\q\left(\tfrac n2\right)}{\zeta_\q(n-1)\zeta_\q(\tfrac n2-ns)}\right.\\\left.+n\sum_{v\mid \q}\frac{\log N(\p_v)}{N(\p_v)^{1+r(n-1)}} \frac{\zeta_v(n-1)\zeta^2_v(\frac n2)}{ \zeta_v^2(\frac{n-2}{2})}\right).
		B_\q:=M^0+nR\sum_{v\mid\mathfrak q}\log N(\mathfrak p_v)\left(\frac{2\zeta_v(\frac n2)}{N(\mathfrak p_v)^{n/2}}-\frac{\zeta_v(n-1)}{N(\mathfrak p_v)^{n-1}}+\frac{\zeta_v(n-1)\zeta_v(\frac n2)^2}{N(\mathfrak p_v)^{1+r_v(n-1)}\zeta_v(\frac{n-2}{2})^2}\right).
	\end{equation}
	%\sj{I simplified this using ChatGPT. Can you guys double check?} \jd{there is also $\zeta_v(n/2)$ term missing} \sj{see if good now}\jd{all good}
	We conclude by noting that trivially $B_\q=O_{F,\pi_0}(1)$.
\end{proof}

\begin{prop}\label{prop:secondary_term_asymp}
	We have an explicit expression for the secondary main term
	\[S:=\lim_{s\to 0}\,(M_{01}(s) + M_{10}(s))=N(\q)^{-\frac{n-2}2}\frac{\zeta_\q(n)}{\zeta_\q^2(1)\zeta_\q(\frac{n}{2})}\left(C\log N(\q)+D_\q\right),\]
	where $C$ is a $\q$-independent constant and $D_\q$ is an explicit $\q$-dependent term described in \eqref{eq:def-Dq}  \forlater{\rn{Same comment as for $B_\q$. Lots of hidden stuff in $S^0$ and $R'$.} \jd{I agree that they are hiding a lot but that's why we use them, they don't depend on $\q$. But as before it's not trivial that $D_\q=O(1)$} \sj{Again it is literally one line}\rn{Then why are we not pointing to the formula for C as well?}\jd{because it's $\q$ independent}\rn{Makes no sense to me}} and is $O_{F,\pi_0}(1)$.
\end{prop}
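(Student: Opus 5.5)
The plan mirrors the proof of Proposition \ref{prop:main_term_asymp}, with the pair $(0,1),(1,0)$ in place of $(0,0),(1,1)$. By Proposition \ref{prop:main_terms_calcs} we have $M_{01}(s)=M^0_{01}(s)h_{01,\q}(s)$ and $M_{10}(s)=M^0_{10}(s)h_{10,\q}(s)$. The proof of Proposition \ref{prop:reg-second-main-term}, applied to $f^0$, shows that $M^0_{01}$ and $M^0_{10}$ each have at most a simple pole at $s=0$ with opposite residues. We write
\[R':=\Res_{s=0}M^0_{10}(s)=-\Res_{s=0}M^0_{01}(s).\]
Here $R'$ is $\q$-independent, since $f^0$ does not depend on $\q$. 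Its residue is given by pairing $|\phi_0|^2$ against $\Eis(\beta_0)$ via Lemma \ref{lem:res-eis-middle} and Remark \ref{rem:upgrade-n=4}.

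First I evaluate the local factors at $s=0$:
\[h_{01,v}(0)=h_{10,v}(0)=N(\p_v^{r_v})^{-\frac{n-2}{2}}\frac{\zeta_v(n)}{\zeta_v(1)^2\zeta_v(\frac n2)}.\]
This produces exactly the prefactor in the statement. Expanding both products in Laurent series at $s=0$, the polar parts cancel because the $h$'s agree at $0$. This gives
\[S=S^0\,h_{10,\q}(0)+R'\,\partial_{s=0}\bigl(h_{10,\q}(s)-h_{01,\q}(s)\bigr),\qquad S^0:=\lim_{s\to0}\bigl(M^0_{01}(s)+M^0_{10}(s)\bigr).\]
The limit $S^0$ exists and is $\q$-independent.

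Next I compute the logarithmic derivatives. The factor $N(\q)^{(n-1)s}$ in $h_{10}$ contributes $(n-1)\log N(\q)$. The factor $N(\q)^{-s}$ in $h_{01}$ contributes $-\log N(\q)$. The factor $\zeta_\q(\frac n2+ns)^{-1}$ contributes $n\sum_{v\mid\q}\zeta_v(\frac n2)\log N(\p_v)N(\p_v)^{-n/2}$. Hence
\[\partial_{s=0}(h_{10,\q}-h_{01,\q})=h_{10,\q}(0)\Bigl(n\log N(\q)+n\sum_{v\mid\q}\zeta_v(\tfrac n2)\tfrac{\log N(\p_v)}{N(\p_v)^{n/2}}\Bigr).\]
This gives $C=nR'$ and
\begin{equation}\label{eq:def-Dq}
D_\q:=S^0+nR'\sum_{v\mid\q}\zeta_v(\tfrac n2)\frac{\log N(\p_v)}{N(\p_v)^{n/2}}.
\end{equation}
To see that $D_\q=O_{F,\pi_0}(1)$, note that the sum is bounded by $\sum_{\p}\log N(\p)N(\p)^{-3/2}<\infty$, since $n\ge3$.

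The main obstacle is justifying the regularity structure for the $f^0$ terms: that $M^0_{01}$ and $M^0_{10}$ have at most simple poles at $s=0$ with exactly opposite residues, and that $S^0$ and $R'$ are finite. This comes from Proposition \ref{prop:reg-second-main-term}, together with the $n=4$ subtlety handled in Remark \ref{rem:upgrade-n=4}. I would also need to check that no higher-order pole arises from the factor $\xi(\frac{n-2}{2}+ns')$ when $n=4$. If that fails, the argument would require an extra term in the Laurent expansion, but the resulting structure would still be of the form $C\log N(\q)+D_\q$.
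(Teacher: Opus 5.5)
Your proposal is correct and follows the paper's argument. You factor $M_{ij}=M^0_{ij}h_{ij,\q}$ and use the cancelling simple poles from Proposition~\ref{prop:reg-second-main-term} to get $S=S^0h_{01,\q}(0)+R'\,\partial_{s=0}\bigl(h_{10,\q}-h_{01,\q}\bigr)$, then read off $C=nR'$ and $D_\q$ as in \eqref{eq:def-Dq}. One correction to your closing aside: a hypothetical double pole would not preserve the shape $C\log N(\q)+D_\q$, since it would leave a nonzero $\log N(\q)/s$ term and create $\log^2 N(\q)$ terms. This scenario does not arise, however, because Proposition~\ref{prop:reg-second-main-term} together with Remark~\ref{rem:upgrade-n=4} already gives simple poles, which is exactly how the paper uses them.
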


\begin{proof}
	We compute this limit in a similar fashion to the proof of Proposition \ref{prop:main_term_asymp}. Recalling Proposition \ref{prop:main_terms_calcs} we compute
	%Recall from Proposition \ref{prop:main_terms_calcs} that
	%\[M_{10}(s)=M_{10}^0(s)\prod_{v\mid \q} h_{10,v}(s)\]
	%and
	%\[M_{01}(s)=M_{01}^0(s)\prod_{v\mid \q} h_{01,v}(s),\]
	%where $h_{ij,v}$ are local ramification factors given in \S \ref{sec:degenerate_calculations}, given by
	%\begin{align*}
	%	h_{01,v}(s)&=N(\p_v^{r_v})^{-\frac{n-2}{2}-s}\frac{\zeta_v(n)}{\zeta_v^2(1)\zeta_v(\tfrac n2)}\\
	%	h_{10,v}(s)&=N(\p_v^{r_v})^{-\frac{n-2}{2}+(n-1)s}\frac{\zeta_v(n)}{\zeta_v^2(1)\zeta_v(\tfrac n2+ns)}.
	%\end{align*}
	%By Proposition \ref{prop:reg-second-main-term}, we know that
	%\[S\defeq \lim_{s\to 0} M_{01}(s)+M_{10}(s)\]
	%and
	\begin{equation*}
		S=S^0\cdot h_{01,\q}(0)+R'\cdot\partial_{s=0}\left(h_{10,\q}(s)-h_{01,\q}(s)\right),
	\end{equation*}
	where
	\[h_{01,v}(0)=h_{10,v}(0)=N(\p_v^{r_v})^{-\frac{n-2}2} \frac{\zeta_v(n)}{\zeta_v^2(1)\zeta_v(\frac{n}{2})}.\]
	Here \(S^0\defeq \lim_{s\to 0}\,\left(M_{01}^0(s)+M_{10}^0(s)\right)\) and
	$R':=\Res_{s=0}M^0_{10}(s)=-\Res_{s=0}M^0_{01}(s)$ exist by the proof of Proposition \ref{prop:reg-second-main-term}.
	%The first summand is
	%\[S^0N(\q)^{-\frac{n-2}2}\frac{\zeta_\q(n)}{\zeta_\q^2(1)\zeta_\q(\tfrac n2)},\]
	We compute
	\[\partial_{s=0}h_{01,\q}(s)=-N(\q)^{-\frac{n-2}{2}}\frac{\zeta_\q(n)}{\zeta_\q^2(1)\zeta_\q(\tfrac n2)}\log N(\q)\]
	and
	\[\partial_{s=0}h_{10,\q}(s)=N(\q)^{-\frac{n-2}{2}}\frac{\zeta_\q(n)}{\zeta_\q(\tfrac n2)\zeta_\q^2(1)}\left((n-1)\log N(\q)+n \sum_{v\mid \q} \zeta_v(\tfrac n2)\frac{\log N(\p_v)}{N(\p_v)^{\frac{n}{2}}}\right).\]
	%As in the proof of Proposition \ref{prop:main_term_asymp} we obtain that
	%\[\frac{n}{\zeta_\q(\frac{n}{2})} \sum_{v\mid \q} \zeta_v(\tfrac{n}{2}) \frac{\log N(\p_v)}{N(\p_v)^{\frac{n}{2}}}\ll 1.\]
	Combining, we obtain
	\begin{equation*}
		S=N(\q)^{-\frac{n-2}2} \frac{\zeta_\q(n)}{\zeta_\q^2(1)\zeta_\q(\frac n2)}\left(nR'\cdot\log N(\q)+ D_\q\right),
	\end{equation*}
	where 
	\begin{equation}\label{eq:def-Dq}
		D_\q:=S^0+ nR'\sum_{v\mid \q}  \zeta_v(\tfrac n2)\frac{\log N(\p_v)}{N(\p_v)^{\frac{n}{2}}}.
	\end{equation}
	Again, trivially we check $D_\q=O_{F,\pi_0}(1)$.
\end{proof}

We are now ready to prove Theorem \ref{thm:second_moment_asymptotic}.
\begin{proof}[Proof of Theorem \ref{thm:second_moment_asymptotic}]
	We start with the normalized period
	\[\vol(K_0(\q))^{-1}\frac{\zeta_\q(\frac{n}{2})^2\zeta_\q(1)}{\zeta_\q(n)}\int_{[\bar{G}_{n}]}|\phi_0|^2|R(x)\Eis(f)|^2,\]
	where $f$, $\phi_0$, and $x$ are as chosen in \S \ref{sec:choices}. It follows from Proposition \ref{prop:spec_exp_second_moment} that the period above equals
	\[\Delta^\mu\sum_{P}n^{-1}_P\sum_{\pi\in\Pi^G_{\mathrm{c}}(M)}
	\intop_{i\a^\ast_P/i\a^\ast_G}\frac{|L(\frac{1}{2},\mathcal{I}(\pi,\lambda)\otimes\tilde{\pi}_0)|^2}
	{\ell(\mathcal{I}(\pi,\lambda))}\mathcal{H}\left(\mathcal{I}(\pi,\lambda)\right)\,d\lambda\]
	where $\mu$ is a constant depending only on $n$ and $\H(\sigma)=H_\q(\sigma) h_\infty(\sigma)$ satisfies all the properties required in Theorem \ref{thm:second_moment_asymptotic}.
	
	On the other hand, we apply Theorem \ref{thm:spectral-expansion} to arrive at 
	\begin{equation*}
		\int_{[\bar{G}_{n}]}|\phi_0|^2|R(x)\Eis(f)|^2 = 
		%\lim_{s\to 0}\left(\int_{[\bar{G}_{n}]}|\phi_0|^2 \left[R(x)\Eis\left(\bar{f}\cdot f_s\right)+R(x)\Eis\left(\M\bar{f}\cdot\M f_s\right) \right]\right)\\+\lim_{s\to 0}\left(\int_{[\bar{G}_{n}]}|\phi_0|^2 \left[R(x)\Eis\left(\bar{f}\cdot \M f_s\right)+R(x)\Eis\left(\M\bar{f}\cdot f_s\right)\right]\right)
		\lim_{s\to 0}\,\left(M_{00}(s)+M_{11}(s))\right)+\lim_{s\to 0}\,\left(M_{10}(s)+M_{01}(s))\right)+I_2^{\mathrm D}(0)+I_2^{\mathrm C}(0).
	\end{equation*}
	We conclude the proof invoking Propositions \ref{prop:main_term_asymp} and \ref{prop:secondary_term_asymp} to evaluate the first two limits, and Proposition \ref{prop:error_term_bound} to estimate the last two terms on the right hand side above.
	%\begin{multline*}
	%\lim_{s\to 0}\left(\int_{[\bar{G}_{n}]}|\phi_0|^2 \left[R(x)\Eis\left(\bar{f}\cdot f_s\right)+R(x)\Eis\left(\M\bar{f}\cdot\M f_s\right) \right]\right)\\+\lim_{s\to 0}\left(\int_{[\bar{G}_{n}]}|\phi_0|^2 \left[R(x)\Eis\left(\bar{f}\cdot \M f_s\right)+R(x)\Eis\left(\M\bar{f}\cdot f_s\right)\right]\right)=\\
	%\frac{\zeta_\q(n)}{\zeta_\q(1)\zeta^2_\q(\tfrac{n}{2})}\left(\frac{nL(1,\pi_0,\mathrm{Ad})\zeta(\frac{n}{2})^2}{\zeta(n)}\log N(\q)+B_\q\right)\\
	%+N(\q)^{-\frac{n-2}2}\frac{\zeta_\q(n)}{\zeta_\q^2(1)\zeta_\q(\frac n2)}\left(C\log N(\q)+D_\q\right)
	%\end{multline*}
	%where $B_\q,C,D_\q$ are absolutely bounded constants and $B_\q, D_\q$ may depend on $\q$. Finally from Proposition \ref{prop:error_term_bound} we have an upper bound
	%\[I_2^D(0)+I_2^C(0)\ll_\epsilon N(\q)^{-\frac{n-1}{2}+\vartheta_2+\epsilon}, \]
	%for any $\epsilon>0$, and where $\vartheta_2$ is a constant such that any cuspidal automorphic representation of $\GL(2)$ is $\vartheta_2-$tempered. Combining all the results and normalizing appropriately we arrive at the statement of Theorem \ref{thm:second_moment_asymptotic}.
\end{proof}

\bibliography{references.bib}
\bibliographystyle{alpha}
\end{document}